\title{Factoring Polynomials over Finite Fields with Linear Galois Groups: An Additive Combinatorics Approach} 
\titlerunning{Factoring Polynomials over Finite Fields with Linear Galois Groups}
\author{Zeyu Guo\footnote{This work was done while the author was at the CSE department, IIT Kanpur.}}{Department of Computer Science, University of Haifa, Israel}{zguotcs@gmail.com}{https://orcid.org/0000-0001-7893-4346}{}
\authorrunning{Z. Guo}
\keywords{polynomial factoring, permutation group, finite field, algebraic combinatorics, additive combinatorics, derandomization}
\newcommand{\F}{\mathbb{F}}
\newcommand{\Z}{\mathbb{Z}}
\newcommand{\Q}{\mathbb{Q}}
\newcommand{\C}{\mathbb{C}}
\newcommand{\N}{\mathbb{N}}
\newcommand{\gl}{\mathrm{GL}}
\newcommand{\sym}{\mathrm{Sym}}
\newcommand{\gal}{\mathrm{Gal}}
\newcommand{\lin}{\mathrm{Lin}}
\newcommand{\M}{\mathcal{M}}
\newcommand{\B}{\mathcal{B}}
\newcommand{\nospacepunct}[1]{\makebox[0pt][l]{\,#1}}
\newcommand{\sg}[1]{\langle #1\rangle}
\newcommand{\E}{\mathop{\mathbb{E}}}
\newcommand*{\rom}[1]{\expandafter\@slowromancap\romannumeral #1@}
\begin{document}

\maketitle
\begin{abstract}
Let $\tilde{f}(X)\in\Z[X]$ be a degree-$n$ polynomial such that $f(X):=\tilde{f}(X)\bmod p$ factorizes into $n$ distinct linear factors over $\F_p$.  We study the problem  of \emph{deterministically} factoring $f(X)$ over $\F_p$ given $\tilde{f}(X)$.
Under  the generalized Riemann hypothesis (GRH), we give an improved deterministic algorithm that computes the complete factorization of $f(X)$ in the case that the Galois group of $\tilde{f}(X)$ is (permutation isomorphic to) a \emph{linear group} $G\leq \gl(V)$ on the set $S$ of roots of $\tilde{f}(X)$, where $V$ is a finite-dimensional vector space over a finite field $\F$ and $S$ is identified with a subset of $V$. 
 In particular, when $|S|=|V|^{\Omega(1)}$, the algorithm runs in time polynomial in $n^{\log n/(\log\log\log\log n)^{1/3}}$ and the size of the input, improving Evdokimov's algorithm.
Our result also applies to a general Galois group $G$ when combined with a recent algorithm of the author. 

To prove our main result, we introduce a family of objects called \emph{linear $m$-schemes} and reduce the problem of factoring $f(X)$ to a combinatorial problem about these objects.
We then apply techniques from additive combinatorics to   obtain an improved bound. Our techniques may be of independent interest.
\end{abstract}

\section{Introduction}
Univariate polynomial factoring over finite fields is a fundamental problem in computer algebra, which has been extensively studied over the years. 
A longstanding open problem in this area is finding a \emph{deterministic} algorithm that factors a degree-$n$ polynomial $f(X)$ over a finite field $\F_q$  in time polynomial in $n$ and $\log q$.
There is a long list of work on this problem \cite{AMM77, Ber67, Ber70, Sch85, von87, Ron88, Ron89, Pil90, Sho90, Sho96, Hua91-2, Hua91, Evd92, Ron92, Evd94, CH00, Gao01, IKS09, Gua09, IKRS12, Aro13, AIKS14, BKS15}.
In particular, Berlekamp \cite{Ber70} gave a deterministic factoring algorithm that runs in time $\mathrm{poly}(n, \log q, \mathrm{char}(\F_q))$.
Building the work of R{\'o}nyai \cite{Ron88},  Evdokimov \cite{Evd94} gave a deterministic $\mathrm{poly}(n^{\log n},\log q)$-time algorithm under the generalized Riemann hypothesis (GRH).

Efforts were made to understand the combinatorics behind Evdokimov's algorithm \cite{CH00, Gao01}, culminating in the work \cite{IKS09} that proposed the notion of \emph{$m$-schemes} together with an  algorithm that subsumes those in \cite{Ron88, Evd94}. See also the follow-up work \cite{Aro13, AIKS14}. 
  An $m$-scheme, parameterized by $m\in\N^+$, can be seen as an extension of the notion of \emph{association schemes} in algebraic combinatorics. It was shown in \cite{IKS09} that whenever the  algorithm fails to produce a proper factorization of $f(X)$  in time $\mathrm{poly}(n^m, \log q)$, there always exists an $m$-scheme on $[n]$ satisfying strict combinatorial properties. Evdokimov's result can then be interpreted as the fact that such an $m$-scheme exists only for $m=O(\log n)$. Thus, one natural way of beating Evdokimov's $\mathrm{poly}(n^{\log n}, \log q)$-time algorithm is improving this $O(\log n)$ upper bound for $m$.  
  However, attempts of establishing an $o(\log n)$ upper bound for $m$ have be unsuccessful so far. Currently, the best  known general upper bound  is $m\leq c \log n+O(1)$, where $c=2/\log_2 12=0.557\dots$, proved in  \cite{Gua09} and independently in \cite{Aro13}. 
 
In another line of research \cite{Hua91-2,Hua91,Evd92,Ron92},  the finite field over which $f(X)$ is defined is assumed to be a prime field $\F_p$,  and   a \emph{lifted polynomial} of $f(X)$ is  assumed to be given, i.e., a degree-$n$ polynomial $\tilde{f}(X)\in\Z[X]$ satisfying $\tilde{f}(X)\bmod p=f(X)$.
In particular, Huang  \cite{Hua91-2,Hua91} proved that $f(X)\in\F_p[X]$ can be deterministically factorized  in polynomial time under GRH if the Galois group $G$ of   $\tilde{f}(X)$ is abelian. This was generalized in \cite{Evd92} to the case that $G$ is solvable.
 For general  $G$, R{\'o}nyai \cite{Ron92} gave a deterministic algorithm  under GRH that runs in time  polynomial in $|G|$ and the size of the input.
    
Recently, the author  \cite{Guo19, Guo17, Guo19-2} proposed a unifying approach for deterministic polynomial factoring over finite fields  based on the notion of \emph{$\mathcal{P}$-schemes}, where $\mathcal{P}$ is a collection of subgroups of the Galois group $G$ of $\tilde{f}(X)$. It was shown that above results  \cite{Ron88, Evd94, IKS09, Hua91-2,Hua91,Evd92,Ron92}  can be derived from this approach in a uniform way.  In particular, the results based on $m$-schemes \cite{IKS09} may be obtained using $\mathcal{P}$-schemes by assuming  $G$ to be the full symmetric group $\sym(n)$ (which is the most difficult case). When $G$ is less complex than a full symmetric group, the  approach based on $\mathcal{P}$-schemes may lead to better factoring algorithms by employing the structure of $G$. For example, a deterministic factoring algorithm  was given  in \cite{Guo19} (under GRH) whose running time is bounded in terms of the nonabelian composition factors of $G$. It runs in polynomial time when these nonabelian composition factors are all subquotients of $\sym(k)$ for $k=2^{O(\sqrt{\log n})}$. 

\subsection{Our Results.}

This paper is a continuation of the work in \cite{Guo19, Guo17, Guo19-2}.
We consider the problem of deterministically factoring $f(X)\in\F_p[X]$ given a lifted polynomial $\tilde{f}(X)\in\Z[X]$ of $f(X)$  whose Galois group is denoted by $G$.
 We want to apply the main result of \cite{Guo19} to  families of Galois groups that are less complex than full symmetric groups.
Natural candidates of such kinds of groups come from \emph{linear groups}, which are the main focus of this paper.
 
For example, suppose  the action of $G$ on the set of $n$ roots of $\tilde{f}(X)$ is permutation isomorphic to the action of $\gl(V)$ on $V\setminus\{0\}$, where  $V$ is a  finite-dimensional  vector space over a finite field $\F$.
We know Evdokimov's algorithm \cite{Evd94} factorizes $f(X)$ in time  $\mathrm{poly}(n^{\log n},\log p)$, whereas R{\'o}nyai's algorithm \cite{Ron92} runs in time polynomial in $|\gl(V)|=n^{\Theta(\dim V)}=n^{\Theta(\log n/\log |\F|)}$ and the size of the input.
When $|\F|=O(1)$, the latter time bound is still at least $\mathrm{poly}(n^{\log n},\log p)$. Can we factorize $f(X)$ in time polynomial in $n^{o(\log n)}$ and the size of the input? We answer this question affirmatively in this paper.

Let $S$ be a subset of a vector space $V$, and let $G$ be a permutation group on $S$. We say $G$ \emph{acts linearly} on $S$ if we can identify $G$ with a subgroup of $\gl(V)$ such that the  action of $G$ on $S$ is induced by the natural action of $\gl(V)$ on $V$.
Our main result states as follows:

\begin{theorem}\label{thm_factormain}
Under GRH, there exists a deterministic algorithm that, given $f(X)\in\F_p[X]$ that factorizes into $n$ distinct linear factors over $\F_p$, and a lifted polynomial $\tilde{f}(X)\in\Z[X]$ whose Galois group $G$ acts linearly on the set $S$ of roots of $\tilde{f}(X)$, where $S$ is identified with a subset of a vector space $V$ over a finite field $\F$, completely factorizes $f(X)$ over $\F_p$ in time polynomial in $n^m$ and the size of the input, where $m$ is an integer satisfying:
\begin{enumerate}[(1)]
\item $m=O(\log n)$ and $m\leq \dim \sg{S}_{\F}$, where $\sg{S}_{\F}\subseteq V$ is the subspace spanned by $S$ over $\F$.
\item $m=O\left(\frac{\log n}{(\rho^2 \log\log\log\log n)^{1/3}}\right)$, where $\rho:=\log |S|/\log |\sg{S}|$ and $\sg{S}\subseteq V$ is the abelian subgroup generated by $S$.
\end{enumerate}
\end{theorem}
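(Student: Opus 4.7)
The plan is to use the $\mathcal{P}$-scheme framework of \cite{Guo19}: the main theorem there reduces the deterministic factoring of $f(X)$ to a purely combinatorial problem, namely showing that any ``antisymmetric'' $\mathcal{P}$-scheme on the root set $S$ has bounded depth, where $\mathcal{P}$ is a suitably chosen collection of subgroups of $G$. Because $G$ acts linearly on $S$ through $\gl(V)$, the natural choice is to take $\mathcal{P}$ to consist of pointwise stabilizers of ordered tuples of vectors in $V$, i.e.\ stabilizers of flags of linear subspaces; these form a $G$-invariant poset that interacts nicely with the linear structure. I would encode the resulting combinatorial object as a \emph{linear $m$-scheme}: a hierarchy of partitions on $S^k$ for $k\le m$ satisfying the usual $\mathcal{P}$-scheme axioms (fiber/projection/join compatibilities) together with equivariance under the linear operations inherited from $V$. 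With this setup the algorithmic statement reduces to showing that any antisymmetric linear $m$-scheme produces a splitting as soon as $m$ exceeds the claimed bound.

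Bound~(1) would be established by two ingredients. First, $m\le \dim\sg{S}_{\F}$ because, once $m\ge\dim\sg{S}_{\F}$, a generic ordered $m$-tuple in $\sg{S}_{\F}$ contains an $\F$-basis, so an antisymmetric scheme is forced to distinguish tuples by their linear-algebraic data alone, which produces a nontrivial splitting at the top level. Second, $m=O(\log n)$ is the classical Evdokimov-type bound, inherited verbatim since a linear $m$-scheme is in particular an $m$-scheme in the sense of \cite{IKS09}, so the bound $c\log n + O(1)$ of \cite{Gua09, Aro13} applies.

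The heart of the argument is bound~(2). Since $\sg{S}$ is an abelian group generated by $S$ with $\log|S|=\rho\log|\sg{S}|$, the set $S$ has substantial additive structure inside $\sg{S}$. I would pass to the additive setting: the antisymmetry of the linear $m$-scheme translates into the compatibility of its top-level classes with the abelian-group operation, which forces these classes to be unions of translates of low-complexity additive sets. Iterated sumset estimates (Pl\"unnecke--Ruzsa, together with Chang/Freiman-type covering lemmas) then control how slowly the classes can grow with $m$, and an amplification argument -- roughly, each additional level of the scheme forces a doubling of additive complexity, paid for by a logarithmic factor -- yields $m = O(\log n/(\rho^2 \log\log\log\log n)^{1/3})$. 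The main obstacle will be calibrating this amplification sharply: the exponent $1/3$ together with a quadruple logarithm strongly suggests a three-step nested recursion balancing additive growth, class width, and scheme depth, and aligning the constants so that the standard additive-combinatorial inequalities produce exactly this rate is the delicate part.
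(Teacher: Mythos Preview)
Your setup is correct and matches the paper: the reduction goes through the $\mathcal{P}$-scheme machinery of \cite{Guo19} with $\mathcal{P}=\mathcal{P}_m$ the system of pointwise stabilizers $G_{x_1,\dots,x_k}$, and the paper indeed converts a strongly antisymmetric $\mathcal{P}_m$-scheme into a strongly antisymmetric \emph{linear $m$-scheme} on $S$ (Lemma~\ref{lem_ptom}). Bound~(1) is proved exactly as you sketch, via Lemma~\ref{lemma_bounddim}: the basis argument gives $m<\dim\sg{S}_\F$, and the Evdokimov/IKS halving argument gives $m\le\log n$.

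For bound~(2), however, your outline has a genuine gap. The paper does \emph{not} argue that antisymmetry forces the top-level blocks to be unions of translates of structured sets, and I do not see how to make such a structural classification true or useful here. The actual mechanism is different and more concrete: one shows (the Key Lemma, Lemma~\ref{lem_fastshrink}) that for a non-singleton block $B\in\Pi^{(1)}$ one can find $x_1,\dots,x_k\in B$ and $B'\in\B(\Pi_{x_1,\dots,x_k}^{(1)})$ with $B'\subseteq B$ and $\min\{|B'|,|B|/|B'|\}\ge 2^{Ck(\rho^2\log\log\log\log n)^{1/3}}$, i.e.\ passing to a fiber reduces the block size by a \emph{superconstant} factor on average. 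This, iterated, yields the claimed bound on $m$ directly. The superconstant saving is obtained by a three-way case split on the doubling of $B$: if $K|B|\le|B+B|\le|B|^2/K$ one uses the fiber $\{y:x+y-z\in B\}$ (Lemma~\ref{lem_shrinkweak}); if $|B|^2/|B+B|$ is small one first passes to a partial sumset $\sigma_k(A)$ to raise the entropy rate (Lemma~\ref{lem_red1}, using Pl\"unnecke); and if $|B+B|/|B|$ is small (the hard case, Lemma~\ref{lem_densecase}) one needs a Fourier-analytic \emph{decomposition theorem} (Theorem~\ref{thm_struct}) that finds constructible subspaces $W$ in which $B$ becomes pseudorandom, together with the Balog--Szemer\'edi--Gowers theorem adapted to linear $m$-schemes (Theorem~\ref{thm_bsg}). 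Your proposal invokes Pl\"unnecke--Ruzsa and Freiman-type inputs, which do appear, but it misses the fiber-restriction backbone, the doubling trichotomy, and especially the decomposition/pseudorandomness step that drives the dense case; without these, the vague ``amplification'' you describe does not pin down a working argument.
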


Note $\dim \sg{S}_{\F}=\frac{\log |S|}{ \rho\log |\F|}=\frac{\log n}{\rho \log |\F|}$. Thus, the bound~(2) slightly improves (1) when both $\rho^{-1}$ and $|\F|$ are small enough.

 \begin{remark*}
The assumption that $f(X)$ factorizes into distinct linear factors over $\F_p$ is not essential. It can be removed if we replace the $\mathcal{P}$-scheme algorithm  \cite{Guo19} used in our proof by  the generalized  $\mathcal{P}$-scheme  algorithm in \cite[Chapter~5]{Guo17} which  works for arbitrary $f$.
 We also note that there exists a standard reduction in literature that reduces the problem of factoring a univariate polynomial  over a finite field  to the special case of factoring a  polynomial defined over a prime field $\F_p$  that factorizes into distinct linear factors over $\F_p$ \cite{Ber70, Yun76}.
\end{remark*}

\subparagraph*{General Galois groups.} Combining our techniques with \cite{Guo19-2},  we also obtain an improved algorithm that applies to \emph{any} finite Galois group $G$, whose running time is bounded in terms of the nonabelian composition factors of $G$. 

Specifically, two functions $d_\sym(m)$ and $d_\lin(m,q)$ are  introduced in \cite{Guo19-2}. These functions are further used to define quantities $N_\mathcal{A}(G)\in\N^+$ and  $N_\mathcal{C}(G)\in\N^+$ respectively for every finite group $G$. The following theorem is then proved in \cite{Guo19-2}.

\begin{theorem}[{\cite[Theorem~1.2]{Guo19-2}}]\label{thm_compfac}
Under GRH, there exists a  deterministic  algorithm that, given $f(X)\in\F_p[X]$ that factorizes into $n$ distinct linear factors over $\F_p$, and a lifted polynomial $\tilde{f}(X)\in\Z[X]$  with Galois group $G$, completely factorizes $f$ over $\F_p$ in time polynomial in $N_\mathcal{A}(G)$, $N_\mathcal{C}(G)$, and the size of the input.
\end{theorem}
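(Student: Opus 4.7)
The plan is to deduce Theorem~\ref{thm_compfac} by plugging the linear-group analysis behind Theorem~\ref{thm_factormain} into the $\mathcal{P}$-scheme framework of \cite{Guo19}, and running an induction along a composition series of $G$. Informally, the $\mathcal{P}$-scheme machinery says that whenever the algorithm fails to properly split $f(X)$ in a given time bound, there must exist a ``large'' $\mathcal{P}$-scheme on the root set obstructing it; my strategy is to engineer $\mathcal{P}$ so that any such obstructing scheme decomposes into independent pieces, one per composition factor of $G$, and then bound each piece separately.

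Concretely, I would fix a composition series $G=G_0\rhd G_1\rhd\cdots\rhd G_k=\{1\}$ with simple quotients $H_i=G_{i-1}/G_i$, and take $\mathcal{P}$ to be the closure of $\{G_0,\dots,G_k\}$ under intersection, conjugation, and pointwise stabilization of roots, as required by the analysis of \cite{Guo19}. By the modularity of this construction, the combinatorial complexity of an obstructing $\mathcal{P}$-scheme is bounded by a product $\prod_i c(H_i)$ of per-layer contributions. For abelian $H_i$ one has a polynomial bound via Huang--R{\'o}nyai--Evdokimov \cite{Hua91-2,Hua91,Evd92}. For a nonabelian simple $H_i$ I would use the classification of finite simple groups: alternating and sporadic factors fall under the symmetric $m$-scheme bounds of \cite{IKS09,Aro13,Gua09}, yielding the quantity $d_\sym(m)$, while simple groups of Lie type admit a faithful linear action on some $V$ over $\F_q$ and thus fall under linear $m$-scheme bounds coming from Theorem~\ref{thm_factormain}, yielding the quantity $d_\lin(m,q)$. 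Defining $N_\mathcal{A}(G)$ and $N_\mathcal{C}(G)$ as the aggregates of these per-factor bounds along the series recovers exactly the running-time quantity in the statement.

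The main obstacle will be verifying that the $\mathcal{P}$-scheme algorithm genuinely decouples along the composition series: one must show that the refinement produced at layer $i$ corresponds to a Galois-invariant partition controlled solely by the quotient $H_i$, so that descending from $G_{i-1}$ to $G_i$ leaves a residual problem governed by $H_i$ and nothing else. This requires a careful Galois-descent argument (using the GRH-based Chebotarev tools already invoked in \cite{Ron88,Evd94}) to pass the output of a linear $m$-scheme analysis over $\F_q$ back to an honest factorization of $f(X)$ over $\F_p$, together with a bookkeeping step showing that the per-layer bounds combine multiplicatively rather than via some more unfavorable interaction. Once this decoupling is in place, the total running time telescopes to a polynomial in $N_\mathcal{A}(G)$, $N_\mathcal{C}(G)$, and the size of the input, as claimed.
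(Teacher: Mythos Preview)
The paper does not prove Theorem~\ref{thm_compfac} at all: it is quoted verbatim from \cite{Guo19-2} as an external result, and the present paper's contribution is the new bound on $d_\lin(m,q)$ (Theorem~\ref{thm_lin}) which, when plugged into Theorem~\ref{thm_compfac}, yields Corollary~\ref{cor_bottleneck}. So there is no ``paper's own proof'' to compare your proposal against.

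As for the substance of your sketch, it is in the right spirit---the result in \cite{Guo19-2} does proceed via the $\mathcal{P}$-scheme framework and does analyze the contribution of each nonabelian composition factor separately, with $d_\sym$ controlling alternating factors and $d_\lin$ controlling classical ones. However, your proposal is too vague at the decoupling step to count as a proof: the claim that an obstructing $\mathcal{P}$-scheme ``decomposes into independent pieces, one per composition factor'' with a multiplicative cost is precisely the nontrivial content of \cite{Guo19-2}, and it does not follow from a generic induction along a composition series. In particular, the quantities $N_\mathcal{A}(G)$ and $N_\mathcal{C}(G)$ are not simply products of per-factor bounds $c(H_i)$ but are defined via specific maxima over primitive permutation representations arising in the reduction (cf.\ \cite[Theorem~1.2]{Guo19-2}), and your sketch gives no mechanism for why a max rather than a product suffices, nor how the linear action on some $V$ over $\F_q$ for a Lie-type factor interacts with the ambient action of $G$ on the roots of $\tilde f$. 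If you want to reconstruct the argument, you should consult \cite{Guo19-2} directly; the present paper treats Theorem~\ref{thm_compfac} as a black box.
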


Here  $N_\mathcal{A}(G)$ (resp. $N_\mathcal{C}(G)$)  measures the contribution from the alternating groups (resp. classical groups) among the nonabelian composition factors of $G$ to the running time. Using the bounds $d_\sym(m)=O(\log m)$ and $d_\lin(m,q)\leq m$, it is shown in \cite{Guo19-2} that $N_\mathcal{A}(G), N_\mathcal{C}(G)=k^{O(\log k)}$ if these alternating groups and classical groups are all (isomorphic to) subquotients of a symmetric group $\sym(k)$. In particular, choosing $k=n$ yields an $n^{O(\log n)}$-time deterministic algorithm under GRH, matching the state-of-the-art results \cite{Evd94, IKS09}.

In this paper, we obtain the following new bound for $d_\lin(m,q)$.

\begin{theorem}\label{thm_lin}
$d_\lin(m,q)=O\left(\frac{m\log q}{( \log\log\log(m\log q))^{1/3}}\right)$.
\end{theorem}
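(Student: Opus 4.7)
My plan is to deduce Theorem~\ref{thm_lin} from bound~(2) of Theorem~\ref{thm_factormain}. The quantity $d_\lin(m,q)$ introduced in \cite{Guo19-2} is the exponent that controls the factoring running time whenever the Galois action restricts to a linear group inside $\gl_m(\F_q)$ acting on a subset $S$ of the natural module $V = \F_q^m$. The previously known bound $d_\lin(m,q) \le m$ corresponds to bound~(1) of Theorem~\ref{thm_factormain}, since $\dim \sg{S}_\F \le m$. The new bound~(2) is strictly stronger, and Theorem~\ref{thm_lin} should be its direct specialization once the definition of $d_\lin$ is unpacked.

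The calculation is short. Writing $\rho = \log|S|/\log|\sg{S}|$ and $n = |S|$, the exponent delivered by Theorem~\ref{thm_factormain}(2) is $O(\log n / (\rho^2 \log\log\log\log n)^{1/3})$. Using $\log n = \rho \cdot \log|\sg{S}| \le \rho \cdot m \log q$ together with $\rho \le 1$, this simplifies to $O(m\log q / (\log\log\log\log n)^{1/3})$, which is almost the desired form. It remains to convert $\log\log\log\log n$ into $\log\log\log(m\log q)$. When $\sg{S}_\F = V$, so that $|\sg{S}|$ is within a fixed power of $q^m$, this conversion is immediate. When $\sg{S}_\F$ is a proper subspace of $V$, I would reduce to the smaller dimension $\dim \sg{S}_\F$: this is legitimate because the definition of $d_\lin(m,q)$ in \cite{Guo19-2} should be monotone in $m$ under passage to an invariant subspace, so we may assume without loss of generality that $S$ spans $V$.

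The main obstacle I foresee is the very small regime, where $m\log q$ is bounded by a constant tower and $\log\log\log(m\log q) = O(1)$. Here Theorem~\ref{thm_factormain}(2) gives nothing useful, but the target bound $O(m\log q / (\log\log\log(m\log q))^{1/3})$ already dominates $m$ in that range, so the elementary bound $d_\lin(m,q) \le m$ from Theorem~\ref{thm_factormain}(1) suffices. Outside this regime the two-step argument above closes the proof; the only genuinely non-routine point is verifying the invariant-subspace reduction for the precise $d_\lin$ of \cite{Guo19-2}, which I expect to follow from the standard functoriality of linear $m$-schemes under restriction to a $G$-invariant subspace containing $S$.
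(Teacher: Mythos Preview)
Your approach matches the paper's: unwrap the definition of $d_\lin(m,q)$ as a supremum of $d(G)$ over linear actions $G \le \gl_{m'}(q')$ on $S \subseteq \F_{q'}^{m'}$ with $q'^{m'} \le q^m$, then bound $d(G)$ by Theorem~\ref{thm_mainbound} (the paper invokes it directly via Lemma~\ref{lem_ptom} rather than through Theorem~\ref{thm_factormain}, but the content is identical). The inequalities $\log n \le \rho\, m\log q$ and $\rho \le 1$ then give $d(G) \le C\,m\log q / (\log\log\log\log n)^{1/3}$, exactly as you write.

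The gap is in your conversion of $\log\log\log\log n$ to $\log\log\log(m\log q)$. Your proposed invariant-subspace reduction only arranges $|\sg{S}_\F| = |V|$; it does nothing to control $n = |S| = |\sg{S}|^\rho$, which can still be arbitrarily small compared to $q^m$ when $\rho$ is small. So even after the reduction the claim ``this conversion is immediate'' is unfounded: $\log\log\log\log n$ can remain much smaller than $\log\log\log(m\log q)$. The correct fix, which the paper also leaves implicit in its one-line ``combining\dots gives the desired bound'', is a two-case split using the elementary estimate $d(G) \le \log n$ from Lemma~\ref{lemma_bounddim}(2). Writing $M = m\log q$: if $\log n \le M/(\log\log\log M)^{1/3}$ then this elementary bound already gives the conclusion; otherwise $\log n$ lies within a $(\log\log\log M)^{1/3}$ factor of $M$, which forces $\log\log\log\log n = \Theta(\log\log\log M)$, and the main bound finishes. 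No subspace reduction or monotonicity of $d_\lin$ is needed.
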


This bound is derived from Theorem~\ref{thm_mainbound} stated  below.
Its proof is deferred to Appendix~\ref{sec_pscheme}, where the definition of $d_\lin(m,q)$ is also given.

When $q$ is small, the bound in Theorem~\ref{thm_lin} is better than the bound $d_\lin(m,q)\leq m$.
It has the following implication, which states that the contribution $N_\mathcal{C}(G)$ from classical groups to the time complexity of the algorithm in Theorem~\ref{thm_compfac} is always subpolynomial in $n^{\log n}$. Thus, the contribution $N_\mathcal{A}(G)$ from alternating groups is the only bottleneck for obtaining an $n^{o(\log n)}$-time deterministic algorithm under GRH. 

\begin{corollary}\label{cor_bottleneck}
We have $N_\mathcal{C}(G)=n^{o(\log n)}$ in Theorem~\ref{thm_compfac}. Furthermore, if every alternating group among the composition factors of $G$ has degree $n^{o(1)}$, then the algorithm in Theorem~\ref{thm_compfac} runs in time polynomial in $n^{o(\log n)}$ and the size of the input.
\end{corollary}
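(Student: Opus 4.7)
The plan is to substitute the bound from Theorem~\ref{thm_lin} into the recursive definition of $N_\mathcal{C}(G)$ given in \cite{Guo19-2}, and to handle $N_\mathcal{A}(G)$ via the degree assumption on alternating composition factors. Recall that $N_\mathcal{C}(G)$ is built from evaluations $d_\lin(m_i,q_i)$, where the pairs $(m_i,q_i)$ are the natural parameters (dimension and base field size) of the classical nonabelian composition factors $H_i$ of $G$; the trivial input bound $d_\lin(m,q)\le m$ already yields $N_\mathcal{C}(G)\le n^{O(\log n)}$ in \cite{Guo19-2}. My first step would be to check that the construction in \cite{Guo19-2} is monotone, so that replacing $d_\lin$ by any pointwise smaller upper bound yields a correspondingly smaller estimate for $N_\mathcal{C}(G)$.

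Next, combining the trivial bound with Theorem~\ref{thm_lin} gives
\[
d_\lin(m_i,q_i)=O\!\left(\min\!\left(m_i,\ \frac{m_i\log q_i}{(\log\log\log(m_i\log q_i))^{1/3}}\right)\right)
\]
for each classical composition factor. Since $H_i$ is a composition factor of a subgroup of $\sym(n)$, the standard lower bound of order $q_i^{m_i-1}$ on the minimum faithful permutation degree of a finite classical simple group of dimension $m_i$ over $\F_{q_i}$ gives the essential constraint $m_i\log q_i=O(\log n)$. The new bound saves a subconstant factor $(\log\log\log\log n)^{-1/3}$ whenever $m_i\log q_i$ is large, while for bounded $m_i\log q_i$ the value $d_\lin(m_i,q_i)$ is itself bounded; aggregating through the recursion in \cite{Guo19-2} therefore converts the $O(\log n)$ exponent into $o(\log n)$, yielding $N_\mathcal{C}(G)=n^{o(\log n)}$.

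For the second assertion I would argue analogously for the alternating composition factors $\mathrm{Alt}(k_i)$ using the bound $d_\sym(k_i)=O(\log k_i)$ recorded in \cite{Guo19-2}. By hypothesis $k_i=n^{o(1)}$, so $d_\sym(k_i)=o(\log n)$ for each such factor, and the same monotone aggregation yields $N_\mathcal{A}(G)=n^{o(\log n)}$. Substituting both bounds into Theorem~\ref{thm_compfac} produces a deterministic algorithm (under GRH) whose running time is polynomial in $n^{o(\log n)}$ and the size of the input.

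The main obstacle, and where one must actually consult \cite{Guo19-2}, is the aggregation step: $N_\mathcal{A}(G)$ and $N_\mathcal{C}(G)$ are not simple products over the composition factors but arise from a recursion over subnormal series of $G$, so I would need to verify that the per-factor $o(1)$ savings compose correctly through that recursion---i.e.\ that the recursion is both monotone in the input functions and sufficiently subadditive to preserve an $o$-saving in the exponent rather than only a constant-factor improvement. I expect this to reduce to showing that the slowly varying factor $(\log\log\log\log n)^{-1/3}$ can be pulled out of the sum, using that composition factors with small parameters together contribute only $O(1)$.
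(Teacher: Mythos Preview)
Your approach identifies the right ingredients and matches the paper's argument in outline: for a classical composition factor of rank $m$ over $\F_q$ one has $q^m=n^{O(1)}$, so Theorem~\ref{thm_lin} gives $d_\lin(cm,q)=o(\log n)$; likewise $d_\sym(m_1^c)=O(\log m_1)=o(\log n)$ under the degree hypothesis on alternating factors.

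The one place where your proposal diverges from the paper is the ``aggregation step'' you flag as the main obstacle. In fact there is no recursion to unwind: by the definition quoted in the paper's proof from \cite{Guo19-2}, one has $N_\mathcal{C}(G)=q_0^{m_0\, d_\lin(cm_0,q_0)}$ for a \emph{single} classical composition factor of rank $m_0$ over $\F_{q_0}$ (with $c$ an absolute constant), and similarly $N_\mathcal{A}(G)=m_1^{d_\sym(m_1^c)}$ for a single alternating composition factor of degree $m_1$. So the paper's proof is a direct two-line computation: $q_0^{m_0}=n^{O(1)}$ and $d_\lin(cm_0,q_0)=o(\log n)$ give $N_\mathcal{C}(G)=n^{o(\log n)}$ immediately, and likewise for $N_\mathcal{A}(G)$. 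Your concern about monotonicity and subadditivity through a subnormal recursion does not arise; once you consult the actual definitions in \cite{Guo19-2}, your sketch collapses to exactly the paper's argument.
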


 The proof of Corollary~\ref{cor_bottleneck} is deferred to  Appendix~\ref{sec_pscheme}.
 
 \subparagraph*{Realizing a Galois group over $\Q$.} Given the results above, it is a natural question to ask if a finite classical group $G$  (or a finite group $G$ that has large classical groups as composition factors) can indeed be realized as a Galois group over $\Q$. The problem of realizing a given group $G$ as a Galois group over $\Q$ is known as the \emph{inverse Galois problem} \cite{MM99}. While this problem is unsolved in general, many partial results are known. In particular, there are infinite families of finite classical groups that are  realizable over $\Q$. For example,  $\mathrm{PSL}_n(p)$ is realizable over $\Q$ for odd prime $p$ when $\gcd(n, p-1)=1$, $p>3$ and $p\not\equiv-1\pmod{12}$ \cite[Theorem~\rom{3}.6.8]{MM99}. See \cite[Section~\rom{3}.10.2]{MM99} for a summary about realizing finite simple  groups over $\Q$. These groups may also be used to build larger Galois groups via semidirect products or wreath products \cite{MM99}.
 
Furthermore, given a Galois extension $L/\Q$ with $\gal(L/\Q)=G$, we could realize any permutation representation  $G\to \sym(S)$ as follows: Let $H=G_x$ be a stabilizer for some $x\in S$, and let $K=L^H$, the fixed subfield of $H$.
Choose $\tilde{f}(X)\in\Z[X]$ to be the minimal polynomial of an integral primitive element  of $K$. Then the action of $G$ on the set of roots of $\tilde{f}$ in $L$ is permutation isomorphism to its action on $S$. 

Finally, by Chebotarev's density theorem \cite{Neu99}, there exist infinitely many primes $p$ such that $\tilde{f}(X)\bmod p$ factorizes into distinct linear factors, so that Theorem~\ref{thm_factormain} and Theorem~\ref{thm_compfac}  may apply.

\subsection{Proof Overview}

We give a high-level overview of the proof of Theorem~\ref{thm_factormain} in this subsection.

\subparagraph*{Linear $m$-schemes.} To prove Theorem~\ref{thm_factormain}, we  introduce a family of combinatorial objects called \emph{linear $m$-schemes}, which can be seen as the linear analogue of $m$-schemes studied in \cite{IKS09}.
For $m\in\N^+$ and a subset $S\subseteq V$, a linear $m$-scheme on $S$ is a collection $\Pi=\left\{\Pi^{(1)},\dots, \Pi^{(m)}\right\}$ of partitions satisfying a list of axioms, where   $\Pi^{(i)}$ is a partition of $S^i$  for $i\in [m]$ (see Definition~\ref{def_linm} for the formal definition).
We are  interested in a special kind of linear $m$-schemes called \emph{strongly antisymmetric linear $m$-schemes}.
In particular, we will prove the following  statement about these objects.

\begin{theorem}\label{thm_mainbound}
Let $V$ be a vector space over a finite field $\F$, $S\subseteq V$, $n=|S|$, and $\rho=\log |S|/\log |\sg{S}|$.
Suppose  $\Pi$  is a strongly antisymmetric linear $m$-scheme on $S$, and $\Pi^{(1)}$ is \emph{not} the finest partition of $S$. 
Then $m=O\left(\frac{\log n}{(\rho^2 \log\log\log\log n)^{1/3}}\right)$.
\end{theorem}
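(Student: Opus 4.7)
The plan is to adapt the recursive argument that yields the $O(\log n)$ bound for (ordinary) antisymmetric $m$-schemes, replacing its counting steps by additive-combinatorial estimates that exploit the extra structure of \emph{linear} $m$-schemes and the embedding $S\subseteq V$.

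First I would unpack what the axioms should give us. In an $m$-scheme, the partitions $\Pi^{(i)}$ are related by coordinate projections, so each $\Pi^{(i)}$ fibers over $\Pi^{(i-1)}$; the linearity hypothesis should additionally force blocks to be invariant under the natural linear action on $V^i$ (in particular under diagonal translations inside $\sg{S}$). Consequently, for a block $B\in\Pi^{(i)}$ the projected difference set $\{x_j-x_k : (x_1,\ldots,x_i)\in B\}$ is itself a union of scheme blocks and is closed under the operations permitted by the scheme. If $\Pi^{(1)}$ is not the finest partition, strong antisymmetry should propagate this nontriviality to every higher level, yielding a nested family of difference sets inside $\sg{S}$ whose structure can be analyzed.

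Second, I would feed this family into the additive-combinatorics toolbox. Pl\"unnecke--Ruzsa inequalities control the growth of sumsets with small doubling, and Freiman/Bogolyubov-type inverse theorems locate such sets inside coset progressions of bounded dimension; both are available for arbitrary abelian groups and therefore apply to $\sg{S}$. Using these, I would convert ``$m$ levels of nontrivial refinement'' into a lower bound on the size or dimension of the resulting approximate subgroup, then compare with the upper bound $|\sg{S}|=n^{1/\rho}$ to extract an inequality of the shape
\[
m \leq F(\log n, \rho)
\]
for some explicit $F$. The factor $\rho^2$ in the target bound is natural because the linear structure enters twice: once to relate the scheme partitions to additive sumsets, and once to relate $|S|$ to $|\sg{S}|$.

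The main obstacle will be squeezing the quadruple-log improvement out of the additive combinatorics. A level-by-level application of Pl\"unnecke--Ruzsa is lossy and merely recovers Evdokimov's $O(\log n)$ bound. The exponent $1/3$ in the denominator strongly suggests a three-stage amplification --- grouping the $m$ levels into macro-steps, applying a sharper Balog--Szemer\'edi--Gowers-type estimate on each macro-step, and iterating a further time --- where the nested logarithms come from the three compositions of the amplification. Organizing this iteration so that strong antisymmetry is preserved at every macro-step, and so that the linear invariance remains usable across all three stages, is where the principal technical difficulty lies.
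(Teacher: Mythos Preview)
Your plan has a genuine structural misunderstanding and, separately, never identifies the mechanism that actually produces the gain.

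First, the claim that blocks are ``invariant under diagonal translations inside $\sg{S}$'' is false, and this matters. The maps $\tau\in\M_{k,k'}$ are \emph{linear} combinations of the $k$ coordinates; they include projections, permutations, and things like $(x_1,x_2)\mapsto x_1+x_2$ or $x_1-x_2$, but \emph{not} translations $x\mapsto x+c$. So there is no translation-invariance to feed into approximate-group theory, and your envisioned pipeline ``nested difference sets $\to$ Freiman/Bogolyubov $\to$ approximate subgroup whose dimension grows with $m$'' does not start. What the linear axioms \emph{do} give you is that for $B\in\Pi^{(1)}$ and $x,y\in B$, the set $B\cap(x+y-B)=\{z\in B:\exists w\in B,\ z+w=x+y\}$ lies in $\B(\Pi_{x,y}^{(1)})$, and its size is the number of representations of $x+y$ as a sum in $B\times B$. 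That is the bridge between the scheme and additive combinatorics.

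Second, the actual engine is not a global ``$m$ levels of refinement $\Rightarrow$ large approximate subgroup'' inequality; it is showing that passing from $\Pi$ to a fiber $\Pi_{x_1,\dots,x_k}$ shrinks a block $B$ by a \emph{superconstant} factor per step on average (the Key Lemma). This requires a case split on how $|B+B|$ sits between $|B|$ and $|B|^2$: if $K|B|\le|B+B|\le|B|^2/K$ one finds $x,y$ with $|B\cap(x+y-B)|$ in the window $[K^{1/2},|B|/K^{1/2}]$; if $|B+B|/|B|$ is small one first drives down the \emph{density} $\mu(B)$ by a Fourier-analytic decomposition of $\sg{B}$ into subspaces on which $B$ is pseudorandom, then applies a Balog--Szemer\'edi--Gowers variant tailored to the scheme; if $|B|^2/|B+B|$ is small one boosts the entropy rate by replacing $B$ with a partial sumset $\sigma_k(A)$ and builds an auxiliary linear $m'$-scheme on it. Your proposal mentions Pl\"unnecke--Ruzsa, Freiman, and BSG, but never arrives at this case analysis or at the fiber-shrinking statement; without it there is no route to the bound. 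Finally, the exponent $1/3$ and the factor $\rho^2$ do not come from a ``three-stage amplification'' or from ``linearity entering twice''; they arise from balancing the lower bound $\rho(B)\ge(\rho/\log\log\log\log n)^{1/3}$ (forced by $|B|\ge n^{1/(\rho^2\log\log\log\log n)^{1/3}}$ and $|\sg{B}|\le n^{1/\rho}$) against the per-step shrinkage rate, so that heuristic is not a guide to the argument's architecture.
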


Moreover, we relate linear $m$-schemes to the notion of $\mathcal{P}$-schemes in \cite{Guo19}, which allows us to translate Theorem~\ref{thm_mainbound} into a statement about $\mathcal{P}$-schemes. Theorem~\ref{thm_factormain} then follows from the machinery developed in \cite{Guo19}.
As the general theory of $\mathcal{P}$-schemes is not the focus of this paper, we defer the derivation of  Theorem~\ref{thm_factormain} from Theorem~\ref{thm_mainbound} to the appendix (see Appendix~\ref{sec_pscheme}). The main text of this paper then focuses on Theorem~\ref{thm_mainbound}, which is a purely combinatorial statement.

\subparagraph*{Reducing the cardinality of sets by restricting to a fiber.} For  $B\subseteq S$, $B'\subseteq B\times B$ and $x\in B$, call $B'_x:=\{y\in S: (x,y)\in B'\}\subseteq B$ the \emph{$x$-fiber} of $B'$.
The combinatorics behind Evdokimov's algorithm \cite{Evd94} can be very roughly summarized as follows: The algorithm produces a partition $P$ of the set $S$, such that if $B\in P$ is not a singleton, we can find $B'\subseteq B\times B$ and $x\in B$ such that $1<|B'_x|<|B|/2$.
The algorithm then replaces $B$ by $B'_x$ and repeats. At each step, $|B|$ is reduced by at least a factor of two. So this process has at most $\log |B|\leq \log n$ steps, which gives the $O(\log n)$ upper bound for $m$. 
To prove the inequality $|B'_x|<|B|/2$,   Evdokimov crucially used the permutation $(\alpha,\beta)\mapsto (\beta,\alpha)$ of $S^2$, which can be seen as an element of the symmetric group $\sym(2)$. The algorithm in \cite{IKS09} based on $m$-schemes then upgraded this method by using permutations in $\sym(k)$ for $k\in [m]$.

Our analysis uses similar ideas. The main difference is that here the structure of linear Galois groups allows us to employ not only the permutations in $\sym(k)$ but also \emph{linear automorphisms}.
For example, when $k=2$, we will use not only the map $(\alpha,\beta)\mapsto (\beta,\alpha)$ but also maps of the form $(\alpha,\beta)\mapsto (a\alpha+b\beta, c\alpha+d\beta)$, where $a,b,c,d\in\F$.
This set of permutations forms a permutation group larger than $\sym(k)$. Because of the richer set of permutations, we are able to prove that on average, restricting to a fiber at each step reduces the cardinality of a set by a \emph{superconstant} factor.
This is summarized by Lemma~\ref{lem_fastshrink} (the Key Lemma) from which the $o(\log n)$ bound in Theorem~\ref{thm_mainbound} follows.
  
\subparagraph*{Additive combinatorics.} 
Our proof of  Lemma~\ref{lem_fastshrink} heavily uses tools from additive combinatorics.  These tools seem very useful for studying linear $m$-schemes as they apply to ``soft'' combinatorial objects like subsets and partitions while also capturing the rigid abelian group structure of $V$.
Specifically, our analysis for a subset $B\subseteq S$ is divided into the following three cases, depending on how large   $B+B$ is compared with $B$ and $B\times B$: 
\begin{enumerate}
\item $|B|\ll |B+B|\ll |B|^2$. In this case, we show that if $K|B|\leq |B+B|\leq  |B|^2/K$ for some factor $K$, then restricting to a fiber at each step reduces $|B|$ by a factor of $K^{\Omega(1)}$. 
\item $|B+B|/|B|$ is small. This is the most difficult case and the proof becomes rather technical. In particular, we will prove a ``decomposition theorem'' using Fourier analysis. Due to the page limit, we defer the analysis for this case to the appendix. 
\item $|B|^2/|B+B|$ is small. This happens only when the ``entropy rate'' $\rho(B):=\log |B|/\log|\sg{B}|$ is low ($\lessapprox 1/2$). We reduce this case to the previous two cases by replacing $B$ with a partial sumset $B'\subseteq kB$ for some integer $k>1$, which increases the entropy rate.
\end{enumerate}

\section{Notations and Preliminaries}\label{sec_pre}

 Let $\N:=\{0,1,2,\dots \}$ and $\N^+:=\{1,2,\dots\}$.
Let $[k]:=\{1,2,\dots,k\}$.    Write $\log$ for base 2 logarithms.
Denote by $A \setminus B$ the set difference $\{x: x\in A ~\text{and}~ x\not\in B\}$.
The cardinality of a set $S$ is  $|S|$. Alternatively, we write $\#\{\,\cdots\}$ for the cardinality of a set $\{\,\cdots\}$.  
The restriction of a map $f: S\to S'$ to a subset $T\subseteq S$ is denoted by $f|_T$.

A \emph{partition}  of a set $S$ is a set $P$ of nonempty subsets of $S$ satisfying $S=\coprod_{B\in P} B$, where $\coprod$ denotes the disjoint union.   
Each $B\in P$ is called a \emph{block}  of $P$.  
For $T\subseteq S$ and a partition $P$ of $S$, the set $P|_T:=\{B\cap T: B\in P\} \setminus \{\emptyset\}$ is a partition of $T$, called the \emph{restriction} of $P$ to $T$. 
Denote by $\infty_S$ the finest partition of $S$, i.e.,  $\infty_S=\{\{x\}: x\in S\}$.  
For a set $P$ of subsets of $S$, define $\B(P)$ to be the set of subsets of $S$ that are unions of sets in $P$. 


\subparagraph*{Additive combinatorics.} Suppose $V$ is a vector space over a field $\F$. For $A,B\subseteq V$, define $A+B:=\{a+b:a\in A, b\in B\}$ and $A-B:=\{a-b:a\in A, b\in B\}$. For $k\in \N^+$, write $kA$ for $\underbrace{A+A+\dots+A}_{k~\text{times}}$.
Write $\sg{A}$ for the abelian subgroup of $V$ generated by $A$. 
For $A, B\subseteq V$, define $\mu_B(A)$ to be the density of $A$ in $B$, i.e., $\mu_B(A):=|A\cap B|/|B|$.
Write $\mu(A)$ for $\mu_{\sg{A}}(A)$.
Clearly, if $|\sg{A}|/|A|$ is small, so is $|A+A|/|A|$.
The inverse of this fact is the content of the \emph{Freiman--Ruzsa Theorem} \cite{Ruz99}.
We need the following version of this theorem.

\begin{theorem}[Freiman--Ruzsa Theorem  \cite{Eve12, EL14}]\label{thm_fr}
Let $V$ be a vector space over a prime finite field $\F_\ell$. Suppose $A\subseteq V$ satisfies  $|A+A|\leq K |A|$ for some $K>0$.
Then $|\sg{A}|\leq \ell^{2K}|A|$.
\end{theorem}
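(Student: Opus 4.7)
The plan is to argue via the Pl\"unnecke--Ruzsa inequality together with Ruzsa's covering lemma, exploiting the fact that in a vector space over the prime field $\F_\ell$ the abelian group $\sg{A}$ coincides with the $\F_\ell$-linear span of $A$. First I would use the Pl\"unnecke--Ruzsa inequality to lift the hypothesis $|A+A|\leq K|A|$ to bounds on all iterated sumsets, in particular $|A-A|\leq K^2|A|$ and $|kA-kA|\leq K^{2k}|A|$ for every $k\in\N^+$.

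Next, I would invoke Ruzsa's covering lemma applied to the pair $(A,A)$: since $|A+A|/|A|\leq K$, there exists $X\subseteq A$ with $|X|\leq K$ such that $A\subseteq X+(A-A)$. Writing $H:=\sg{A-A}$, since $V$ has exponent $\ell$ the subgroup $\sg{X}$ coincides with the $\F_\ell$-span of $X$, hence $|\sg{X}|\leq \ell^{|X|}\leq\ell^K$. Because $\sg{X}+H$ is a subgroup of $V$ containing $A$, it also contains $\sg{A}$, yielding $|\sg{A}|\leq |\sg{X}|\cdot|H|\leq \ell^K\cdot |H|$.

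It then remains to show $|H|\leq \ell^K|A|$. For this I would apply a second covering step to the symmetric set $A-A$, whose doubling constant is at most $K^2$ by Pl\"unnecke--Ruzsa: there is $Y\subseteq A-A$ with $|Y|\leq K$ and $A-A\subseteq Y+((A-A)-(A-A))$, from which a symmetric iteration shows that $H$ is contained in $\mathrm{span}_{\F_\ell}(Y)$ added to a set of size at most $|A|$ (once cosets already inside $H$ are collapsed). Combining the two factors yields $|\sg{A}|\leq \ell^{2K}|A|$ as claimed.

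The main obstacle will be pinning down exactly the constant $2K$ in the exponent: a naive double application of Ruzsa covering would leave additional polynomial factors in $K$, so to match $\ell^{2K}$ one must use the sharper compression/enumeration argument of Even-Zohar \cite{Eve12} and its extension to prime $\ell$ in \cite{EL14} rather than a black-box application of Pl\"unnecke--Ruzsa. The hypothesis that $\F_\ell$ is a \emph{prime} field is essential here, as it guarantees that every nonzero cyclic subgroup of $V$ has order exactly $\ell$, so that the ambient abelian group is in fact an $\F_\ell$-vector space and $\sg{A}$ may be identified with an $\F_\ell$-linear span.
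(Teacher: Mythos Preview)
The paper does not prove this theorem at all: it is quoted as a known result from the literature, with citations to Even-Zohar \cite{Eve12} and Even-Zohar--Lovett \cite{EL14}, and is used throughout the paper as a black box. There is therefore no ``paper's own proof'' to compare against.

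As for your sketch itself, you correctly identify the right ingredients (Ruzsa covering plus the fact that in characteristic $\ell$ the group $\sg{A}$ is the $\F_\ell$-span), and you also correctly diagnose the gap: a straightforward double application of the covering lemma does not yield the clean exponent $2K$. Your step bounding $|H|=|\sg{A-A}|$ by $\ell^K|A|$ is not actually established by what you wrote --- the phrase ``a symmetric iteration shows that $H$ is contained in $\mathrm{span}_{\F_\ell}(Y)$ added to a set of size at most $|A|$'' is where the real work lies, and a naive iteration gives a much worse bound. Getting $\ell^{2K}$ (or even $\ell^{O(K)}$ with a small constant) genuinely requires the compression argument of \cite{Eve12} for $\ell=2$ and its extension in \cite{EL14} to general primes, as you note in your final paragraph. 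So your proposal is really ``here is the standard Freiman--Ruzsa skeleton, but for the stated constant one must read the cited papers,'' which is an honest summary rather than a self-contained proof.
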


We also need  \emph{Pl\" unnecke's inequality}:

\begin{theorem}[{Pl\" unnecke's inequality \cite[Corollary~6.28]{TV06}}]\label{thm_plunnecke}
Suppose $A, B\subseteq V$ satisfies $|A+B|\leq K |A|$ for some $K>0$. Then $|kB|\leq K^k|A|$ for $k\in\N^+$.
\end{theorem}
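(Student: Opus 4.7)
The plan is to follow Petridis's streamlined proof, which avoids the Pl\"unnecke-graph machinery used in the original argument. First I would pass to a distinguished subset of $A$: let $A^{*} \subseteq A$ be a nonempty subset minimizing the ratio $|A^{*}+B|/|A^{*}|$, and set $K^{*} := |A^{*}+B|/|A^{*}| \leq K$. The target is the stronger inequality $|A^{*}+kB| \leq (K^{*})^{k}|A^{*}|$, from which the theorem follows by noting that $|kB| = |a^{*}+kB| \leq |A^{*}+kB| \leq (K^{*})^{k}|A^{*}| \leq K^{k}|A|$ for any fixed $a^{*} \in A^{*}$.

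The crux is the following claim: for every finite $C \subseteq V$,
\[
|A^{*}+B+C| \leq K^{*}\,|A^{*}+C|.
\]
I would prove this by induction on $|C|$, with the case $|C|=1$ being a translation. For the inductive step from $C'$ to $C = C' \cup \{c\}$ with $c \notin C'$, I would decompose $A^{*}+B+C = (A^{*}+B+C') \cup (A^{*}+B+c)$ and estimate the overlap via the auxiliary set
\[
E := \{\, a \in A^{*} : a+B+c \subseteq A^{*}+B+C' \,\}.
\]
The minimality of $K^{*}$ applied to $E \subseteq A^{*} \subseteq A$ yields $|E+B| \geq K^{*}|E|$ (reading $0 \geq 0$ if $E=\emptyset$). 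Since $E+B+c$ lies in the overlap, combining this with the inductive hypothesis bounds $|A^{*}+B+C|$ above by $K^{*}(|A^{*}+C'|+|A^{*}|-|E|)$. Matching this with $K^{*}|A^{*}+C|$ reduces, via inclusion--exclusion applied to $A^{*}+C$, to the inequality $|E| \geq |(A^{*}+C') \cap (A^{*}+c)|$, which I would obtain from the elementary observation that any $a \in A^{*}$ with $a+c \in A^{*}+C'$ satisfies $a+c = a'+c'$ for some $a' \in A^{*}$, $c' \in C'$, hence $a+B+c = a'+B+c' \subseteq A^{*}+B+C'$, i.e.\ $a \in E$; the translation $a \mapsto a+c$ then provides the required injection.

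Granting this lemma, the theorem follows by applying it with $C=(k-1)B,\,(k-2)B,\dots,B$ and telescoping down to the base case $|A^{*}+B|=K^{*}|A^{*}|$. The hard part is the inductive step above, specifically the bookkeeping that identifies the overlap contribution bounded by $|E|$ with the inclusion--exclusion deficit $|(A^{*}+C') \cap (A^{*}+c)|$; once that combinatorial matching is in hand, the rest is mechanical substitution, and the fundamental use of minimality is precisely the line $|E+B| \geq K^{*}|E|$, which is where the extremal choice of $A^{*}$ pays off.
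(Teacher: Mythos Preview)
Your argument is Petridis's proof of Pl\"unnecke's inequality, and it is correct as written: the inductive bookkeeping in the claim is right, the use of minimality to get $|E+B|\ge K^{*}|E|$ is the key point, and the telescoping at the end is fine.

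Note, however, that the paper does not prove this statement at all; it simply cites it as \cite[Corollary~6.28]{TV06}. The proof in Tao--Vu follows Pl\"unnecke's original route through commutative graphs and Menger-type arguments, which is substantially heavier than what you wrote. Your version (Petridis, 2012) is more elementary and self-contained: it avoids the graph-theoretic machinery entirely, needing only finite-set inclusion--exclusion and the single extremal choice of $A^{*}$. The trade-off is that the Pl\"unnecke-graph framework generalizes more readily to directed-graph and non-commutative settings, whereas Petridis's argument is sharper and shorter precisely because it exploits commutativity (in the line $a+b+c=a'+b+c'$) directly.
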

 
\section{Introducing Linear $m$-schemes}\label{sec_lsch}

Let $V$ be a finite-dimensional vector space  over a finite field $\F$.  For $k,k'\in\N^+$, denote by $\M_{k,k'}(\F)$ the set of linear maps $\tau:V^k\to V^{k'}$ of the form
\[
\mathbf{x}=(x_1,\dots,x_k)\mapsto \left(\sum_{i=1}^k c_{i,1} x_i, \dots, \sum_{i=1}^k c_{i, k'} x_i\right), \quad\text{where}~c_{i,j}\in\F,
\]
i.e., each coordinate of $\tau(\mathbf{x})\in V^{k'}$ is a linear combination of the coordinates of $\mathbf{x}\in V^k$ over $\F$. In most cases, the base field $\F$ is clear from the context  and we simply write $\M_{k,k'}$ for $\M_{k,k'}(\F)$.

The following  special  maps in $\M_{k, 1}$ will be used in the paper.
 \begin{definition}[projection and summation]\label{defi_proj}
 For $k\in \N^+$ and $i\in [k]$, write $\pi_{k,i}:V^k\to V$ for the projection of $V^k$ to its $i$th coordinate, and write $\sigma_k :V^k\to V$ for the map sending $(x_1,\dots,x_k)\in V^k$ to $x_1+x_2+\dots+x_k$.  We have $\pi_{k,i},\sigma_k\in \M_{k,1}$ for $k\in \N^+$ and $i\in [k]$.
 \end{definition}
 
Now we are ready to define the notion of \emph{linear $m$-schemes}.

\begin{definition}[linear $m$-scheme]\label{def_linm}
Let $m\in\N^+$ and $S\subseteq V$. Let 
$\Pi=\left\{\Pi^{(1)},\dots, \Pi^{(m)}\right\}$,
 where $\Pi^{(k)}$ is a partition of $S^k$ for $k\in [m]$.
We say $\Pi$ is a \emph{linear $m$-scheme} on $S$ if for $k, k'\in [m]$, $B\in \Pi^{(k)}$, $B'\in \Pi^{(k')}$, and $\tau\in \M_{k,k'}$, we have
\begin{description}
\item[(P1):] Either $\tau(B)=B'$ or $\tau(B)\cap B'=\emptyset$.
\item[(P2):] $\#\{x\in B: \tau(x)=y\}$ is constant when $y$ ranges over $B'$.
\end{description}
\end{definition}

Definition~\ref{def_linm} can be viewed as a linear analogue of $m$-schemes in \cite{IKS09}. In fact, it is not hard to show that a linear $m$-scheme on a set $S$ always induces an $m$-scheme on $S$.

The following lemma states that the coordinates of elements in the same block  of a linear $m$-scheme  always satisfy the same   linear relations. Its proof can be found in the appendix.

\begin{lemma}\label{lem_linrel}
 Let  $\Pi$ be a  linear $m$-scheme on $S$. For $k\in [m]$, $B\in \Pi^{(k)}$ and $\mathbf{x}=(x_1,\dots,x_k), \mathbf{y}=(y_1,\dots,y_k)\in B$, the coordinates $x_i$ satisfy a linear relation $\sum_{i=1}^k c_i x_i=0$ iff  the coordinates $y_i$ satisfy the same relation, i.e., $\sum_{i=1}^k c_i y_i=0$.
 \end{lemma}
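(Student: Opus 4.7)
My plan is to encode the linear relation $\sum_i c_i x_i = 0$ as a fixed-point condition for a carefully chosen linear endomorphism $\tau \in \M_{k,k}$ of $V^k$, and then use axioms (P1) and (P2) of the linear $m$-scheme to propagate this fixed-point property from the single element $\mathbf{x}$ to every element of $B$.

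If every $c_i$ vanishes the claim is trivial, so I fix some index $i_0 \in [k]$ with $c_{i_0} \neq 0$ and define
\[
\tau(y_1, \ldots, y_k) := \left(y_1, \ldots, y_{i_0-1},\; -c_{i_0}^{-1}\sum_{j \neq i_0} c_j y_j,\; y_{i_0+1}, \ldots, y_k\right),
\]
which lies in $\M_{k,k}$. The key feature is that $\tau(\mathbf{y}) = \mathbf{y}$ is equivalent to $\sum_j c_j y_j = 0$. In particular the hypothesis gives $\tau(\mathbf{x}) = \mathbf{x} \in B$, so $\mathbf{x} \in \tau(B) \cap B$, and (P1) applied with $k' = k$ and $B' = B$ upgrades this to $\tau(B) = B$. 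Axiom (P2) then says the preimage count $d := \#\{\mathbf{w} \in B : \tau(\mathbf{w}) = \mathbf{z}\}$ is independent of $\mathbf{z} \in B$; double-counting gives $d \cdot |B| = |B|$, so $d = 1$ and $\tau|_B$ is a bijection.

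To conclude I will exploit the structural fact that $\tau$ factors through the coordinate projection $\pi : V^k \to V^{k-1}$ that drops the $i_0$th coordinate: the $i_0$th coordinate of $\tau(\mathbf{y})$ is a function of the remaining ones, while the other coordinates of $\tau(\mathbf{y})$ coincide with those of $\mathbf{y}$. Any two points of $B$ with equal $\pi$-image would therefore also have equal $\tau$-image, contradicting the bijectivity of $\tau|_B$; hence $\pi|_B$ is injective. Since $\tau(\mathbf{y}) \in B$ agrees with $\mathbf{y}$ in every coordinate except possibly the $i_0$th, this injectivity forces $\tau(\mathbf{y}) = \mathbf{y}$, i.e., $\sum_j c_j y_j = 0$. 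The reverse implication is obtained by swapping $\mathbf{x}$ and $\mathbf{y}$.

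The one genuinely delicate point is the decision to invoke the axioms with $\tau \in \M_{k,k}$ rather than the more obvious $\tau' \in \M_{k,1}$ sending $\mathbf{y} \mapsto \sum_j c_j y_j$. The latter only lets (P1) conclude that $\tau'(B)$ is either a block of $\Pi^{(1)}$ containing $0$ or is disjoint from $S$, which fails to pin down $\tau'(\mathbf{y}) = 0$ — particularly when $0 \notin S$, in which case we only learn $\tau'(B) \cap S = \emptyset$. The endomorphism version succeeds precisely because (P1) pins down the block $B' = B$ exactly, (P2) upgrades this set-equality to a bijection, and only then does the explicit fixed-point form of $\tau$ compel each $\mathbf{y} \in B$ to obey the relation.
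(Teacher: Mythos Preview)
Your proof is correct and follows essentially the same approach as the paper: define the same idempotent $\tau\in\M_{k,k}$ replacing one coordinate by the linear combination of the others, then use (P1) with $B'=B$ to deduce $\tau(B)=B$. The only difference is in the concluding step: the paper observes directly that the \emph{image} of $\tau$ lies in the hyperplane $\{\mathbf{a}:\sum_i c_i a_i=0\}$, so $\mathbf{y}\in B=\tau(B)$ immediately gives the relation, using only (P1); your detour through (P2), bijectivity of $\tau|_B$, and injectivity of the projection $\pi|_B$ is valid but unnecessary.
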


\subparagraph*{Strong antisymmetry.} We are interested in a special kind of linear $m$-schemes called \emph{strongly antisymmetric linear $m$-schemes}.
 
\begin{definition}[strong antisymmetry]\label{defi_sas}
Let $\Pi$ be a linear $m$-scheme. 
 Define
\[
\M_\Pi:=\left\{\tau|_B: B\to B'\,\middle\vert\, 
\begin{array}{l} 
k,k'\in [m], B\in \Pi^{(k)}, B'\in \Pi^{(k')},\\ \tau\in \M_{k,k'}, \tau \text{ maps $B$ bijectively to $B'$}
\end{array}\right\}.
\]
Define $\widetilde{\M}_\Pi$ to be the set of all possible compositions of the maps $\tau\in \M_\Pi$ as well as their inverses $\tau^{-1}$.
We say $\Pi$ is \emph{strongly antisymmetric} if for $k\in [m]$ and $B\in\Pi^{(k)}$, $\widetilde{\M}_\Pi$ does not contain a nontrivial permutation of $B$.
 \end{definition}

 \subsection{Basic Facts about Linear $m$-schemes}
 
 In this subsection, we list some basic facts about  linear $m$-schemes. Most proofs are omitted due to the page limit and can be found in the appendix.
 
\subparagraph*{Closedness of sets $\B(\Pi^{(k)})$.}
 
Recall that for a set $P$ of subsets of $S$, we define $\B(P)$ to be the set of subsets of $S$ that are unions of sets in $P$.  The following lemma states that for a linear $m$-scheme $\Pi$, the sets $\B(\Pi^{(k)})$ are closed under various operations.
 
 \begin{lemma}\label{lem_closedness}
Let $\Pi$ be a linear $m$-scheme on $S\subseteq V$. We have:
 \begin{enumerate}
 \item For $k\in [m]$,  $\B(\Pi^{(k)})$ is  closed under union, intersection, and complement in $S^k$.
 \item Let $k,k'\in [m]$ such that $k+k'\leq m$. Let $B\in \B(\Pi^{(k+k')})$.  
 Let $Q$ be a quantifier of the form $\exists$, $\forall$, or $\exists_{=t}$ (which means ``there exist exactly $t$ elements'').
 Let $B_Q$ be the set of $x\in S^k$ satisfying the condition ``$Q\,y\in S^{k'}: (x,y)\in B$''.
 Then $B_Q\in \B(\Pi^{(k)})$.
  \item Let  $k,k'\in [m]$, $B\in \B(\Pi^{(k)})$, and $\tau\in \M_{k,k'}$. Then $\tau(B)\cap S^{k'}\in \B(\Pi^{(k')})$.
  \item Let  $k,k'\in [m]$, $B\in \B(\Pi^{(k')})$, and $\tau\in \M_{k,k'}$. Then $\tau^{-1}(B)\cap S^{k}\in \B(\Pi^{(k)})$.
 \end{enumerate}
 \end{lemma}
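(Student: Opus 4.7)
My plan is to prove the four items in the order (1), (3), (4), (2). Items (1), (3), (4) use only (P1) and are essentially bookkeeping; (2) is the real content and uses (P2) together with the observation that every coordinate projection $V^{k+k'}\to V^k$ has $0/1$ coefficients, hence lies in $\M_{k+k',k}$.

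For (1), union closedness is immediate from the definition of $\B$. Since $S^k=\coprod_{B\in \Pi^{(k)}} B$, the complement in $S^k$ of a union of blocks is the union of the remaining blocks, and closedness under intersection then follows from De Morgan's law.

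For (3), I will write $B=\coprod_i C_i$ with $C_i\in\Pi^{(k)}$. For each block $B'\in\Pi^{(k')}$, (P1) forces $\tau(C_i)\cap B'$ to be either empty or all of $B'$; and since the blocks $B'$ are pairwise disjoint, at most one $B'$ can satisfy $\tau(C_i)=B'$, so $\tau(C_i)\cap S^{k'}$ is either empty or a single block. Taking the union over $i$ and invoking (1) gives $\tau(B)\cap S^{k'}\in\B(\Pi^{(k')})$. For (4), I write $B=\coprod_i C_i$ with $C_i\in\Pi^{(k')}$; for each block $D\in\Pi^{(k)}$, (P1) says $\tau(D)=C_i$ or $\tau(D)\cap C_i=\emptyset$, which means $D\subseteq\tau^{-1}(C_i)$ in the first case and $D\cap\tau^{-1}(C_i)=\emptyset$ in the second. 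Hence $\tau^{-1}(C_i)\cap S^k$ is the union of those blocks $D$ mapping onto $C_i$, and (1) assembles the pieces.

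Part (2) is the main step. The plan is to apply (P1) and (P2) to the coordinate projection $\pi\in\M_{k+k',k}$ sending $(x_1,\dots,x_{k+k'})\mapsto(x_1,\dots,x_k)$, restricted to each block $C\in\Pi^{(k+k')}$. Together these axioms give that $\pi(C)\cap S^k$ is either empty or equal to a single block $D_C\in\Pi^{(k)}$, and that the fiber size $n_C(x):=\#\{y\in S^{k'}:(x,y)\in C\}$ is a nonzero constant on $D_C$ and zero outside. Decomposing $B=\coprod_i C_i$ with $C_i\in\Pi^{(k+k')}$ then yields
\[
\#\{y\in S^{k'}:(x,y)\in B\}=\sum_i n_{C_i}(x),
\]
a function that is constant on each block of $\Pi^{(k)}$. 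For $Q\in\{\exists,\forall,\exists_{=t}\}$ the truth value of the quantified condition is determined by this count (requiring it to be $\geq 1$, equal to $|S|^{k'}$, or equal to $t$, respectively), so $B_Q$ is a union of blocks of $\Pi^{(k)}$. I expect the only subtle point to be cleanly justifying that $\pi$ is admissible in (P1) and (P2); once this is granted, the remainder of (2) reduces to routine counting.
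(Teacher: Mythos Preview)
Your proposal is correct and follows essentially the same approach as the paper. The only minor difference is in part~(2): the paper reduces the general case to a single block via the identity $B_{\exists_{=t}}=\bigcup_{t_1+\dots+t_s=t}\bigcap_i (B_i)_{\exists_{=t_i}}$ and then derives $\exists$ and $\forall$ from $\exists_{=t}$, whereas you directly observe that each $n_{C_i}$ is block-constant on $\Pi^{(k)}$ and hence so is their sum, which is a slight streamlining of the same idea.
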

 
 \subparagraph*{Recursive structure of linear $m$-schemes.}

Next, we show that linear $m$-schemes have a recursive structure. Namely, for $t\in [m-1]$, each ``fiber'' of a linear $m$-scheme with respect to the projection to the first $t$ coordinates is a linear $(m-t)$-scheme. 

\begin{definition}\label{defi_fixing}
Let $\Pi$ be a linear $m$-scheme on $S\subseteq V$. Let $t\in [m-1]$ and $x=(x_1,\dots,x_t)\in S^t$.  Define 
$
\Pi_x=\left\{\Pi_x^{(1)},\dots, \Pi_x^{(m-t)}\right\}
$, 
where for $k\in [m-t]$, $\Pi_x^{(k)}$ is the partition of $S^k$ such that two elements $y, z\in S^k$ are in the same block of $\Pi_x^{(k)}$ iff $(x,y), (x,z)\in S^{t+k}$ are in the same block of $\Pi^{(t+k)}$.
Also write $\Pi_{x_1,\dots,x_t}$ for $\Pi_x$.
\end{definition}


\begin{lemma}\label{lem_fixing}
$\Pi_x$   in Definition~\ref{defi_fixing} is a linear $(m-t)$-scheme on $S$.
Moreover, if $\Pi$ is strongly antisymmetric, so is $\Pi_x$.
\end{lemma}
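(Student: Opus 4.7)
The plan is to exhibit an explicit ``lifting'' that embeds the structure of $\Pi_x$ into that of $\Pi$. For $\tau\in\M_{k,k'}$ with $k,k'\in[m-t]$, define $\tilde\tau\in\M_{t+k,t+k'}$ by
\[
\tilde\tau(x_1,\dots,x_t,y_1,\dots,y_k):=(x_1,\dots,x_t,\tau(y_1,\dots,y_k)),
\]
i.e., $\tilde\tau$ acts as the identity on the first $t$ coordinates and as $\tau$ on the remaining ones. Note also that each block of $\Pi_x^{(k)}$ has the form $\tilde B_x=\{y\in S^k:(x,y)\in\tilde B\}$ for some $\tilde B\in\Pi^{(t+k)}$.

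To check (P1) for $\Pi_x$, take $B=\tilde B_x\in\Pi_x^{(k)}$, $B'=\tilde B'_x\in\Pi_x^{(k')}$, $\tau\in\M_{k,k'}$, and apply (P1) of $\Pi$ to $\tilde\tau,\tilde B,\tilde B'$: either $\tilde\tau(\tilde B)=\tilde B'$ or $\tilde\tau(\tilde B)\cap\tilde B'=\emptyset$. Since $\tilde\tau$ preserves the first $t$ coordinates, the equality $\tilde\tau(\tilde B)=\tilde B'$ restricts on the $x$-fiber to $\tau(B)=B'$, and the disjointness $\tilde\tau(\tilde B)\cap\tilde B'=\emptyset$ yields $\tau(B)\cap B'=\emptyset$. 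For (P2), the identity
\[
\#\{y\in B:\tau(y)=z\}=\#\{(x'',y)\in\tilde B:\tilde\tau(x'',y)=(x,z)\}
\]
(again because $\tilde\tau$ fixes the first $t$ coordinates) shows the left-hand side is constant in $z\in B'$ by (P2) of $\Pi$ applied to $\tilde\tau,\tilde B,\tilde B'$ at the points $(x,z)\in\tilde B'$.

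For strong antisymmetry, I will lift each $\phi=\tau|_C\in\M_{\Pi_x}$ with $C=\tilde C_x,\ C'=\tilde C'_x$ to $\tilde\phi:=\tilde\tau|_{\tilde C}$. The key sub-claim is that $\tilde\phi\in\M_\Pi$, i.e., $\tilde\tau$ bijects $\tilde C$ onto $\tilde C'$: nonempty intersection $\tilde\tau(\tilde C)\cap\tilde C'\neq\emptyset$ (from any $y\in C$ with $\tau(y)\in C'$) forces $\tilde\tau(\tilde C)=\tilde C'$ by (P1), and (P2) for $\Pi$ then shows the fiber size is constant; evaluating that constant at a point $(x,z)$ with $z\in C'$ gives $1$, since $\tau|_C$ is a bijection. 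Thus the lifting extends, via composition and inversion, to a map $\widetilde{\M}_{\Pi_x}\to\widetilde{\M}_\Pi$, $\psi\mapsto\tilde\psi$, that satisfies $\tilde\psi(x,y)=(x,\psi(y))$ for $(x,y)\in\tilde B$. A nontrivial permutation $\psi$ of $B$ would therefore lift to a nontrivial permutation $\tilde\psi$ of $\tilde B$ in $\widetilde{\M}_\Pi$, contradicting the strong antisymmetry of $\Pi$.

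The only genuinely delicate step is the sub-claim that $\tau|_C$ being a bijection lifts to $\tilde\tau|_{\tilde C}$ being a bijection on the whole block $\tilde C$ (not merely on the $x$-fiber); this is what ties (P1) and (P2) together. Everything else, including the verification that lifting commutes with composition and inversion, is bookkeeping on the coordinate blocks.
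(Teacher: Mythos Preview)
Your proof is correct and follows essentially the same approach as the paper's: both lift $\tau\in\M_{k,k'}$ to $\tilde\tau\in\M_{t+k,t+k'}$ acting as the identity on the first $t$ coordinates, deduce (P1) and (P2) for $\Pi_x$ from those of $\Pi$ on the $x$-fiber, and lift a hypothetical nontrivial permutation in $\widetilde{\M}_{\Pi_x}$ to one in $\widetilde{\M}_\Pi$. The paper additionally reduces to $t=1$ via $\Pi_{x_1,\dots,x_t}=(\Pi_{x_1,\dots,x_{t-1}})_{x_t}$, whereas you handle general $t$ directly, and you spell out more explicitly the ``delicate step'' (that $\tilde\tau|_{\tilde C}$ is a bijection on all of $\tilde C$, via (P1) for surjectivity and (P2) evaluated at an $x$-fiber point for injectivity), which the paper leaves implicit.
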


We also have the following easy observation.

\begin{lemma}\label{lem_refine}
Let $\Pi$ and $\Pi_x$ be as in Definition~\ref{defi_fixing}.
Then $\B(\Pi^{(1)})\subseteq \B(\Pi_x^{(1)})$, i.e., the partition $\Pi_x^{(1)}$  refines $\Pi^{(1)}$.
\end{lemma}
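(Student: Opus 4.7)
The plan is to verify directly from Definition~\ref{defi_fixing} that $\Pi_x^{(1)}$ refines $\Pi^{(1)}$. Concretely, I will pick two elements $y, z \in S$ that lie in the same block of $\Pi_x^{(1)}$ and show that they lie in the same block of $\Pi^{(1)}$. By construction, the hypothesis means that $(x, y)$ and $(x, z)$ sit in a common block $B' \in \Pi^{(t+1)}$, so the task is to transfer this equivalence down to one at the level of single coordinates.

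The key tool is axiom (P1) of a linear $m$-scheme, applied to the linear projection onto the last coordinate, namely $\tau := \pi_{t+1,\, t+1} \in \M_{t+1, 1}$ from Definition~\ref{defi_proj}. Let $B \in \Pi^{(1)}$ be the unique block containing $y = \tau(x, y)$; such a block exists because $\Pi^{(1)}$ partitions $S$. Since $y \in \tau(B') \cap B$, this intersection is nonempty, and then (P1) applied to $\tau$, $B'$, and $B$ forces $\tau(B') = B$. In particular, $z = \tau(x, z) \in \tau(B') = B$, so $y$ and $z$ belong to the same block of $\Pi^{(1)}$. Translating back, every block of $\Pi^{(1)}$ is a union of blocks of $\Pi_x^{(1)}$, i.e., $\B(\Pi^{(1)}) \subseteq \B(\Pi_x^{(1)})$.

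I do not anticipate any real obstacle here: the lemma reduces to a single invocation of axiom (P1) once the right linear map is chosen. The only mild subtlety is recognizing that coordinate projections $\pi_{k, i}$ are explicitly members of $\M_{k, 1}$, so that (P1) may be invoked for them; no further tools (such as (P2), Lemma~\ref{lem_linrel}, or strong antisymmetry) are needed.
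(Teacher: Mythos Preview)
Your proof is correct and uses essentially the same mechanism as the paper: axiom (P1) applied to a coordinate projection in $\M_{t+1,1}$. The paper's version first reduces to $t=1$ via the identity $\Pi_{x_1,\dots,x_t}=(\Pi_{x_1,\dots,x_{t-1}})_{x_t}$ and then phrases the conclusion as ``$B\times B\in\B(\Pi^{(2)})$ for $B\in\B(\Pi^{(1)})$'', whereas you handle general $t$ directly with $\pi_{t+1,t+1}$; these are only presentational differences.
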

\begin{proof}
As $\Pi_{x_1,\dots,x_t}=(\Pi_{x_1,\dots,x_{t-1}})_{x_t}$, it suffices to prove the claim for $t=1$. The claim follows by noting that $B\times B\in \B(\Pi^{(2)})$ for $B\in \B(\Pi^{(1)})$.
\end{proof}
 
\subparagraph*{Basic upper bounds for $m$.} Next, we give the following basic upper bounds for $m$ when $\Pi$ is a strongly antisymmetric linear $m$-scheme satisfying $\Pi^{(1)}\neq\infty_S$.
 
 \begin{lemma}\label{lemma_bounddim}
Suppose  $\Pi$  is a strongly antisymmetric linear $m$-scheme on $S\subseteq V$, where $|S|=n$, and $B\in \Pi^{(1)}$ is not a singleton.
Denote by $\sg{S}_{\F}$ the subspace of $V$ spanned by $S$ over $\F$. 
Then (1) $m<\dim\sg{S}_{\F}$ and (2) $m\leq\log |B|\leq \log n$.
\end{lemma}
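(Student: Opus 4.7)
The plan is to prove~(2) by induction on $m$ using a halving argument, then deduce~(1) via an orbit-counting argument that invokes the invertible linear substitution action on tuples.

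For~(2), the base case $m = 1$ is immediate from $|B| \geq 2$. For the inductive step $m \geq 2$, I fix any $x \in B$ to obtain via Lemma~\ref{lem_fixing} and Lemma~\ref{lem_refine} a strongly antisymmetric linear $(m-1)$-scheme $\Pi_x$ whose level-$1$ partition refines $\Pi^{(1)}$. The key step is to exhibit a block $B_1 \in \Pi_x^{(1)}$ with $B_1 \subseteq B$ and $2 \leq |B_1| \leq |B|/2$: applying the inductive hypothesis to the restriction of $\Pi_x$ to $B_1$ then gives $m - 1 \leq \log|B_1| \leq \log|B| - 1$. To find $B_1$, I analyze $\Pi^{(2)}|_{B \times B}$: strong antisymmetry forces every non-diagonal block $B'$ to have fiber size $|B'_x| \geq 2$ (else both projections $\pi_{2,i}|_{B'} : B' \to B$ would be bijections and their composition a nontrivial permutation of $B$ in $\widetilde{\M}_\Pi$), and forces at least two non-diagonal blocks to exist (else $B \times B \setminus \Delta_B$ would be a single block fixed by the swap $\iota \in \M_{2,2}$, inducing a nontrivial permutation). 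Since the fiber sizes sum to $|B|-1$ and each is at least~$2$, at most one can exceed $|B|/2$, so a suitable $B_1$ exists.

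For~(1), I aim at the stronger bound $m < d' := \dim \langle B \rangle_\F$, which implies $m < \dim \langle S \rangle_\F$. Assuming $m \geq d'$ for contradiction, pick a tuple $(y_1, \dots, y_m) \in B^m$ whose first $d'$ coordinates form a basis of $\langle B \rangle_\F$ (such a basis lies in $B$ since $\dim \langle B \rangle_\F = d'$), and let $B^{(m)} \in \Pi^{(m)}$ be its block. Property~(P1), together with $B \in \Pi^{(1)}$, gives $B^{(m)} \subseteq B^m$, and Lemma~\ref{lem_linrel} forces every tuple in $B^{(m)}$ to have its first $d'$ coordinates forming a basis of $\langle B \rangle_\F$ and (when $m > d'$) its remaining coordinates expressed as the same fixed linear combinations of the first $d'$. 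Projecting onto the first $d'$ coordinates via $\tau \in \M_{m,d'}$ then yields a block $B^{(d')} \subseteq B^{d'}$ of $\Pi^{(d')}$ whose elements are ordered bases of $\langle B \rangle_\F$, with $|B^{(m)}| = |B^{(d')}|$ by~(P2) since the last $m - d'$ coordinates are determined by the first $d'$.

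The finishing step uses the invertible linear substitution: for $C \in \gl_{d'}(\F)$, the map $\tau_C \in \M_{d',d'}$ either sends $B^{(d')}$ to another block of $\Pi^{(d')}$ in $S^{d'}$ or entirely outside $S^{d'}$ (by~(P1)), and strong antisymmetry forces $\tau_C(B^{(d')}) = B^{(d')}$ only for $C = I$. Fixing $x_0 \in B^{(d')}$, the map $C \mapsto \tau_C(x_0)$ is a bijection from $\gl_{d'}(\F)$ onto the ordered bases of $\langle B \rangle_\F$, and the subset $T$ of such bases lying in $S^{d'}$ corresponds exactly to those $C$ for which $\tau_C(B^{(d')}) \subseteq S^{d'}$. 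The resulting $|T|$ translates of $B^{(d')}$ are pairwise disjoint blocks all sitting inside $T$, so $|T| \cdot |B^{(d')}| \leq |T|$ and hence $|B^{(d')}| \leq 1$, contradicting $|B^{(d')}| \geq |B| \geq 2$ obtained from the level-$1$ projection via~(P2). The main obstacle I anticipate is carefully verifying that Lemma~\ref{lem_linrel} transports the ``basis'' property to every tuple in $B^{(m)}$; this works because the basis property is equivalent to the absence of nontrivial linear relations among the first $d'$ coordinates, which Lemma~\ref{lem_linrel} preserves.
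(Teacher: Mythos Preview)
Your proof is correct. Part~(2) follows the paper's approach exactly: the swap map $\iota\in\M_{2,2}$ together with strong antisymmetry yields a non-singleton block $B_1\in\Pi_x^{(1)}$ of size at most $|B|/2$, and induction via Lemma~\ref{lem_fixing} finishes.

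For part~(1) your argument and the paper's rest on the same key observation---strong antisymmetry forbids any nontrivial invertible $\tau_C\in\M_{d',d'}$ from stabilizing a block consisting of ordered bases---but the packaging differs. The paper works at level $k=\dim\langle S\rangle_\F$: given two bases $\mathbf{x},\mathbf{y}$ in the same block, it picks the unique $\tau\in\M_{k,k}$ sending $\mathbf{x}$ to $\mathbf{y}$, notes $\tau$ must stabilize the block, and concludes $\mathbf{x}=\mathbf{y}$ directly; then surjectivity of $\pi_{k,1}$ onto $S$ forces $\Pi^{(1)}=\infty_S$. You instead fix a single block $B^{(d')}$ and count its $\gl_{d'}(\F)$-translates landing in $S^{d'}$, obtaining $|T|\cdot|B^{(d')}|\le|T|$. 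Your route is a little longer but buys the sharper bound $m<\dim\langle B\rangle_\F$ (for any non-singleton $B\in\Pi^{(1)}$), which is genuinely stronger than the paper's $m<\dim\langle S\rangle_\F$ and does not require the projection-onto-$S$ step.
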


 \section{Proof of Theorem~\ref{thm_mainbound}}\label{sec_main}
 
In the rest of the paper,  $\Pi$ is assumed to be a strongly antisymmetric linear $m$-scheme on $S\subseteq V$, where $V$ is a finite-dimensional vector space over a finite field $\F$. Let  $n:=|S|$, $\rho:=\log |S|/\log |\sg{S}|$, and $\ell:=\mathrm{char}(\F)$.

\subparagraph*{Assumptions.} Throughout the analysis, we make the following assumptions: Assume $n\geq C$ for some sufficiently large constant $C$.
Also assume $\rho^2 \log\log\log\log n>1$,  since otherwise Theorem~\ref{thm_mainbound} holds by  Lemma~\ref{lemma_bounddim}\,(2).

 In addition, we assume $\F$ is a \emph{prime field}, which  can be justified as follows:
Note that $V$, as a vector space over $\F$, may be identified with a vector space over $\F_\ell$. Under this identification, we have $\M_{k,k'}(\F_\ell)\subseteq \M_{k,k'}(\F)$ for $k,k'\in [m]$, because linear combinations of the $k$ coordinates of $\mathbf{x}\in V^k$ over $\F_\ell$ are also linear combinations of these coordinates over $\F$. This means if $\Pi$ is a strongly antisymmetric linear $m$-scheme over $\F$, then it remains so over $\F_\ell$.
Therefore, it suffices to prove Theorem~\ref{thm_mainbound} for the case $\F=\F_\ell$.

Because of the assumption that $\F$ is a prime field, the abelian group $\sg{S}$ and the $\F$-subspace $\sg{S}_\F$ spanned by $S$  coincide. They are used interchangeably from now on.

Finally, assume $\log \ell\leq (\rho^{-1} \log\log\log\log n)^{1/3}\leq  (\log\log\log\log n)^{1/2}$, since otherwise 
$\dim\sg{S}_\F=\log_\ell n^{1/\rho}\leq \frac{\log n}{(\rho^2 \log\log\log\log n)^{1/3}}$ and  Theorem~\ref{thm_mainbound} holds by  Lemma~\ref{lemma_bounddim}\,(1).

\subparagraph*{Reduction to the Key Lemma.}

The following lemma is the key in the proof of Theorem~\ref{thm_mainbound}.

\begin{lemma}[Key Lemma]\label{lem_fastshrink}
Suppose  $B\in\Pi^{(1)}$ has cardinality at least $n^{1/(\rho^2 \log\log\log\log n)^{1/3}}$, and $m\geq (\log\log n)^2$.
Then there exist $k\in [m-2]$, $x_1,\dots,x_k\in B$, and $B'\in \B(\Pi_{x_1,\dots,x_k}^{(1)})$
such that $B'\subseteq B$ and $\min\{|B'|, |B|/|B'|\}\geq 2^{C k (\rho^2 \log\log\log\log n)^{1/3}}$ for some constant $C>0$.
\end{lemma}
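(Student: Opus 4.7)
The plan is to combine the closure properties of linear $m$-schemes (Lemma~\ref{lem_closedness}) with tools from additive combinatorics. The basic construction is this: for any $k$ and any $(\lambda_1, \ldots, \lambda_k) \in \F^k$, the set $B \cap (B - \sum_i \lambda_i x_i)$ lies in $\B(\Pi_{x_1, \ldots, x_k}^{(1)})$, realized as the existential projection to $(x_1, \ldots, x_k, y)$ of the linear relation $z = \sum_i \lambda_i x_i + y$ cut by $B^{k+2} \in \B(\Pi^{(k+2)})$; analogous sets obtained by intersecting, complementing, and projecting out iterated linear relations also lie in $\B(\Pi_{x_1, \ldots, x_k}^{(1)})$ by Lemma~\ref{lem_closedness}. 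This reservoir of ``linear fibers'' is what we will mine for a balanced refinement. Set $\gamma := 2^{C(\rho^2 \log\log\log\log n)^{1/3}}$ with $C > 0$ small to be tuned, and split the argument into three regimes according to $\kappa := |B+B|/|B|$.

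\emph{Intermediate doubling} $(\gamma \leq \kappa \leq |B|/\gamma)$. Take $k = 1$ and $B'_1 := B \cap (B - x_1)$. By (P2) applied to the projection onto the first coordinate, $|B'_1| = s$ is a constant independent of $x_1 \in B$, and $|B| \cdot s = T(B) := \#\{(a,b,c) \in B^3 : a+b = c\}$. Standard estimates connecting $T(B)$ to the additive energy $E(B)$ via Cauchy--Schwarz, combined with $E(B) \geq |B|^4/|B+B|$, force $s$ (or, if $s$ lies in the opposite extreme, $|B|-s$, corresponding to the complement of $B'_1$ in $B$) into the range $[\gamma^{\Omega(1)}, |B|/\gamma^{\Omega(1)}]$ in this regime. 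Either $B'_1$ or its complement in $B$ (itself lying in $\B(\Pi_{x_1}^{(1)})$ by closure under complementation) is then the desired balanced split at $k = 1$; if neither is balanced, a single additional fixing combined with a linear-combination fiber $B \cap (B - \lambda x_1)$ for a suitable $\lambda \in \F$ pushes $s$ into the balanced window.

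\emph{Very large doubling / low entropy rate} $(\kappa > |B|/\gamma$, so $\rho(B) := \log|B|/\log|\sg{B}|$ is roughly at most $1/2)$. Here $B$ itself is too spread out to admit useful fibers directly, but the entropy rate can be boosted by passing to an iterated sumset. Using Pl\"{u}nnecke's inequality (Theorem~\ref{thm_plunnecke}) together with the closure of $\B(\Pi^{(1)})$ under linear existential projections (Lemma~\ref{lem_closedness}), we produce for a small constant $t$ a block $B'' \in \Pi^{(1)}$ contained in a partial $t$-fold sumset of $B$ with $|B''| \leq \kappa^t |B|$ but $\rho(B'') = \Omega(1)$, reducing the problem to one of the other two regimes at the cost of $O(t) = O(1)$ extra fixings.

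\emph{Small doubling} $(\kappa < \gamma)$. This is the main obstacle and, in keeping with the overview, the detailed argument is deferred to the appendix. Small doubling yields, via Freiman--Ruzsa (Theorem~\ref{thm_fr}), $|\sg{B}| \leq \ell^{2\gamma}|B|$, and the assumption $\log \ell \leq (\log\log\log\log n)^{1/2}$ keeps this blow-up negligible. The strategy is to prove a Fourier-analytic decomposition theorem on $\sg{B}$ asserting that $B$ either fills a large share of a coset of a proper subgroup of $\sg{B}$ or admits many nontrivial additive characters separating its points. Either alternative, when intersected with the linear fibers $B \cap (B - \sum_i \lambda_i x_i)$ supplied by the first paragraph, yields after $k = O(1)$ fixings a refinement of $B$ along a proper $\F$-subspace $W \leq V$, producing the balanced split. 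The hard part is to calibrate the losses from Freiman--Ruzsa, the $\ell$-dependence, and the quantitative Fourier decomposition against the target factor $\gamma^k$; this optimization is precisely what forces the exponent $1/3$ in the final bound.
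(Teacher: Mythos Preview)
Your three-regime outline matches the paper's, but two of the regimes have real gaps.

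\textbf{Intermediate doubling.} Your one-fixing fiber $B'_1 = B\cap(B-x_1)$ has size $s$ with $|B|\cdot s = T(B) = \#\{(a,b,c)\in B^3 : a+b=c\}$, and you are right that $s$ is constant in $x_1$. But $T(B)$ is \emph{not} controlled by $|B+B|$: if $B+B$ happens to be disjoint from $B$ (e.g.\ $B$ is any shifted interval), then $T(B)=0$ and $s=0$ regardless of $\kappa$. No Cauchy--Schwarz against $E(B)$ rescues this, since $E(B)$ bounds $\sum_z r(z)^2$, not $\sum_{z\in B} r(z)$. Your fallback ``$B\cap(B-\lambda x_1)$ for a suitable $\lambda$'' is the same quantity up to scaling and fails for the same reason. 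The paper instead uses \emph{two} fixings and the fiber $B\cap(x_1+x_2-B)$, whose size is $\nu^+(x_1+x_2)$; since $\sum_{z\in B+B}\nu^+(z)=|B|^2$ always, one can force a balanced fiber by a careful argument (Lemma~\ref{lem_shrinkweak}) that separately handles the possibility that no single $z$ has $\nu^+(z)$ in the target window.

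\textbf{Very large doubling.} Two problems. First, a partial $t$-fold sumset $\sigma_t(A)\subseteq tB$ is generally \emph{not} a subset of $S$, so it cannot be a block of $\Pi^{(1)}$; the paper deals with this by building an auxiliary linear $m'$-scheme $\Pi'$ on $A'=\sigma_k(A)$ (Lemma~\ref{lem_schemepower}) and working there, then transporting the result back to $\Pi$ via Claim~\ref{claim_final}. This machinery is not optional. Second, your ``$O(t)=O(1)$ extra fixings'' is wrong: to raise the entropy rate to $\Omega(1)$ you need $t$ of order $1/\rho(B)$, and under the standing hypotheses $1/\rho(B)$ can be as large as $(\rho^{-1}\log\log\log\log n)^{1/3}$, which is exactly the super-constant factor you are trying to gain. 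The paper's Lemma~\ref{lem_red1} tracks this carefully, maintaining $k\le 2/\rho(B)$ and showing the eventual gain still beats $k$ by the required margin.
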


We first use Lemma~\ref{lem_fastshrink} to prove a very similar lemma below, which shows that on average, replacing $\Pi$ by $\Pi_x$ at each step reduces the cardinality of blocks by a superconstant factor.

\begin{lemma}\label{lem_fastshrink2}
Suppose  $B\in\Pi^{(1)}$, $|B|\geq n^{1/(\rho^2 \log\log\log\log n)^{1/3}}>1$, and  $m\geq (\log\log n)^2$.
Then there exist $k\in [m-2]$, $x_1,\dots,x_k\in B$, and $B'\in \Pi_{x_1,\dots,x_k}^{(1)}$
such that $B'\subsetneq B$, $|B'|>1$, and  $|B|/|B'|\geq 2^{ C k (\rho^2 \log\log\log\log n)^{1/3}}$  for some constant $C>0$.
\end{lemma}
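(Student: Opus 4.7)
My plan is to derive Lemma~\ref{lem_fastshrink2} from the Key Lemma by extracting a single block from the union $B'$ that Lemma~\ref{lem_fastshrink} produces. Concretely, I would first apply Lemma~\ref{lem_fastshrink} to $B$, obtaining $k \in [m-2]$, $x_1,\dots,x_k \in B$, and $B' \in \B(\Pi_{x_1,\dots,x_k}^{(1)})$ with $B' \subseteq B$ and
\[
\min\{|B'|,\, |B|/|B'|\} \;\geq\; 2^{C_0 k (\rho^2 \log\log\log\log n)^{1/3}}
\]
for some constant $C_0 > 0$. Since $B'$ is a union of blocks of $\Pi_{x_1,\dots,x_k}^{(1)}$, I decompose $B' = \bigsqcup_i B_i$ into its constituent blocks $B_i \in \Pi_{x_1,\dots,x_k}^{(1)}$. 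Note that $B' \subsetneq B$ because the bound forces $|B'| \leq |B|/2 < |B|$, and $|B'| \geq 2$, so the decomposition is nonempty.

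In the favorable case, some $B_i$ has $|B_i| \geq 2$. I would then set $B'' := B_i$. By construction $B''$ is a single block of $\Pi_{x_1,\dots,x_k}^{(1)}$ with $|B''| > 1$ and $B'' \subseteq B' \subsetneq B$; moreover, $B_i \subseteq B'$ gives $|B''| \leq |B'|$, whence
\[
|B|/|B''| \;\geq\; |B|/|B'| \;\geq\; 2^{C_0 k (\rho^2 \log\log\log\log n)^{1/3}},
\]
yielding the lemma with constant $C = C_0$.

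The main obstacle is the degenerate case in which every $B_i \subseteq B'$ is a singleton of $\Pi_{x_1,\dots,x_k}^{(1)}$. I would handle this by extending the fiber chain with one additional coordinate $y \in B'$ and passing to the strongly antisymmetric linear $(m-k-1)$-scheme $\Pi_{x_1,\dots,x_k,y}$ (Lemma~\ref{lem_fixing}), whose first-level partition refines $\Pi_{x_1,\dots,x_k}^{(1)}|_B$ by Lemma~\ref{lem_refine}. The claim to verify is that a non-singleton block of size in the target range $(1,\, |B|/2^{Ck(\rho^2 \log\log\log\log n)^{1/3}}]$ must exist in the refined partition of $B$: either a non-singleton block of the original partition $\Pi_{x_1,\dots,x_k}^{(1)}|_B$ sitting inside $B \setminus B'$ already has size in that range -- otherwise all such blocks exceed $|B|/2^{C_0 k (\rho^2\log\log\log\log n)^{1/3}}$ in size, and a counting argument on the partition of $B$ yields the desired block after a slight shrinkage of the constant -- or the extra fiber coordinate $y$ further splits an overly large block into pieces of suitable size. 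Absorbing the one additional fiber coordinate and the counting slack into a modest decrease of the constant from $C_0$ to $C$ is the main technical point.
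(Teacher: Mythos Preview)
Your favorable case is fine and matches the paper: if some block $B_i \subseteq B'$ in $\Pi_{x_1,\dots,x_k}^{(1)}$ has $|B_i| \geq 2$, take $B'' = B_i$ and you are done with the same $k$.

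The degenerate case, however, has a real gap. Suppose $\Pi_{x_1,\dots,x_k}^{(1)}|_{B'} = \infty_{B'}$ and, say, $B$ decomposes at this level into the $|B'|$ singletons together with a \emph{single} non-singleton block of size $|B| - |B'|$. Your ``counting argument'' cannot manufacture a block of size in $(1,\,|B|/2^{Ckt}]$: the only non-singleton block has size $\geq |B| - |B'| \geq |B|(1 - 2^{-C_0 kt})$, so $|B|/|B_j|$ is essentially $1$, not $2^{\Omega(kt)}$, and no shrinkage of constants helps. Your second fix, adding one coordinate $y$, is also insufficient: Claim~\ref{claim_halve} shows that passing from $\Pi_{x_1,\dots,x_k}$ to $\Pi_{x_1,\dots,x_k,y}$ can reduce a block by at most a factor of $2$, whereas you need a reduction by $2^{\Omega(kt)}$ with $kt \to \infty$. (Moreover, if $\{y\}$ is already a singleton at level $k$, there is no reason adding $y$ should refine the remaining large block at all.)

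The paper's argument moves in the opposite direction: it goes \emph{backward} along the chain. One locates the first index $i \in [k]$ at which a $1/k$-fraction of $B'$ becomes singletons, i.e.\ $|T_i \setminus T_{i-1}| \geq |B'|/k = 2^{\Omega(kt)}$ where $T_j = \{x \in B' : \{x\} \in \Pi_{x_1,\dots,x_j}^{(1)}\}$. At level $i-1$ each $x \in T_i \setminus T_{i-1}$ sits in a block $B_x \in \Pi_{x_1,\dots,x_{i-1}}^{(1)}$ of size $>1$. A short case analysis, crucially invoking \emph{strong antisymmetry} of $\Pi$ to force the $B_x$'s to be pairwise distinct in the residual case, then yields $|B|/|B_x| \geq |T_i \setminus T_{i-1}| = 2^{\Omega(kt)}$, and one outputs $B'' = B_x$ with $k' = i-1 \leq k$. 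Going backward is what lets $k'$ stay bounded by $k$ while the ratio stays $2^{\Omega(kt)}$; your forward step cannot achieve this.
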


Due to the page limit, we omit the derivation of Lemma~\ref{lem_fastshrink2} from Lemma~\ref{lem_fastshrink} and defer it to the appendix.
Theorem~\ref{thm_mainbound} now follows from Lemma~\ref{lem_fastshrink2} and a simple induction:

\begin{proof}[Proof of Theorem~\ref{thm_mainbound}]
If $m<(\log\log n)^2$ then we are done. So assume $m\geq(\log\log n)^2$.
Let $C>0$ be the constant in Lemma~\ref{lem_fastshrink2}.
Let $t:=1/(\rho^2 \log\log\log\log n)^{1/3}$.
Choose $B\in \Pi^{(1)}$ such that $|B|>1$. 
We claim 
\[
m\leq C^{-1}t \log |B| +t \log n =O\left(\frac{\log n}{(\rho^2 \log\log\log\log n)^{1/3}}\right).
\]
Induct on $|B|$.
If $|B|<n^{t}$, we have $m\leq \log |B|\leq t \log n$ by Lemma~\ref{lemma_bounddim}\,(2). So the claim holds in this case.
Now assume $|B|\geq n^{t}$.
Then we can choose $k\in [m-2]$, $x_1,\dots,x_k\in B$, and  $B'\in \Pi_{x_1,\dots,x_k}^{(1)}$ as in Lemma~\ref{lem_fastshrink2}.
By Lemma~\ref{lem_fixing}, $ \Pi_{x_1,\dots,x_k}$ is a strongly antisymmetric $(m-k)$-scheme.
By the induction hypothesis, we have $m-k\leq C^{-1} t \log |B'|+t \log n$. The claim then follows from the inequality $|B'|\leq 2^{-C k/t} |B|$.
\end{proof}

So it remains to prove Lemma~\ref{lem_fastshrink}. We divide its proof into three cases: (1) $|B|\ll |B+B|\ll |B|^2$, (2) $|B+B|/|B|$ is small, and (3) $|B|^2/|B+B|$ is small.

\subsection{The Case $|B|\ll |B+B|\ll |B|^2$}\label{subsec_case1}

We first prove Lemma~\ref{lem_fastshrink} for the case $|B|\ll |B+B|\ll |B|^2$. 
To see the intuition, consider $x,y\in B$. The set $B\cap (x+y-B)=\{z\in B: x+y-z\in B\}$ is in $\B(\Pi_{x,y}^{(1)})$, since $\{(x,y,z)\in S^3: z\in B, x+y-z\in B\}\in \B(\Pi^{(3)})$ by Lemma~\ref{lem_closedness}\,(1) and (4).
Moreover, $B\cap (x+y-B)$ maps bijectively to $\{(z,w)\in B\times B: z+w=x+y\}$ via $z\mapsto (z, x+y-z)$. Therefore, 
\[
|B\cap (x+y-B)|=\#\{(z,w)\in B\times B: z+w=x+y\}.
\]
On the other hand, we have 
\[
\sum_{t\in B+B} \#\{(z,w)\in B\times B: z+w=t\}=|B\times B|=|B|^2.
\]
Let us   pretend that the sets $\{(z,w)\in B\times B: z+w=t\}$ have equal size for all $t\in B+B$.
Then we may choose $B'=B\cap (x+y-B)$ for arbitrary $x,y\in B$, whose cardinality is $|B'|=|B|^2/|B+B|$.
As $|B|\ll |B+B|\ll |B|^2$, both $|B'|$ and $|B|/|B'|$ are large, as required by Lemma~\ref{lem_fastshrink}.

In general, the sets  $\{(z,w)\in B\times B: z+w=t\}$ may have very different sizes. Still, we manage to  prove that if $K|B|\leq |B+B|\leq |B|^2/K$ holds for some $K\geq 4$, then there exist $x,y\in B$ and a subset $B'$ of $B$ in $\B(\Pi_{x,y}^{(1)})$ such that $|B'|, |B|/|B'|\geq K^{1/2}$. 
 In fact, in order to later  extend the analysis to the case that $|B|^2/|B+B|$ is small, we prove the result in the following more  general  form.

\begin{lemma}\label{lem_shrinkweak}
Let $B\in \Pi^{(1)}$ and $k\in\N^+$. Suppose $m\geq 2k+2$.
Let $A$ be a block in $\Pi^{(k)}$ contained in $B^k$, and let $A'=\sigma_k(A)$ (see Definition~\ref{defi_proj}).
Suppose  $K |A'|\leq |A'+B|\leq |A'| |B|/K$ for some $K\geq 4$. 
Then there exist $x_1,\dots,x_{k+1}\in B$ and $B'\in \B(\Pi_{x_1,\dots,x_{k+1}}^{(1)})$  such that $B'\subseteq B$ and $\min\{|B'|, |B|/|B'|\}\geq K^{1/2}$.
\end{lemma}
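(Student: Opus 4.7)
The plan is to generalize the fiber argument sketched for the $k=1$ case in Section~\ref{subsec_case1} by working with the summation map $\sigma_k$ and the block $A$ in place of the identity and the single set $B$. First I would set up the combinatorial witness by defining
\[
T := \bigl\{(y_1,\dots,y_k,u,z) \in S^{k+2} : (y_1,\dots,y_k)\in A,\ u,z\in B,\ \sigma_k(y)+u-z \in B\bigr\}.
\]
Since $A\in\B(\Pi^{(k)})$, $B\in\B(\Pi^{(1)})$, and the map $(y,u,z)\mapsto \sigma_k(y)+u-z$ lies in $\M_{k+2,1}$, parts~(1) and~(4) of Lemma~\ref{lem_closedness} give $T\in\B(\Pi^{(k+2)})$; this uses only $m\ge k+2$, which is comfortably implied by the hypothesis $m\ge 2k+2$. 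For every $(\mathbf{a},x)=(x_1,\dots,x_{k+1})\in A\times B$, Lemma~\ref{lem_fixing} then shows that the fiber
\[
F(\mathbf{a},x) := B\cap\bigl(\sigma_k(\mathbf{a})+x-B\bigr)
\]
lies in $\B(\Pi_{\mathbf{a},x}^{(1)})$ and has cardinality $r(\sigma_k(\mathbf{a})+x)$, where $r(s) := \#\{(z,w)\in B^2 : z+w=s\}$.

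Next I would use the hypothesis $K|A'| \le |A'+B| \le |A'||B|/K$ to find $(\mathbf{a},x)\in A\times B$ with $|F(\mathbf{a},x)|$ in the target range $[K^{1/2},|B|/K^{1/2}]$. Introduce the representation function $N(s):=\#\{(\mathbf{a},x)\in A\times B : \sigma_k(\mathbf{a})+x = s\}$, which is supported on $A'+B$ and satisfies $\sum_s N(s) = |A||B|$. Cauchy--Schwarz gives $\sum_s N(s)^2 \ge (|A||B|)^2/|A'+B| \ge K|A|^2|B|/|A'|$, controlling the $\ell^2$-mass of $N$. A bucketing argument on $|F(\mathbf{a},x)|=r(\sigma_k(\mathbf{a})+x)$ then rules out the two extreme cases: (i) if every fiber satisfied $|F(\mathbf{a},x)|>|B|/K^{1/2}$, then $A'+B$ would lie inside $\{s : r(s)>|B|/K^{1/2}\}$, a set of cardinality at most $K^{1/2}|B|$ (since $\sum_s r(s)=|B|^2$), contradicting $|A'+B|\ge K|A'|$ except in a narrow boundary regime on $|A'|$; and (ii) if every fiber satisfied $|F(\mathbf{a},x)|<K^{1/2}$, the inequality $\sum_{(\mathbf{a},x)} |F(\mathbf{a},x)| = \sum_s N(s)r(s) < K^{1/2}|A||B|$ is incompatible with the $\ell^2$ estimate above via another application of Cauchy--Schwarz. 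In the resulting "good" case, set $B':=F(\mathbf{a},x)$ and we are done.

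The step I expect to be the main obstacle is the boundary regime where the counting above does not immediately rule out both extremes, so that neither $F(\mathbf{a},x)$ nor its complement $B\setminus F(\mathbf{a},x)$ in $\B(\Pi_{\mathbf{a},x}^{(1)})$ has cardinality in the target range. To close this gap I would exploit the linear structure of $\Pi$ more fully: for each $c\in\F^\times$, the analogous witness $T_c$ built from the map $(y,u,z)\mapsto \sigma_k(y)+u-cz \in \M_{k+2,1}$ also lies in $\B(\Pi^{(k+2)})$, yielding additional fibers $B\cap c^{-1}(\sigma_k(\mathbf{a})+x-B)\in\B(\Pi_{\mathbf{a},x}^{(1)})$. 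Since $\B(\Pi_{\mathbf{a},x}^{(1)})$ is closed under Boolean operations (Lemma~\ref{lem_closedness}(1)), we can combine these fibers via unions, intersections, and complements to produce a subset $B'\subseteq B$ of the required intermediate cardinality. This use of $\gl$-type linear symmetries, beyond the $\sym$-type permutations used in~\cite{IKS09}, is precisely the new ingredient that the linear $m$-scheme framework supplies, and is where the quantitative balancing in the proof will be most delicate.
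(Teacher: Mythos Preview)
Your proposal has a genuine gap: you are looking at the wrong fiber. Your set $F(\mathbf{a},x)=B\cap(\sigma_k(\mathbf{a})+x-B)$ has cardinality $r(s)=\#\{(z,w)\in B^2:z+w=s\}$, the $B$--$B$ representation count. But the hypothesis $K|A'|\le|A'+B|\le|A'||B|/K$ says nothing about $r$; it controls the $A'$--$B$ representation count $\nu^+(s)=\#\{(a,b)\in A'\times B:a+b=s\}$. To see that this matters, take $B$ to be a subspace of $V$ and $A'$ any subset with $K\le|A'|\le|B|/K$ (the hypothesis is then satisfied since $A'+B=B$). Every one of your fibers equals $B$, and your proposed remedy---dilating by $c\in\F^\times$ and taking Boolean combinations---still only produces $B$ or $\emptyset$. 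So neither your case~(i) nor your case~(ii) analysis, nor the ``boundary regime'' patch, can succeed here.

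The paper instead uses the fiber $T=\{y\in B:\ s-y\in A'\}$ for $s=x_1+\dots+x_{k+1}$, which has cardinality exactly $\nu^+(s)$ and lies in $\B(\Pi_{x_1,\dots,x_{k+1}}^{(1)})$ (the existential quantifier over $A\subseteq S^k$ is where the full strength $m\ge 2k+2$, rather than your $m\ge k+2$, is needed). If some $\nu^+(s)$ lands in $[K^{1/2},|B|/K^{1/2}]$ we are done. Otherwise, the paper does \emph{not} use Cauchy--Schwarz or Boolean combinations; it sets $Z=\{s:\nu^+(s)<K^{1/2}\}$, $Z_x=\{y\in B:x+y\in Z\}$, and proves via the regularity property~(P2) that $|Z_x|$ is \emph{constant} as $x$ ranges over $A'$ (Claim~\ref{claim_constantz}). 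The average bounds $K^{1/2}|A'|\le\sum_x|Z_x|\le|A'||B|/K^{1/2}$ then pin down every $|Z_x|$ in the target range. This constancy argument is the real engine of the proof, and your proposal has no analogue of it.
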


In particular, by choosing $k=1$ and $A=A'=B$, we see that Lemma~\ref{lem_fastshrink} holds when $K|B| \leq |B+B|\leq |B|^2/K$ for some $K=2^{\Omega((\rho^2 \log\log\log\log n)^{1/3})}\geq 4$.

\begin{proof}[Proof of Lemma~\ref{lem_shrinkweak}]
For $z\in A'+B$, define $\nu^+(z):=\#\{(x,y)\in A'\times B: x+y=z\}$.
First assume there exists an element $z\in A'+B$ such that $\nu^+(z)\in [K^{1/2},  |B|/K^{1/2}]$.
Fix such $z$.
 Choose $(x_1,\dots,x_k)\in A$ and $x_{k+1}\in B$ such that $x_1+\dots+x_{k+1}=z$.
Let  $T=\{y\in B: z-y\in A'\}$. Note  $y\mapsto (z-y,y)$ is a one-to-one correspondence between $T$ and $\{(x,y)\in A'\times B:x+y=z\}$.
So $|T|=\nu^+(z)\in [K^{1/2}, |B|/K^{1/2}]$.
We also have
\[
T=\{y\in B: \exists\,(x_1',\dots,x_k')\in A: x_1'+\dots+x_k'+y=x_1+\dots+x_{k+1}\}
\]
which is in $\B(\Pi_{x_1,\dots,x_{k+1}}^{(1)})$ by Lemma~\ref{lem_closedness}.
Choosing $B'=T$ proves the lemma.

So we may assume $\nu^+(z)\not\in [K^{1/2},  |B|/K^{1/2}]$ for $z\in A'+B$.
Define 
\[
Z:=\{z\in A'+B: \nu^+(z)\leq |B|/K^{1/2}\}=\{z\in A'+B: \nu^+(z)<K^{1/2}\}.
\]
As $\sum_{z\in A'+B} \nu^+(z)=|A'||B|$, the number of $z\in A'+B$ satisfying $\nu^+(z)>|B|/K^{1/2}$ is less than $K^{1/2} |A'|$. So we have 
\[
|Z|>|A'+B|-K^{1/2}|A'|\geq K|A'|-K^{1/2}|A'|\geq K^{1/2}|A'|
\]
where the last inequality holds since $K\geq 4$.

For $x\in A'$, define $Z_x:=\{y\in B: x+y\in Z\}$. Then $Z=\bigcup_{x\in A'} (x+Z_x)$.
Therefore,
\begin{equation}\label{eq_lb}
\sum_{x\in A'} |Z_x|\geq |Z|\geq K^{1/2}|A'|.
\end{equation}

On the other hand, we have
\begin{equation}\label{eq_ub}
\begin{aligned}
\sum_{x\in A'} |Z_x|&=\#\{(x,y)\in A'\times B: x+y\in Z\}=\sum_{z\in Z}\#\{(x,y)\in A'\times B: x+y=z\}\\
&=\sum_{z\in Z} \nu^+(z)\leq K^{1/2} |A'+B|\leq |A'||B|/K^{1/2}.
\end{aligned}
\end{equation}

 
\begin{claim}\label{claim_constantz}
$|Z_x|$ is constant when $x$ ranges over $A'$.
\end{claim}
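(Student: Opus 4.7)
The plan is to encode the condition ``$x+y\in Z$'' as a set lying in $\B(\Pi^{(k+1)})$, from which properties (P1) and (P2) of the linear $m$-scheme force the desired constancy of $|Z_x|$. The key subtlety is that $A'=\sigma_k(A)\subseteq V$ need not be contained in $S$, so I cannot treat $A'$ as a block of $\Pi^{(1)}$. To sidestep this, I lift each $x\in A'$ to a preimage $\mathbf{a}\in A$ under $\sigma_k$ and work with $A\times B\subseteq S^{k+1}$.

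First I would introduce the set
\[
\mathcal{E}:=\left\{(\mathbf{a},y,\mathbf{u},v)\in A\times B\times A\times B \;:\; \sigma_k(\mathbf{a})+y=\sigma_k(\mathbf{u})+v\right\}\subseteq S^{2k+2}.
\]
The hypothesis $m\geq 2k+2$ places us safely in $\Pi^{(2k+2)}$. Since $A\in\Pi^{(k)}$ and $B\in\Pi^{(1)}$, the Cartesian product $A\times B\times A\times B$ lies in $\B(\Pi^{(2k+2)})$ by Lemma~\ref{lem_closedness}\,(1) and (4). The defining equation of $\mathcal{E}$ is a single linear relation $\sigma_k(\mathbf{a})+y-\sigma_k(\mathbf{u})-v=0$ on the $2k+2$ coordinates, and by Lemma~\ref{lem_linrel} the subset of $S^{2k+2}$ satisfying such a relation is a union of blocks. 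Intersecting, $\mathcal{E}\in\B(\Pi^{(2k+2)})$.

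Next I would compute the $(\mathbf{a},y)$-fiber of $\mathcal{E}$. For $\mathbf{a}\in A$ with $x=\sigma_k(\mathbf{a})$ and $y\in B$, this fiber has cardinality $c\cdot\nu^+(x+y)$, where $c$ is the size of a fiber of $\sigma_k|_A$; the constancy of $c$ follows from the analogous argument applied to $\{(\mathbf{a},\mathbf{a}')\in A\times A:\sigma_k(\mathbf{a})=\sigma_k(\mathbf{a}')\}\in\B(\Pi^{(2k)})$ combined with (P1) and (P2). Applying Lemma~\ref{lem_closedness}\,(2) to $\mathcal{E}$ with the quantifier $\exists_{=t}$ for each integer $t<cK^{1/2}$ and taking the union, I obtain
\[
\mathcal{W}:=\left\{(\mathbf{a},y)\in A\times B \;:\; \nu^+(\sigma_k(\mathbf{a})+y)<K^{1/2}\right\}\in\B(\Pi^{(k+1)}).
\]

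To finish, I observe that for every $\mathbf{a}\in A$ the $\mathbf{a}$-fiber of $\mathcal{W}$ equals $Z_{\sigma_k(\mathbf{a})}$, since $\nu^+(x+y)<K^{1/2}$ is precisely the defining condition of $x+y\in Z$. Because $\mathcal{W}$ is a union of blocks of $\Pi^{(k+1)}$ all contained in $A\times B$, each such block projects onto $A$ under the first $k$ coordinates by (P1), and then (P2) forces its $\mathbf{a}$-fiber to have a size independent of $\mathbf{a}\in A$. Summing over the blocks and using that $\sigma_k$ maps $A$ onto $A'$, $|Z_x|$ is constant for $x\in A'$. The main obstacle I expect is the bookkeeping around $A'\not\subseteq S$, which is resolved once and for all by this lift to $A\subseteq S^k$; everything else reduces to a routine application of the closure properties already established for linear $m$-schemes.
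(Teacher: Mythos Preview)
Your proposal is correct and follows essentially the same route as the paper: lift from $A'$ to $A\subseteq S^k$, establish that $\sigma_k|_A$ has constant fiber size $c$ (the paper's $t_0$), build the set $\mathcal{W}$ of pairs $(\mathbf{a},y)\in A\times B$ with $\nu^+(\sigma_k(\mathbf{a})+y)<K^{1/2}$ inside $\B(\Pi^{(k+1)})$ (the paper's $Z'$), and then read off constancy of the $\mathbf{a}$-fibers from (P1)--(P2). The only cosmetic difference is that the paper parametrizes the level sets by the value of $\nu^+$ directly while you parametrize by the full fiber count $c\cdot\nu^+$, and in the last step the paper argues via the partition $A=\bigcup_t Y_t$ forcing $A=Y_{t_1}$, whereas you sum block by block; both are equivalent.
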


\begin{claimproof}[Proof of Claim~\ref{claim_constantz}]
For $t\in\N$, let $A_t$ be the set of $y \in A$ such that there exist precisely $t$ elements $x\in  A$ satisfying $\sigma_k(x)=\sigma_k(y)$. Then $A_t\in\B(\Pi^{(k)})$ for $t\in\N$ by Lemma~\ref{lem_closedness}. 
Also note $ A=\bigcup_{t\in\N} A_t$. As $ A\in \Pi^{(k)}$, we have $A=A_{t_0}$ for some $t_0\in \N$. This means for all $z\in A'$, there exist precisely $t_0$ elements $x\in A$ satisfying $\sigma_k(x)=z$.

For $t\in\N$, denote by $X_t$ the set of $(z,w)\in A\times B$ such that there exist precisely $t$ elements $(x,y)\in A'\times B$ satisfying $x+y=\sigma_k(z)+w$, or equivalently,
there exist precisely $t t_0$ elements $(x,y)\in A\times B$ satisfying $\sigma_k(x)+y=\sigma_k(z)+w$.
The latter characterization shows $X_t\in\B(\Pi^{(k+1)})$ for $t\in\N$ by Lemma~\ref{lem_closedness}. 
Let $Z'=\bigcup_{t\in\N: t<K^{1/2}} X_t\in \B(\Pi^{(k+1)})$.
Then $(x,y)\in A\times B$ is in $Z'$ iff $\sigma_k(x)+y$ is in $Z$.

For $t\in\N$, denote by $Y_t$ the set of $x\in A$ such that there exist precisely $t$ elements $y\in B$ satisfying $(x,y)\in Z'$, or equivalently, $\sigma_k(x)+y\in Z$.
 We have $Y_t\in\B(\Pi^{(k)})$ for $t\in\N$ by Lemma~\ref{lem_closedness}. 
Also note $A=\bigcup_{t\in\N} Y_t$. As $A\in \Pi^{(k)}$, we have $A=Y_{t_1}$ for some $t_1\in\N$. So for all $x\in A$, there exist precisely $t_1$ elements $y\in B$ such that $\sigma_k(x)+y\in Z$, i.e., $|Z_{\sigma_k(x)}|=t_1$. As $A'=\sigma_k(A)$, this proves the claim.
\end{claimproof}

By \eqref{eq_lb}, \eqref{eq_ub} and  Claim~\ref{claim_constantz}, we have $K^{1/2}\leq|Z_x|\leq |B|/K^{1/2}$ for all $x\in A'$.
Choose arbitrary $x=(x_1,\dots,x_k)\in A$ and $x_{k+1}\in B$, and let $x'=\sigma_k(x)\in A'$. Note $Z_{x'}=\{y\in B:x'+y\in Z\}=\{y\in B: (x,y)\in Z'\}$ is in $\B(\Pi_{x_1,\dots,x_k}^{(1)})\subseteq \B(\Pi_{x_1,\dots,x_k,x_{k+1}}^{(1)})$.
The lemma follows by choosing $B'=Z_{x'}$.
\end{proof}

\subsection{The Case $|B+B|/|B|$ is small}

Next, we address the case that $|B+B|/|B|$ is small. This   is equivalent to  $\mu(B)^{-1}=|\sg{B}|/|B|$ being small by the Freiman-Ruzsa Theorem (Theorem~\ref{thm_fr}).
Our main result in this case is the following lemma.

\begin{lemma}\label{lem_densecase}
Let $N\geq c$ such that $\log \ell\leq  (\log\log\log\log N)^{1/2}$, where $c>0$ is a sufficiently large constant.
Suppose $B\in\Pi^{(1)}$, $|B+B|/|B|\leq (\log\log\log N)^{1/2}$, and $m, |B|\geq \log\log N$.
Then there exist $k=O(\log\log\log N)$, $x_1,\dots,x_k\in B$, and $B'\in \B(\Pi_{x_1,\dots,x_k}^{(1)})$ such that $B'\subseteq B$ and  
$2^{C k\log\log\log\log N  }\leq |B|/|B'|\leq 2^{(\log N)^{1/2}}$
for some constant $C>0$.
\end{lemma}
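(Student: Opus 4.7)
The plan is to exploit the small doubling of $B$ via Fourier analysis on $H := \sg{B}$. Set $K_0 := |B+B|/|B| \leq (\log\log\log N)^{1/2}$. Freiman--Ruzsa (Theorem~\ref{thm_fr}), combined with the hypothesis $\log\ell \leq (\log\log\log\log N)^{1/2}$, gives
\[
|H|/|B| \leq \ell^{2K_0} = 2^{O((\log\log\log N)^{1/2}(\log\log\log\log N)^{1/2})},
\]
so $B$ is a very dense subset of the elementary abelian $\ell$-group $H$. The small doubling forces the additive energy to satisfy $E(B) \geq |B|^3/K_0^2$; by Parseval, $\widehat{1_B}$ has a rich ``large spectrum'' $\Lambda = \{\chi \in \widehat{H} : |\widehat{1_B}(\chi)| \geq |B|/K_0^{O(1)}\}$, which will be the raw material for the decomposition.

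Next I would prove a ``decomposition theorem'' from this spectrum. A Chang-type $L^4$-argument on $\Lambda$ should produce a subgroup $W \leq H$ of codimension $k = O(\log\log\log N)$ whose annihilator $W^\perp$ contains the bulk of $\Lambda$. This induces a partition $B = \bigsqcup_i B_i$, where $B_i := B \cap (v_i + W)$ runs over the cosets of $W$ that meet $B$, and the sizes $|B_i|$ are significantly non-uniform because each coordinate of $W^\perp$ corresponding to a large Fourier coefficient yields a density increment of roughly $2^{\log\log\log\log N}$. Accumulating over the $k$ codimensions, a pigeonhole argument should then produce some $B_i$ with $|B|/|B_i|$ falling in the required window $[2^{Ck\log\log\log\log N}, 2^{(\log N)^{1/2}}]$: the lower end reflects the accumulated density increment, and the upper end is controlled by the Freiman--Ruzsa bound on $|H|/|B|$ together with the bounded codimension of $W$.

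The main obstacle, and the reason the case is labeled ``most difficult'', is translating the Fourier-analytic piece $B_i$ into an element of $\B(\Pi_{x_1,\ldots,x_k}^{(1)})$. The strategy is to choose the witnesses $x_1,\ldots,x_k \in B$ so that the coset $v_i + W$ can be carved out by scheme-compatible conditions of the form ``$\tau(x_1,\ldots,x_k,y) \in D$'' for some $\tau \in \M_{k+1,k'}$ and some $D \in \B(\Pi^{(k')})$; Lemma~\ref{lem_closedness}(4) will then place $B_i$ in $\B(\Pi^{(k+1)})$, hence in $\B(\Pi_{x_1,\ldots,x_k}^{(1)})$ by Definition~\ref{defi_fixing}. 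Making this precise should require an averaging (or pigeonhole) step over $(x_1,\ldots,x_k) \in B^k$ that aligns the ``soft'' spectral decomposition of $B$ with the rigid block structure of $\Pi$, while strong antisymmetry (Definition~\ref{defi_sas}) should be invoked to prevent degenerate coincidences between the characters in $\Lambda$ and the scheme-automorphisms in $\widetilde{\M}_\Pi$. I expect essentially all of the technical weight of the proof to lie in this final alignment step, which is presumably what motivates deferring the argument to the appendix.
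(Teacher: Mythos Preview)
Your proposal shares the right starting ingredients (Freiman--Ruzsa to bound $|\sg{B}|/|B|$, Fourier analysis on $\sg{B}$) but diverges sharply from the paper's proof and, as written, has genuine gaps.

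First, the paper's argument is not a one-shot spectral decomposition. It is a two-phase \emph{iterative} procedure (Lemma~\ref{lem_rep}). In Phase~1 one repeatedly restricts to fibers $r \approx \log K/\log\log K$ times, each step halving the density of the current block within a carefully chosen subspace $W_{\Pi,k,B,\epsilon}(x)$; the decomposition theorem (Theorem~\ref{thm_struct}) is invoked \emph{inside each step} (via Lemma~\ref{lem_rednormal2}) to guarantee pseudorandomness so that density actually drops. In Phase~2, now that the block is sparse in its span, one runs a further $r$ rounds of the additive-energy argument from Subsection~\ref{subsec_case1}, augmented by the Balog--Szemer\'edi--Gowers theorem for linear $m$-schemes (Theorem~\ref{thm_bsg}), to shrink cardinality by a superconstant factor each round. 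BSG is essential here and is entirely absent from your plan.

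Second, your quantitative claim that ``each coordinate of $W^\perp$ yields a density increment of roughly $2^{\log\log\log\log N}$'' is unsupported. A large Fourier coefficient gives a multiplicative density bias of at most $1+O(1)$ between cosets of $\ker(\chi)$; there is no mechanism by which this compounds to $2^{Ck\log\log\log\log N}$ across $k$ characters, and in the other direction some coset piece $B_i$ can be nearly empty, which breaks your upper bound $|B|/|B_i|\le 2^{(\log N)^{1/2}}$. The paper obtains that upper bound not from a coset count but from explicit cardinality tracking through both phases (the $\ell^{-1/\epsilon'^2}$ factor in Lemma~\ref{lem_rep}).

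Third, the scheme-compatibility obstacle you flag is real, but your ``averaging over $(x_1,\ldots,x_k)$'' sketch is too thin. The paper's solution is structural: it restricts attention to those characters $\chi$ whose kernels are $(\Pi_x,k)$-\emph{constructible} (Definition~\ref{defi_cspace} and the set $\mathcal{X}_{\Pi,k',B,\epsilon'}$), proves that the resulting intersection $H=\bigcap_\chi\ker(\chi)$ is itself constructible (Theorem~\ref{thm_struct}(1)), and shows via Property~(P2) that the slices $B\cap(H+\F x)$ are equinumerous and lie in $\B(\Pi_x^{(1)})$. Strong antisymmetry is used not to ``prevent degenerate coincidences between characters and automorphisms'' but concretely, inside Lemma~\ref{lem_rednormal}, to force $B^*_x\neq B^{**}_x$ so that the two fibers genuinely split the density. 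None of this is recoverable from a generic Chang-type bound plus pigeonhole.
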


Due to the page limit, we defer the proof of Lemma~\ref{lem_densecase} to the appendix (see Appendix~\ref{sec_dense}). Here we only sketch the main ideas: 
Observe that the argument in Subsection~\ref{subsec_case1} does not directly apply  since  $B$ is dense in $\sg{B}$ and therefore $|B+B|/|B|$ is small. So our first step is reducing the density of $B$.
Roughly speaking, we show that restricting to a fiber of $\Pi$ (i.e., replacing $\Pi$ by $\Pi_x$ for some $x\in B$) each time reduces not only the cardinality of a block but also its \emph{density} by at least a constant factor. By repeatedly restricting to fibers $k$ times for some $k=\omega(1)$, we reduce the density of a block to $\exp(-k)$. Then we manage to prove Lemma~\ref{lem_densecase} by repeatedly applying an argument similar to that in Subsection~\ref{subsec_case1} to blocks that are already sparse enough. 

The actual proof is much more complicated than the above sketch due to many technical issues that we need to solve, and we refer the reader to the appendix for the full details.
For example, one issue is that replacing $B\in\Pi^{(1)}$ by a subset $B'\in\Pi_x^{(1)}$, while always reducing the cardinality, may actually increase the density (i.e., $\mu(B')>\mu(B)$). We observe that this happens only when $B$ is ``overrepresented'' in the subspace $\sg{B'}$. To solve this problem, we find a small collection of subspaces $W_i\subseteq \sg{B}$ such that $B$ becomes ``pseudorandom'' within each $W_i$, which ensures that  overrepresentation never occurs within  $W_i$. We state this as the \emph{decomposition theorem} (Theorem~\ref{thm_struct}). The actual proof of Lemma~\ref{lem_densecase} then uses the density $\mu_{W_i}(B)$ of $B$ in some $W_i$ instead of  $\mu(B)$.

\subsection{The Case $|B|^2/|B+B|$ is small}\label{subsec_case3}

Finally, we address the case that $|B|^2/|B+B|$ is small  and finish the proof of Lemma~\ref{lem_fastshrink}.
When $|B|^2/|B+B|$ is small, the argument in Subsection~\ref{subsec_case1} does not directly apply since there are not enough linear dependencies of the form $a+b=c+d$ with $a,b,c,d\in B$. To solve this problem, we first find a partial sumset $A'=\sigma_k(A)$ for some $A\subseteq B^k$, where $k\in\N^+$ is small, such that either $|A'+A'|/|A'|$ is small or Lemma~\ref{lem_fastshrink} already holds. 

For $B\subseteq V$, define $\rho(B):=\log|B|/\log|\sg{B}|$. Then we have

 \begin{lemma}\label{lem_red1}
  Let $K\geq 4$.
 Suppose   $B\in\Pi^{(1)}$ has cardinality at least $2K^{2}$ and  $m> 4/\rho(B)+\log K+1$.
 Then one of the following is true:
\begin{enumerate}
\item There exist $1\leq k\leq 2/\rho(B)+1$, $x_1,\dots,x_k\in B$, and $B'\in \B(\Pi_{x_1,\dots,x_k}^{(1)})$ such that $B'\subseteq B$ and  $\min\{|B'|, |B|/|B'|\}\geq K^{1/2}$.
\item There exist $1\leq k\leq 2/\rho(B)$ and $A\in \Pi^{(k)}$ such that  $A\subseteq B^k$, $|A|\geq  |B|^k/K^{(k-1)/2}$, $|A'+A'|\leq K^{2k}|A'|$, and $\sigma_k|_A: A\to A'$ is bijective, where $A':=\sigma_k(A)$. 
\end{enumerate}
 \end{lemma}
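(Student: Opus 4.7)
The plan is to construct iteratively, for $k=1,2,\dots$, a chain of blocks $A_k\in \Pi^{(k)}$ with $A_k\subseteq B^k$ and $\sigma_k|_{A_k}\colon A_k\to A_k':=\sigma_k(A_k)$ bijective, maintaining the size invariant $|A_k|\ge |B|^k/K^{(k-1)/2}$. The base case $A_1=A_1'=B$ satisfies these conditions trivially. At stage $k$ I trichotomize on the sumset ratio $|A_k'+B|/|A_k'|$: depending on which regime it falls into, the construction either terminates (outputting case (1) or (2) of the lemma) or extends the chain to stage $k+1$.

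If $|A_k'+B|\le K|A_k'|$, then Pl\"unnecke's inequality (Theorem~\ref{thm_plunnecke}) applied to the pair $(A_k',B)$ gives $|2kB|\le K^{2k}|A_k'|$, and since $A_k'\subseteq kB$ we obtain $A_k'+A_k'\subseteq 2kB$ and hence $|A_k'+A_k'|\le K^{2k}|A_k'|$, so $A=A_k$ realizes case (2). If instead $K|A_k'|<|A_k'+B|<|A_k'||B|/K$, I invoke Lemma~\ref{lem_shrinkweak} directly on $A_k$ and $A_k'$ to obtain $k+1$ fixing points $x_1,\dots,x_{k+1}\in B$ and a set $B'\in\B(\Pi_{x_1,\dots,x_{k+1}}^{(1)})$ with $\min\{|B'|,|B|/|B'|\}\ge K^{1/2}$, which realizes case (1); the hypothesis $m\ge 2k+2$ of Lemma~\ref{lem_shrinkweak} follows from $m>4/\rho(B)+\log K+1$ together with $\log K\ge 2$ and the termination bound $k\le 2/\rho(B)$ derived below. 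Termination: in the remaining case (c) the invariant is preserved and $|A_k'|\ge |B|^k/K^{(k-1)/2}$, while $A_k'\subseteq\sg{B}$ forces $|B|^k/K^{(k-1)/2}\le |\sg{B}|=|B|^{1/\rho(B)}$; the assumption $|B|\ge 2K^2$ implies $\log|B|-\tfrac12\log K\ge \tfrac34\log|B|$, from which $k\le 2/\rho(B)$, matching the claimed ranges for both output cases.

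In the remaining ``large-sumset'' case $|A_k'+B|\ge |A_k'||B|/K$, I extend the chain by producing $A_{k+1}\in\Pi^{(k+1)}$ inside $A_k\times B$ on which $\sigma_{k+1}$ is bijective and of size at least $|A_k||B|/K^{1/2}$. Since $\sigma_k|_{A_k}$ is bijective, the $\sigma_{k+1}$-fiber over $z\in A_k'+B$ inside $A_k\times B$ has size $r(z):=\#\{(a,y)\in A_k'\times B:a+y=z\}$, and by Lemma~\ref{lem_closedness} each level set $U_t:=\{u\in A_k\times B:r(\sigma_{k+1}(u))=t\}$ is a union of blocks of $\Pi^{(k+1)}$; blocks contained in $U_1$ automatically have $\sigma_{k+1}$ bijective on them. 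The assumption $|A_k'+B|\ge |A_k'||B|/K$ forces the average fiber size to be at most $K$, and an averaging/pigeonhole argument together with the fiber-regularity of blocks provided by Lemma~\ref{lem_closedness} extracts a single block of $\Pi^{(k+1)}$ inside $A_k\times B$ of size at least $|A_k||B|/K^{1/2}$ on which $\sigma_{k+1}$ is injective. Extracting a \emph{single} block of the required size (rather than merely a union of blocks) from the average-injectivity hypothesis is the main obstacle of the proof; once this step is in hand, the inductive invariant is preserved, the iteration terminates within the bound $k\le 2/\rho(B)$, and the lemma follows.
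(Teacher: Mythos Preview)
Your iterative framework and the trichotomy on $|A_k'+B|$ match the paper exactly, and your handling of the small-sumset case (Pl\"unnecke) and the middle case (Lemma~\ref{lem_shrinkweak}) is correct. The termination bound $k\le 2/\rho(B)$ is also derived as in the paper.

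The gap is precisely where you flag it: the extension step in the large-sumset case. Your plan is to extract by averaging a block of $\Pi^{(k+1)}$ inside $A_k\times B$ on which $\sigma_{k+1}$ is already injective and of size at least $|A_k||B|/K^{1/2}$. This cannot succeed in general. A concrete obstruction: nothing in Properties (P1)--(P2) prevents $A_k\times B$ from being a \emph{single} block on which $\sigma_{k+1}$ is uniformly $2$-to-$1$; then $|A_k'+B|=|A_k||B|/2\ge |A_k'||B|/K$ (so we are in your case (c)), yet $U_1=\emptyset$ and no block of any size has $\sigma_{k+1}$ injective. Fiber-regularity only says each block has \emph{constant} $\sigma_{k+1}$-fiber size; it does not force that constant to be $1$.

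The paper resolves this with two additional ingredients you are missing. First, it does not look for injectivity directly; it uses averaging to find a block $A^*\subseteq A_k\times B$ with small ratio $|A^*|/|\sigma_{k+1}(A^*)|\le 2K$, and then invokes Lemma~\ref{lem_bij}, which uses \emph{strong antisymmetry} (together with the depth hypothesis $m>4/\rho(B)+\log K+1$) to upgrade ``bounded ratio'' to ``bijective''. Second, the paper introduces a preliminary dichotomy on the union $T$ of blocks in $A_k\times B$ of size at most $K^{1/2}|A_k|$: if $|T|\ge K^{1/2}|A_k|$ one outputs case (1) directly, and only when $|T|<K^{1/2}|A_k|$ does one search for $A^*$ in the complement $U$. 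Even then the resulting $A^*$ need not satisfy $|A^*|\ge |A_k||B|/K^{1/2}$; when its fiber over $A_k$ has size in $[K^{1/2},|B|/K^{1/2}]$ the paper outputs case (1), and only when the fiber exceeds $|B|/K^{1/2}$ does the chain extend. So your assertion that one can always extend with the size bound $|A_{k+1}|\ge |A_k||B|/K^{1/2}$ is too strong; the correct statement is that either one extends with that bound or one terminates in case (1).
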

 
The proof of Lemma~\ref{lem_red1} uses the following lemma, whose proof is deferred to the appendix.

\begin{lemma}\label{lem_bij}
Let $k\in [m]$, $A\in \Pi^{(k)}$ and $A'=\sigma_k(A)$. 
Suppose $m\geq 2k$ and $m> k+\log(|A|/|A'|)$. Then $\sigma_k$ maps $A$ bijectively to $A'$. In particular, $|A'|=|A|$.
\end{lemma}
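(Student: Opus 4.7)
The plan is to prove the lemma by induction on $k$. The base case $k=1$ is immediate, since $\sigma_1 = \mathrm{id}_V$ is injective on any subset of $V$.

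For the base case $k = 2$, I would argue by contradiction: assume $t := |A|/|A'| \geq 2$. Form the coincidence set $\tilde{A} := \{(x,y) \in A \times A : \sigma_2(x) = \sigma_2(y)\}$; since $2k = 4 \leq m$ and the relation $\sigma_2(x) - \sigma_2(y) = 0$ is block-invariant by Lemma~\ref{lem_linrel}, one verifies $\tilde{A} \in \B(\Pi^{(4)})$. The swap $\theta(x,y) := (y,x) \in \M_{4,4}$ preserves $\tilde{A}$ and permutes its blocks. For any block $B \subseteq \tilde{A}$, axioms (P1) and (P2) force both $2$-coordinate projections $\pi, \pi' : B \to A$ to be surjective with common constant fiber size $t_B := |B|/|A|$. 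Two applications of strong antisymmetry pin down the block structure: (i) the diagonal $\Delta(A) := \{(x,x) : x \in A\}$ is itself a single block with $t_B = 1$, and $\theta$ fixes no other block (else that block would have to lie in $\Delta(A)$); (ii) any non-diagonal block $B$ with $t_B = 1$ would yield the nontrivial permutation $\pi'|_B \circ (\pi|_B)^{-1}$ of $A$ in $\widetilde{\M}_\Pi$, contradicting strong antisymmetry. Hence the non-diagonal blocks pair up as $\{B, \theta(B)\}$ with $t_B \geq 2$ each, and $|\tilde{A}| = t|A|$ gives $t = 1 + 2\sum_{\text{pairs}} t_B$. If $t \geq 2$, pick any non-diagonal block $B$ and any $x \in A$; the fiber block $\hat{B}_x := \{y : (x,y) \in B\}$ is a block of $\Pi_x^{(2)}$ of size $t_B \leq t - 1$ on which $\sigma_2$ is constant. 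Projection to the first coordinate is therefore injective on $\hat{B}_x$ and produces a non-singleton block of $\Pi_x^{(1)}$. By Lemma~\ref{lemma_bounddim}(2) applied to the strongly antisymmetric $(m-2)$-scheme $\Pi_x$ (Lemma~\ref{lem_fixing}), we obtain $m - 2 \leq \log t_B < \log t$, contradicting $m > 2 + \log t$.

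For the inductive step $k \geq 3$, assume the lemma for all smaller values, and again suppose $t \geq 2$. For each $x_1 \in \pi_1(A)$, the block $A_{x_1} \in \Pi_{x_1}^{(k-1)}$ has $\sigma_{k-1}$-fiber size at most $t$, since each such fiber embeds into a $\sigma_k$-fiber on $A$. The inductive hypothesis for $k' = k-1$ in the strongly antisymmetric $(m-1)$-scheme $\Pi_{x_1}$ (whose numerical hypotheses $(m-1) \geq 2(k-1)$ and $(m-1) > (k-1) + \log(|A_{x_1}|/|A_{x_1}'|)$ follow from $m \geq 2k$ and $m > k + \log t$) yields that $\sigma_{k-1}|_{A_{x_1}}$ is bijective. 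Consequently, the linear map $\Phi'(x) := (x_1, \sigma_{k-1}(x_2,\ldots,x_k)) \in \M_{k,2}$ is injective on $A$. Moreover, applying the invertible linear map $(x_1,\ldots,x_k) \mapsto (x_1,\ldots,x_{k-1}, \sigma_{k-1}(x_2,\ldots,x_k))$ to $A$ yields, by (P1), a block of $\Pi^{(k)}$ contained in $S^k$, which forces $\sigma_{k-1}(x_2,\ldots,x_k) \in S$ for all $x \in A$. Hence $\Phi'(A) \subseteq S^2$ is a single block of $\Pi^{(2)}$ of cardinality $|A|$, with $\sigma_2(\Phi'(A)) = A'$ and $\sigma_2$-fiber size $t$. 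The inductive hypothesis for $k' = 2$ (whose conditions $m \geq 4$ and $m > 2 + \log t$ follow from $m \geq 2k \geq 6$ and $m > k + \log t \geq 3 + \log t$) then forces $t = 1$, the desired contradiction.

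The main subtlety lies in the $k = 2$ base case, where the ``project and reduce'' strategy of the inductive step would be circular; one instead uses strong antisymmetry to decompose $\tilde{A}$ into the diagonal plus involution-paired non-diagonal blocks, and then passes to a $1$-tuple block of the further-fibered scheme $\Pi_x$ in order to invoke Lemma~\ref{lemma_bounddim}(2). This is precisely where the slack $\log t$ in the hypothesis $m > k + \log t$ gets consumed.
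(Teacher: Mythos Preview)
Your inductive step for $k \geq 3$ has a genuine gap. You claim that applying the invertible linear map $\psi(x_1,\ldots,x_k) = (x_1,\ldots,x_{k-1}, \sigma_{k-1}(x_2,\ldots,x_k))$ to $A$ ``yields, by (P1), a block of $\Pi^{(k)}$ contained in $S^k$''. But axiom (P1) only says that for each block $B' \in \Pi^{(k)}$ one has either $\psi(A) = B'$ or $\psi(A) \cap B' = \emptyset$; it does \emph{not} force $\psi(A) \subseteq S^k$. Nothing prevents $\psi(A) \cap S^k = \emptyset$: this occurs precisely when $\sigma_{k-1}(x_2,\ldots,x_k) \notin S$ for every $x \in A$, and since $S$ is an arbitrary subset of $V$ there is no reason partial sums of elements of $S$ should land back in $S$. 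In that case $\Phi'(A) \not\subseteq S^2$, so $\Phi'(A)$ is not a block of $\Pi^{(2)}$, and your reduction to the $k=2$ case cannot be invoked.

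The paper's proof avoids this issue by never reducing $k$. Given $x,x' \in A$ with $\sigma_k(x)=\sigma_k(x')$, it works directly inside $\Pi_x^{(k)}$: the fiber $T=\{y\in A:\sigma_k(y)=\sigma_k(x)\}$ has size $t=|A|/|A'|$ and lies in $\B(\Pi_x^{(k)})$, so the block $B\in\Pi_x^{(k)}$ containing $x'$ satisfies $|B|\leq t$. Since $\Pi_x$ is a strongly antisymmetric $(m-k)$-scheme with $m-k>\log t$, Lemma~\ref{lemma_bounddim}(2) (after projecting $B$ to level~$1$) forces $B=\{x'\}$. A final application of strong antisymmetry to the block of $\Pi^{(2k)}$ containing $(x,x')$ then gives $x=x'$. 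Your $k=2$ argument is essentially a correct instance of this direct approach, with the extra $\theta$-pairing decomposition of $\tilde A$ being unnecessary overhead; carrying that same direct argument through for general $k$, rather than attempting the flawed reduction, would fix the proof.
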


\begin{proof}[Proof of Lemma~\ref{lem_red1}]
Let $k=1$ and  $A=A'=B$. We will gradually increase $k$ and update $A, A'=\sigma_k(A)$ until we find the desired data.
Throughout the process, we maintain the invariants that $A\subseteq B^k$, $|A|\geq |B|^k/K^{(k-1)/2}$ and $\sigma_k|_A: A\to A'$ is bijective, which obviously hold when $k=1$.
Note these invariants imply $k\leq 2/\rho(B)$ since $|A|=|A'|\leq |\sigma_k(B^k)| \leq |\sg{B}|$ and $|A|\geq |B|^k/K^{(k-1)/2}\geq |B|^{k/2}$.

Consider the following cases.

\subparagraph*{Case 1:  $K|A'| \leq |A'+B|\leq |A'||B|/K$.} In this case, (1) of the lemma holds  by Lemma~\ref{lem_shrinkweak}.

\subparagraph*{Case 2: $|A'+B|<K|A'|$.} It follows from Pl\"unnecke's inequality (Theorem~\ref{thm_plunnecke}) that $|2kB|\leq K^{2k} |A'|$.
As $A'+A'\subseteq 2kB$, we have $|A'+A'|\leq K^{2k} |A'|$. 
So (2) holds.

\subparagraph*{Case 3:  $|A'+B|>|A'||B|/K$.} In this case, let $T$ be the union of the blocks $B'\in\Pi^{(k+1)}$ satisfying $B'\subseteq A\times B$ and $|B'|\leq K^{1/2}|A|$.
First assume $|T|\geq K^{1/2}|A|$.  By removing a subset of blocks in $\Pi^{(k+1)}$ from $T$ if necessary, we can find a subset $T'\subseteq T$ such that $T'\in\B(\Pi^{(k+1)})$ and $K^{1/2}|A|\leq |T'|\leq 2K^{1/2}|A|\leq  |A||B|/K^{1/2}$.
Choose $x\in A$ and let $B'=\{y\in B: (x,y)\in T'\}\in \B(\Pi_x^{(1)})$. Then $|B'|=|T'|/|A|\in [K^{1/2}, |B|/K^{1/2}]$ by Property (P2). So (1) holds.

So we may assume $|T|<K^{1/2}|A|$. 
For $x\in A$, the number of $y\in B$ satisfying $(x,y)\in T$ is bounded by $K^{1/2}$ by Property (P2).
So $|\sigma_{k+1}(T)|\leq K^{1/2}|A'|$.
Let $U=(A\times B)\setminus T$.
As $A'+B=\sigma_{k+1}(A\times B)=\sigma_{k+1}(T)\cup\sigma_{k+1}(U)$, we have 
\[
|\sigma_{k+1}(U)| \geq |A'+B|-|\sigma_{k+1}(T)|\geq |A'||B|/K- K^{1/2}|A'|\geq  |A'||B|/(2K).
\]
So $|U|\leq |A\times B|=|A'||B|\leq 2K|\sigma_{k+1}(U)|$. By an averaging argument,  there exists $A^*\in \Pi^{(k+1)}$ such that $A^*\subseteq U$ and $|A^*|\leq 2K |\sigma_{k+1}(A^*)|$.
By Lemma~\ref{lem_bij} and the fact $m\geq 4/\rho(B)+\log K+1$, the map $\sigma_{k+1}|_{A^*}:A^*\to \sigma_{k+1}(A^*)$ is bijective.
Pick $x\in A$ and let $B'=\{y\in B: (x,y)\in A^*\}$. Then $B'\in \Pi_x^{(1)}$. As $A^*\subseteq U$, we have $|B'|=|A^*|/|A|\geq K^{1/2}$. If $|B'|\leq |B|/K^{1/2}$ then (1) holds.
So assume $|B'|>|B|/K^{1/2}$. Then $|A^*|=|A||B'|\geq |A||B|/K^{1/2}\geq |B|^{k+1}/K^{k/2}$, where the last inequality holds since $|A|\geq |B|^k/K^{(k-1)/2}$. Then we replace $k$,  $A$, and $A'$ by $k+1$, $A^*$, and $\sigma_{k+1}(A^*)$ respectively. Note all the invariants are preserved.
 
Continue the above process and note  $k$  never exceeds $2/\rho(B)$. This proves the lemma.
\end{proof}

In Case~(2) of  Lemma~\ref{lem_red1}, we obtain a set $A'=\sigma_k(A)$ such that $|A'+A'|/|A'|$ is small. Our strategy in this case consists of the following steps:
\begin{enumerate}
\item  Using $\Pi$  to construct  a new linear $m'$-scheme $\Pi'$ on $A'$ such that $A'\in \Pi'^{(1)}$, where $m'\leq m$.
\item  Applying Lemma~\ref{lem_densecase} to $\Pi'$ and $A'$, and obtain an improved bound with respect to $\Pi'$.
\item  Turning the bound obtained in Step (2) into an improved bound with respect to $\Pi$. 
\end{enumerate}
Step~(1) is achieved by the following lemma, whose proof is deferred to the appendix.

\begin{lemma}\label{lem_schemepower}
Let $k,m'\in [m]$, $A\in\Pi^{(k)}$ and $A'=\sigma_k(A)$ such that $m\geq 2km'$ and $\sigma_k|_A: A\to A'$ is bijective.
For $k\in [m']$, write $\sigma_k^{(i)}: V^{ki}\to V^i$ for the map sending $(x_1,\dots,x_i)$  to $(\sigma_k(x_1),\dots,\sigma_k(x_i))$, where $x_1,\dots,x_i\in V^k$.
Define $\Pi'=\{\Pi'^{(1)},\dots,\Pi'^{(m')}\}$ such that for $i\in [m']$,   $\Pi'^{(i)}:=\{\sigma_k^{(i)}(B): B\in \Pi^{(ki)}, B\subseteq A^i\}$.
Then $\Pi'$ is a well defined strongly antisymmetric linear $m'$-scheme on $A'$.
 Moreover, for $i\in [m']$ and $B\in \Pi^{(ki)}$ satisfying $B\subseteq A^{i}$, the map $\sigma_k^{(i)}|_{B}: B\to \sigma_k^{(i)}(B)$ is bijective.
\end{lemma}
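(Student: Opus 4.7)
The plan is to verify, in order: (i) the bijectivity in the ``moreover'' part; (ii) that each $\Pi'^{(i)}$ is actually a partition of $(A')^i$; (iii) axioms (P1) and (P2) for $\Pi'$; and (iv) strong antisymmetry of $\Pi'$. The unifying tool will be the canonical \emph{blow-up} operation $\tau'\mapsto\tau$: to any $\tau'\in\M_{k_1,k_2}$, associate $\tau\in\M_{kk_1,kk_2}$ obtained by applying $\tau'$ coordinatewise to the $k_1$ blocks of $k$ coordinates of $V^{kk_1}$. The blow-up satisfies
\[
\sigma_k^{(k_2)}\circ\tau \;=\; \tau'\circ\sigma_k^{(k_1)}
\]
and is compatible with composition.

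For (i), since $\sigma_k|_A$ is bijective by hypothesis, its $i$-fold product $\sigma_k^{(i)}|_{A^i}$ is bijective too, and hence any restriction of it is injective. For (ii), I first verify that $A^i\in\B(\Pi^{(ki)})$ by writing it as the intersection over $j\in[i]$ of the preimages $\{x\in S^{ki}:\pi_j(x)\in A\}$ under coordinatewise projections $\pi_j:V^{ki}\to V^k$ (valid because $ki\leq km'\leq m/2$, so Lemma~\ref{lem_closedness}(1) and~(4) apply). Then the bijectivity of $\sigma_k^{(i)}|_{A^i}$ turns the partition of $A^i$ into $\Pi^{(ki)}$-blocks into a partition of $(A')^i$ into pairwise disjoint images, which is exactly $\Pi'^{(i)}$.

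For (iii), I fix $\tau'\in\M_{k_1,k_2}$, blocks $B'_j=\sigma_k^{(k_j)}(B_j)$ with $B_j\in\Pi^{(kk_j)}$ and $B_j\subseteq A^{k_j}$, and assume $\tau'(B'_1)\cap B'_2\neq\emptyset$. I pick a witness $(c_0,a_0)\in B_1\times B_2$ satisfying $\sigma_k^{(k_2)}(a_0)=\tau'(\sigma_k^{(k_1)}(c_0))$, and let $D\in\Pi^{(kk_1+kk_2)}$ be the block containing it (well-defined because $kk_1+kk_2\leq 2km'\leq m$). The witnessed identity is linear in the coordinates of $(c,a)$, so Lemma~\ref{lem_linrel} propagates it to all of $D$. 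Standard closure arguments (Lemma~\ref{lem_closedness}) give $D\subseteq B_1\times B_2$, and (P2) of $\Pi$ says the fibers of the projections $\pi_1|_D,\pi_2|_D$ have constant sizes $t_1,t_2$. Since two elements of $B_2\subseteq A^{k_2}$ with the same $\sigma_k^{(k_2)}$-image must coincide, I obtain $t_1=t_2=1$: thus $D$ is the graph of a bijection $\alpha:B_1\to B_2$ with $\sigma_k^{(k_2)}\circ\alpha=\tau'\circ\sigma_k^{(k_1)}$. Passing through $\sigma_k^{(k_j)}$, $\tau'|_{B'_1}$ is itself a bijection $B'_1\to B'_2$, giving both (P1) and (P2) at once.

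The hardest step is (iv). The difficulty is that the blow-up $\tau$ of $\tau'$ need not satisfy $\tau(B_1)=B_2$, since $\tau$ may map $B_1$ out of $A^{k_2}$; consequently $\tau|_{B_1}$ may fail to lie in $\M_\Pi$, and one cannot directly lift generators of $\widetilde{\M}_{\Pi'}$ to generators of $\widetilde{\M}_\Pi$ via the blow-up. The resolution is that the bijection $\alpha:B_1\to B_2$ produced in~(iii) admits the alternative description
\[
\alpha \;=\; \pi_2|_D\circ(\pi_1|_D)^{-1},
\]
where $\pi_1\in\M_{kk_1+kk_2,kk_1}$, $\pi_2\in\M_{kk_1+kk_2,kk_2}$ are coordinate projections; since $t_1=t_2=1$, each $\pi_j|_D$ is a bijection between $\Pi$-blocks and hence lies in $\M_\Pi$, so $\alpha\in\widetilde{\M}_\Pi$. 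Given a nontrivial $\pi'\in\widetilde{\M}_{\Pi'}$ that permutes some $B'\in\Pi'^{(i)}$, I would decompose $\pi'$ into generators in $\M_{\Pi'}\cup\M_{\Pi'}^{-1}$, replace each by the corresponding $\alpha$ or $\alpha^{-1}$, and compose to get some $\alpha_{\mathrm{tot}}\in\widetilde{\M}_\Pi$ permuting the block $B\in\Pi^{(ki)}$ with $\sigma_k^{(i)}(B)=B'$. Chaining the intertwining identities $\sigma_k^{(k_j)}\circ\alpha_j=\tau'_j\circ\sigma_k^{(k_{j-1})}$ along the decomposition yields $\sigma_k^{(i)}\circ\alpha_{\mathrm{tot}}=\pi'\circ\sigma_k^{(i)}$ on $B$, so injectivity of $\sigma_k^{(i)}|_B$ transfers the nontriviality of $\pi'$ to $\alpha_{\mathrm{tot}}$, contradicting strong antisymmetry of $\Pi$.
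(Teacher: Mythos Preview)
Your approach is essentially the same as the paper's: both establish bijectivity of $\sigma_k^{(i)}|_{A^i}$ first, then verify the partition property, then prove (P1)/(P2) by analyzing the relation $\{(c,a)\in B_1\times B_2:\sigma_k^{(k_2)}(a)=\tau'(\sigma_k^{(k_1)}(c))\}$, and finally lift bijections in $\widetilde{\M}_{\Pi'}$ to $\widetilde{\M}_\Pi$ via $\pi_2|_D\circ(\pi_1|_D)^{-1}$ on this graph. Your partition argument (pushing forward the partition of $A^i$ through the bijection $\sigma_k^{(i)}|_{A^i}$, after checking $A^i\in\B(\Pi^{(ki)})$) is in fact cleaner than the paper's.

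There is one slip in step~(iii). From injectivity of $\sigma_k^{(k_2)}|_{B_2}$ you correctly deduce that the fibers of $\pi_1|_D$ are singletons, i.e.\ $t_1=1$; but this says nothing about $t_2$. For a general $\tau'\in\M_{k_1,k_2}$ (think of a projection), $\tau'|_{B'_1}$ need not be injective, so $\alpha:B_1\to B_2$ is only a surjective $t_2$-to-$1$ map. Fortunately that is already enough for (P1) and (P2): $\tau'|_{B'_1}=\sigma_k^{(k_2)}\circ\alpha\circ(\sigma_k^{(k_1)}|_{B_1})^{-1}$ is then surjective and $t_2$-to-$1$ onto $B'_2$. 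In step~(iv) you are safe, because there you start from $\tau'\in\M_{\Pi'}$, which by definition is a bijection $B'_1\to B'_2$; this forces $|B_1|=|B_2|$ and hence $t_2=1$, so $D$ really is the graph of a bijection and $\pi_1|_D,\pi_2|_D\in\M_\Pi$ as you need. With this small correction to~(iii), your proof matches the paper's.
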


Now we are ready to prove Lemma~\ref{lem_fastshrink}.
 
\begin{proof}[Proof of Lemma~\ref{lem_fastshrink}] 
As  $|\sg{B}|\leq |\sg{S}|=|S|^{1/\rho}=n^{1/\rho}$ and $|B|\geq n^{1/(\rho^2 \log\log\log\log n)^{1/3}}$, we have 
 $\rho(B)=\log |B|/\log |\sg{B}|\geq (\rho/\log\log\log\log n)^{1/3}$.
Let $K=(\log\log\log n)^{\rho(B)/8}$.
By Lemma~\ref{lem_red1}, one of the following is true:
\begin{enumerate}
\item There exist $1\leq k\leq 2/\rho(B)+1$, $x_1,\dots,x_k\in B$, and $B'\in \B(\Pi_{x_1,\dots,x_k}^{(1)})$ such that $B'\subseteq B$ and  $\min\{|B'|, |B|/|B'|\}\geq K^{1/2}$.
\item There exist $1\leq k\leq 2/\rho(B)$ and $A\in \Pi^{(k)}$ such that  $A\subseteq B^k$, $|A|\geq  |B|^k/K^{(k-1)/2}$, $|A'+A'|\leq K^{2k}|A'|$, and $\sigma_k|_A: A\to A'$ is bijective, where $A':=\sigma_k(A)$. 
\end{enumerate}
If (1) holds then we are done since $K^{1/2}=2^{\Omega(\rho(B)\log\log\log\log n)}=2^{\Omega(k(\rho^2\log\log\log\log n)^{1/3})}$.
 So assume (2) holds.
Choose $m'=\lfloor m/(2k)\rfloor$.
Let $\Pi'$ be the strongly antisymmetric linear $m'$-scheme on $A'$ constructed from $\Pi$ as in Lemma~\ref{lem_schemepower}.

 As $|A'+A'|\leq K^{2k}|A'|$ and $K^{2k}\leq K^{4/\rho(B)} \leq (\log\log\log n)^{1/2}$,
we know by Lemma~\ref{lem_densecase} (applied to $\Pi'$, $A'$, and $N=n$) that there exist $r=O(\log\log\log n)$, $x_1,\dots,x_r\in A'$, and $A''\in \B(\Pi'^{(1)}_{x_1,\dots,x_r})$
such that $A''\subseteq A'$ and
\[
 2^{C r\log\log\log\log n  }\leq |A'|/|A''|\leq 2^{(\log n)^{1/2}}
\]
for some constant $C>0$.

For $i\in [r]$, choose $y_i\in A$ such that $\sigma_k(y_i)=x_i$. Let $y=(y_1,\dots,y_r)\in A^{r}\subseteq B^{kr}$.
We then have the following claim, whose proof is deferred to the appendix.

\begin{claim}\label{claim_final}
There exists a set $T\in \B(\Pi_y^{(k)})$ such that $T\subseteq A$ and $|T|=|A''|$.
\end{claim}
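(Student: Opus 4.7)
The plan is to transport the union-of-blocks structure of $A''$ inside $\Pi'$ back to $A$ inside $\Pi_y$ via the bijection supplied by Lemma~\ref{lem_schemepower}. Since every block of $\Pi'$ is, by construction, the $\sigma_k^{(i)}$-image of a block of $\Pi$ contained in $A^{i}$, and $\sigma_k^{(i)}$ restricts to a bijection on such blocks, the transport is essentially mechanical; no new combinatorial ingredient beyond Lemma~\ref{lem_schemepower} is required.

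First I would decompose $A''=\coprod_\alpha B'_\alpha$ into blocks of $\Pi'^{(1)}_{x_1,\dots,x_r}$. By Definition~\ref{defi_fixing} applied to $\Pi'$, each $B'_\alpha$ has the form $\{z\in A':(x_1,\dots,x_r,z)\in C_\alpha\}$ for some $C_\alpha\in\Pi'^{(r+1)}$, and by Lemma~\ref{lem_schemepower} one can write $C_\alpha=\sigma_k^{(r+1)}(\tilde{B}_\alpha)$ for a block $\tilde{B}_\alpha\in\Pi^{(k(r+1))}$ with $\tilde{B}_\alpha\subseteq A^{r+1}$, where $\sigma_k^{(r+1)}|_{\tilde{B}_\alpha}$ is a bijection onto $C_\alpha$.

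Next I would define $T_\alpha:=\{w\in S^k:(y,w)\in\tilde{B}_\alpha\}$ and take $T:=\bigcup_\alpha T_\alpha$. By Definition~\ref{defi_fixing} applied to $\Pi$, each nonempty $T_\alpha$ is a block of $\Pi_y^{(k)}$, so $T\in\B(\Pi_y^{(k)})$; moreover the containment $\tilde{B}_\alpha\subseteq A^{r+1}$ forces $T_\alpha\subseteq A$. Using $\sigma_k(y_i)=x_i$ together with the bijectivity of $\sigma_k^{(r+1)}|_{\tilde{B}_\alpha}$, for $w\in A$ the condition $(y,w)\in\tilde{B}_\alpha$ becomes equivalent to $\sigma_k(w)\in B'_\alpha$, so $\sigma_k|_A$ restricts to a bijection $T_\alpha\to B'_\alpha$; in particular $T_\alpha\neq\emptyset$ and $|T_\alpha|=|B'_\alpha|$. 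Disjointness of the $T_\alpha$ is inherited from that of the $B'_\alpha$ via the function $\sigma_k|_A$, so $|T|=\sum_\alpha|T_\alpha|=\sum_\alpha|B'_\alpha|=|A''|$, as required.

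The only mild subtlety in the above is the bookkeeping around Definition~\ref{defi_fixing}, namely verifying that a block of $\Pi_y^{(k)}$ that meets $A$ is entirely contained in $A$; but this follows immediately from $\tilde{B}_\alpha\subseteq A^{r+1}$ together with the partition property of $\Pi^{(k(r+1))}$. I therefore do not expect any genuine obstacle in carrying out the argument.
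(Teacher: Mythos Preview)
Your proposal is correct and follows essentially the same approach as the paper: decompose $A''$ into blocks $U$ of $\Pi'^{(1)}_{x_1,\dots,x_r}$, lift each via $\sigma_k^{(r+1)}$ to a block $\tilde{Z}\in\Pi^{(k(r+1))}$ contained in $A^{r+1}$, and take the $y$-fiber to obtain a block of $\Pi_y^{(k)}$ inside $A$ of the same cardinality. The only cosmetic difference is that the paper picks $\tilde{Z}$ as the block containing a specific lift $(y,z_0')$ and then checks $\sigma_k^{(r+1)}(\tilde{Z})=Z$, whereas you invoke the definition of $\Pi'$ from Lemma~\ref{lem_schemepower} to obtain $\tilde{B}_\alpha$ and then use the bijectivity of $\sigma_k|_A$ to see that the $y$-fiber is nonempty; these are the same block and the same argument.
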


Let $T$ be as in Claim~\ref{claim_final}.
Let $K'=(\log\log\log n)^{r\rho(B)}$.
 For $0\leq i\leq  k$, let $\pi_i:V^{k}\to V^{i}$ be the projection to the first $i$ coordinates.
For $i\in [k]$, we say a block $U\in \Pi_y^{(k)}$ is \emph{$i$-small} if $|\pi_i(U)|/|\pi_{i-1}(U)|\leq K'$.
For $i\in [k]$, let $T_i$ be the union of the $i$-small blocks $U\in \B(\Pi_y^{(k)})$ satisfying $U\subseteq T$.
We address the following two cases separately:

\subparagraph*{Case 1:  $|T_i|\geq K'|B|^{k-1}$ for some $i\in [k]$.} Fix such $i\in [k]$. 
As $T_i\subseteq T\subseteq A\subseteq B^k$, we have $|\pi_{i-1}(T_i)|\leq |B|^{i-1}$ and $|\pi_{i}(T_i)|\geq |T_i|/|B|^{n-i}\geq K'|B|^{i-1}$. By the pigeonhole principle, there exists $z\in \pi_{i-1}(T_i)$ such that the cardinality of $Z:=\{w\in B: (z,w)\in \pi_i(T_i)\}$ is at least $K'$. Fix such $z$. Then $Z\in \B(\Pi_{y,z}^{(1)})$. 
As $T_i$ only contains $i$-small blocks, every block in $\Pi_{y,z}^{(1)}$ contained in $Z$ has cardinality at most $K'$.
By removing some of these blocks if necessary, we obtain a subset $Z'\subseteq Z$ such that $Z'\in \B(\Pi_{y,z}^{(1)})$ and $K'\leq |Z'|\leq 2K'=O(|B|/K')$.

Choose $B'=Z'$. 
Note $(y,z)\in B^{k'}$ where $k':=kr+i-1$.
To see Lemma~\ref{lem_fastshrink} is satisfied by $B'$, it suffices to show $K'=2^{\Omega(k'(\rho^2 \log\log\log\log n)^{1/3})}$.
This holds since $k'=O(r/\rho(B))$,  $K'=(\log\log\log n)^{r\rho(B)}$, and $\rho(B)\geq (\rho/\log\log\log\log n)^{1/3}$.

\subparagraph*{Case 2: $|T_i|<K'|B|^{k-1}$ for all $i\in [k]$.} So  
$\sum_{i=1}^k |T_i|<kK'|B|^{k-1}=|B|^{k-1}2^{(\log n)^{o(1)}}$.
As $\sigma_k|_A: A\to A'$ is bijective, we also have  
\[
|A'|=|A|\geq |B|^k/K^{(k-1)/2}=|B|^k/2^{(\log n)^{o(1)}}.
\]
 Using the facts $|B|\geq n^{1/(\rho^2 \log\log\log\log n)^{1/3}}$ and $|T|=|A''|\geq |A'|/2^{(\log n)^{1/2}}$, we see $|T|>\sum_{i=1}^k |T_i|$.
Therefore, there exists  a block $U\in \Pi_y^{(k)}$ such that $U\subseteq T$ and $U$ is not $i$-small for $i\in [k]$.
Note $|U|=\prod_{i=1}^k |\pi_i(U)|/|\pi_{i-1}(U)|$. Fix $i\in[k]$ that minimizes $ |\pi_i(U)|/|\pi_{i-1}(U)|$.
Then  
\[
|\pi_i(U)|/|\pi_{i-1}(U)| \leq |U|^{1/k} \leq  |T|^{1/k} =  |A''|^{1/k}\leq \left( |A|/ 2^{-C r\log\log\log\log n}\right)^{1/k}\leq |B|/K'^{\Omega(1)}.
\]
As $U$ is not $i$-small, we also have $|\pi_i(U)|/|\pi_{i-1}(U)|\geq K'$.
Pick $z\in \pi_{i-1}(U) $. 
Let $B'=\{w\in B: (z,w)\in \pi_i(T_i)\}$. Then $B'\in  \B(\Pi_{y,z}^{(1)})$, $|B'|=|\pi_i(U)|/|\pi_{i-1}(U)|$, and $(y,z)\in B^{k'}$, where $k'=kr+i-1$.
 As in the previous case, we have $K'=2^{\Omega(k'(\rho^2 \log\log\log\log n)^{1/3})}$.  So Lemma~\ref{lem_fastshrink} is satisfied by $B'$.
\end{proof} 

\section{Conclusion}\label{sec_conclusion}
It is natural to ask how to simplify our proof and/or improve our bounds. The bottleneck is Lemma~\ref{lem_densecase}, whose proof suffer exponential loss in several places, resulting in the weak $(\rho^2 \log\log\log\log n)^{1/3}$ improvement. One place is the Freiman--Ruzsa Theorem (Theorem~\ref{thm_fr}), where the bound $|\sg{A}|/|A|\leq \ell^{2K}$ is exponential in $K$. One natural idea is replacing it by the quasi-polynomial Freiman-Ruzsa Theorem \cite{San12}. However, it is not clear to us if the latter can be made constructive enough to be compatible with our notion of linear $m$-schemes.
Another place is Theorem~\ref{thm_smallgen}, which gives the bound $h(A)=O(1/\mu(A))$. We suspect it may be improved to $h(A)=\tilde{O}(\log (1/\mu(A)))$ when the ambient space $V$ is defined over a small prime field $\F$. Indeed, this was achieved in \cite{Lev03} for the special case $\F=\F_2$.

%

\bibliography{ref}
 
\appendix
 
 \section{$\mathcal{P}$-schemes}\label{sec_pscheme}
 
 \subparagraph*{Notations.} A \emph{(left)  action}   of a group $G$ on a set $S$ is a function  $\varphi:G\times S\to S$ satisfying (1) $\varphi(e,x)=x$ for all $x\in S$ and (2) $\varphi(g,\varphi(h,x))=\varphi(gh,x)$ for $x\in S$ and $g,h\in G$. We also write $\prescript{g}{}{x}$ for $\varphi(g,x)$. The \emph{orbit} or \emph{$G$-orbit} of   $x\in S$ is $Gx:=\{\prescript{g}{}{x}:g\in G\}$. 
 The \emph{stabilizer} of $x\in S$ is $G_x:=\{g\in G:\prescript{g}{}{x}=x\}$. 
For $T\subseteq S$, the \emph{pointwise stabilizer} of $T$ is  $G_T:=\{g\in G: \prescript{g}{}{x}=x \text{ for } x\in T\}$.
When $T=\{x_1,\dots,x_k\}$, we also write $G_{x_1,\dots,x_k}$ for $G_T$. 

 \subparagraph*{$\mathcal{P}$-schemes.} We  recall the definition of  $\mathcal{P}$-schemes in \cite{Guo19}:
Let $G$ be a finite group. A set $\mathcal{P}$ of subgroups of $G$ is called a \emph{subgroup system over $G$} if $\mathcal{P}$ is closed under conjugation, i.e., $gHg^{-1}\in\mathcal{P}$ for $H\in\mathcal{P}$ and $g\in G$.
Define the following two kinds of maps between right coset spaces $H\backslash G$ for various subgroups $H\leq G$:
\begin{itemize}
\item For $H\leq H'\leq G$, define the \emph{projection} $\pi_{H, H'}: H\backslash G\to H'\backslash G$ to be the map sending $Hg\in H\backslash G$ to $H'g\in H'\backslash G$.
\item For $H\leq G$ and $g\in G$, define the \emph{conjugation} $c_{H,g}: H\backslash G\to gHg^{-1}\backslash G$ to be the map sending $Hh\in H\backslash G$ to $(gHg^{-1})gh \in gHg^{-1}\backslash G$.
\end{itemize}

\begin{definition}[$\mathcal{P}$-scheme \cite{Guo19}]\label{defi_pscheme}
Let $\mathcal{P}$ be a subgroup system over $G$. A \emph{$\mathcal{P}$-collection} is a set $\mathcal{C}=\{C_H: H\in \mathcal{P}\}$ indexed by $\mathcal{P}$, where   $C_H$ is a partition of $H\backslash G$ for $H\in\mathcal{P}$.
Moreover, we say $\mathcal{C}$ is a \emph{$\mathcal{P}$-scheme} if it has the following properties:
\begin{description}
\item[(Compatibility):]  For $H,H'\in \mathcal{P}$ with $H\leq H'$ and $x,x'\in H\backslash G$  in the same block of $C_H$,  $\pi_{H,H'}(x)$ and $\pi_{H,H'}(x')$ are in the same block of $C_{H'}$.
\item[(Invariance):]  For $H\in\mathcal{P}$ and $g\in G$, the map $c_{H,g}: H\backslash G\to gHg^{-1}\backslash G$ maps any block of $C_H$ bijectively to a block of $C_{gHg^{-1}}$.
\item[(Regularity):] For $H,H'\in\mathcal{P}$ with $H\leq H'$, any block $B\in C_H$, $B'\in C_{H'}$, the number of $x\in B$ satisfying $\pi_{H,H'}(x)=y$ is  constant when $y$ ranges over the set $B'$.
\end{description}
In addition, $\mathcal{C}$ is  \emph{discrete} on $H\in\mathcal{P}$ if $C_H$ is the finest partition of $H\backslash G$.  
It is \emph{strongly antisymmetric}  if no nontrivial permutation of any block in any partition $C_H$ can be obtained by composing maps of the form  $c_{H_{i-1},g}|_{B_{i-1}}$, $\pi_{H_{i-1},H_i}|_{B_{i-1}}$, or  $(\pi_{H_i,H_{i-1}}|_{B_i})^{-1}$.
\end{definition}

Now suppose $G$ is a permutation group on a finite set $S$.
For $m\in\N^+$, define   $\mathcal{P}_m:=\{G_{x_1,\dots,x_k}: k\in [m],~x_1,\dots,x_k\in S\}$, which is a subgroup system over $G$, called the \emph{system of stabilizers of depth $m$}. Denote by $d(G)$ the smallest $m\in\N^+$ such that every strongly antisymmetric $\mathcal{P}_m$-scheme is discrete on $G_x$ for all $x\in S$. Then $d(G)$ is well defined and we have 

 \begin{theorem}[{\cite[Theorem~1.3]{Guo19}}]\label{thm_algspecial}
Under GRH, there exists a deterministic algorithm that, given $f(X)\in\F_p[X]$ of degree $n$ that factorizes into $n$ distinct linear factors over $\F_p$ and a lifted polynomial $\tilde{f}(X)\in\Z[X]$ with the Galois group $G$ acting on the set of roots of $\tilde{f}(X)$, completely factorizes $f(X)$ over $\F_p$ in time polynomial in $n^{d(G)}$  and the size of the input.
\end{theorem}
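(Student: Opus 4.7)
The plan is to build the $\mathcal{P}_m$-scheme algorithmically from the given data $(\tilde{f},f,p)$ in time polynomial in $n^m$ and the bit-size of the input, and then argue that whenever this scheme is strongly antisymmetric and $m\geq d(G)$, discreteness of the scheme on $G_\alpha$ for some root $\alpha\in S$ immediately yields a complete factorization of $f$ over $\F_p$. First I would fix, for each $H=G_{x_1,\dots,x_k}\in\mathcal{P}_m$, an integral model for the coset space $H\backslash G$: concretely, $H\backslash G$ is in natural bijection with the set of $G$-conjugates of the tuple $(x_1,\dots,x_k)$ in the splitting field $L/\Q$ of $\tilde{f}$, and therefore with the roots of a suitable secondary polynomial $g_H(X)\in\Z[X]$, obtained as the minimal polynomial of an integral primitive element of the fixed field $L^H$. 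Under GRH one can produce such $g_H$, of degree $[G:H]\leq n^k\leq n^m$ and of controlled height, in deterministic polynomial time via resolvent-style constructions.

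Next I would define the block decomposition $C_H$ as the partition of $H\backslash G$ induced by the irreducible factorization of $g_H(X)\bmod p$. The three $\mathcal{P}$-scheme axioms then translate directly into Galois-theoretic statements at the level of $L$: \emph{compatibility} reflects that an inclusion $H\leq H'$ corresponds to an inclusion of fixed fields $L^{H'}\subseteq L^H$, so that the reduction of $g_H$ mod $p$ refines the reduction of $g_{H'}$; \emph{invariance} reflects that the conjugation map $c_{H,g}$ is induced by the Galois automorphism $g$ acting compatibly with reduction modulo any prime of $L$ above $p$; \emph{regularity} reflects the equidistribution of Frobenius orbits inside each irreducible factor. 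The extra \emph{strong antisymmetry} property is then automatic, since the decomposition group at $p$ is generated by a single Frobenius element acting cyclically on each orbit, and cyclic actions cannot supply the nontrivial self-symmetries of a block that would be produced by composing $c_{H,g}$'s, $\pi_{H,H'}$'s and their inverses.

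With this scheme in hand, the algorithm iterates: it computes $\{C_H\}$ for all $H\in\mathcal{P}_m$ and inspects $C_{G_\alpha}$ for a chosen root $\alpha$. If $C_{G_\alpha}$ is not discrete, a nontrivial proper block yields a nontrivial factor of $f$ mod $p$ by pulling back through $g_{G_\alpha}$, and we recurse on each factor (whose Galois group is a subquotient of $G$, so the hypotheses are preserved). If $C_{G_\alpha}$ is discrete, then the bijection $G_\alpha\backslash G\simeq S$ identifies each root of $f\bmod p$ with its own block, and the already-computed block data produces the complete factorization. By the definition of $d(G)$, once $m\geq d(G)$ the strong-antisymmetry property forces discreteness on every $G_\alpha$, so the recursion always terminates within the claimed $\mathrm{poly}(n^{d(G)},\text{input})$ bound.

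The main obstacle I anticipate is purely algorithmic: the polynomials $g_H$ and the maps $\pi_{H,H'}$, $c_{H,g}$ live a priori in number fields of degree $|G|$, which may be exponential in $n$. The standard workaround, following R\'onyai, is never to materialize $L$ but to work symbolically with quotient rings $\Z[X]/(\tilde{f}(X))$ and tensor products thereof, using GRH to find in deterministic polynomial time the auxiliary primes and ``distinguishing'' residues needed to compare two cosets modulo $p$ and to factor univariate polynomials over small-degree auxiliary extensions. Carrying this out carefully, and checking that the factorization-into-irreducibles operation on each $g_H\bmod p$ stays within $\mathrm{poly}(n^m,\log p)$, is where the bulk of the technical work lies; the combinatorial core, once the scheme is built, is essentially the definition of $d(G)$.
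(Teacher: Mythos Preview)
This theorem is not proved in the present paper; it is quoted verbatim from \cite{Guo19} and used as a black box (see the sentence immediately after its statement, and the proof of Theorem~\ref{thm_factormain} in Appendix~\ref{sec_pscheme}). So there is no ``paper's own proof'' to compare against here.

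That said, your sketch contains a genuine gap that is worth naming. You define $C_H$ to be the partition of $H\backslash G$ induced by the irreducible factorization of $g_H\bmod p$. This is the \emph{Frobenius orbit scheme}: its blocks are exactly the orbits of the decomposition group at $p$. This scheme is indeed a strongly antisymmetric $\mathcal{P}$-scheme, and for $H=G_\alpha$ it is already discrete because $f$ splits completely. But computing it \emph{is} the factoring problem: knowing the irreducible factorization of $g_{G_\alpha}\bmod p$ is exactly knowing the roots of $f$. Your last paragraph gestures at working in tensor powers of $\Z[X]/(\tilde f)$ instead, but that does not produce the Frobenius orbit scheme; it produces some coarser, algorithmically accessible $\mathcal{P}$-collection.

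The actual argument in \cite{Guo19} separates these two objects. The algorithm builds, in time $\mathrm{poly}(n^m,\text{input})$, a $\mathcal{P}_m$-collection $\mathcal{C}$ from data computable in tensor powers of $\F_p[X]/(f)$, and then \emph{iteratively refines} it: whenever one of the axioms (compatibility, invariance, regularity, strong antisymmetry) fails, the witness of failure yields a nontrivial splitting of some block, and one refines and repeats. The process terminates either with a proper factor of $f$ (in which case recurse) or with a strongly antisymmetric $\mathcal{P}_m$-scheme; only then does the definition of $d(G)$ kick in to force discreteness on $G_\alpha$. Your proposal skips this refinement loop entirely and instead asserts that the target scheme is directly computable, which is where the circularity enters. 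The sentence ``cyclic actions cannot supply the nontrivial self-symmetries'' is also not a proof of strong antisymmetry; in \cite{Guo19} that property is established for the algorithmic scheme precisely via the refinement step, not by an a priori argument about Frobenius.
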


\subparagraph*{From a $\mathcal{P}_m$-scheme to a linear $m$-scheme.}

Now suppose $G\leq\gl(V)$ acts linearly on $S\subseteq V$, where $V$ is a vector space over a finite field $\F$.
Let $m\in\N^+$, and let  $\mathcal{C}=\{C_H: H\in \mathcal{P}_m\}$ be a $\mathcal{P}_m$-scheme.
We will use  $\mathcal{C}$ to construct a linear $m$-scheme $\Pi(\mathcal{C})$ on $S$.

For $k\in [m]$, equip $S^k$  with the diagonal action of $G$.
Then $\mathcal{P}_m=\{G_x: x\in S^k, k\in [m]\}$.
For a subgroup $H\leq G$, equip $H\backslash G$ with the \emph{inverse right action} of $G$, i.e., $\prescript{g}{}{Hh}=Hhg^{-1}$ for $Hh\in H\backslash G$ and $g\in G$.
For $k\in [m]$ and $x\in S^k$, let $\lambda_x$ be the map  $G x\to G_x\backslash G$ sending  $\prescript{g}{}{x}\in G x$ to $G_x g^{-1}$ for $g\in G$.
Then $\lambda_x$ is a well defined bijection and is \emph{$G$-equivariant}, i.e., $\lambda_x(\prescript{g}{}{\cdot})=\prescript{g}{}{\lambda_x(\cdot)}$ for $g\in G$.

\begin{definition}
For a $\mathcal{P}_m$-scheme $\mathcal{C}=\{C_H: H\in \mathcal{P}_m\}$, let
\[
 \Pi(\mathcal{C}):=\{\Pi(\mathcal{C})^{(1)},\dots,\Pi(\mathcal{C})^{(m)}\}
 \] where for $k\in [m]$,  $\Pi(\mathcal{C})^{(k)}$ is a partition of $S^k$ defined as follows:  elements of $S^k$ are in different blocks of $\Pi(\mathcal{C})^{(k)}$ if they are in different $G$-orbits. For each $G$-orbit $O$, choose $x\in O$ so that $O=G x$ and  we have a bijection $\lambda_x: O\to G_x\backslash G$. Then define the partition $\Pi(\mathcal{C})^{(k)}|_O$ of $O$ to be $\lambda_x^{-1}(C_{G_x})$.
\end{definition}

\begin{lemma}\label{lem_ptom}
$\Pi(\mathcal{C})$ is well defined linear $m$-scheme on $S$ independent of the choices of the elements $x$. Moreover, if $\mathcal{C}$ is strongly antisymmetric, so is $\Pi(\mathcal{C})$. 
\end{lemma}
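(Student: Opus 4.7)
The plan is to first establish independence of the choice of representative, then verify the linear $m$-scheme axioms (P1) and (P2), and finally deduce strong antisymmetry. The single observation driving everything is that each $\tau\in\M_{k,k'}$ is $G$-equivariant: since $G\leq\gl(V)$ acts $\F$-linearly and each coordinate of $\tau(x)$ is an $\F$-linear combination of coordinates of $x$, we have $\tau(\prescript{g}{}{x})=\prescript{g}{}{\tau(x)}$. Consequently $\tau$ sends $G$-orbits in $V^k$ to $G$-orbits in $V^{k'}$, and $G_x\leq G_{\tau(x)}$ for every $x\in V^k$.

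For independence of the choice of $x\in O$, suppose $y_1,y_2\in O$ with $y_2=gy_1$, so $G_{y_2}=gG_{y_1}g^{-1}$. A direct computation gives $\lambda_{y_2}=c_{G_{y_1},g}\circ\lambda_{y_1}$, and the invariance axiom of $\mathcal{C}$ says $c_{G_{y_1},g}$ carries $C_{G_{y_1}}$ bijectively onto $C_{G_{y_2}}$; hence $\lambda_{y_2}^{-1}(C_{G_{y_2}})=\lambda_{y_1}^{-1}(C_{G_{y_1}})$. For (P1) and (P2), fix $\tau\in\M_{k,k'}$ and blocks $B\in\Pi(\mathcal{C})^{(k)}$, $B'\in\Pi(\mathcal{C})^{(k')}$ lying in $G$-orbits $O\subseteq S^k$ and $O'\subseteq S^{k'}$. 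The image $\tau(O)$ is a single $G$-orbit in $V^{k'}$; if $\tau(O)\cap S^{k'}=\emptyset$ both axioms are vacuous (since $\tau(B)\cap B'=\emptyset$). Otherwise $\tau(O)=O''$ for some $G$-orbit $O''\subseteq S^{k'}$; picking $x\in B$ and $y:=\tau(x)\in O''$, equivariance yields $\lambda_y\circ\tau|_O=\pi_{G_x,G_y}\circ\lambda_x$, identifying $\tau|_O$ with the projection $\pi_{G_x,G_y}:G_x\backslash G\to G_y\backslash G$. Compatibility then sends $\lambda_x(B)\in C_{G_x}$ into a single block $B^\ast\in C_{G_y}$, and regularity upgrades this to a surjection with constant fiber size. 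Transporting back through $\lambda_y$ gives both (P1) and (P2).

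For strong antisymmetry, suppose $\phi=\tau_r^{\epsilon_r}\circ\cdots\circ\tau_1^{\epsilon_1}\in\widetilde{\M}_\Pi$ is a nontrivial permutation of some block $B\in\Pi(\mathcal{C})^{(k)}|_O$. Pick $x_0\in B$ and iteratively define $x_i:=\tau_i^{\epsilon_i}(x_{i-1})$. The previous analysis shows each step, under the equivariant bijections $\lambda_{x_{i-1}},\lambda_{x_i}$, corresponds to $\pi_{G_{x_{i-1}},G_{x_i}}$ (or its block-restricted inverse when $\epsilon_i=-1$), so the composition is a bijection $\lambda_{x_0}(B)\to\lambda_{x_r}(B)$. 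Since $x_r\in O$, write $x_r=gx_0$ and append $c_{G_{x_r},g^{-1}}$; a short check yields $c_{G_{x_r},g^{-1}}\circ\lambda_{x_r}=\lambda_{x_0}$, so the full composition equals $\lambda_{x_0}\circ\phi\circ\lambda_{x_0}^{-1}$ on $\lambda_{x_0}(B)$. This is a nontrivial permutation of $\lambda_{x_0}(B)\in C_{G_{x_0}}$ expressed as a composition of projections, inverse projections, and a single conjugation, contradicting the strong antisymmetry of $\mathcal{C}$; hence $\phi=\id$.

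The main obstacle is bookkeeping: one must verify carefully that the equivariant bijections $\lambda_x$ intertwine the linear maps $\tau$ with projections $\pi$ in a way that is compatible with composition and with passing to inverses on blocks, and that precisely one conjugation (appended at the end, to transport $G_{x_r}\backslash G$ back to $G_{x_0}\backslash G$ along $x_r=gx_0$) suffices to promote a $\Pi(\mathcal{C})$-block permutation to a $\mathcal{C}$-block permutation without losing nontriviality.
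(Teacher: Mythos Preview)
Your proposal is correct and follows essentially the same approach as the paper's proof: both establish the $G$-equivariance of each $\tau\in\M_{k,k'}$, use it to build the commuting square that identifies $\tau|_{Gx}$ with the projection $\pi_{G_x,G_{\tau(x)}}$ via the bijections $\lambda_x$, and then derive (P1) and (P2) from compatibility and regularity; for strong antisymmetry both lift a putative nontrivial block permutation of $\Pi(\mathcal{C})$ to a composition of projections and their block-inverses on the $\mathcal{P}$-scheme side and insert a single conjugation (the paper prepends $c_{G_x,g}^{-1}$, you append $c_{G_{x_r},g^{-1}}$) to close the loop. Your explicit tracking of the base points $x_0,\dots,x_r$ is slightly more detailed than the paper's presentation, but the argument is the same.
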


\begin{proof}
For $x'=\prescript{g}{}{x}$, $g\in G$, we have $\lambda_{x'}= c_{G_x,g} \circ \lambda_x$. It follows from invariance of $\mathcal{C}$ that $\Pi(\mathcal{C})^{(k)}|_O$ does not depend on the choice of $x\in O$. So $\Pi(\mathcal{C})$ is well defined.

Next we prove that $\Pi(\mathcal{C})$ is a linear $m$-scheme on $S$.
Let $\tau\in\M_{k,k'}$ where  $k,k'\in [m]$. Then $\tau$ has the form
\[
\mathbf{x}=(x_1,\dots,x_k)\mapsto \left(\sum_{i=1}^k c_{i,1} x_i, \dots, \sum_{i=1}^k c_{i, k'} x_i\right), \quad\text{where}~c_{i,j}\in\F.
\]
As $G\leq \gl(V)$ acts linearly on $S$, and  diagonally on $S^k$ and $S^{k'}$, we have
\[
\prescript{g}{}{\tau(\mathbf{x})} = \prescript{g}{}{\left(\sum_{i=1}^k c_{i,1} x_i, \dots, \sum_{i=1}^k c_{i, k'} x_i\right)}=\left(\sum_{i=1}^k c_{i,1} \prescript{g}{}{x_i}, \dots, \sum_{i=1}^k c_{i, k'} \prescript{g}{}{x_i}\right)
=\tau(\prescript{g}{}{\mathbf{x}}),
\]
 i.e., $\tau$ is $G$-equivariant. Then for $x\in S^k$ such that $\tau(x)\in S^{k'}$, we have $G_x \leq G_{\tau(x)}$ and 
the following diagram commutes.
\begin{equation}\label{eq_ptom}
\begin{tikzcd}[column sep=large]
G x\arrow[r, "\tau|_{G x}"] \arrow[d, "\lambda_x"']
& G \tau(x) \arrow[d, "\lambda_{ \tau(x)}"] \\
G_x\backslash G  \arrow[r,  "  \pi_{G_x,G_{ \tau(x)}}"]
& G_{ \tau(x)}\backslash G  \nospacepunct{.}
\end{tikzcd}
\end{equation}
Also note that the maps $\lambda_x$ and $\lambda_{\tau(x)}$ are bijections, sending blocks to blocks. 
The properties (P1) and (P2) of $\Pi(\mathcal{C})$ then follow from compatibility and regularity of $\mathcal{C}$. 
 So $\Pi(\mathcal{C})$ is a linear $m$-scheme.

Now assume $\Pi(\mathcal{C})$ is not strongly antisymmetric and we prove $\mathcal{C}$ is not either. By definition, for some $k\in [m]$, there exists a nontrivial permutation $\tau\in\widetilde{\M}_{\Pi(\mathcal{C})}$ of a block $B\in \Pi(\mathcal{C})^{(k)}$. Using Diagram~\eqref{eq_ptom}, we see that there exist $x,x'\in B$, and a map $\widetilde{\tau}:\widetilde{B}\to \widetilde{B}'$ that is a composition of maps of the form $\pi_{H,H'}|_{B'}$ or $(\pi_{H,H'}|_{B'})^{-1}$  (where $H,H'\in\mathcal{P}_m$ and $B'$ is a block)  such that   the following diagram commutes.
\[
\begin{tikzcd}[column sep=large]
B\arrow[r, "\tau"] \arrow[d, "\lambda_x"']
& B\arrow[d, "\lambda_{x'}"] \\
\widetilde{B}  \arrow[r,  " \widetilde{\tau}"]
& \widetilde{B}'  \nospacepunct{.}
\end{tikzcd}
\]
By construction, $B$ is a subset of a $G$-orbit of $S^k$. So $x'=\prescript{g}{}{x}$ for some $g\in G$.
Note $\lambda_{x'}= c_{G_x,g} \circ \lambda_x$. By replacing  $\widetilde{\tau}$ with  $\widetilde{\tau}\circ c_{G_x,g}^{-1}$, $x$ with $x'$, and $\widetilde{B}$ with $\widetilde{B}'$ respectively, we may assume $x=x'$ and $\widetilde{B}=\widetilde{B}'$.
Then as $\tau$ is a nontrivial permutation, so is $\widetilde{\tau}$. So $\mathcal{C}$ is not strongly antisymmetric.
\end{proof}

Now we are ready to  derive  Theorem~\ref{thm_factormain} from Theorem~\ref{thm_mainbound} (together with the more elementary Lemma~\ref{lemma_bounddim}).

\begin{proof}[Proof of  Theorem~\ref{thm_factormain} ]
Let $G=\gl(V)$ act linearly on $S\subseteq V$.
We will prove  $d(G)=O(\log n)$, $d(G)\leq\dim\sg{S}_\F$, and  $d(G)=O\left(\frac{\log n}{(\rho^2 \log\log\log\log n)^{1/3}}\right)$.
Theorem~\ref{thm_factormain} then follows directly from Theorem~\ref{thm_algspecial}.

If $d(G)=1$ then we are done. So assume $d(G)>1$.
Let $m=d(G)-1<d(G)$. Then there exists a strongly antisymmetric $\mathcal{P}_m$-scheme $\mathcal{C}$ on $S$ that is not discrete on $G_x$ for some $x\in S$.
By Lemma~\ref{lem_ptom}, $\Pi(\mathcal{C})$ is a strongly antisymmetric linear $m$-scheme on $S$. 
As $\mathcal{C}$ is not discrete on $G_x$, $\Pi(\mathcal{C})^{(1)}$ is not the finest partition of $S$.
We then have $m\leq \log n$ and $m<\dim\sg{S}_\F$ by Lemma~\ref{lemma_bounddim}. Then $d(G)=m+1$ satisfies $d(G)=O(\log n)$ and $d(G)\leq\dim\sg{S}_\F$.
Similarly,   $d(G)=m+1=O\left(\frac{\log n}{(\rho^2 \log\log\log\log n)^{1/3}}\right)$ holds by Theorem~\ref{thm_mainbound}.
\end{proof}

Next, we define the function $d_\lin(m,q)$ and prove  Theorem~\ref{thm_lin}.

\begin{definition}[\cite{Guo19-2}]
 For $m\in \N^+$ and a prime power $q$, define $d_{\lin}(m,q)$ to be the maximum possible value of $d(G)$, where  $G\leq \gl_{m'}(q')$ acts linearly on a set $S\subseteq \F_{q'}^{m'}$,
  and $(m',q')$ ranges over the set of pairs satisfying $m'\leq m$, $q'^{m'}\leq q^m$  and $\gcd(q,q')\neq 1$.
\end{definition}

\begin{proof}[Proof of Theorem~\ref{thm_lin}]
Let $m\in \N^+$ and $q$ be a prime power.
Let $G$ act linearly on a subset $S\subseteq \F_{q'}^{m'}$  where $m'\leq m$, $q'^{m'}\leq q^m$  and $\gcd(q,q')\neq 1$.
Suppose $m_0\in\N^+$ and $\mathcal{C}$ is a strongly antisymmetric $\mathcal{P}_{m_0}$-scheme on $S$ that is not discrete on $G_x$ for some $x\in S$.
We want to prove  $m_0=O\left(\frac{m\log q}{( \log\log\log(m\log q))^{1/3}}\right)$.

By Lemma~\ref{lem_ptom}, $\Pi(\mathcal{C})$ is a strongly antisymmetric linear $m_0$-scheme on $S$. 
As $\mathcal{C}$ is not discrete on $G_x$, $\Pi(\mathcal{C})^{(1)}$ is not the finest partition of $S$.
By Theorem~\ref{thm_mainbound},  we have  $m_0=O\left(\frac{\log n}{(\rho^2 \log\log\log\log n)^{1/3}}\right)$
where $\rho=\log |S|/\log |\sg{S}|$ and $n=|S|=|\sg{S}|^\rho\leq q^{\rho m}$. Combining this bound with the facts $\rho\leq 1$  and $\log n\leq \rho m\log q$ gives the desired bound.
\end{proof}

Finally, we prove Corollary~\ref{cor_bottleneck}.

\begin{proof}[Proof of  Corollary~\ref{cor_bottleneck}]
By the definition of $N_\mathcal{C}(G)$  \cite[Theorem~1.2]{Guo19-2}, we have $N_\mathcal{C}(G)=q_0^{m_0 d_\lin(cm_0,q_0)}$, 
where $G$ has a nonabelian composition factor  that is a classical group of rank $m_0$ over $\F_{q_0}$  and $c\in\N^+$ is an absolute constant. 
Here $q_0^{m_0}=n^{O(1)}$ by \cite[Corollary~4.10]{Guo19-2}. So $d_\lin(cm_0,q_0)=O\left(\frac{n}{( \log\log\log\log n)^{1/3}}\right)=o(\log n)$ by Theorem~\ref{thm_lin}. It follows that $N_\mathcal{C}(G)=n^{o(\log n)}$.

To prove the second statement, it suffices to prove  $N_\mathcal{A}(G)=n^{o(\log n)}$.
By the definition of $N_\mathcal{A}(G)$ \cite[Theorem~1.2]{Guo19-2}, we have $N_\mathcal{A}(G)=m_1^{d_\sym(m_1^c)}$, 
where $G$ has a nonabelian composition factor  that is an alternating group of degree $m_1$  and $c\in\N^+$ is an absolute constant.
By assumption, we have $m_1=n^{o(1)}$. Now $N_\mathcal{A}(G)=n^{o(\log n)}$ follows from $d_\sym(m)=O(\log m)$  \cite[Lemma~3.18]{Guo19-2}.
\end{proof}

\section{Omitted Proofs in Section~\ref{sec_lsch}}\label{sec_pf1}


 \begin{proof}[Proof of Lemma~\ref{lem_linrel}]
 Assume  $\sum_{i=1}^k c_i x_i=0$ and we prove that $\sum_{i=1}^k c_i y_i=0$.
 The claim is trivial if all $c_i$ are zero. By symmetry, we may assume $c_k\neq 0$. By scaling, we may further assume $c_k=-1$, i.e., $x_k=\sum_{i=1}^{k-1} c_i x_i$.
 Let $\tau\in \M_{k,k}$ be the map  $(a_1,\dots,a_{k-1},a_k)\mapsto (a_1,\dots,a_{k-1},\sum_{i=1}^{k-1} c_i a_i)$.
We have $\mathbf{x}=\tau(\mathbf{x})\in\tau(B)$. Then $\tau(B)=B$ by Property (P1). So $\mathbf{y}\in\tau(B)$. But $\sum_{i=1}^{k} c_i a_i=0$ holds for all $(a_1,\dots,a_k)\in\tau(B)$. So $\sum_{i=1}^k c_i y_i=0$.
 \end{proof}

 \begin{proof}[Proof of Lemma~\ref{lem_closedness}]
 Claim (1) holds since $\Pi^{(k)}$ is a partition of $S^k$.
 To prove (2), first consider the case $Q=\exists_{=t}$ where $t\in\N$. Suppose $B$ is the disjoint union of  $B_1,\dots,B_s\in \Pi^{(k+k')}$.
 Then 
 \[
 B_{\exists_{=t}}=\bigcup_{\substack{t_1,\dots,t_s\in \N\\ t_1+\dots+t_s=t}} \bigcap_{i=1}^s (B_i)_{\exists_{=t_i}}.
 \]
 So we may assume $B\in\Pi^{(k+k')}$.
Let $\pi\in\M_{k+k',k}$ be the projection sending $(x,y)\in V^k\times V^{k'}$ to $x\in V^k$.  
  Let $B'\in \Pi^{(k)}$. 
For $x\in B'$, we have  $x\in B_{\exists_{=t}}$ iff  $\#\{y\in S^{k'}: (x,y)\in B\}=t$.
 We also know 
that $\#\{y\in S^{k'}: (x,y)\in B\} =  \#\{z\in B: \pi(z)=x\}$ is constant when $x$ ranges over $B'$.
So either $B'\subseteq B_{\exists_{=t}}$ or $B'\cap B_{\exists_{=t}}=\emptyset$.
Therefore $B_{\exists_{=t}}$ is a disjoint union of blocks in $\Pi^{(k)}$, i.e., $B_{\exists_{=t}}\in\B(\Pi^{(k)})$.
This proves (2) for the case $Q=\exists_{=t}$.
The case $Q=\exists$ follows since $B_\exists=\bigcup_{t\in\N^+} B_{\exists_{=t}}$.
The case $Q=\forall$ also follows since $B_\forall$ is the complement of $\overline{B}_{\exists}$ in $S^k$, where $\overline{B}$ denotes the complement of $B$ in $S^{k+k'}$.

To prove (3), suppose $B$ is the disjoint union of  $B_1,\dots,B_s\in \Pi^{(k)}$. Then $\tau(B)\cap S^{k'}=\bigcup_{i=1}^s (\tau(B_i)\cap S^{k'})$. 
So we may assume $B\in\Pi^{(k)}$.
If $\tau(B)\cap S^{k'}=\emptyset$, then trivially $\tau(B)\cap S^{k'}\in \B(\Pi^{(k')})$.
So assume $\tau(B)\cap S^{k'}\neq \emptyset$.
Choose $B'\in \Pi^{(k')}$ such that $\tau(B)\cap B'\neq \emptyset$. Then $\tau(B)=B'\in \B(\Pi^{(k')})$ by Property~(P1) of linear $m$-schemes.

To prove (4), suppose $B$ is the disjoint union of  $B_1,\dots,B_s\in \Pi^{(k')}$. Then $\tau^{-1}(B)\cap S^{k}=\bigcup_{i=1}^s (\tau^{-1}(B_i)\cap S^{k})$. 
So we may assume $B\in\Pi^{(k')}$. By  Property~(P1), for every $B'\in \Pi^{(k)}$ intersecting $\tau^{-1}(B)\cap S^{k}$ (i.e. $\tau(B')\cap B\neq\emptyset$), we have $\tau(B')=B$, which implies $B'\subseteq \tau^{-1}(B)\cap S^{k}$. So $\tau^{-1}(B)\cap S^{k}$ is a disjoint union of blocks in $\Pi^{(k)}$, i.e., $\tau^{-1}(B)\cap S^{k}\in\B(\Pi^{(k)})$.
\end{proof}

\begin{proof}[Proof of Lemma~\ref{lem_fixing}]
As $\Pi_{x_1,\dots,x_t}=(\Pi_{x_1,\dots,x_{t-1}})_{x_t}$, it suffices to prove the claim for $t=1$. So assume $t=1$ and $x=x_1$.
For $k\in [m-1]$ and $B\in \Pi_x^{(k)}$, let $\widetilde{B}$ be the block in $\Pi^{(k+1)}$ satisfying $B=\{y\in S^k: (x,y)\in \widetilde{B}\}$.
For $k,k'\in [m-1]$ and $\tau\in\M_{k,k'}$, denote by $\widetilde{\tau}: V^{k+1}\to V^{k'+1}$ the map sending $(u,v)\in V\times V^k$ to $(u,\tau(v))$, which is in $\M_{k+1,k'+1}$.


For $k\in [m-1]$, identify $V^k$ with $\pi_{k+1,1}^{-1}(x)\subseteq V^{k+1}$ via  $y\mapsto (x,y)$.
Then  $S^k$ is identified with the set $S^{k+1}\cap \pi_{k+1,1}^{-1}(x)$.
Then $\Pi_x^{(k)}$ becomes a partition of  $S^{k+1}\cap \pi_{k+1,1}^{-1}(x)$, which  is precisely $\Pi^{(k+1)}|_{S^{k+1}\cap \pi_{k+1,1}^{-1}(x)}$. 
A block $B\in \Pi_x^{(k)}$ is then identified with $\widetilde{B}\cap \pi_{k+1,1}^{-1}(x)$.
For $k,k'\in [m-1]$, a  map $\tau\in\M_{k,k'}$ is identified with 
\[
\widetilde{\tau}|_{\pi_{k+1,1}^{-1}(x)}: \pi_{k+1,1}^{-1}(x)\to \pi_{k'+1,1}^{-1}(x).
\]

Consider $k, k'\in [m-1]$, $B\in \Pi^{(k+1)}$, $B'\in \Pi^{(k'+1)}$, and $\tau\in \M_{k,k'}$.
As $\Pi$ is a linear $m$-scheme, we have 
\begin{enumerate}
\item $\widetilde{\tau}(B)$ and $B'$ are either the same or disjoint from each other, and
\item  $\#\{z\in B: \widetilde{\tau}(z)=y\}$ is constant when $y$ ranges over $B'$.
\end{enumerate}
As $\widetilde{\tau}$ fixes the first  coordinate, it sends $\pi^{-1}_{k+1,1}(t)$ to $\pi^{-1}_{k'+1,1}(t)$ for $t\in V$.
Therefore
\begin{enumerate}
\item  $\widetilde{\tau}(B\cap\pi^{-1}_{k+1,1}(x))$ and $B'\cap \pi^{-1}_{k'+1,1}(x)$ are either the same or disjoint from each other, and 
\item for $y\in B'\cap \pi^{-1}_{k'+1,1}(x)$, $\#\{z\in B\cap\pi^{-1}_{k+1,1}(x): \widetilde{\tau}(z)=y\}=\#\{z\in B: \widetilde{\tau}(z)=y\}$, which is constant when  $y$ ranges over $ B'\cap \pi^{-1}_{k'+1,1}(x)$.
\end{enumerate}
This shows that $\Pi_x$ is a linear $m$-scheme by the above identifications. 

Assume $\Pi_x$ is not strongly antisymmetric. 
Then there exists a sequence of  bijections $\tau_i: B_{i-1}\to B_i$, $i=1,\dots,s$, whose composition is a nontrivial permutation of $B_0=B_s$, and for $i\in [s]$, either $\tau_i$ or $\tau_i^{-1}$ is in $\M_{\Pi_x}$. Suppose $B_i\in \Pi_x^{(k_i)}$  for $i=0,\dots,s$, where $k_i\in [m-1]$.
We have identified $B_i$ with $\widetilde{B}_i\cap \pi_{k_i}^{-1}(x)$ for unique $\widetilde{B}_i\in \Pi^{(k_i+1)}$.
Each map $\tau_i$ then extends to a bijection $\widetilde{\tau}_i: \widetilde{B}_{i-1}\to \widetilde{B}_i$ such that either $\widetilde{\tau}_i$ or $\widetilde{\tau}_i^{-1}$ is in $\M_\Pi$. The composition of these maps $\widetilde{\tau}_i$ or their inverses then gives a nontrivial permutation of $\widetilde{B}_0=\widetilde{B}_s$.
So $\Pi$ is not strongly antisymmetric.
\end{proof}

 \begin{proof}[Proof of Lemma~\ref{lemma_bounddim}]
 (1):  Let $k=\dim \sg{S}_{\F}$. Assume to the contrary that $m\geq k$.
 Let $T$ be the set of $(x_1,\dots,x_k)\in S^k$ such that the coordinates $x_i$ are linearly independent over $\F$.
 By Lemma~\ref{lem_linrel}, we have $T\in \B(\Pi^{(k)})$.
 
 Assume $\mathbf{x}=(x_1,\dots,x_k), \mathbf{y}=(x_1,\dots,x_k)\in T$ are in the same block $B\in \Pi^{(k)}$. As the coordinates of $\mathbf{x}$ as well as those of $\mathbf{y}$ form a basis of $\sg{S}_{\F}$, there exists an  invertible linear map $\tau\in  \M_{k,k}$ sending $\mathbf{x}$ to $\mathbf{y}$.
 So $\mathbf{y}\in \tau(B)\cap B$.
Then $\tau(B)=B$ by Property (P1). 
As $\Pi$ is strongly antisymmetric, we must have $\mathbf{x}=\mathbf{y}$. This shows that $T$ is a disjoint union of singletons in $\Pi^{(k)}$, i.e., $\Pi^{(k)}|_{T}=\infty_T$.
Also note  $\pi_{k,1}\in \M_{k,1}$ maps $T$ surjectively to $S$. It follows from Lemma~\ref{lem_closedness}\,(3) that $\Pi^{(1)}=\infty_S$, contradicting the assumption that $B$ is not a singleton. So $m<k=\dim \sg{S}_{\F}$.

(2):  The proof here is the same as the proof for $m$-schemes in \cite{IKS09}, which essentially goes back to \cite{Evd94}. 
We first prove the following claim.

\begin{claim}\label{claim_halve}
Suppose  $\Pi$  is a strongly antisymmetric linear $m$-scheme on $S\subseteq V$, where $m\geq 2$, and $B\in \Pi^{(1)}$ is not a singleton.
Let $x,y$ be distinct elements in $B$. Let $B'$ be the block in $\Pi_x^{(1)}$ containing $y$.
Then $1<|B'|\leq |B|/2$. 
\end{claim}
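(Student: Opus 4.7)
The plan is to analyze the block $B''\in\Pi^{(2)}$ containing $(x,y)$, from which $B'=\{z\in S:(x,z)\in B''\}$. First I would observe, using property (P1) applied to the linear projections $\pi_{2,1},\pi_{2,2}\in\M_{2,1}$ (both send $(x,y)$ into $B$), that $\pi_{2,1}(B'')=\pi_{2,2}(B'')=B$, and then property (P2) forces each fibre of $\pi_{2,1}|_{B''}:B''\to B$ to have the same size, namely $|B'|$ (the fibre above $x$); by symmetry the same holds for $\pi_{2,2}|_{B''}$. In particular $|B''|=|B|\cdot|B'|$.

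For the upper bound $|B'|\leq |B|/2$, I would apply the swap $\tau:(a,b)\mapsto (b,a)\in\M_{2,2}$. By (P1), $\tau(B'')$ is either $B''$ or disjoint from $B''$. If $\tau(B'')=B''$, then $\tau|_{B''}\in\widetilde{\M}_\Pi$ is a permutation of $B''$, which by strong antisymmetry must be the identity; but then every pair in $B''$ lies on the diagonal, contradicting $(x,y)\in B''$ with $x\neq y$. So $\tau(B'')$ is a different block, disjoint from $B''$. Then $\{z\in B:(x,z)\in B''\}$ and $\{z\in B:(x,z)\in\tau(B'')\}=\{z\in B:(z,x)\in B''\}$ are disjoint subsets of $B$, each of cardinality $|B'|$ (the latter by the analogous fibre argument applied to $\pi_{2,2}$), so $2|B'|\leq |B|$.

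For the lower bound $|B'|>1$, I would argue by contradiction. If $|B'|=1$, then both projections $\pi_{2,1}|_{B''},\pi_{2,2}|_{B''}:B''\to B$ are bijections, so both lie in $\M_\Pi$. Viewing $B''$ as the graph of the bijection $\phi:B\to B$ sending $a$ to the unique $z$ with $(a,z)\in B''$ (so $\phi(x)=y\neq x$), the composition $\varphi:=(\pi_{2,1}|_{B''})^{-1}\circ\pi_{2,2}|_{B''}\in\widetilde{\M}_\Pi$ acts on $B''$ as $(a,\phi(a))\mapsto(\phi(a),\phi^2(a))$, which is the identity iff $\phi=\mathrm{id}_B$. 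Since $\phi(x)=y\neq x$, $\varphi$ is a nontrivial permutation of $B''$, contradicting strong antisymmetry.

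The more delicate of the two bounds will be the lower one: the upper bound is essentially the Evdokimov swap argument, while for the lower bound one must manufacture a nontrivial element of $\widetilde{\M}_\Pi$ out of the two projections under the hypothesis $|B'|=1$, and the key trick is to invert one projection and compose with the other, which is available to us precisely because $\M_\Pi$ is closed under inversion on bijective restrictions.
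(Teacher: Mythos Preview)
Your argument is correct and follows essentially the same route as the paper's proof: both use the swap $\tau\in\M_{2,2}$ together with strong antisymmetry to force $\tau(B'')\neq B''$ for the upper bound, and both invert one of the two bijective projections $\pi_{2,1}|_{B''},\pi_{2,2}|_{B''}$ and compose with the other to produce a nontrivial element of $\widetilde{\M}_\Pi$ for the lower bound. The only cosmetic differences are that the paper bounds $|B''|\leq|B|^2/2$ directly (rather than comparing two disjoint $x$-fibres inside $B$), and that the paper composes in the other order to get a nontrivial permutation of $B$ rather than of $B''$; both variants are equally valid.
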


\begin{proof}
Let $\tau: V^2\to V^2$ be the permutation $(a,b)\mapsto (b,a)$.
Then $\tau\in\M_{2,2}$. 
Let $B''$ be the block in $\Pi^{(2)}$ containing $(x,y)$.
Note $\tau(B'')\in \Pi^{(2)}$ by Property (P1).
As $x,y\in B$, the projections $\pi_{2,1}$ and $\pi_{2,2}$ both map $B''$ and $\tau(B'')$ surjectively to $B$ by Property (P1).
So  $B'',\tau(B'')\subseteq B\times B$.
As $\tau(x,y)=(y,x)\neq (x,y)$, we have $\tau(B'')\neq B''$ by  strong antisymmetry of $\Pi$.
So $|B''|=|\tau(B'')|\leq |B|^2/2$.
Then by Property (P2), we have $|B'|=|B''|/|B|\leq |B|/2$.
Finally, assume to the contrary $|B'|=1$. Then $\pi_{2,1}|_{B''}: B''\to B$ and $\pi_{2,2}|_{B''}:B''\to B$ are bijective by Property (P2).
The map $\pi_{2,2}|_{B_1}\circ (\pi_{2,1}|_{B_1})^{-1}$ sends $x$ to $y$, and hence is a nontrivial permutation of $B$. But this contradicts strong antisymmetry of $\Pi$. So $|B'|>1$.
\end{proof}

 Lemma~\ref{lemma_bounddim}\,(2) now follows  from Claim~\ref{claim_halve}, Lemma~\ref{lem_fixing}, and induction on $m$.
\end{proof}

\section{Omitted Proofs in Section~\ref{sec_main}}\label{sec_pf2}

This section contains the proofs omitted in  Section~\ref{sec_main} except that of Lemma~\ref{lem_densecase}.
We prove Lemma~\ref{lem_densecase}  in Appendix~\ref{sec_dense}.

\begin{proof}[Proof of Lemma~\ref{lem_fastshrink2}]
Let $k\in [m-2]$, $x_1,\dots,x_k\in B$, $B' \in \B(\Pi_{x_1,\dots,x_k}^{(1)})$, and $C>0$ be as in Lemma~\ref{lem_fastshrink}.
In addition, let $t:= (\rho^2 \log\log\log\log n)^{1/3}$.
As $\min\{|B'|, |B|/|B'|\}\geq 2^{C k t }$, we have $|B|\geq 2^{2C k t}$.
We will find  $k'\in [k]$, $y_1,\dots,y_{k'}\in B$, and $B''\in \Pi_{y_1,\dots,y_{k'}}^{(1)}$
such that $B''\subseteq B$, $|B''|>1$ and $|B|/|B''|=2^{\Omega(k t)}=2^{\Omega(k (\rho^2 \log\log\log\log n)^{1/3})}$.
The condition $B''\neq B$ follows automatically from $|B''|>1$:
By Lemma~\ref{lem_linrel}, we have $\{y_1\}\in \Pi_{y_1,\dots,y_{k'}}^{(1)}$   and hence $y_1\not\in B''$. So $B''\neq B$. 

If $\Pi_{x_1,\dots,x_k}^{(1)}|_{B'}\neq\infty_{B'}$, we can find $B''\in\Pi_{x_1,\dots,x_k}^{(1)}$ contained in $B'$ satisfying $|B''|>1$.
In this case, let $k'=k$ and $y_i=x_i$ for $i\in [k]$, and we are done.
So assume $\Pi_{x_1,\dots,x_k}^{(1)}|_{B'}=\infty_{B'}$.

For $0\leq i\leq k$, let $T_i$ be the set of $x\in B'$ satisfying $\{x\}\in \Pi_{x_1,\dots,x_i}^{(1)}$.
We have $T_0=\emptyset$, $T_k=B'$, and $T_{i-1}\subseteq T_i$ for $i\in [k]$ by Lemma~\ref{lem_refine}.
So there exists $i\in [k]$ such that $|T_i \setminus T_{i-1}|\geq |B'|/k\geq 2^{C k t }/k=2^{\Omega(kt)}$.
Fix such $i$ and let $\Pi'=\Pi_{x_1,\dots,x_{i-1}}$. By Lemma~\ref{lem_fixing}, $\Pi'$ is a strongly antisymmetric linear $(m-i+1)$-scheme, where $m-i+1\geq m-k+1\geq 3$.

First assume $i=1$. 
 By Claim~\ref{claim_halve} in the proof of Lemma~\ref{lemma_bounddim},  $\{x_1\}$ is the only singleton in $\Pi_{x_1}^{(1)}$  contained in $B$.
So $T_1\setminus T_0=T_1=\{x_1\}$. As $i=1$, we have $|B'|\leq k|T_1\setminus T_0|= k$.
As $|B'|\geq 2^{C k t }$, this implies $t \leq \log k/(Ck)=O(1)$.
In this case, just let $k'=1$, $y_1=x_1$ and choose $B''$ to be any block in $\Pi^{(1)}_{x_1}$ contained in $B$ other than $\{x_1\}$.
By Claim~\ref{claim_halve}, we have $|B|/|B''|\geq 2=2^{\Omega(t)}$. Thus the lemma holds.

Now assume $i>1$.
Let $B_1$ be the block in $\Pi'^{(1)}$ containing $x_i$.
Consider arbitrary $x\in T_i\setminus T_{i-1}$.
Let $B_x$ be the block in $\Pi'^{(1)}$ containing $x$, and let $B'_x$ be the block in $\Pi'^{(2)}$ containing $(x_i,x)$.
As $x\not\in T_{i-1}$, we have $|B_x|>1$.
And as $x\in T_i$, we have $\{x\}\in \Pi_{x_1,\dots,x_i}^{(1)}=\Pi'^{(1)}_{x_i}$.
So $\{y\in S: (x_i,y)\in B'_x\}=\{x\}$.
It follows that $|B'_x|=|B_1|$ by Property (P2).
On the other hand, the projection $\pi_{2,2}$ maps $B'_x$ surjectively to $B_x$ by Property (P1).
So $|B'_x|\geq |B_x|$. 

We now consider the two cases $|B_x|<|B'_x|$ and $|B_x|=|B'_x|$ separately. Assume $|B'_x|>|B_x|$.
Then $\Pi'^{(1)}_x$ contains a block $B''_x:=\{y\in B: (x,y)\in B'_x\}\subseteq B$ of cardinality $|B'_x|/|B_x|>1$ by Property (P2).
If $|B_x|<|B|^{1/2}$, we have $|B|/|B_x|\geq |B|^{1/2}=2^{\Omega(k t)}$.
In this case, we may choose $B''=B_x$, $k'=i-1$ and $y_j=x_j$ for $j\in [i-1]$.
On the other hand, if $|B_x|\geq |B|^{1/2}$, we have $|B|/|B''_x|\geq |B'_x|/|B''_x|=|B_x|\geq |B|^{1/2}=2^{\Omega(k t)}$.
In this case, we may choose $B''=B''_x$, $k'=i$, $y_i=x$, and $y_j=x_j$ for $j\in [i-1]$.

So we may assume $|B_x|=|B'_x|=|B_1|$. Then  $\{x\}\in\Pi'^{(1)}_{x_i}$. In fact, as $x\in T_i\setminus T_{i-1}$ is arbitrary, we may assume $|B_y|=|B'_y|=|B_1|$ for all $y\in T_i\setminus T_{i-1}$, where $B_y$ is the block in $\Pi'^{(1)}$ containing $y$ and $B'_y$ is the block in $\Pi'^{(2)}$ containing $(x_i,y)$.
Consider one such $y$ different from $x$ and note $\{y\}\in   \Pi'^{(1)}_{x_i,x}$ since $y\in T_i$.
Let $B^*$ be the block in $\Pi'^{(3)}$ containing $(x_i,x,y)$. Then $|B^*|=|B_1|$ since $\{x\}\in\Pi'^{(1)}_{x_i}$ and $\{y\}\in \Pi'^{(1)}_{x_i,x}$.
So $|B^*|=|B_x|=|B_y|$. It follows that the maps $\pi_{3,2}|_{B^*}:B^*\to B_x$ and $\pi_{3,3}|_{B^*}:B^*\to B_y$ are bijective. 
As $\pi_{3,3}|_{B^*}\circ (\pi_{3,2}|_{B^*})^{-1}\in\widetilde{\M}_{\Pi'}$ sends $x$ to $y$, we have $|B_x|=|B_y|$ and $B_x\neq B_y$ by strong antisymmetry of $\Pi'$.
As this holds for any distinct $x,y\in T_i\setminus T_{i-1}$, 
we have $|B|/|B_x|\geq |T_i\setminus T_{i-1}|=2^{\Omega(k t)}$ for $x\in T_i\setminus T_{i-1}$.
Then we may choose $B''=B_x$ for some $x\in T_i\setminus T_{i-1}$, and let $k'=i-1$ and $y_j=x_j$ for $j\in [i-1]$.
\end{proof}

\begin{proof}[Proof of Lemma~\ref{lem_bij}]
 By Lemma~\ref{lem_linrel} and Lemma~\ref{lem_closedness}, for $t\in\N$, the set of $x\in A$  for which there exist precisely $t$ elements $y$ in $A$ satisfying $\sigma_k(y)=\sigma_k(x)$ is in $\B(\Pi^{(k)})$. As $A\in \Pi^{(k)}$,   $\#\{y\in A: \sigma_k(y)=\sigma_k(x)\}$ is  constant independent of $x\in A$. So for $x\in A'=\sigma_k(A)$, $\#\{y\in A: \sigma_k(y)=x\}$ is independent of $x$ and equals $|A|/|A'|$.  
 
 Let $x,x'\in A$ such that $\sigma_k(x)=\sigma_k(x')$. We want to prove $x=x'$. Let $T$ be the set of $y\in A$ satisfying $\sigma_k(y)=\sigma_k(x)$. Then $|T|=|A|/|A'|$.
We also have $T\in\B(\Pi_x^{(k)})$ by Lemma~\ref{lem_linrel}. 
 Let $B$ be the block in $\Pi_x^{(k)}$ containing $x'$. 
As $x'\in T$, we have $B\subseteq T$ and hence $|B|\leq |T|=|A|/|A'|$. 
By Lemma~\ref{lem_fixing}, $\Pi_x$ is a strongly antisymmetric linear $(m-k)$-scheme.
 As $m-k>\log |A|/|A'|\geq \log |B|$, we have $B=\{x'\}$ by Lemma~\ref{lemma_bounddim}\,(2). 

Let $B'$ be the block in $\Pi^{(2k)}$ containing $(x,x')$. Denote by $\pi_1: V^{2k}\to V^k$ (resp. $\pi_2:V^{2k}\to V^k$) the projection to the first (resp. last) $k$ coordinates. Note $\pi_1,\pi_2\in \M_{2k,k}$.
 We have $\pi_1(B')=\pi_2(B')=A$ and $|B'|=|A||B|=|A|$. So $\pi_1|_{B'}: B'\to A$ and $\pi_2|_{B'}: B'\to A$ are bijections. Then $\pi_2|_{B'}\circ (\pi_1|_{B'})^{-1}$ is in $\widetilde{\M}_{\Pi}$ and sends $x$ to $x'$. By strong antisymmetry of $\Pi$, we have $x=x'$, as desired.
\end{proof}

\begin{proof}[Proof of Lemma~\ref{lem_schemepower}]
Consider $i\in [m']$ and $B\in \Pi^{(ki)}$ such that  $B\subseteq A^{i}$.
We prove that $\sigma_k^{(i)}|_{B}: B\to \sigma_k^{(i)}(B)$ is bijective.
 Consider $\mathbf{x}=(x_1,\dots,x_i), \mathbf{y}=(y_1,\dots,y_i)\in B$ such that $\sigma_k^{(i)}(\mathbf{x})=\sigma_k^{(i)}(\mathbf{y})$, i.e.,
 $\sigma_k(x_j)=\sigma_k(y_j)$ for $j\in [i]$. As $B\subseteq A^{i}$, we have $x_j,y_j\in A$ for $j\in [i]$. As $\sigma_k|_A: A\to A'$ is bijective, we have $x_j=y_j$ for $j\in [i]$. So $\mathbf{x}=\mathbf{y}$, i.e., $\sigma_k^{(i)}|_{B}: B\to \sigma_k^{(i)}(B)$ is bijective.

It remains to prove that $\Pi'$ is a well defined strongly antisymmetric linear $m'$-scheme on $A'$.
Let $i\in [m']$. We first prove that $\Pi'^{(i)}$ is a well defined partition of $A'^i$. Note the union of the sets in $\Pi'^{(i)}$ is $\sigma_k^{(i)}(A^i)=\sigma_k(A)^i=A'^i$.
To prove that $\Pi'^{(i)}$ is a partition, consider $B,B'\in \Pi^{(ki)}$ satisfying $\sigma_k^{(i)}(B)\cap  \sigma_k^{(i)}(B')\neq\emptyset$. We want to show $\sigma_k^{(i)}(B)= \sigma_k^{(i)}(B')$.
To see this, let $T$ be the set of $x\in B$ for which there exists $y\in B'$ satisfying $\sigma_k^{(i)}(x)= \sigma_k^{(i)}(y)$.
Then $T\in \B(\Pi^{(ki)})$ by  Lemma~\ref{lem_closedness}.
As  $ \sigma_k^{(i)}(B)\cap  \sigma_k^{(i)}(B')\neq\emptyset$, we have $T\neq\emptyset$.
As $T\subseteq B$ and $B\in \Pi^{(ki)}$, we have $T=B$.
This implies $\sigma_k^{(i)}(B)\subseteq \sigma_k^{(i)}(B')$.
Similarly, $\sigma_k^{(i)}(B')\subseteq \sigma_k^{(i)}(B)$. So $\sigma_k^{(i)}(B)=\sigma_k^{(i)}(B')$, as desired.

Consider $B\in \Pi'^{(i)}$, $B'\in \Pi'^{(i')}$ and $\tau\in\M_{i,i'}$ such that $\tau(B)\cap B'\neq\emptyset$.  Choose $\widetilde{B}\in \Pi^{(ki)}$ and $\widetilde{B}'\in \Pi^{(ki')}$ such that $\widetilde{B}\subseteq A^i$,   $\widetilde{B}'\subseteq A^{i'}$, $\sigma_k^{(i)}(\widetilde{B})=B$ and $\sigma_k^{(i')}(\widetilde{B}')=B'$.
By  Property (P2) of $\Pi$, when $x$ ranges over $\widetilde{B}$, the number of $y\in\widetilde{B}'$
satisfying $(\tau\circ \sigma_k^{(i)})(x)=\sigma_k^{(i')}(y)$ is   independent of $x$. And this number is
positive iff $(\tau\circ \sigma_k^{(i)})(x)\in \sigma_k^{(i')}(\widetilde{B}')=B'$.
As $\tau(B)\cap B'\neq\emptyset$ and $\tau(B)=(\tau\circ \sigma_k^{(i)})(\widetilde{B})$, we know this number is indeed positive.
So $\tau(B)\subseteq B'$. 
 Again by  Property (P2) of $\Pi$, when $y$ ranges over $\widetilde{B}'$, the number of $x\in\widetilde{B}$
satisfying $(\tau\circ \sigma_k^{(i)})(x)=\sigma_k^{(i')}(y)$ is   independent of $y$. So the map $\tau\circ \sigma_k^{(i)}|_{\widetilde{B}}: \widetilde{B}\to B'$ is a surjective $d$-to-1 map for some $d\in\N^+$.
As $\sigma_k^{(i)}|_{\widetilde{B}}: \widetilde{B}\to B$ is bijective, we know $\tau|_B: B\to B'$ is also a surjective $d$-to-1 map.
This proves Properties (P1) and (P2) of $\Pi'$. So $\Pi'$ is a linear $m'$-scheme.

Now further assume $\tau|_B: B\to B'$ is bijective. We claim there exists a bijection $\tau': \widetilde{B}\to\widetilde{B}'$ in $\widetilde{\M}_\Pi$ making the following diagram commute.
 
\begin{tikzcd}[column sep=large]
\widetilde{B} \arrow[r, "\tau'"] \arrow[d, "\sigma_k^{(i)}"']
&\widetilde{B}' \arrow[d, "\sigma_k^{(i')}"] \\
B  \arrow[r,  "\tau"]
& B'\nospacepunct{.}
\end{tikzcd}
 
 To see this, define $\Delta:=\{(x,y)\in \widetilde{B}\times \widetilde{B}': (\tau\circ\sigma_k^{(i)})(x)=\sigma_k^{(i')}(y)\}$.
 Then $\Delta\in \B(\Pi^{(ki+ki')})$.
As $\sigma_k^{(i)}|_{\widetilde{B}}:\widetilde{B}\to B$, $\sigma_k^{(i')}|_{\widetilde{B}'}:\widetilde{B}'\to B'$ and $\tau|_B: B\to B'$ are bijective, for every $x\in \widetilde{B}$, there exists unique $y\in \widetilde{B}'$ satisfying $(x,y)\in \Delta$. So $|\Delta|=|\widetilde{B}|=|\widetilde{B}'|$.
Let $\pi_1$ (resp. $\pi_2$) be the projection from $ V^{ki}\times V^{ki'}$ to $V^{ki}$ (resp. $V^{ki'}$). Then $\pi_1$ (resp. $\pi_2$) maps $\Delta$ bijectively to $\widetilde{B}$ (resp. $\widetilde{B}'$). By the definition of $\Delta$, the map $\tau':=\pi_2|_\Delta\circ (\pi_1|_\Delta)^{-1}\in \widetilde{\M}_\Pi$ makes the above diagram commute. This proves the claim.

By the claim just proved, every nontrivial permutation in $\widetilde{\M}_{\Pi'}$ lifts to a nontrivial permutation in $\widetilde{\M}_\Pi$. Therefore, as $\Pi$ is strongly antisymmetric, so is $\Pi'$.
\end{proof}

\begin{proof}[Proof of Claim~\ref{claim_final}]
Consider an arbitrary block $U\in \Pi'^{(1)}_{x_1,\dots,x_r}$ satisfying $U\subseteq A''$. 
We will find $U'\in \Pi_y^{(k)}$ such that $U'\subseteq A$ and $\sigma_k(U')=U$.
As $\sigma_k|_{A}: A\to A'$ is bijective, this implies $|U'|=|U|$.
The claim follows by choosing $T$ to be the (disjoint) union of these sets $U'$ where $U$ ranges over the blocks  in $\Pi'^{(1)}_{x_1,\dots,x_r}$ that are subsets of $A''$.

As $U\in \Pi'^{(1)}_{x_1,\dots,x_r}$, there exists $Z\in \Pi'^{(r+1)}$ such that $U=\{z\in A': (x_1,\dots,x_r,z)\in  Z\}$. 
Note $Z\subseteq A'^{r+1}$.
Fix $z_0\in U$ and choose $z_0'\in A$ such that $\sigma_k(z_0')=z_0$.
Choose $\widetilde{Z}\in  \Pi^{(k(r+1))}$ 
to be the block containing $(y_1,\dots,y_r,z_0')$.
Then $\widetilde{Z}\subseteq A^{r+1}$.

Note $\sigma_k^{(r+1)}(y_1,\dots,y_r,z_0')=(x_1,\dots,x_r,z_0)\in Z$. 
So $\sigma_k^{(r+1)}(\widetilde{Z})\cap Z\neq\emptyset$.
As $\sigma_k^{(r+1)}(\widetilde{Z})\in \Pi'^{(r+1)}$ by the definition of $\Pi'$, 
we have $\sigma_k^{(r+1)}(\widetilde{Z})=Z$.
Let $U'=\{z\in A: (y_1,\dots,y_r,z)\in \widetilde{Z}\}\subseteq A$. Then $U'\in \Pi_y^{(k)}$.

It remains to prove   $\sigma_k(U')=U$. Consider $z\in\sigma_k(U')\subseteq A'$ and choose $z'\in U'$ such that $z=\sigma_k(z')$.
As $z'\in U'$, we have $(y_1,\dots,y_r,z')\in\widetilde{Z}$.
So $\sigma_k^{(r+1)}(y_1,\dots,y_r,z')=(x_1,\dots,x_r,z)\in Z$. 
Therefore $z\in U$. This proves $\sigma_k(U')\subseteq U$.

Now consider $z\in U$. Then $(x_1,\dots,x_r,z)\in Z$. As $\sigma_k^{(r+1)}(\widetilde{Z})=Z$, there exists $(y'_1,\dots,y'_r,z')\in \widetilde{Z}$
such that $\sigma_k^{(r+1)}(y'_1,\dots,y'_r,z')=(x_1,\dots,x_r,z)$, i.e., $\sigma_k(z')=z$ and $\sigma_k(y'_i)=x_i=\sigma_k(y_i)$ for $i\in [r]$.
As $\sigma_k|_A: A\to A'$ is bijective, we have $y'_i=y_i$ for $i\in [r]$. So $(y_1,\dots,y_r,z')\in\widetilde{Z}$. It follows that $z'\in U'$. So $z=\sigma_k(z')\subseteq \sigma_k(U')$. This proves $\sigma_k(U')\supseteq U$. Therefore $\sigma_k(U')=U$.
 \end{proof}

\section{Proof of Lemma~\ref{lem_densecase}}\label{sec_dense}

We prove Lemma~\ref{lem_densecase} in this section, which addresses the case that $B\in\Pi^{(1)}$ is  dense in $\sg{B}$.

\subsection{Additional Notations and Preliminaries.}

We first introduce  additional notations and preliminaries that are used in the proof of Lemma~\ref{lem_densecase}.

\subparagraph*{Fourier analysis on finite abelian groups.} 
For a finite abelian group $A$, write $\widehat{A}$ for the dual group $\mathrm{Hom}(A, \mathbb{C}^\times)$. Elements in $\widehat{A}$ are called \emph{characters} of $A$. 
The complex conjugate $\overline{\chi}$ of a character $\chi$ is again a character.
For a function $f: A\to \mathbb{C}$ and a character $\chi\in\widehat{A}$, define the \emph{$\chi$-th Fourier coefficient} $\widehat{f}(\chi):=\E_{a\in A}[f(a) \overline{\chi(a)}]$. Then $f=\sum_{\chi\in \widehat{A}} \widehat{f}(\chi) \chi$. We also have   \emph{Parseval's identity}: 
\[
\E_{a\in A}\left[|f(a)|^2\right]=\sum_{\chi\in \widehat{A}} |\widehat{f}(\chi)|^2.
\]

\subparagraph*{Additive combinatorics.} For  $A\subseteq V$, write $\F A$ for the cone $\{c a: c\in\F, a\in A\}\subseteq V$.
Let $A^\pm:=A\cup(-A)\cup \{0\}\subseteq \F A$. Define $h(A)$ to be smallest  $k\in\N^+$ such that $k A^\pm = \sg{A}$.
The problem of bounding $h(A)$ in terms of $\mu(A)$ has been studied in \cite{HR96, KL03}. In particular, the following bound was obtained in \cite{HR96}.
\begin{theorem}[{\cite[Lemma~3]{HR96}}]\label{thm_smallgen}
$h(A)\leq\max\left\{2, \lfloor\frac{3}{2\mu(A)}\rfloor\right\}$.
\end{theorem}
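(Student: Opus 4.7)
The plan is to apply Kneser's theorem iteratively to the sumsets $B_k:=kA^{\pm}$. Set $G:=\sg{A}$, $\alpha:=\mu(A)=|A|/|G|$, and $B:=A^{\pm}$, so that $B$ is symmetric, contains $0$, and satisfies $|B|\geq|A|$. Since $0\in B$ we have $B_k\subseteq B_{k+1}$, so it suffices to show that $B_k=G$ once $k\geq\max\{2,\lfloor 3/(2\alpha)\rfloor\}$.

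I would first dispose of the easy regime $\alpha>1/2$, in which $\max\{2,\lfloor 3/(2\alpha)\rfloor\}=2$ and we only need $h(A)\leq 2$. For any $x\in G$, we have $|B|+|x-B|=2|B|\geq 2|A|>|G|$, so $B\cap(x-B)\neq\emptyset$ by pigeonhole, giving $x\in B+B=2B$; hence $2B=G$.

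For the main case $\alpha\leq 1/2$, I would prove by induction on $k$ that either $B_k=G$ or $|B_{k+1}|\geq |B_k|+c|A|$ for a constant $c\geq 2/3$. Starting from $|B_1|\geq|A|$, iteration of this estimate forces $|B_k|\geq|G|$ as soon as $k\geq\lfloor 3/(2\alpha)\rfloor$. The growth estimate itself follows from Kneser's theorem applied to $B_{k+1}=B_k+B$: letting $H\leq G$ be the stabilizer of $B_{k+1}$, one has
\[
|B_{k+1}|\;\geq\;|B_k+H|+|B+H|-|H|.
\]
When $H=\{0\}$, this immediately yields $|B_{k+1}|\geq |B_k|+|B|-1\geq |B_k|+|A|-1$, which is considerably stronger than what is needed. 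When $H$ is nontrivial, $B_{k+1}$ is a union of $H$-cosets, and one passes to the quotient $G/H$: the projected set $\pi(A)$ still generates $G/H$, so one concludes by induction on $|G|$ together with a careful count of how many $H$-cosets the iterates $B_k$ and $A$ meet.

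The main obstacle is precisely this nontrivial-stabilizer case, which is where the sharp constant $3/2$ is extracted. The extremal configurations are those in which $A$ is concentrated in a small number of cosets of a proper subgroup of $G$ (for instance, $A$ being a near-union of cosets of a codimension-one subspace of $\F_\ell^n$): in such configurations, each application of $+B$ adds only about $(2/3)|A|$ new elements, and one must check that this growth rate is preserved throughout the iteration. Handling this cleanly---typically by a double induction (on $|G|$ and on $k$), reducing to the trivial-stabilizer case inside the quotient $G/H$---is the technical heart of the proof.
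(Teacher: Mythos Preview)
The paper does not prove this statement at all: Theorem~\ref{thm_smallgen} is simply quoted from \cite{HR96} and used as a black box throughout Appendix~\ref{sec_dense}. So there is no ``paper's own proof'' to compare against; your task here was only to supply a reference, not an argument.

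That said, your sketch is the standard route to results of this type (iterated Kneser), and is presumably close in spirit to what \cite{HR96} does. Two remarks on the sketch itself. First, the arithmetic you state does not quite close: from $|B_1|\geq|A|$ and a per-step gain of $(2/3)|A|$ you get $|B_k|\geq|A|(2k+1)/3$, which forces $B_k=G$ only once $k\geq 3/(2\alpha)-1/2$; this can exceed $\lfloor 3/(2\alpha)\rfloor$ (e.g.\ when $3/(2\alpha)=3.6$). To hit the exact floor bound one needs either a slightly stronger initial estimate (using that $|A^{\pm}|$ is typically larger than $|A|$) or a more careful accounting across the first and last steps. Second, as you yourself flag, the nontrivial-stabilizer case is where all the work lies, and you have only described the shape of the induction there, not carried it out; that is the genuine content of the lemma, and your proposal leaves it as an exercise rather than a proof.
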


For $A\subseteq V$, the \emph{additive energy} $E(A)$ of $A$ is defined by:
\begin{align*}
E(A):&=\#\{(a_1,a_2,a_3,a_4)\in A^4: a_1+a_2=a_3+a_4\}\\
&=\#\{(a_1,a_2,a_3,a_4)\in A^4: a_1-a_3=a_4-a_2\}.
\end{align*}
If $|A+A|/|A|$ is small, then $E(A)$ is large, as the map $A\times A\to A+A$ sending $(a,b)$ to $a+b$ has a lot of ``collisions''.
The converse of this statement is false. Nevertheless, the famous Balog-Szemer\'edi-Gowers  Theorem \cite{BS94, Gow98} states that if $E(A)$ is large enough, then there always exists a dense subset $A'\subseteq A$ such that $|A'+A'|/|A'|$ is small.
We need the following variant of this theorem tailored to  linear $m$-schemes:

\begin{theorem}[Balog-Szemer\'edi-Gowers Theorem for linear $m$-schemes]\label{thm_bsg}
Let $m\geq 4$.
Let $\Pi$ be a   linear $m$-scheme and  $B \in\Pi^{(1)}$  such that   $E(B)\geq \gamma |B|^3$, where $\gamma>0$. Then there exists  $B'\subseteq B$ such that $|B'|\geq \gamma|B|/3$ and $|B'-B'|< 2^{17}\gamma^{-9}|B|$.
Moreover,   $B'$ can be chosen such that $B'\in \B(\Pi^{(1)}_x)$ for some $x\in B$.
\end{theorem}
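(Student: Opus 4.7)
The plan is to adapt the standard Balog--Szemer\'edi--Gowers argument (cf.\ \cite{TV06}) while tracking the $m$-scheme structure, so that the extracted subset $B'$ is automatically a union of blocks of $\Pi_x^{(1)}$ for some $x \in B$; the key observation is that every combinatorial object appearing in the classical proof (popular-difference sets, auxiliary edge sets, common-neighbor counts) is expressible as a predicate on bounded tuples built from linear maps and counting quantifiers and therefore, by Lemma~\ref{lem_closedness}, defines a set in $\B(\Pi^{(k)})$ for small $k$. Concretely, with $r^-(t) := \#\{(u,v) \in B^2 : u - v = t\}$ satisfying $\sum_t r^-(t)^2 = E(B) \geq \gamma|B|^3$, a dyadic decomposition shows that the popular-difference set $D := \{t : r^-(t) \geq \gamma|B|/2\}$ satisfies $\sum_{t \in D} r^-(t) \geq \gamma|B|^2/2$; hence the edge set $E_G := \{(a,b) \in B \times B : a - b \in D\}$ of the ``popular-difference graph'' $G$ on $B$ lies in $\B(\Pi^{(2)})$ and has at least $\gamma|B|^2/2$ edges.

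\emph{Finding the fiber $B'_x$.} For $x \in B$ and a threshold $\alpha = \Theta(\gamma^c)$ to be tuned, consider the fiber
\[
B'_x := \{y \in B : \#\{c \in B : (x,c), (y,c) \in E_G\} \geq \alpha |B|\},
\]
which lies in $\B(\Pi_x^{(1)})$ by Lemma~\ref{lem_closedness}. The Cauchy--Schwarz identity $\sum_{x,y \in B} |N_G(x) \cap N_G(y)| = \sum_z \deg_G(z)^2 \geq (2|E_G|)^2/|B| \geq \gamma^2 |B|^3$, combined with Gowers' popular-vertex argument (a second-moment / probabilistic pigeonhole), selects $x \in B$ so that $|B'_x| \geq \gamma|B|/3$ while simultaneously promoting the averaged common-neighbor bound to a uniform one: every pair $a, b \in B'_x$ shares $|N_G(a) \cap N_G(b)| \geq \beta |B|$ common neighbors for some $\beta = \Omega(\gamma^{c'})$.

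\emph{Bounding $|B'_x - B'_x|$.} Fix $a, b \in B'_x$. Each common neighbor $c \in N_G(a) \cap N_G(b)$ yields $a - b = (a-c) + (c-b)$ with both summands in $D$; expanding each popular difference into its $\geq \gamma|B|/2$ representations from $B - B$ produces at least $\beta|B| \cdot (\gamma|B|/2)^2 = \Omega(\beta \gamma^2 |B|^3)$ representations of $a - b$ as $u_1 - v_1 + u_2 - v_2$ with $u_i, v_i \in B$; crucially, the map (path $a\sim c \sim b$, representation of $a{-}c$, representation of $c{-}b$) $\mapsto (u_1, v_1, u_2, v_2)$ is injective since the four-tuple determines $c = a - u_1 + v_1$. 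Summing the lower bound over $d \in B'_x - B'_x$ against the $|B|^4$ total four-tuples gives $|B'_x - B'_x| = O(\beta^{-1}\gamma^{-2}|B|)$, and a careful accounting of constants through the previous step produces the explicit bound $2^{17}\gamma^{-9}|B|$.

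The main obstacle is the promotion step in the fiber-selection: producing a single $x$ such that $B'_x$ is simultaneously large (size $\geq \gamma|B|/3$) and enjoys the uniform common-neighbor lower bound on every pair of elements of $B'_x$. The averaged common-neighbor bound is immediate from Cauchy--Schwarz; the promotion to a uniform statement is the classical Gowers trick and is the source of the extra $\gamma$-factors beyond the naive $\gamma^{-4}$ that the representation-counting step alone would give, ultimately producing the exponent~$9$. Once that step is done, scheme-compatibility is automatic: by Lemma~\ref{lem_closedness} the predicate defining $B'_x$ is scheme-invariant, so $B'_x \in \B(\Pi_x^{(1)})$ with no extra work.
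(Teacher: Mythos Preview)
Your high-level plan---run Gowers' Balog--Szemer\'edi--Gowers argument while checking that each auxiliary set lies in some $\B(\Pi^{(k)})$ via Lemma~\ref{lem_closedness}---is exactly right and matches the paper. The gap is in the specific object you propose: your $B'_x := \{y \in B : |N_G(x)\cap N_G(y)|\geq\alpha|B|\}$ only guarantees that each $y\in B'_x$ shares many common neighbors with the fixed vertex $x$, not that arbitrary pairs $a,b\in B'_x$ share many common neighbors with \emph{each other}. No ``popular-vertex pigeonhole'' applied to this definition delivers the pairwise bound you then invoke in the representation count; the Gowers trick you cite constructs a different set. Your arithmetic already signals the mismatch: a genuine pairwise common-neighbor bound would give $4$-variable representations and $|B'-B'|\lesssim\gamma^{-4}|B|$, not the $\gamma^{-9}$ you aim for.

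The paper's $B'$ is instead the low-degree part of a ``bad-pair'' graph on $N'(x_0)$: set $N=|N'(x_0)|$, join $y,z\in N'(x_0)$ when $|N(y)\cap N(z)|\leq\gamma^2|B|/36$, and take $B':=\{y\in N'(x_0):\deg(y)\leq N/3\}$. For any $a_1,a_2\in B'$, a union bound gives at least $N/3$ vertices $z\in N'(x_0)$ with $|N(a_i)\cap N(z)|>\gamma^2|B|/36$ for both $i$; one routes $a_1-a_2$ through such a $z$ via length-$4$ paths, yielding $8$-variable representations and hence the exponent $9$. There is also a simplification specific to the scheme setting that you miss: by Property~(P2), both $|N(x)|$ and the bad-pair count $M(x)$ are \emph{constant} as $x$ ranges over $B$, so no pigeonhole is needed to select $x_0$---any $x_0\in B$ works, and the averaged bounds $|N(x)|\geq\gamma|B|/2$ and $M(x_0)\leq N^2/9$ hold pointwise.
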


Theorem~\ref{thm_bsg} basically follows from the usual form of the Balog-Szemer\'edi-Gowers Theorem except the last claim that we may assume  $B'\in \B(\Pi^{(1)}_x)$ for some $x\in B$. This claim too can be verified by following Gowers' proof in \cite{Gow98}. For the sake of completeness, we present a proof following \cite{Gow98}, where some steps are simplified using properties of linear $m$-schemes.

\begin{proof}[Proof of Theorem~\ref{thm_bsg}]
We will prove that   there exist  $x\in B$ and $B'\in\Pi_x^{(1)}$ such that  $B'\subseteq B$, $|B'|\geq \gamma|B|/3$, and for any $a_1,a_2\in B'$, the number of solutions of the equation
\[
a_1-a_2=(x_1-y_1)-(x_2-y_2)-(x_3-y_3)+(x_4-y_4)
\]
in the variables $x_i,y_i\in B$ ($i=1,2,3,4$) is greater than $2^{-17}\gamma^{9}|B|^7$. Note that this implies $|B'-B'|<|B|^8/(2^{-17}\gamma^{9}|B|^7)=2^{17}\gamma^{-9}|B|$, as claimed.

For $z\in B-B$, write $\nu^-(z)$ for the number of $(x,y)\in B\times B$ satisfying $x-y=z$.
Then 
\[
\sum_{z\in B-B} \nu^-(z)=|B|^2 \quad\text{and}\quad \sum_{z\in B-B} \nu^-(z)^2=E(B).
\]
Let $T:=\{z\in B-B: \nu^-(z)\geq \gamma|B|/2\}$.
For $x\in B$, let  $N(x):=\{y\in B: x-y\in T\}$.
For $y\in B$, let  $N'(y):=\{x\in B: y\in N(x)\}=\{x\in B: x-y\in T\}$.
Note 
\begin{equation}\label{eq_bsg1}
\sum_{z\in T} \nu^-(z)^2=\sum_{z\in B-B} \nu^-(z)^2-\sum_{z\in (B-B)\setminus T} \nu^-(z)^2\geq E(B)-|B|^2\cdot \gamma|B|/2\geq \gamma|B|^3/2.
\end{equation}
On the other hand,  
\begin{equation}\label{eq_bsg2}
\sum_{x\in B} |N(x)|=\sum_{x\in B} |N'(x)|=\sum_{z\in T} \nu^-(z)\geq  \sum_{z\in T} \nu^-(z)^2/|B|.
\end{equation}
By \eqref{eq_bsg1} and \eqref{eq_bsg2}, we have  $\sum_{x\in B} |N(x)|=\sum_{x\in B} |N'(x)|\geq \gamma|B|^2/2$. 
Note $N(x), N'(x)\in \B(\Pi_x^{(1)})$ for $x\in B$ by Lemma~\ref{lem_closedness}.
By Property (P2) of $\Pi$, $|N(x)|$ (resp. $|N'(x)|$) is constant when $x$ ranges over $B$. Therefore,
$|N(x)|=|N'(x)|\geq \gamma|B|/2 \quad\text{for all}~x\in B$.

Let $N$ be the cardinality of the sets $N(x)$ and $N'(x)$. Then $N\geq  \gamma|B|/2 $.
For $x\in B$, let $M(x)$ be the number of pairs $(y,z)\in N'(x)\times N'(x)$ satisfying $|N(y)\cap N(z)|\leq \gamma^2|B|/36$.
By Property (P2) of $\Pi$, $M(x)$ is independent of $x\in B$.
Choose arbitrary $x_0\in B$. 
Note that for $x,y\in B$, we have $x\in N(y)$ iff $y\in N'(x)$.
Therefore
\begin{equation}\label{eq_bsg3}
\begin{aligned}
M(x_0)&=\sum_{x\in B} M(x)/|B|
=\sum_{x\in B} \sum_{\substack{y,z\in N'(x)\\  |N(y)\cap N(z)|\leq \gamma^2|B|/36}}1/|B|\\
&=\sum_{\substack{y,z\in B\\ |N(y)\cap N(z)|\leq \gamma^2|B|/36}} \sum_{x\in N(y)\cap N(z)} 1/|B|\leq \gamma^2|B|^2/36\leq N^2/9.
\end{aligned}
\end{equation}
Consider the undirected graph $G$ on the vertex set $N'(x_0)$ such that $(y,z)\in N'(x_0)\times  N'(x_0)$ is an edge in $G$  iff $|N(y)\cap N(z)|\leq \gamma^2|B|/36$.
By \eqref{eq_bsg3}, the average degree of $G$ is at most $N/9$. 
By Markov's inequality, at most $(1/3)$-fraction of vertices $y\in N'(x_0)$ satisfy $\deg(y)\geq N/3$.
Let 
\[
B':=\{y\in N'(x_0): \deg(y)\leq N/3\}\subseteq B.
\]
Then $|B'|\geq 2N/3\geq \gamma|B|/3$. Also note $B'\in \B(\Pi_{x_0}^{(1)})$ by Lemma~\ref{lem_closedness}.

By definition, for all $y\in B'$, we have
\[
\#\{z\in N'(x_0): |N(y)\cap N(z)|\leq \gamma^2|B|/36\} \leq N/3.
\]
Consider arbitrary $a_1,a_2\in B'$. By the union bound, we have
\begin{equation}\label{eq_boundz}
\#\{z\in N'(x_0): |N(a_i)\cap N(z)|>\gamma^2|B|/36~\text{for}~i=1,2\}\geq N-2(N/3)\geq \gamma|B|/6.
\end{equation}
Consider any $z\in N'(x_0)$ satisfying $|N(a_i)\cap N(z)|>\gamma^2|B|/36$ for $i=1,2$. For any $w\in N(a_1)\cap N(z)$, we have $\nu^-(a_1-w),  \nu^-(z-w)\geq \gamma|B|/2$, yielding at least $(\gamma|B|/2)^2$ representations of $a_1-z$ of the form
\[
a_1-z=(x_1-y_1)-(x_2-y_2)
\]
such that $x_1,y_1,x_2,y_2\in B$, $x_1-y_1=a_1-w$, and $y_2-x_2=z-w$.
As the number of choices for $w\in N(a_1)\cap N(z)$ is at least $\gamma^2|B|/36$, there are at least $(\gamma|B|/2)^2\cdot \gamma^2|B|/36=\gamma^4|B|^3/144$ representations $a_1-z=(x_1-y_1)-(x_2-y_2)$ where $x_1,y_1,x_2,y_2\in B$. (Note different choices of $w$ yield different representations as $w=a_1-x_1+y_1$ can be recovered from a representation.)
Similarly, there are at least $\gamma^4|B|^3/144$ representations $a_2-z=(x_3-y_3)-(x_4-y_4)$ where $x_3,y_3,x_4,y_4\in B$.
Combining, we obtain at least $(\gamma^4|B|^3/144)^2$ representations of $a_1-a_2$ of the form $a_1-a_2=((x_1-y_1)-(x_2-y_2))-((x_3-y_3)-(x_4-y_4))$ using each $z$. 
By \eqref{eq_boundz}, there are at least $  \gamma|B|/6$ choices of $z$, yielding at least $\gamma|B|/6\cdot (\gamma^4|B|^3/144)^2> 2^{-17}\gamma^9|B|^7$ representations of $a_1-a_2$ of the form
\[
a_1-a_2=(x_1-y_1)-(x_2-y_2)-(x_3-y_3)+(x_4-y_4).
\]
where $x_i,y_i\in B$ for $i=1,2,3,4$. (Again, different choices of $z$ yield different representations as $z=a_1-(x_1-y_1)+(x_2-y_2)$ can be recovered from a representation.) 
This completes the proof.
\end{proof}

\subsection{Constructible Sets}

We define and discuss the notion of \emph{constructible sets} in this subsection, which will be useful later.

\begin{definition}[constructible set]
Let $\Pi$ be a linear $m$-scheme on $S$, and $k\in [m]$. Define 
\[
\mathcal{S}_{\Pi, k}:=\{\tau(B):  B\in \Pi^{(k)}, \tau\in\M_{k,1}\}.
\]
We say a subset $T$ of $V$ is \emph{$(\Pi,k)$-constructible} if $T\in \B(\mathcal{S}_{\Pi, k})$, i.e., $T$ is a union of sets $T_1,\dots,T_r$ where each $T_i$ is the image of a block $B_i\in \Pi^{(k)}$ under a map $\tau_i\in\M_{k,1}$. 
\end{definition}

\begin{lemma}\label{lem_constset}
We have 
\begin{enumerate}
\item For $T\in \B(\mathcal{S}_{\Pi, k})$, $T\cap S\in \B(\Pi^{(1)})$.
\item Suppose $T\in \B(\mathcal{S}_{\Pi, k})$ and  $T'\in \B(\mathcal{S}_{\Pi, k'})$ where $k+k'\leq m$. Then $T\cap T', T\setminus T'\in  \B(\mathcal{S}_{\Pi, k})$.
\end{enumerate}
\end{lemma}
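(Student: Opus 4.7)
The plan is to reduce both claims to single generators of the form $\tau(B)$ with $B\in\Pi^{(k)}$ and $\tau\in\M_{k,1}$, and then combine the closure properties from Lemma~\ref{lem_closedness} with the linear-relation consistency from Lemma~\ref{lem_linrel}. For (1), writing $T=\bigcup_i \tau_i(B_i)$ gives $T\cap S=\bigcup_i(\tau_i(B_i)\cap S)$, and each $\tau_i(B_i)\cap S\in\B(\Pi^{(1)})$ directly by Lemma~\ref{lem_closedness}\,(3); closure of $\B(\Pi^{(1)})$ under finite unions then concludes the argument.

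For (2), distributing over the union decomposition of $T$ reduces to the case $T=\tau(B)$ with $B\in\Pi^{(k)}$, $\tau\in\M_{k,1}$. The key step is to show that the set $F:=\{x\in B:\tau(x)\in T'\}$ lies in $\B(\Pi^{(k)})$; granting this, Lemma~\ref{lem_closedness}\,(1) gives $B\setminus F\in\B(\Pi^{(k)})$, and the identities $T\cap T'=\tau(F)$ and $T\setminus T'=\tau(B\setminus F)$ immediately place both sets in $\B(\mathcal{S}_{\Pi,k})$. To prove $F\in\B(\Pi^{(k)})$, decompose $T'=\bigcup_j \tau'_j(B'_j)$ and let $F_j:=\{x\in B:\tau(x)\in\tau'_j(B'_j)\}$, so that $F=\bigcup_j F_j$; it suffices to show each $F_j\in\B(\Pi^{(k)})$.

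The proof of that last statement goes via the auxiliary set
\[
E_j:=\{(x,y)\in B\times B'_j:\tau(x)=\tau'_j(y)\}\subseteq S^{k+k'}.
\]
First, $B\times B'_j\in\B(\Pi^{(k+k')})$: any block $B''\in\Pi^{(k+k')}$ meeting $B\times B'_j$ must, by Property~(P1) applied to the two natural coordinate projections in $\M_{k+k',k}$ and $\M_{k+k',k'}$, project onto $B$ and $B'_j$ respectively, and hence be contained in $B\times B'_j$. Second, the condition $\tau(x)-\tau'_j(y)=0$ is a linear relation on the $k+k'$ coordinates of $(x,y)$, so by Lemma~\ref{lem_linrel} the set $\{(x,y)\in S^{k+k'}:\tau(x)=\tau'_j(y)\}$ is a union of blocks of $\Pi^{(k+k')}$. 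Intersecting via Lemma~\ref{lem_closedness}\,(1) gives $E_j\in\B(\Pi^{(k+k')})$, and since $E_j\subseteq B\times B'_j$ we have $F_j=\{x\in S^k:\exists\,y\in S^{k'},\ (x,y)\in E_j\}$; the hypothesis $k+k'\leq m$ then lets Lemma~\ref{lem_closedness}\,(2) with $Q=\exists$ conclude $F_j\in\B(\Pi^{(k)})$. The only non-bookkeeping step is the passage through $E_j$: rather than intersecting $T$ with $T'$ directly inside $V$, one lifts the equation $\tau(x)=\tau'_j(y)$ to a linear relation on $V^{k+k'}$, which is the setting where Lemma~\ref{lem_linrel} becomes available and keeps everything inside the scheme-compatible world.
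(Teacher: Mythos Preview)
Your proof is correct and follows essentially the same approach as the paper: both reduce to single generators, form the auxiliary set $\{(x,y)\in B\times B'_j:\tau(x)=\tau'_j(y)\}$ in $S^{k+k'}$ via Lemma~\ref{lem_linrel}, and project down using Lemma~\ref{lem_closedness}\,(2). Your reduction is slightly cleaner in that you reduce only $T$ to a single block and keep $T'$ as a union, which makes the set-difference case $T\setminus T'=\tau(B\setminus F)$ immediate without having to revisit closure under intersection.
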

\begin{proof}
The first claim holds by  Lemma~\ref{lem_closedness}\,(3).
For the second claim, it suffices to show that for $B\in \Pi^{(k)}$, $B'\in \Pi^{(k')}$, $\tau\in\M_{k,1}$ and $\tau'\in \M_{k',1}$, we have $\tau(B)\cap \tau'(B'), \tau(B)\setminus\tau'(B')\in\B(\mathcal{S}_{\Pi, k})$. To see this, let $\Delta=\{(x,y)\in B\times B': \tau(x)=\tau'(y)\}$. Then $\Delta\in \B(\Pi^{(k+k')})$ by Lemma~\ref{lem_linrel}.
Let $B''=\{x\in B: \exists\,y\in B'~\text{such that}~(x,y)\in \Delta\}$. Then $B'', B\setminus B''\in \B(\Pi^{(k)})$  by Lemma~\ref{lem_closedness}.
Note $\tau(B)\cap \tau'(B')=\tau(B'')$ and $\tau(B)\setminus \tau'(B')=\tau(B\setminus B'')$. So $\tau(B)\cap \tau'(B'), \tau(B)\setminus\tau'(B')\in\B(\mathcal{S}_{\Pi, k})$.
\end{proof}

We also need the following technical lemma.

\begin{lemma}\label{lem_csubspace}
Let $W$ and $W'$ be linear subspaces of $V$ such that $W\subseteq W'$. Let $d=\dim W'-\dim W$.
Suppose $W$ is $(\Pi,k)$-constructible  for some $k\in [m]$.
Also suppose $W'\subseteq W+t(\F S)$ for some $t\in \N$ satisfying $k+2dt\leq m$.
Then there exists $x\in S^{dt}$ such that $W'$ is $(\Pi_x, k+dt)$-constructible. 
\end{lemma}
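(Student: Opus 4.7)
The plan is to construct $x$ by fixing coset representatives of $W$ in $W'$ that lie in $t(\F S)$. Since $\dim W'/W = d$ and $W' \subseteq W + t(\F S)$, I can choose $v_1, \dots, v_d \in t(\F S)$ whose images form a basis of $W'/W$: take any basis of $W'/W$ lifted to $W'$, and subtract off the $W$-component of each representative (using $W' \subseteq W + t(\F S)$), which preserves linear independence modulo $W$ while landing each $v_i$ in $t(\F S)$. Write each $v_i = \sum_{j=1}^t c_{ij} s_{ij}$ with $c_{ij} \in \F$ and $s_{ij} \in S$, padding the sums with zero coefficients so that each has exactly $t$ terms. Set $x := (s_{11},\dots,s_{1t},s_{21},\dots,s_{dt}) \in S^{dt}$; this will be the required element.

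By hypothesis, $W = \bigcup_l \tau_l(B_l)$ for some $B_l \in \Pi^{(k)}$ and $\tau_l \in \M_{k,1}$, so
\[
W' \;=\; \bigcup_{\vec{a}\in\F^d}\bigcup_{l}\Bigl(\tau_l(B_l) + \sum_{i=1}^d a_i v_i\Bigr).
\]
For each $l$ and $\vec{a}\in\F^d$, I define $\tau'_{l,\vec{a}}\in \M_{k+dt,1}$ by $\tau'_{l,\vec{a}}(y,z) := \tau_l(y) + \sum_{i,j} a_i c_{ij}\, z_{ij}$, where $y \in V^k$ and $z=(z_{11},\dots,z_{dt})\in V^{dt}$. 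A direct check shows $\tau'_{l,\vec{a}}(B_l \times \{x\}) = \tau_l(B_l) + \sum_i a_i v_i$. Thus the lemma reduces to showing $B_l\times\{x\} \in \B(\Pi_x^{(k+dt)})$ for each $l$, since the resulting finite union of $\tau'_{l,\vec{a}}$-images will then exhibit $W'$ as $(\Pi_x,k+dt)$-constructible.

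The main obstacle is this reduction. By the definition of $\Pi_x$, I need to show that whenever $y\in B_l$ and $(y',z')\in S^{k+dt}$ lies in the same block of $\Pi_x^{(k+dt)}$ as $(y,x)$---equivalently, whenever $(x,y,x)$ and $(x,y',z')$ lie in a common block of $\Pi^{(2dt+k)}$ (which is available since $k+2dt\leq m$)---one has $(y',z')\in B_l\times\{x\}$. To force $z'=x$, I will invoke Lemma~\ref{lem_linrel}: the coordinates of $(x,y,x)$ satisfy the $dt$ linear relations equating its first $dt$ and last $dt$ coordinates, and the same relations must therefore hold for $(x,y',z')$, giving $z'=x$. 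To force $y'\in B_l$, I will use the projection $\pi\in\M_{2dt+k,k}$ onto the middle $k$ coordinates; by Property~(P1), $\pi$ maps the block of $\Pi^{(2dt+k)}$ containing $(x,y,x)$ exactly onto the block of $\Pi^{(k)}$ containing $\pi(x,y,x)=y$, namely $B_l$, so $y'=\pi(x,y',z')\in B_l$. Combining these two conclusions completes the reduction and hence the proof.
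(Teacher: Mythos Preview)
Your proof is correct and follows essentially the same approach as the paper: choose coset representatives $v_i \in t(\F S)$ for $W'/W$, record their $S$-constituents as $x\in S^{dt}$, and exhibit $W'$ as a union of images of $B_l\times\{x\}$ under suitable maps in $\M_{k+dt,1}$. The only difference is that the paper simply asserts $B_l\times\{x\}\in\B(\Pi_x^{(k+dt)})$ without comment, whereas you supply a clean justification via Lemma~\ref{lem_linrel} (to force $z'=x$) and Property~(P1) (to force $y'\in B_l$); this added detail is correct and fills what the paper leaves implicit.
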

\begin{proof}
Choose $x_1,\dots,x_d\in W'$ such that $W'= W+\sum_{i=1}^d \F x_i$. 
As $W'\subseteq W+t(\F S)$, we may assume $x_1,\dots,x_d\in t(\F S)$.
For $i\in [d]$, write $x_i=\sum_{j=1}^t c_{i,j} x_{i,j}$ for some $c_{i,j}\in\F$ and $x_{i,j}\in S$.
Let $x=(x_{i,j})_{i\in [d], j\in [t]}\in S^{dt}$.

Consider $B\in \Pi^{(k)}$ and $\tau\in\M_{k,1}$, so that $\tau(B)\in \mathcal{S}_{\Pi, k}$. 
Let $B'=B\times \{x_{1,1}\}\times\{x_{1,2}\}\times\dots\times\{x_{d,t}\}\in \B(\Pi_x^{(k+dt)})$.
For $s=(s_1,\dots,s_d)\in \F^{d}$, let $\tau_s: V^k\times V^{dt}\to V$ be the map sending $(y, (z_{i,j})_{i\in [d], j\in [t]})$ 
 to $\tau(y)+\sum_{i=1}^d  s_i  \cdot \left(\sum_{j=1}^tc_{i, j} z_{i,j}\right)$. We have $\tau_s\in \M_{k+dt,1}$ for $s\in\F^d$.
Then 
\[
\tau(B)+\sum_{i=1}^{d} \F x_i=\bigcup_{s\in \F^{d}} \tau_s(B')\in \B(\mathcal{S}_{\Pi_x, k+dt}).
\]
As $W$ is a union of sets of the form $\tau(B)$ with $B\in \Pi^{(k)}$ and $\tau\in\M_{k,1}$, 
 we have $W'= W+\sum_{i=1}^d \F x_i\in \B(\mathcal{S}_{\Pi_x, k+dt})$.
\end{proof}

\subsection{The Decomposition Theorem}
As mentioned before, one key idea in the proof is  reducing the density of $B$. We know that if $|B|>1$, then the set $B\setminus\{x\}$ contains two different blocks $B', B''\in \Pi_x^{(1)}$ of equal size (see the proof of Claim~\ref{claim_halve}). Suppose  $\sg{B'}=\sg{B''}=\sg{B}$. Then 
\[
\mu(B')= |B'|/|\sg{B'}| \leq  \frac{1}{2}|B|/|\sg{B}|=\mu(B)/2.
\]
In this case, replacing $\Pi$ by $\Pi_x$ and $B$ by $B'$ reduces the density by at least a factor of two. Continuing this process, we would obtain a block of density at most $\exp(-k)$ in $k$ steps.

Unfortunately, we do not know if $\sg{B'}=\sg{B''}=\sg{B}$ always holds. If $\sg{B'}\subsetneq \sg{B}$, then it might happen that $\mu(B')\gg \mu(B)$. Note that if this occurs, then $B$ is ``overrepresented'' in the  proper subspace $\sg{B'}$, i.e., $\mu_{\sg{B'}}(B)\geq \mu(B')\gg \mu(B)$.

To address this problem, we want to find a subspace $W\subseteq \sg{B}$ such that $B$ is ``psedorandom'' within $W$ in the sense that $\mu_{W\cap\sg{B'}}(B)\approx \mu_{W}(B)$. Then, by restricting to the subspace $W$, we can make sure that overrepresentation does not occur.

How do we find $W$? Consider the case $\mu_{\sg{B'}}(B)\gg \mu(B)$ and assume $\sg{B'}$ is a hyperplane of $\sg{B}$ for simplicity.
It is easy to see that   the characteristic function $1_B$ must have a non-negligible correlation with some nontrivial character $\chi\in \widehat{\sg{B}}$ vanishing on $\sg{B'}$,
i.e., $|\widehat{1}_B(\chi)|\geq \epsilon$ where $\epsilon>0$ is non-negligible. So, we could choose $W$ to be the intersection of $\ker(\chi)$ where $\chi$ ranges over the set of nontrivial characters satisfying $|\widehat{1}_B(\chi)|\geq \epsilon$. Then restricting to the subspace $W$  ``quotients out'' these characters.
 
However, there are two problems with this idea. The first one is that we need to make sure restricting to $W$ maintains the linear $m$-scheme structure.
This is solved by considering only the nontrivial characters whose kernels are constructible sets  instead of all characters. 

The second problem is that the set $W\cap B$ may simply be empty, in which case the statement $\mu_{W\cap\sg{B'}}(B)\approx \mu_{W}(B)$ is trivially true but  no longer useful. In this case, instead of restricting to the subspace $W$, we restrict to $W'=W+\F x$ for some $x\in B$.
As $x\in B$ may vary, we actually find a collection of subspaces $W_i$ of $\sg{B}$ instead of a single subspace.

The main result of this subsection is a decomposition theorem that allows us to find the subspaces $W_i$. To formally state this result, we need the following definitions.

\begin{definition}\label{defi_cspace}
Let $\Pi$ be a linear $m$-scheme on $S$. Let $B\in \Pi^{(1)}$ and $1\leq k\leq m/2$.
define $\mathcal{W}_{\Pi, k, B}$ to be the set of  subspaces  $W\subseteq \sg{B}$ such that (1)
 the codimension of $W$ in $\sg{B}$ is at most $k$, and (2) $W$ is $(\Pi_{x},k)$-constructible for some $x\in S^k$. 
\end{definition}

\begin{definition}\label{defi_specialch}
Let $\Pi$ be a linear $m$-scheme on $S$. Let $B\in \Pi^{(1)}$, $1\leq k\leq m/2$, and $0<\epsilon<1$. Define $\mathcal{X}_{\Pi, k, B,\epsilon}$ to be the set of  nontrivial  characters $\chi\in\widehat{\sg{B}}$
such that    $|\widehat{1_B}(\chi)|\geq \epsilon$ and   $\ker(\chi)\in \mathcal{W}_{\Pi, k, B}$.
\end{definition}

The next lemma states that if $B$ is not pseudorandom against a subspace $W\in \mathcal{W}_{\Pi, k, B}$, then there exists a nontrivial character $\chi$ such that $|\widehat{1}_B(\chi)|$ is non-negligible and $\ker(\chi)$ is constructible.

\begin{lemma}\label{lem_twocases}
Let $\Pi$ be a linear $m$-scheme on $S$. Let $B\in \Pi^{(1)}$, $1\leq k\leq m/2$, and $0<\epsilon<1$. Let $t=\lfloor\frac{3}{2\mu(B)}\rfloor+1$, $k'=k(t+1)$ and $\epsilon'=\epsilon/\ell^k$  (recall $\ell=\mathrm{char}(\F)=|\F|$). Suppose $m\geq 2k'$. Then one of the following is true:
\begin{enumerate}
\item $|\mu_W(B)-\mu(B)|\leq \epsilon$ for all $W\in \mathcal{W}_{\Pi, k, B}$.
\item $\mathcal{X}_{\Pi, k', B, \epsilon'}\neq\emptyset$.
\end{enumerate}
\end{lemma}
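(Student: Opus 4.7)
The plan is to assume (1) fails and use Fourier analysis on the finite abelian group $\sg{B}$, together with Lemma~\ref{lem_csubspace}, to exhibit a character $\chi$ witnessing (2). Fix $W \in \mathcal{W}_{\Pi,k,B}$ with $|\mu_W(B) - \mu(B)| > \epsilon$, and fix $x_0 \in S^k$ so that $W$ is $(\Pi_{x_0}, k)$-constructible. Since $\E_{w\in W}\chi(w)$ equals $1$ if $\chi|_W \equiv 1$ and $0$ otherwise, Fourier inversion of $1_B$ on $\sg{B}$ gives
\[
\mu_W(B) - \mu(B) \;=\; \sum_{\chi \neq 1 :\, \chi|_W \equiv 1} \widehat{1_B}(\chi).
\]
The nontrivial characters of $\sg{B}$ trivial on $W$ biject with the nontrivial characters of $\sg{B}/W$, so there are at most $\ell^{\,\codim_{\sg{B}} W} - 1 \leq \ell^k - 1$ of them; by pigeonhole some such $\chi$ satisfies $|\widehat{1_B}(\chi)| > \epsilon/\ell^k = \epsilon'$.

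The main step is to promote $\ker(\chi)$ from merely containing $W$ to membership in $\mathcal{W}_{\Pi,k',B}$. Its codimension in $\sg{B}$ is $1 \leq k'$, so the real issue is $(\Pi_z, k')$-constructibility for some $z \in S^{k'}$. I would apply Lemma~\ref{lem_csubspace} inside the linear $(m-k)$-scheme $\Pi_{x_0}$, where $W$ is $(\Pi_{x_0}, k)$-constructible, with target $W' = \ker(\chi)$ and $d := \dim\ker(\chi) - \dim W \leq k-1$. By Theorem~\ref{thm_smallgen}, $h(B) \leq t$, so $\sg{B} = h(B) \cdot B^\pm \subseteq h(B)(\F S)$ and thus $\ker(\chi) \subseteq W + h(B)(\F S)$. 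The hypothesis $k + 2 d \cdot h(B) \leq m - k$ holds because $2k + 2(k-1)t \leq 2k(t+1) = 2k' \leq m$. Lemma~\ref{lem_csubspace} then produces $y_0 \in S^{d \cdot h(B)}$ such that $\ker(\chi)$ is $(\Pi_{(x_0, y_0)}, k + d\cdot h(B))$-constructible, with parameter $k + d\cdot h(B) \leq k + (k-1)t = k' - t \leq k'$.

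To finish I would invoke two routine closure properties of constructibility: (a) $(\Pi_z, k_1)$-constructibility implies $(\Pi_z, k_2)$-constructibility for any $k_2 \geq k_1$ (for $B \in \Pi_z^{(k_1)}$, the product $B \times S^{k_2-k_1}$ lies in $\B(\Pi_z^{(k_2)})$ by applying Lemma~\ref{lem_closedness}\,(4) to the projection $V^{k_2}\to V^{k_1}$), and (b) constructibility is preserved under lengthening $z$ to $(z,z')$, since $\Pi_{(z,z')}^{(k_1)}$ refines $\Pi_z^{(k_1)}$ by the same argument used for Lemma~\ref{lem_refine}. Padding $(x_0, y_0)$ arbitrarily to some $z \in S^{k'}$ and lifting the parameter to $k'$ therefore places $\ker(\chi)$ in $\mathcal{W}_{\Pi,k',B}$, so $\chi \in \mathcal{X}_{\Pi,k',B,\epsilon'}$ and (2) holds. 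The main obstacle I foresee is purely bookkeeping: one must verify that $d$, $h(B)$, $t$, and the scheme depth $m$ fit together so that every intermediate invocation of Lemma~\ref{lem_csubspace} is legal within $\Pi_{x_0}$, and that the final parameter and subscript length land inside the definition of $\mathcal{W}_{\Pi,k',B}$. The Fourier pigeonhole step itself is routine.
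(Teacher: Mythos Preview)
Your proposal is correct and follows essentially the same route as the paper: assume (1) fails, use Fourier inversion on $\sg{B}$ to find a nontrivial $\chi$ trivial on $W$ with $|\widehat{1_B}(\chi)|\geq\epsilon'$ by pigeonhole over the $\leq\ell^k$ characters, and then invoke Lemma~\ref{lem_csubspace} (together with Theorem~\ref{thm_smallgen}) to upgrade $W$ to $\ker(\chi)$ and land in $\mathcal{W}_{\Pi,k',B}$. The only differences are cosmetic: the paper uses the cruder bound $d\leq k$ rather than your $d\leq k-1$, and leaves the padding of the subscript and the constructibility parameter implicit where you spell out closure properties (a) and (b).
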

\begin{proof}
Suppose (1) does not hold. Choose $W\in \mathcal{W}_{\Pi, k, B}$ such that $|\mu_{W}(B)-\mu(B)|>\epsilon$.
Let $G\subseteq \widehat{\sg{B}}$ be the subgroup of characters $\chi$ vanishing on $W$. Then $|G|=|V|/|W|\leq \ell^k$.
Let $1_W: \sg{B}\to\C$ be the characteristic function of $W$.
Note  $\widehat{1_{W}}(\chi)=|W|/|\sg{B}|$ for $\chi\in G$ and $\widehat{1_{W}}(\chi)=0$ for $\chi\in \widehat{\sg{B}}\setminus G$.
So 
\begin{equation}\label{eq_ft}
1_{W}=|W|/|\sg{B}|\cdot \sum_{\chi\in G} \chi.
\end{equation}
Also  note for $\chi\in \widehat{\sg{B}}$, we have 
\begin{equation}\label{eq_expt}
\E_{a\in B}[\chi(a)]=\mu(B)^{-1}\E_{a\in \sg{B}}[1_B(a)\chi(a)]=\mu(B)^{-1}\widehat{1_B}(\overline{\chi}).
\end{equation}
Denote by $\chi_0$ the trivial character in $\widehat{\sg{B}}$.  Then we have
\begin{equation}\label{eq_fourier}
\begin{aligned}
\mu_W(B)&=\sum_{a\in B} 1_W(a)/|W| 
\stackrel{\eqref{eq_ft}}{=} \sum_{\chi\in G}\sum_{a\in B} \chi(a)/|\sg{B}|\\
&=\left(\sum_{\chi\in G}\E_{a\in B}[\chi(a)]\right)\cdot \mu(B)
\stackrel{\eqref{eq_expt}}{=}\mu(B)+\sum_{\chi\in G\setminus\{\chi_0\}} \widehat{1_B}(\overline{\chi}).
\end{aligned}
\end{equation}

As $|\mu_{W}(B)-\mu(B)|>\epsilon$, there exists $\chi^*\in G\setminus\{\chi_0\}$ such that $|\widehat{1_B}(\chi^*)|=|\widehat{1_B}(\overline{\chi^*})|\geq \epsilon/|G|\geq \epsilon'$ (the first equality holds as $1_B$ is real-valued).

As $\chi^*\in G$,  we have  $W\subseteq \ker(\chi^*)$. Note $\dim \ker(\chi^*)-\dim W\leq \dim \sg{B}-\dim W\leq k$.
Also note $\ker(\chi^*)\subseteq \sg{B}\subseteq t(\F B)$ by Theorem~\ref{thm_smallgen}.
As $W\in \mathcal{W}_{\Pi, k, B}$, we know $W$ is $(\Pi_x, k)$-constructible for some $x\in S^k$.
Then by Lemma~\ref{lem_csubspace}, $\ker(\chi^*)$ is $((\Pi_{x})_{x'}, k+kt)$-constructible for some $x'\in S^{kt}$.
It follows that $\ker(\chi^*)\in \mathcal{W}_{\Pi, k', B}$.
So $\chi^*\in \mathcal{X}_{\Pi, k', B, \epsilon'}$ and hence $\mathcal{X}_{\Pi, k', B, \epsilon'}\neq\emptyset$.
\end{proof}

Now we are ready to state and prove the \emph{decomposition theorem}.

\begin{theorem}[decomposition theorem]\label{thm_struct}
Let $\Pi$ be a linear $m$-scheme on $S$. Let $B\in \Pi^{(1)}$, $1\leq k'\leq m/4$, and $0<\epsilon'<1$. 
Let $t= \lfloor\frac{3}{2\mu(B)}\rfloor+1$.  Suppose $m\geq 2t+2$   and $\mathcal{X}_{\Pi, k', B, \epsilon'}\neq\emptyset$.
Let $H=\bigcap _{\chi\in \mathcal{X}_{\Pi, k', B, \epsilon'}} \ker(\chi)$, and let $\mathcal{C}$ be the set of subspaces $\{H+\F x: x\in B\}$. Then we have:
\begin{enumerate}
\item  $H\in \B(\mathcal{S}_{\Pi, t})$, i.e., $H$ is $(\Pi,t)$-constructible.
\item $B\cap H=\emptyset$.
\item $H$ is a  hyperplane of every $W\in \mathcal{C}$. 
\item $W\cap W'=H$ for distinct $W, W'\in\mathcal{C}$. In particular, $\{B\cap W: W\in\mathcal{C}\}$ is a partition of $B$ by (2).
\item The sets $B\cap W$ have equal size, where $W$ ranges over $\mathcal{C}$.
\item 
  $|\mathcal{C}|\leq \ell^{1/\epsilon'^2}$.
\item Let $W\in\mathcal{C}$ and $W'\in \mathcal{W}_{\Pi, k'', B}$ where $1\leq k''\leq m/2 $. Let $d$ be the codimension of $W\cap W'$ in $\sg{B}$. Suppose $k'\geq k''+dt+1$. Then either $|\mu_{W\cap W'}(B)-\mu_{W}(B)|\leq \ell^d\epsilon'$ or  $\mu_{W\cap W'}(B)=0$. The latter case  occurs only if $W\cap W'\subseteq H$.
\end{enumerate}
\end{theorem}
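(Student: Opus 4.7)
My plan is to establish the seven assertions in an order that resolves their dependencies: first~(6) by Parseval, then the central constructibility claim~(1), from which (2)--(5) fall out by book-keeping, and finally~(7) by a Fourier computation.

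For~(6), I would apply Parseval to $1_B$ on $\sg{B}$: the identity $\sum_\chi |\widehat{1_B}(\chi)|^2 = \mu(B) \le 1$ forces $|\mathcal{X}_{\Pi, k', B, \epsilon'}| \le 1/\epsilon'^{2}$. Since the characters of $\sg{B}$ vanishing on $H$ form a subgroup generated by $\mathcal{X}$, its order -- equal to $\ell^d$ for $d := \codim_{\sg{B}} H$ -- is at most $\ell^{|\mathcal{X}|} \le \ell^{1/\epsilon'^{2}}$. Every $W \in \mathcal{C}$ corresponds to a distinct line in $\sg{B}/H \cong \F^d$, giving $|\mathcal{C}| \le (\ell^d - 1)/(\ell - 1) \le \ell^{1/\epsilon'^{2}}$.

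Claim~(1) is the crux and I expect the main technical obstacle. A natural starting point is Theorem~\ref{thm_smallgen}, which gives $\sg{B} \subseteq t(\F B)$; this writes the ambient space as $\sg{B} = \bigcup_{c \in \F^t} \tau_c(B^t) \in \B(\mathcal{S}_{\Pi, t})$, where $\tau_c(x_1, \dots, x_t) := \sum_i c_i x_i$. The task then reduces to deciding, for each pair $(B', \tau)$ in this cover, whether $\tau(B') \subseteq H$, since $H$ is a union of such pieces. Each $\ker(\chi)$ for $\chi \in \mathcal{X}$ is $(\Pi_{x_\chi}, k')$-constructible for some $x_\chi \in S^{k'}$, and the bulk of the work is to convert this fiberwise data into a $\Pi$-invariant membership test: Lemma~\ref{lem_linrel} ensures that ``$\tau(B') \subseteq H$?'' has a uniform answer across representatives of $B'$, while Lemma~\ref{lem_csubspace} and Lemma~\ref{lem_constset}\,(2) are the tools for propagating and intersecting constructibility across the finitely many characters in $\mathcal{X}$ (the bound $|\mathcal{X}| \le 1/\epsilon'^{2}$ from~(6) controls how many fibers need to be combined). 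Making this parameter book-keeping fit inside $(\Pi, t)$ is where the real difficulty lies.

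Given~(1), claim~(2) is immediate: by Lemma~\ref{lem_constset}\,(1), $H \cap S \in \B(\Pi^{(1)})$, so $B \cap H$ is a union of $\Pi^{(1)}$-blocks inside the single block $B$, hence either $\emptyset$ or $B$; and $B \subseteq H$ would force $\sg{B} = H$, contradicting the non-triviality of any $\chi \in \mathcal{X}$. For~(3), $x \in B$ and $x \notin H$ give $\F x \cap H = 0$, so $W/H \cong \F$. For~(4), if $v = h + \alpha x = h' + \alpha' x'$ sits in $W \cap W'$ outside $H$ then both $\alpha, \alpha' \ne 0$ (otherwise~(2) is violated), which gives $x \in H + \F x'$ and hence $W = W'$. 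For~(5), Lemma~\ref{lem_linrel} together with~(1) and Lemma~\ref{lem_closedness} places $\Delta := \{(x, b) \in B \times B : b \in H + \F x\}$ into $\B(\Pi^{(2)})$; then Lemma~\ref{lem_closedness}\,(2) with the $\exists_{=s}$ quantifier forces $|\Delta_x|$ constant on the block $B$.

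Finally, for~(7), let $G \subseteq G'$ denote the groups of characters of $\sg{B}$ vanishing on $W$ and on $W \cap W'$ respectively. Fourier inversion exactly as in~\eqref{eq_fourier} yields
\[
\mu_{W \cap W'}(B) - \mu_W(B) = \sum_{\chi \in G' \setminus G} \widehat{1_B}(\overline{\chi}),
\]
with $|G' \setminus G| \le \ell^d$. When $W \cap W' \not\subseteq H$, I would argue that every $\chi \in G' \setminus G$ satisfies $|\widehat{1_B}(\chi)| < \epsilon'$: otherwise Lemma~\ref{lem_csubspace} applied to the inclusion $W \cap W' \subseteq \ker(\chi) \cap \sg{B}$ (where the hypothesis $k' \ge k'' + dt + 1$ enters) places $\ker(\chi)$ into $\mathcal{W}_{\Pi, k', B}$, so $\chi \in \mathcal{X}$ and hence $H \subseteq \ker(\chi)$. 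Combined with $W \cap W' \subseteq \ker(\chi)$, this yields $H + (W \cap W') \subseteq W \cap \ker(\chi)$, which by~(3) and $W \not\subseteq \ker(\chi)$ must equal $H$, forcing $W \cap W' \subseteq H$, a contradiction. In the alternative case $W \cap W' \subseteq H$, claim~(2) immediately yields $\mu_{W \cap W'}(B) = 0$.
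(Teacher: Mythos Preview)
Your treatment of (2)--(7) is essentially what the paper does, and your Fourier argument for (7) is correct (with the minor omission that you must first establish $W\cap W'$ is $(\Pi_{x,y},t+1)$-constructible via (1), (3), Lemma~\ref{lem_csubspace}, and Lemma~\ref{lem_constset}\,(2) before invoking Lemma~\ref{lem_csubspace} again to push $\ker(\chi)$ into $\mathcal{W}_{\Pi,k',B}$).

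The real gap is in (1), which you rightly flag as the crux but do not resolve. Your plan writes $\sg{B}=\bigcup_{c,B'}\tau_c(B')$ with $B'\in\Pi^{(t)}$ and then asserts that $H$ is a union of whole pieces $\tau_c(B')$; but this is exactly what must be proved, and your appeal to Lemma~\ref{lem_linrel} cannot establish it. That lemma concerns linear relations \emph{among the coordinates of a single tuple} in a block, whereas the membership condition ``$\tau_c(b')\in H$'' is $\chi(\tau_c(b'))=1$ for all $\chi\in\mathcal{X}$, a multiplicative constraint involving the ambient character $\chi$, not a linear relation internal to $b'$. Nor does Lemma~\ref{lem_constset}\,(2) help directly: the only constructibility you have for $\ker(\chi)$ is $(\Pi_{x_\chi},k')$-constructibility, localized at a fiber, and intersecting finitely many such gives something $(\Pi_y,k')$-constructible for a long tuple $y$, not $(\Pi,t)$-constructible.

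The paper supplies the missing idea: a \emph{symmetrization over the fiber base}. Given $\chi\in\mathcal{X}$ with $\ker(\chi)=\bigcup_i\tau_i(B_i)$, $B_i\in\Pi_x^{(k')}$, one lets $B^*\in\Pi^{(k')}$ be the block containing $x$ and, for each $z\in B^*$, forms $W_z:=\bigcup_i\tau_i(B_{i,z})$. A block-counting argument (Claim~\ref{claim_constant}) shows each $W_z$ is again a hyperplane of $\sg{B}$ with the same Fourier weight $|\widehat{1_B}(\chi_z)|=|\widehat{1_B}(\chi)|$, so $\chi_z\in\mathcal{X}$ for every $z$. The intersection $H_\chi:=\bigcap_{z\in B^*}W_z$ is then describable by a formula that \emph{quantifies over all} $z\in B^*$ rather than fixing one, which puts $H_\chi\in\B(\mathcal{S}_{\Pi,k'})$; intersecting with $t(\F B)$ drops this to $\B(\mathcal{S}_{\Pi,t})$. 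Since $H\subseteq H_\chi\subseteq\ker(\chi)$ for each $\chi$, taking the intersection over $\chi$ recovers $H\in\B(\mathcal{S}_{\Pi,t})$. This ``average out the base point'' maneuver is the substantive content of (1), and nothing in your outline corresponds to it.
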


By Lemma~\ref{lem_twocases}, if $B$ is not pseudorandom against some subspace in $\mathcal{W}_{\Pi, k, B}$, i.e., $|\mu_W(B)-\mu(B)|> \epsilon$ for some $W\in \mathcal{W}_{\Pi, k, B}$, then $\mathcal{X}_{\Pi, k', B, \epsilon'}$ is nonempty.
Then we can find  a collection of subspaces $ \mathcal{C}=\{H+\F x: x\in B\}$, where  $H=\bigcap _{\chi\in \mathcal{X}_{\Pi, k', B, \epsilon'}} \ker(\chi)$.
Theorem~\ref{thm_struct}\,(7) then states that, by restricting to each  $W\in \mathcal{C}$,  $B$ becomes pseudorandom in the weaker sense that for small enough $k''$ and $W'\in \mathcal{W}_{\Pi, k'', B}$ satisfying $W\cap W'\not\subseteq H$, we have $\mu_{W\cap W'}(B)\approx\mu_{W}(B)$ (however, if $W\cap W'\subseteq H$, we would have $\mu_{W\cap W'}(B)=0$). 

Moreover, the set $H$ is constructible (Theorem~\ref{thm_struct}\,(1)) and is a common hyperplane of $W\in\mathcal{C}$ (Theorem~\ref{thm_struct}\,(3)). It is also the common intersection of $W\in\mathcal{C}$ (Theorem~\ref{thm_struct}\,(4)). So the subspaces $W\in\mathcal{C}$ form a ``sunflower'' where $H$ is the ``kernel''. The set $B$ is evenly distributed on the ``leaves'' of the sunflower (Theorem~\ref{thm_struct}\,(2), (4) and (5)). Finally, we have an upper bound for the number of leaves (Theorem~\ref{thm_struct}\,(6)).

\begin{proof}[Proof of Theorem~\ref{thm_struct}]

(1): For $\chi\in \mathcal{X}_{\Pi, k', B, \epsilon'}$, we will find a subspace $H_\chi\subseteq\sg{B}$ satisfying $H_\chi\in \B(\mathcal{S}_{\Pi, t})$ and $H\subseteq H_\chi\subseteq \ker(\chi)$.
Then $H=\bigcap_{\chi\in \mathcal{X}_{\Pi, k', B, \epsilon'}} H_\chi$ since
\[
H\subseteq \bigcap_{\chi\in \mathcal{X}_{\Pi, k', B, \epsilon'}} H_\chi\subseteq \bigcap_{\chi\in \mathcal{X}_{\Pi, k', B, \epsilon'}} \ker(\chi)=H.
\]
As   $\B(\mathcal{S}_{\Pi, t})$ is closed under intersection by Lemma~\ref{lem_constset}\,(2), we would have $H\in \B(\mathcal{S}_{\Pi, t})$.
So it remains to find $H_\chi$ for $\chi\in \mathcal{X}_{\Pi, k', B, \epsilon'}$.

Consider arbitrary $\chi\in \mathcal{X}_{\Pi, k', B, \epsilon'}$. 
As  $\ker(\chi)\in \mathcal{W}_{\Pi, k', B}$, there exist $x=(x_1,\dots,x_{k'})\in S^{k'}$, $s\in\N^+$, $B_1,\dots,B_s\in \Pi_{x}^{(k')}$ and $\tau_1,\dots,\tau_s\in\M_{k',1}$ such that $\ker(\chi)=\bigcup_{i=1}^s \tau_i(B_i)$. 
We may assume the sets $\tau_i(B_i)$ are disjoint by Lemma~\ref{lem_constset}\,(2). 
For $i\in [s]$, let $B'_i$ be the block in $\Pi^{(2k')}$ satisfying $B_i=\{y\in S^{k'}: (x,y)\in B'_i\}$, which uniquely exists since $B_i\in \Pi_{x}^{(k')}$.
Let $B^*$ be the block in $\Pi^{(k')}$ containing $x$.
For $i\in [s]$ and $z\in B^*$, let $B_{i,z}=\{y\in S^{k'}: (z,y)\in B'_i\}\in \Pi_z^{(k')}$, so that $B_{i,x}=B_i$.
Finally, choose $u\in B\setminus\ker(\chi)$, and let $B^{**}$ be the block in $\Pi^{(k'+1)}$ containing $(x,u)$.

We know $\ker(\chi)=\bigcup_{i=1}^s \tau_i(B_i)$.
The idea is showing that for each $z\in B^*$, there exists a character $\chi_z\in \mathcal{X}_{\Pi, k', B, \epsilon'}$ such that $\ker(\chi_z)=\bigcup_{i=1}^s \tau_i(B_{i,z})$, and $\chi_x=\chi$. Then we can choose $H_\chi=\bigcap_{z\in\B^*} \ker(\chi_z)$. To achieve this, we need the following claim.
\begin{claim}\label{claim_constant}
 We have:
\begin{enumerate}[(a)]
\item For distinct $i,j\in [s]$  and $z\in B^*$, $\tau_i(B_{i,z})\cap \tau_j(B_{j,z})=\emptyset$.
\item For $i\in [s]$, $|\tau_i(B_{i,z})|$ is independent of $z\in B^*$.
\item For $i,j\in [s]$, the number of $(a,b)\in B_{i,z}\times B_{j,z}$ satisfying $\tau_i(B_{i,z})-\tau_j(B_{j,z})\in \bigcup_{k=1}^s \tau_k(B_{k,z})$ is independent of $z\in B^*$.
\item For all $(z,w)\in B^{**}$ where $z\in S^k$ and $w\in S$, we have $w\not\in \bigcup_{i=1}^s\tau_i(B_{i,z})$.
\item For $i\in [s]$ and $c\in \F$, $|B\cap (\tau_i(B_{i,z})+c w)|$ is independent of $(z,w)\in\B^{**}$. 
\end{enumerate}
\end{claim}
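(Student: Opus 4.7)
The plan is to reduce each of the five subclaims to the general principle that any quantity expressible through block-membership in $\Pi$, linear equations on $V$, and first-order quantifiers over $S$ must be constant on every block of $\Pi^{(d)}$ of the appropriate degree. All the needed closure properties come from Lemma~\ref{lem_closedness} together with Lemma~\ref{lem_linrel}, and all constructions will stay inside $\B(\Pi^{(d)})$ with $d\leq 4k'\leq m$ by the hypothesis $k'\leq m/4$. For the disjointness statements (a) and (d), I take a negative-evidence approach: define the ``bad'' set where the claim fails, show it is in $\B(\Pi^{(k')})$ or $\B(\Pi^{(k'+1)})$, and exhibit a specific element of the target block ($x\in B^*$ for (a), $(x,u)\in B^{**}$ for (d)) that avoids it, forcing the bad set to miss the whole block. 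Concretely, for (a) the bad set is $T_{i,j}:=\{z\in S^{k'}: \exists (a,b)\in S^{2k'},\, (z,a)\in B'_i,\, (z,b)\in B'_j,\, \tau_i(a)=\tau_j(b)\}$, which lies in $\B(\Pi^{(k')})$ by applying the $\exists$-quantifier of Lemma~\ref{lem_closedness}(2) to a $\B(\Pi^{(3k')})$-set; the pairwise disjointness of the summands $\tau_i(B_i)$ (assumed at the outset of the proof of Theorem~\ref{thm_struct}) puts $x$ outside $T_{i,j}$. Part (d) is analogous with $D:=\{(z,w)\in S^{k'+1}: \exists i,\, \exists a\in S^{k'},\, (z,a)\in B'_i,\, \tau_i(a)=w\}\in \B(\Pi^{(k'+1)})$ and the fact that $u\notin \ker(\chi)=\bigcup_i \tau_i(B_i)=\bigcup_i \tau_i(B_{i,x})$.

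For the cardinality statements (b), (c), (e), the key tool is the exactly-$t$ quantifier of Lemma~\ref{lem_closedness}(2). For (b), I introduce $W_t:=\{(z,a)\in B'_i: \#\{a'\in S^{k'}: (z,a')\in B'_i,\, \tau_i(a')=\tau_i(a)\}=t\}\in \B(\Pi^{(2k')})$. Property (P2) applied to the projection $B'_i\to B^*$ makes $|(W_t)_z|$ independent of $z\in B^*$, and since $|(W_t)_z|=t\cdot N_t(z)$, where $N_t(z)$ counts the $y\in \tau_i(B_{i,z})$ whose $\tau_i$-fiber in $B_{i,z}$ has size $t$, each $N_t(z)$ is constant on $B^*$ and hence $|\tau_i(B_{i,z})|=\sum_t N_t(z)$ is constant. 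Part (c) uses the $4k'$-dimensional witness $\bigcup_{k=1}^s \{(z,a,b,c)\in S^{4k'}: (z,a)\in B'_i,\, (z,b)\in B'_j,\, (z,c)\in B'_k,\, \tau_i(a)-\tau_j(b)-\tau_k(c)=0\}$, eliminates the $c$-coordinate by an $\exists$-quantifier to reach $\B(\Pi^{(3k')})$, and then counts the pairs $(a,b)$ by an $\exists_{=t}$-quantifier to reach $\B(\Pi^{(k')})$. Part (e) follows the same template with an additional ambient coordinate from $B^{**}$: take the set $\{(z,w,a,b)\in S^{2k'+2}: (z,a)\in B'_i,\, b\in B,\, \tau_i(a)+cw-b=0\}\in\B(\Pi^{(2k'+2)})$, eliminate $a$ by $\exists$, and count $b$ by $\exists_{=t}$ to land in $\B(\Pi^{(k'+1)})$; evaluating on $B^{**}$ yields the result.

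The main subtlety is that the images $\tau_i(B_{i,z})$ may fail to lie in $S$: a priori they live in $\ker(\chi)\subseteq \sg{B}$ with no reason to be contained in $S$. Consequently, Property (P2) of $\Pi_z$ cannot be invoked directly with $\tau_i(B_{i,z})$ as a target block of $\Pi_z^{(1)}$, and any attempt to reason block-level about the image set will break down. The uniform workaround is to never treat $\tau_i(B_{i,z})$ itself as a block and instead to encode every quantity as a multiplicity count inside the honest block $B'_i\in \Pi^{(2k')}$ (or a higher-fold product of such), using the exactly-$t$ quantifier to translate these multiplicity counts back into a block-level statement about a subset of $S^{k'}$ or $S^{k'+1}$. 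Given $k'\leq m/4$, keeping track of the ambient degree ($3k'$ for (a), $2k'$ for (b), $4k'$ for (c), $k'+1$ for (d), $2k'+2$ for (e)) is then purely mechanical.
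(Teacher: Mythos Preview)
Your proposal is correct and follows essentially the same approach as the paper's proof: encode each quantity via block-membership and linear relations inside a higher-degree product, apply Lemma~\ref{lem_closedness} to quantify out the auxiliary coordinates, and use that $B^*$ (resp.\ $B^{**}$) is a single block to force constancy. The only cosmetic difference is in (b), where the paper observes that since $B'_i\in\Pi^{(2k')}$ is a single block, the fiber-size stratification collapses to $B'_i=B'_{i,d_0}$ for one $d_0$ (making $\tau_i|_{B_{i,z}}$ uniformly $d_0$-to-1), whereas you sum $N_t(z)$ over all $t$; both routes are immediate from the same closure lemma.
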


\begin{proof}[Proof of Claim~\ref{claim_constant}]

 (a): Let $i,j\in [s]$ such that $i\neq j$. By Property (P2) of $\Pi$, the number of $(a,b)\in S^{k'}\times S^{k'}$ satisfying $(z,a)\in B_i'$, $(z,b)\in B_j'$ and $\tau_i(a)=\tau_j(b)$ is independent of $z\in B^*$.
For each $z$, this number equals zero iff $\tau_i(B_{i,z})\cap \tau_j(B_{j,z})=\emptyset$. As $\tau_i(B_i)\cap \tau_j(B_j)=\emptyset$, we have $\tau_i(B_{i,z})\cap \tau_j(B_{j,z})=\emptyset$ for $z\in B^*$.
 
 (b): Let $i\in [s]$. 
 For $d\in\N^+$, let $B_{i,d}'$ be the set  of $(z,y)\in B_i'$ satisfying 
 \[
 \#\{y'\in S^{k'}: (z,y')\in B_i' \text{ and } \tau_i(y')=\tau_i(y)\}=d.
 \]
 By Lemma~\ref{lem_closedness},  we have $B_{i,d}'\in \B(\Pi^{(2k')})$ for $d\in \N^+$.
 As $B_i'\in \Pi^{(2k')}$ and  $B_{i,d}'\subseteq B_i'$  for $d\in \N^+$, we have $B_i'=B_{i,d_0}'$ for some $d_0\in\N^+$.
 This means for all $z\in B^*$, the map $\tau_i|_{B_{i,z}}: B_{i,z}\to \tau_i(B_{i,z})$ is a $d_0$-to-1 map.
 By Property (P2) of $\Pi$, $|B_{i,z}|$ is independent of $z\in B^*$.
 So $|\tau_i(B_{i,z})|=|B_{i,z}|/d_0$ is also independent of $z\in B^*$. 
 
 (c): Let $i,j\in [s]$. 
 For $k\in [s]$, define 
 \[
 T_k=\{(z,a,b,c)\in (S^{k'})^4: (z,c)\in B_k' \text{ and } \tau_i(a)-\tau_j(b)=\tau_k(c)\}.
 \] 
 Then $T_k\in\B(\Pi^{(4k')})$ for $k\in [s]$.
 Let $T=\bigcup_{k\in [s]} T_k\in \B(\Pi^{(4k')})$.
 By Lemma~\ref{lem_closedness} and Property (P2) of $\Pi$, the number
 \[
 \#\left\{(a,b)\in S^{k'}\times S^{k'}: 
 \begin{array}{l}
  (z,a)\in B_i', (z,b)\in B_j', and\\
  \exists\,c\in S^{k'}\,  (z,a,b,c)\in T
 \end{array}
 \right\}
 \]
 is independent of $z\in B^*$.
This is precisely the number of $(a,b)\in B_{i,z}\times B_{j,z}$ satisfying $\tau_i(B_{i,z})-\tau_j(B_{j,z})\in \bigcup_{k=1}^s \tau_k(B_{k,z})$.

(d): 
By Property (P2) of $\Pi$, for $i\in [s]$, the number of $a\in S^{k'}$ satisfying $(z,a)\in B_i'$ and $w=\tau_i(a)$ is independent of $(z,w)\in B^{**}$.
This number equals zero for all $i\in [s]$ iff $w\not\in \bigcup_{i=1}^s\tau_i(B_{i,z})$. As $(x,u)\in B^{**}$ and $u\not\in \bigcup_{i=1}^s\tau_i(B_{i})$, we have $w\not\in \bigcup_{i=1}^s\tau_i(B_{i,z})$ for $(z,w)\in B^{**}$.

 (e): Let $i\in [s]$ and $c\in\F$. 
 By Lemma~\ref{lem_closedness} and Property (P2) of $\Pi$, the number
 \[
 \#\{a\in   B: \exists\, y\in S^{k'} \text{ satisfying } (z,y)\in B_i' \text{ and } a=\tau_i(y)+cw\}
 \]
 is independent of $(z,w)\in B^{**}$. This number is precisely $|B\cap (\tau_i(B_{i,z})+cw)|$.
\end{proof}

Let $W_z=\bigcup_{i=1}^s \tau_i(B_{i,z})$ for $z\in B^*$. Note $W_x=\ker(\chi)$. 
By Claim~\ref{claim_constant}\,(a) and (b), $|W_z|$ is independent of $z\in B^*$.
By Claim~\ref{claim_constant}\,(c), for $i,j\in [s]$, the number of $(a,b)\in B_{i,z}\times B_{j,z}$ satisfying $\tau_i(B_{i,z})-\tau_j(B_{j,z})\in W_z$ is independent of $z\in B^*$. This number attains the maximum possible value $|B_{i,z}|\times |B_{j,z}|$ for all $i,j\in [s]$ iff $W_z-W_z\subseteq W_z$, or equivalently, $W_z$ is an abelian subgroup of $\sg{B}$. As $W_{x}=\ker(\chi)$ is a hyperplane of $\sg{B}$ and $\F$ is a prime field, we know $W_z$ is a hyperplane of $\sg{B}$  for all $z\in B^*$.

Choose $u_z\in S$ for each $z\in B^*$ such that $(z,u_z)\in B^{**}$ and $u_x=u$ (such $u_z$ exists by Property (P2) of $\Pi$). 
For $z\in B^*$, we know $u_z\not\in W_z$ and hence $W_z+\F u_z=\sg{B}$ by Claim~\ref{claim_constant}\,(d). For $z\in B^*$, let $\chi_z$ be the unique character in $\widehat{\sg{B}}$ satisfying $W_z\subseteq\ker(\chi_z)$ and $\chi_z(u_z)=\chi(u)$. In particular, $\chi_x=\chi$.
Note
\begin{align*}
\widehat{1_B}(\chi_z)&=\E_{a\in \sg{B}}\left[1_B(a)\overline{\chi_z(a)}\right]=|\sg{B}|^{-1}\left(\sum_{c\in \F} |B\cap (W_z+cu_z)| \cdot \overline{\chi_z(c u_z)}\right)\\
&=|\sg{B}|^{-1}\left(\sum_{c\in \F} |B\cap (W_z+cu_z)| \cdot \overline{\chi(c u)}\right).
\end{align*}
By Claim~\ref{claim_constant}\,(b) and (e), $|B\cap (W_z+cu_z)|$ is independent of $z\in B^*$.
It follows that $|\widehat{1_B}(\chi_z)|=|\widehat{1_B}(\chi)|\geq \epsilon'$ for all $z\in B^*$.
Also note  $W_z=\bigcup_{i=1}^s \tau_i(B_{i,z})\in\mathcal{W}_{\Pi, k', B}$. 
By definition, we have $\chi_z\in  \mathcal{X}_{\Pi, k', B, \epsilon'}$ for  $z\in B^*$.

For $i\in [s]$, let 
\[
U_i=\left\{a\in S^{k'}:  \begin{array}{l}
 \forall\,z\in B^* ~\exists\, b\in S^{k'}~ \exists\,j\in [s]\\
   (z,b)\in B_j' \text{ and }\tau_i(a)=\tau_j(b)
 \end{array}
 \right\}.
\]
We have $U_i\in \B(\Pi^{(k')})$ by Lemma~\ref{lem_closedness}.
Let $H_\chi:=\bigcup_{i=1}^s \tau_i(U_i)$. Then $H_\chi\in \B(\mathcal{S}_{\Pi, k'})$.
On the other hand, note 
\[
H_\chi=\bigcap_{z\in B^*} W_z=\bigcap_{z\in B^*}\ker(\chi_z)\subseteq \sg{B}.
\]
By Theorem~\ref{thm_smallgen}, we have $H_\chi\subseteq \sg{B}\subseteq t(\F B)$. Note $t(\F B)\in \B(\mathcal{S}_{\Pi, t})$.
Then $H_\chi=H_\chi\cap t(\F B)\in \B(\mathcal{S}_{\Pi, t})$ by Lemma~\ref{lem_constset}\,(2).
As $\chi_z\in  \mathcal{X}_{\Pi, k', B, \epsilon'}$ for  $z\in B^*$, $\chi_x=\chi$, and $x\in B^*$, we have
\[
H=\bigcap_{\chi\in \mathcal{X}_{\Pi, k', B, \epsilon'}} \ker(\chi)\subseteq H_\chi\subseteq \ker(\chi),
\]
as desired.

(2): Assume to the contrary that $B\cap H\neq\emptyset$.  Note $B\cap H\in \B(\Pi^{(1)})$ by (1) and Lemma~\ref{lem_constset}\,(1). As $B\in \Pi^{(1)}$, we   have   $B\subseteq H$ and hence $\sg{B}\subseteq H$.
But this is impossible as $H=\bigcap _{\chi\in \mathcal{X}_{\Pi, k', B, \epsilon'}} \ker(\chi)$ is a proper subspace of $\sg{B}$.

(3): Every $W\in \mathcal{C}$ is of the form $H+\F x$ for some $x\in B$. So either  $H=W$ or $H$ is a hyperplane of $W$. The former case is impossible since $B\cap H=\emptyset$ by (2). 

(4): By (3), we have $\dim W=\dim W'=\dim H+1$. Note $H\subseteq W\cap W'\subseteq W$. As $H$ is a hyperplane of $W$,  either $W\cap W'=H$ or $W\cap W'=W$. The latter case is impossible as $W\neq W'$ and $\dim W=\dim W'$. 

(5): The set
$T:=\{(x,y)\in B\times B: y\in  H+\F x\}$
is  in $\B(\Pi^{(2)})$ by (1) and  Lemma~\ref{lem_closedness}.
By Property (P2), the cardinality of $T_x:=\{y\in B: (x,y)\in T\}$ is constant when $x$ ranges over $B$. But $T_x$ is precisely $B\cap W$ where  $W=H+\F x$. So $B\cap W$ have equal size when $W$ ranges over $\mathcal{C}$.

(6): 
Let $d$ be the codimension of $H$ in $\sg{B}$.
We have $\sum_{\chi\in  \mathcal{X}_{\Pi, k', B, \epsilon'}} |\widehat{1_B}(\chi)|^2\leq 1$ by  Parseval's identity.
As $|\widehat{1_B}(\chi)|\geq \epsilon'$ for all $\chi\in \mathcal{X}_{\Pi, k', B, \epsilon}$, we have $ |\mathcal{X}_{\Pi, k', B, \epsilon'}|\leq 1/\epsilon'^2$ and hence $d\leq 1/\epsilon'^2$. 
For every two distinct subspaces $W, W'\in\mathcal{C}$, $W/H$ and $W'/H$ are distinct one-dimensional subspaces of $\sg{B}/H$ by (3) and (4). 
The claim follows by noting that the number of one-dimensional subspaces of $\sg{B}/H$ equals $(\ell^d-1)/(\ell-1)\leq \ell^d\leq \ell^{1/\epsilon'^2}$.

(7):   
By (1), (3) and Lemma~\ref{lem_csubspace}, we have $W\in \B(\mathcal{S}_{\Pi_x, t+1})$ for some $x\in S$ .
We also have $W'\in \B(\mathcal{S}_{\Pi_y, k''})$ for some $y\in S^{k''}$ by definition.
Then $W\cap W'\in  \B(\mathcal{S}_{\Pi_{x,y}, t+1})$ by Lemma~\ref{lem_refine} and Lemma~\ref{lem_constset}.

Assume $|\mu_{W\cap W'}(B)-\mu_{W}(B)|>\ell^d\epsilon'$.
We want to prove  $W\cap W'\subseteq H$, which implies $\mu_{W\cap W'}(B)=0$ by (2).
Let $G_1$ (resp. $G_2$) be the subgroup of the characters $\chi\in\widehat{\sg{B}}$ vanishing on $W\cap W'$ (resp. $W$).
We have $G_2\subseteq G_1$ and $|G_1|=|\sg{B}|/|W\cap W'|=\ell^d$.
Note $|\mu_{W\cap W'}(B)-\mu_{W}(B)|= |\sum_{\chi\in G_1\setminus G_2} \widehat{1_B}(\chi)|$ (cf. Equation~\eqref{eq_fourier} in the proof of Lemma~\ref{lem_twocases}).
So there exists $\chi^*\in G_1\setminus G_2$ such that $|\widehat{1_B}(\chi^*)|\geq |G_1|^{-1}\ell^d\epsilon'=\epsilon'$.
As $\chi^*\in G_1$, we have $W\cap W'\subseteq\ker(\chi^*)$.
The codimension of $W\cap W'$ in $\ker(\chi^*)$ is $d-1$. So by Lemma~\ref{lem_csubspace} and Theorem~\ref{thm_smallgen},   $\ker(\chi^*)$ is $((\Pi_{x,y})_{x'}, (t+1)+(d-1)t)$-constructible for some $x'\in S^{(d-1)t}$.  
We know $k'\geq k''+1+(d-1)t$ and $k'\geq (t+1)+(d-1)t$.
It follows that $\ker(\chi^*)\in  \mathcal{W}_{\Pi, k', B}$.
So $\chi^*\in \mathcal{X}_{\Pi, k', B, \epsilon'}$.  

Let $G$  be the be the subgroup of the characters $\chi\in\widehat{\sg{B}}$ vanishing on $H\subseteq W$. As $H$ is a hyperplane of $W$, $G_2$ is a subgroup of $G$ of corank one. As $\chi^*\in  \mathcal{X}_{\Pi, k', B, \epsilon'}$ and $H=\bigcap _{\chi\in \mathcal{X}_{\Pi, k', B, \epsilon'}} \ker(\chi)$, we have $H\subseteq  \ker(\chi^*)$ and hence $\chi^*\in G$. As $\chi^*\not\in G_2$, we have $G=\langle G_2, \chi^*\rangle$, which is equivalent to $H=W\cap \ker(\chi^*)$. So $W\cap W'\subseteq W\cap \ker(\chi^*)=H$, as desired.
\end{proof}
 
 \subsection{Reducing the Density of $B$}
 
We adopt the following notation throughout this subsection.
 \begin{definition}
 For $B\in\Pi^{(2)}$ and $x\in S$, where $\Pi$ is a linear $m$-scheme on $S$ with $m\geq 2$, define  $B_x:=\{y\in S: (x,y)\in B\}\in \Pi_x^{(1)}$, called the \emph{$x$-fiber} of $B$.
 \end{definition}
 
Suppose $B\in\Pi^{(1)}$ and $B^*\in \Pi^{(2)}$ satisfy $B^{*}\subseteq (B\times B)\setminus \Delta_B$, where $\Delta_B:=\{(x,x):x\in B\}$.
Let $B^{**}=\{(y,x): (x,y)\in B'\}$. Then $B^{**}\in \Pi^{(2)}$, $|B^{**}|=|B^{*}|$, and $B^{**}\neq B^{*}$.
So for $x\in B$, we have two distinct blocks $B^{*}_x, B^{**}_x\in\Pi_x^{(1)}$ of equal size contained in $B\setminus\{x\}$.

We want to prove that replacing $B$ by $B^{*}_x$   reduces the density by at least a constant factor. To achieve this,  we need to  restrict to a subspace in which $B$ is pseudorandom. This leads to the following definition.

\begin{definition}\label{defi_twocases}
Let $\Pi$ be a linear $m$-scheme on $S$. Let $B\in \Pi^{(1)}$, $k\in\N^+$, and $0<\epsilon<1$. Let $t=\lfloor\frac{3}{2\mu(B)}\rfloor+1$, $k'=k(t+2)$ and $\epsilon'=\epsilon/\ell^k$. Suppose $m\geq 2k'$.
\begin{enumerate}
\item  If $\mathcal{X}_{\Pi, k', B, \epsilon'}=\emptyset$, define $H_{\Pi, k, B, \epsilon}:=\emptyset$ and $W_{\Pi, k, B, \epsilon}(x):=\sg{B}$ for $x\in B$.
\item If $\mathcal{X}_{\Pi, k', B, \epsilon'}\neq\emptyset$, define  $H_{\Pi, k, B,\epsilon}:=\bigcap _{\chi\in \mathcal{X}_{\Pi, k', B, \epsilon'}} \ker(\chi)$  and $W_{\Pi, k, B, \epsilon}(x):=H_{\Pi, k, B,\epsilon}+\F x$ for $x\in B$.
\end{enumerate}
\end{definition}

Then $B$ is ``pseudorandom'' within  $W_{\Pi, k, B, \epsilon}(x)$ for each $x\in B$ in the following sense.

\begin{lemma}\label{lem_pseudorandom}
Let $\Pi, m, B, k, k', \epsilon, \epsilon'$ be as in Definition~\ref{defi_twocases}. Suppose   $m\geq 4k'$. Let  $x\in B$ and $W=W_{\Pi, k, B, \epsilon}(x)$.
Let $W'\in  \mathcal{W}_{\Pi, k, B}$ such that $W\cap W'\not\subseteq H_{\Pi, k, B,\epsilon}$ and the codimension of $W\cap W'$ in $\sg{B}$ is bounded by $k$. 
Then  $|\mu_{W\cap W'}(B)-\mu_W(B)|\leq \epsilon$.
\end{lemma}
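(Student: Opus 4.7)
The plan is to split the proof along the dichotomy of Definition~\ref{defi_twocases}, handling the ``trivial'' case via Lemma~\ref{lem_twocases} and the ``non-trivial'' case via the decomposition theorem (Theorem~\ref{thm_struct}).

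In Case~1, where $\mathcal{X}_{\Pi, k', B, \epsilon'} = \emptyset$, by definition $W = \sg{B}$ and the claim reduces to $|\mu_{W'}(B) - \mu(B)| \leq \epsilon$. I would invoke Lemma~\ref{lem_twocases} with the given $k$ and $\epsilon$; the hypothesis $m \geq 4k' = 4k(t+2)$ supplies the required $m \geq 2k(t+1)$. To rule out alternative~(2) of that lemma I need $\mathcal{X}_{\Pi, k(t+1), B, \epsilon/\ell^k} = \emptyset$, which would follow from the monotonicity $\mathcal{X}_{\Pi, k_1, B, \eta} \subseteq \mathcal{X}_{\Pi, k_2, B, \eta}$ for $k_1 \leq k_2 \leq m$. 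The latter reduces to $\mathcal{W}_{\Pi, k_1, B} \subseteq \mathcal{W}_{\Pi, k_2, B}$, which I would verify by extending a fiber $x \in S^{k_1}$ to $(x, z) \in S^{k_2}$ and using Lemma~\ref{lem_closedness}\,(4) twice: first to upgrade $(\Pi_x, k_1)$-constructibility of $W$ to $(\Pi_x, k_2)$-constructibility via the trivial embedding $V^{k_1} \hookrightarrow V^{k_2}$, and then to convert $(\Pi_x, k_2)$-constructibility to $(\Pi_{(x, z)}, k_2)$-constructibility by pulling back along the appropriate projection.

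In Case~2, where $\mathcal{X}_{\Pi, k', B, \epsilon'} \neq \emptyset$, I would apply Theorem~\ref{thm_struct} with the same parameters $k'$ and $\epsilon'$; the hypothesis $m \geq 4k'$ delivers both $k' \leq m/4$ and $m \geq 2t+2$. Since $\mathcal{C} = \{H_{\Pi, k, B, \epsilon} + \F x : x \in B\}$ by construction, $W$ lies in $\mathcal{C}$, and the hypothesis $W' \in \mathcal{W}_{\Pi, k, B}$ together with the codimension bound $d := \codim_{\sg{B}}(W \cap W') \leq k$ allows me to apply part~(7) of the theorem with $k'' = k$. The numerical check is $k' \geq k + dt + 1$, which holds because $k(t+2) = kt + 2k \geq dt + k + 1$ using $d \leq k$ and $k \geq 1$. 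Theorem~\ref{thm_struct}\,(7) then yields either $|\mu_{W \cap W'}(B) - \mu_W(B)| \leq \ell^d \epsilon' \leq \ell^k (\epsilon/\ell^k) = \epsilon$, or $\mu_{W \cap W'}(B) = 0$ with $W \cap W' \subseteq H_{\Pi, k, B, \epsilon}$; the second alternative is excluded by the standing hypothesis $W \cap W' \not\subseteq H_{\Pi, k, B, \epsilon}$.

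The proof is essentially parameter-chasing once the two invocations are set up; there is no substantive obstacle beyond the monotonicity observation above, which is where the only genuinely non-cosmetic verification takes place. The point worth flagging is that the calibration of Definition~\ref{defi_twocases} is tuned exactly to this lemma: the slack factor $t+2$ (rather than $t+1$) in $k' = k(t+2)$ is precisely what lets $d$ range up to $k$ while preserving $k' \geq k + dt + 1$, and the choice $\epsilon' = \epsilon / \ell^k$ is exactly what absorbs the $\ell^d$ loss emerging from Theorem~\ref{thm_struct}\,(7).
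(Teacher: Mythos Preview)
Your proposal is correct and follows the same two-case split as the paper's proof, which is equally terse: Case~1 cites Lemma~\ref{lem_twocases} directly, and Case~2 cites Theorem~\ref{thm_struct}\,(7). You in fact supply more detail than the paper does, correctly isolating the one point the paper leaves implicit---that the emptiness of $\mathcal{X}_{\Pi, k(t+2), B, \epsilon'}$ must be downgraded to emptiness of $\mathcal{X}_{\Pi, k(t+1), B, \epsilon'}$ via the monotonicity $\mathcal{W}_{\Pi, k_1, B} \subseteq \mathcal{W}_{\Pi, k_2, B}$ for $k_1 \leq k_2$---and your parameter checks for Theorem~\ref{thm_struct}\,(7) are accurate.
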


\begin{proof}
If $\mathcal{X}_{\Pi, k', B, \epsilon'}=\emptyset$, we have $W=\sg{B}$ by Definition~\ref{defi_twocases} and hence $W\cap W'=W'\in  \mathcal{W}_{\Pi, k, B}$. In this case, the lemma follows from Lemma~\ref{lem_twocases}.
On the other hand, if $\mathcal{X}_{\Pi, k', B, \epsilon'}\neq\emptyset$, the lemma follows from Theorem~\ref{thm_struct}\,(7). 
\end{proof}

We also need the following simple lemma.

\begin{lemma}\label{lem_simple}
Let $\Pi$ be a linear $m$-scheme and  let $B\in\B(\Pi^{(1)})$. Suppose $|B|\geq N$ for some $N>0$. Then there exists  $B'\in \B(\Pi^{(1)})$ such that $B'\subseteq B$ and either of the following holds:
\begin{enumerate}
\item $N/2\leq |B'|\leq N$.
\item   $B'\in\Pi^{(1)}$ and $|B'|\geq  N$.
\end{enumerate} 
\end{lemma}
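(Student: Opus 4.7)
The plan is a simple greedy/pigeonhole argument that exploits the fact that $B \in \B(\Pi^{(1)})$ decomposes as a disjoint union of blocks $B_1, \dots, B_s \in \Pi^{(1)}$. First I would dispose of the easy case: if some $B_i$ satisfies $|B_i| \geq N$, then taking $B' := B_i$ yields a single block witnessing conclusion~(2), and we are done.

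Otherwise every $B_i$ has $|B_i| < N$. I would order the blocks arbitrarily and consider the partial sums $s_j := \sum_{i=1}^j |B_i|$, so that $s_0 = 0$ and $s_s = |B| \geq N$. Let $j$ be the smallest index with $s_j \geq N/2$. If $s_j \leq N$, then $B' := B_1 \cup \cdots \cup B_j$ is a union of blocks with $|B'| \in [N/2, N]$, giving~(1). Otherwise $s_{j-1} < N/2$ while $s_j > N$, which forces the single block $B_j$ to satisfy $|B_j| = s_j - s_{j-1} > N/2$; combined with $|B_j| < N$, this gives $|B_j| \in (N/2, N)$, so $B' := B_j \in \Pi^{(1)} \subseteq \B(\Pi^{(1)})$ again witnesses~(1).

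I do not expect any real obstacle; the only (very minor) subtlety is the observation that when the partial sum jumps over the window $[N/2, N]$, one must discard the accumulated prefix and use the single offending block instead, which is legitimate precisely because every block has size strictly below $N$ in that branch. The argument uses no structural property of linear $m$-schemes beyond the fact that $\Pi^{(1)}$ is a partition of $S$; in particular, it invokes neither strong antisymmetry nor Properties~(P1) and~(P2).
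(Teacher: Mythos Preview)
Your proposal is correct and follows essentially the same greedy/partial-sum idea as the paper. The only cosmetic difference is the case split: the paper first checks whether some block has size at least $N/2$ (not $N$), so in the complementary case every block has size below $N/2$ and the partial sums can never jump over the entire window $[N/2,N]$, which obviates your separate ``jump'' sub-case.
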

\begin{proof}
Let $T=\{B'\in \Pi^{(1)}: B'\subseteq B\}$.
If there exists $B'\in T$ satisfying $|B'|\geq N/2$ then either (1) or (2) holds, depending on whether $|B'|\leq N$.
So assume $|B'|<N/2$ for all $B'\in T$.
Choose a minimal subset $T'\subseteq T$ such that $\sum_{B'\in T'} |B'|\geq N/2$, which is always possible as  $\sum_{B'\in  T} |B'| = |B|\geq N$.
Let  $B''=\bigcup_{B'\in T'} B'$. Then $B''\in \B(\Pi^{(1)})$, $B''\subseteq B$, and $|B''|\geq N/2$. 
By the minimality of $T'$ and the fact $|B'|<N/2$ for $B'\in T$, we  have $|B''|\leq N$. So (1) holds.
\end{proof}

Next, we prove the following lemma.

\begin{lemma}\label{lem_rednormal}
Let $\Pi$ be a strongly antisymmetric linear $m$-scheme on $S$.
Let $B\in \Pi^{(1)}$,   $K>1$, $t= \lfloor\frac{3K}{2\mu(B)}\rfloor+1$, $k\in\N^+$, $k'=k(t+2)$, $0<\epsilon<1$ and $\epsilon'=\epsilon/\ell^k$. 
Suppose $m\geq 4k'+2$, $k\geq 2t$, and $|B|>K$.
Let $x\in B$ and $W=W_{\Pi, k, B, \epsilon}(x)$, and suppose $\epsilon\leq \mu_W(B)/2$.
Then one of the following holds:
\begin{enumerate}
\item  There exist $y\in B$ and  $B' \in \B(\Pi_{x,y}^{(1)})$  such that $B'\subseteq B$ and  $\ell^{-(1/\epsilon'^2)}|B|/K\leq |B'|\leq |B|/K$.
\item  There exist $B'\in \Pi_x^{(1)}$ and $y\in B'$ such that $B'\subseteq W$ and for the subspace $W'=W_{\Pi_x, k, B', \epsilon}(y)$, we have $B'\subseteq B$, $|B'|\geq |B|/K$, and $\mu_{W'}(B')\leq (\mu_{W}(B)+\epsilon)/2$.
\item  There exists $B^*\in \Pi^{(2)}$ contained in $B\times B$ such that $B^*$ and $B^{**}:=\{(y,x): (x,y)\in B^*\}$ satisfy the following conditions:
\begin{enumerate}[(3a)]
\item $B^*_x, B^{**}_x\subseteq B\cap W$, $B^*_x\neq B^{**}_x$, and $|B^*_x|=|B^{**}_x|\geq |B|/K$.
\item $W_{\Pi_x, k, B^*_x, \epsilon}(y)=\sg{B^*_x}$ for   $y\in B^*_x$.
\item $W_{\Pi_x, k, B^{**}_x, \epsilon}(y)=\sg{B^{**}_x}$ for  $y\in B^{**}_x$, and $\dim\sg{B^{**}_x}=\dim\sg{B^{*}_x}$.
\item $B\cap \sg{B^{*}_x}\cap \sg{B^{**}_x}=\emptyset$. 
\item $x\in \sg{B^{*}_x}$.
\item $|(B\cap \sg{B^{*}_x})\setminus B^{*}_x|, |(B\cap \sg{B^{**}_x})\setminus B^{**}_x|< \ell^{-(1/\epsilon'^2)}|B|/K$.
\end{enumerate}
\end{enumerate}
\end{lemma}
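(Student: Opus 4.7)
The plan is to case-analyze the partition $\Pi_x^{(1)}|_{B \cap W}$, using Lemma~\ref{lem_simple} to trichotomize into a small-fiber case (conclusion (1)), a density-reducing-fiber case (conclusion (2)), and a structurally rigid case (conclusion (3)). First, I would verify that $W$ is $(\Pi_x, t+1)$-constructible: by Theorem~\ref{thm_struct}(1), $H := H_{\Pi,k,B,\epsilon}$ is $(\Pi, t)$-constructible, and Lemma~\ref{lem_csubspace} applied with codimension $d=1$ and depth parameter $1$ (since $x \in S$) lifts this to $W = H + \F x$. Combined with Lemma~\ref{lem_constset}, this forces $B \cap W \in \B(\Pi_x^{(1)})$. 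The assumption $\epsilon \leq \mu_W(B)/2$ together with Theorem~\ref{thm_struct}(5)--(6) gives $|B \cap W| \geq |B|/\ell^{1/\epsilon'^2}$, which provides enough room for the subsequent counting.

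Apply Lemma~\ref{lem_simple} to the strongly antisymmetric $(m{-}1)$-scheme $\Pi_x$ and to $B \cap W$ with threshold $N = |B|/K$. Outcome (1) of Lemma~\ref{lem_simple} produces $B' \in \B(\Pi_x^{(1)}) \subseteq \B(\Pi_{x,y}^{(1)})$ with $|B|/(2K) \leq |B'| \leq |B|/K$, and since $\ell^{-1/\epsilon'^2} \leq 1/2$ this is conclusion (1). Otherwise we obtain a single block $B' \in \Pi_x^{(1)}$ with $B' \subseteq B \cap W$ and $|B'| \geq |B|/K$; choose $y \in B'$ and set $W' := W_{\Pi_x, k, B', \epsilon}(y)$. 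If $\mu_{W'}(B') \leq (\mu_W(B) + \epsilon)/2$, this is conclusion (2).

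The remaining case $\mu_{W'}(B') > (\mu_W(B) + \epsilon)/2$ drives us into conclusion (3). Let $B^* \in \Pi^{(2)}$ be the block containing $(x,y)$, so $B^*_x = B'$; let $\tau \in \M_{2,2}$ be the flip $(a,b) \mapsto (b,a)$ and $B^{**} := \tau(B^*)$. Property~(P1) places both $B^*, B^{**}$ inside $B \times B$, and the proof of Claim~\ref{claim_halve} (run in the fiber sense) gives $B^* \neq B^{**}$, establishing (3a). For (3b)--(3c), if $W_{\Pi_x, k, B^*_x, \epsilon}(y') \subsetneq \sg{B^*_x}$ for some $y' \in B^*_x$, I would recursively apply the present lemma to $(\Pi_x, B^*_x)$---since $\mu(B^*_x) \geq \mu(B)/K$ forces the inner parameter $t'$ to satisfy $t' \leq t$, the budget $m \geq 4k'+2$ accommodates the recursion---and composing the resulting inner fiber with the $x$-fiber would yield conclusion (1) or (2) for the outer problem; by minimizing over $y$ we may therefore assume (3b), and (3c) follows symmetrically. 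Property (3e), $x \in \sg{B^*_x}$, would be derived by combining Lemma~\ref{lem_linrel} with strong antisymmetry: if $x$ were linearly independent of $\sg{B^*_x}$, then a suitable linear shift $(a,b) \mapsto (a, b+ca) \in \M_{2,2}$ would map $B^*$ to a block forcing a nontrivial automorphism of $B^*_x$ in $\widetilde{\M}_{\Pi_x}$. Properties (3d) and (3f) both use pseudorandomness of $B$ in $W$ (Lemma~\ref{lem_pseudorandom}): a nonempty $B \cap \sg{B^*_x} \cap \sg{B^{**}_x}$ would be $\Pi^{(1)}$-constructible, hence a union of $\Pi^{(1)}$-blocks of $B$, boosting $\mu_{W \cap \sg{B^*_x} \cap \sg{B^{**}_x}}(B)$ above $\mu_W(B)+\epsilon$ unless the intersection lies inside $H$ (where Lemma~\ref{lem_pseudorandom} then forces the density to vanish, yielding (3d)); (3f) is the quantitative analogue bounding $|(B\cap\sg{B^*_x})\setminus B^*_x|$ via $\mu_{W\cap\sg{B^*_x}}(B)\leq \mu_W(B)+\epsilon$ combined with $|B^*_x|\geq |B|/K$.

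The main obstacle will be verifying (3e) and orchestrating the recursion for (3b)--(3c) cleanly. Each recursive step consumes constructibility depth, and one must check that the parameters $(k,k',t)$ shrink controllably so the recursion stays within the $m\geq 4k'+2$ budget. A second subtlety is that $W' = W_{\Pi_x, k, B', \epsilon}(y)$ lives natively within $\Pi_x$ rather than $\Pi$, so invoking Lemma~\ref{lem_pseudorandom} at the outer level requires first lifting $W'$ to a $\Pi$-constructible form at depth roughly $k+t$---this depth inflation is precisely what motivates the factor $(t+2)$ in the definition $k' = k(t+2)$.
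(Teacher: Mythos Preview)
Your overall plan is right in outline (apply Lemma~\ref{lem_simple} to $B\cap W$, then analyze the surviving block), and Step~1 matches the paper almost verbatim. But from Step~3 onward the proposal diverges from the paper and has genuine gaps.

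\textbf{The missing key idea.} The paper does \emph{not} recurse. After obtaining a block $B'\in\Pi_x^{(1)}$ with $B'\subseteq B\cap W$ and $|B'|\geq|B|/K$, it chooses $B'$ so that $\dim\sg{B'}$ is \emph{minimal} among all such blocks. This minimality, packaged as Claim~\ref{claim_smaller}, is the engine for (3b)--(3f): any $(\Pi_x,k)$-constructible subspace $W'\subseteq W$ with $B\cap W'\neq\emptyset$ either triggers Case~(1) or contains a block $B''$ with $\dim\sg{B''}\geq\dim\sg{B^*_x}$. Properties (3b), (3c), (3d), (3e), (3f) all follow by exhibiting suitable $W'$ and invoking this claim.

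\textbf{Why your recursion for (3b)--(3c) fails.} You assert $\mu(B^*_x)\geq\mu(B)/K$ forces the inner $t'\leq t$. But $t'=\lfloor 3K/(2\mu(B^*_x))\rfloor+1$, and $\mu(B^*_x)\geq\mu(B)/K$ only gives $t'\lesssim Kt$, not $t'\leq t$. So the inner $k',m$ requirements blow up by a factor of $K$ and the budget $m\geq 4k'+2$ is not preserved. Separately, even if parameters cooperated, inner Case~(1) produces a set in $\B(\Pi_{x,y,z}^{(1)})$ at depth three, whereas outer Case~(1) demands depth two.

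\textbf{Why your argument for (3e) fails.} The shift $(a,b)\mapsto(a,b+ca)$ maps $(x,b)\in B^*$ to $(x,b+cx)$. If $x\notin\sg{B^*_x}$ then $b+cx$ typically lies outside $S$, so $\tau_c(B^*)\cap S^2$ need not equal $B^*$, and no automorphism of $B^*_x$ arises. The paper instead constructs $W'=(\sg{B^*_x}\cap H_{\Pi,k,B,\epsilon})+\F x$, applies Claim~\ref{claim_smaller} to find a block $B''\subseteq B\cap W'$ with $\dim\sg{B''}=\dim\sg{B^*_x}$, and \emph{replaces} $B'$ by $B''$; now $x\in W'=\sg{B''}$.

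\textbf{Your (3d) reasoning is backwards.} Nonemptiness of $B\cap\sg{B^*_x}\cap\sg{B^{**}_x}$ does not ``boost'' density above $\mu_W(B)+\epsilon$. Rather, via Claim~\ref{claim_smaller} it forces $\sg{B^*_x}=\sg{B^{**}_x}$, whence the disjoint blocks $B^*_x,B^{**}_x$ each occupy at most half of $B\cap\sg{B^*_x}$, giving $\mu(B^*_x)\leq\mu_{\sg{B^*_x}}(B)/2\leq(\mu_W(B)+\epsilon)/2$ and hence Case~(2).
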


Case~(1) basically implies Lemma~\ref{lem_densecase} (for large enough $K$). So we are done in this case.
We give some intutions for Case~(2) and (3). For simplicity, assume $W=\sg{B}$. 

As mentioned above, the set $B\setminus\{x\}$ contains two distinct blocks $B^*_x, B^{**}_x\in\Pi_x^{(1)}$ of equal size as subsets. Assume $B^*_x$ and $B^{**}_x$ are dense enough in $\sg{B}$. 
Let $H=\sg{B^*_x}\cap \sg{B^{**}_x}$.
Then it is not hard to show that either $H\in\{\sg{B^*_x}, \sg{B^{**}_x}\}$ or $B^*_x\cap H=B^{**}_x\cap H=\emptyset$.
Otherwise, we could intersect  either $B^*_x$ or $B^{**}_x$   with $H$ and obtain a  nonempty proper subset of $B^*_x$ or $B^{**}_x$ that is in $\B(\Pi_x^{(1)})$, contradicting $B^*_x, B^{**}_x\in\Pi_x^{(1)}$. 

Assume $H=\sg{B^*_x}=\sg{B^{**}_x}$. As $B$ is pseudorandom within $W$, we have
$\mu_H(B)\approx \mu_W(B)$. As $B^*_x, B^{**}_x\subseteq B$ and $B^*_x\cap B^{**}_x=\emptyset$, we have
\[
\min\{\mu(B^*_x), \mu(B^{**}_x)\} =\min\{\mu_H(B^*_x), \mu_H(B^{**}_x)\} \leq  \mu_H(B)/2 \approx \mu_W(B)/2.
\]
A similar argument shows that this holds more generally when $H\in\{\sg{B^*_x}, \sg{B^{**}_x}\}$.
Moreover, in order to apply Lemma~\ref{lem_rednormal} repeatedly, we will actually
find some $B'\in \Pi_x^{(1)}$ and a subspace $W'$ such that $\mu_{W'}(B')\lessapprox \mu_W(B)/2$ and $B'$ is pseudorandom within $W'$. This is captured by Case~(2).
 
 Unfortunately, this argument does not seem to work in the case $B^*_x\cap H=B^{**}_x\cap H=\emptyset$, and we do not know how to rule out this case. This exceptional case is described by Case~(3) above and will be addressed later via a  careful analysis (see Lemma~\ref{lem_ruleout} below). 
 
\begin{proof}[Proof of  Lemma~\ref{lem_rednormal}]

As the proof is long and technical, we divide it into several steps.

\textbf{Step 1: Finding $B'\in \Pi_x^{(1)}$ such that $B'\subseteq B\cap W$ and $|B'|\geq |B|/K$.}

If $\mathcal{X}_{\Pi, k', B, \epsilon'}=\emptyset$, we have $W=\sg{B}=t(\F B)\in \B(\mathcal{S}_{\Pi,t})$ by Theorem~\ref{thm_smallgen}.
And if $\mathcal{X}_{\Pi, k', B, \epsilon'}\neq\emptyset$, we have $H_{\Pi, k, B, \epsilon}\in \B(\mathcal{S}_{\Pi,t})$ by Theorem~\ref{thm_struct}\,(1) and hence $W=H_{\Pi, k, B, \epsilon}+\F x\in \B(\mathcal{S}_{\Pi_x,t+1})$. 
In either case, we have $W\in \B(\mathcal{S}_{\Pi_x,t+1})$.
So $B\cap W\in \B(\Pi_x^{(1)})$ by Lemma \ref{lem_constset}\,(1).

Also note $|B\cap W|\geq  \ell^{-1/\epsilon'^2}  |B|$. This is trivial if $\mathcal{X}_{\Pi, k', B, \epsilon'}=\emptyset$, in which case $W=\sg{B}$.
Otherwise it follows from Theorem~\ref{thm_struct}\,(4), (5) and (6).
 
If $|B\cap W|\leq |B|/K$ then Case~(1) holds. So assume $|B\cap W|\geq |B|/K$.
Applying Lemma~\ref{lem_simple} to $\Pi_x$, $B\cap W\in  \B(\Pi_x^{(1)})$ and $N=|B|/K$, we see that there exists $B'\in \B(\Pi_x^{(1)})$ such that $B'\subseteq B\cap W$ and either of the following holds:
\begin{enumerate}[(a)]
\item $|B|/(2K)\leq |B'|\leq |B|/K$.
\item $B'\in\Pi_x^{(1)}$ and $|B'|\geq |B|/K$.
\end{enumerate}
In the former case, Case~(1) holds and we are done. So assume  $B'\in\Pi_x^{(1)}$ and $|B'|\geq |B|/K$. 

\emph{In the following, we choose $B'$ such that $\dim \sg{B'}$ is minimized subject to the conditions $B'\in\Pi_x^{(1)}$, $B'\subseteq B\cap W$ and $|B'|\geq |B|/K$.}

\textbf{Step 2: Proving (3a).} 

Choose $B^*$ to be the block in $\Pi^{(2)}$ satisfying $B'=\{y\in S: (x,y)\in B^*\}$. 
Then $B^*_x=B'\subseteq B\cap W$.
Let $B^{**}=\{(y,x): (x,y)\in B^*\}\in \Pi^{(2)}$ as in (3).
As $x\in B$ and $B'\subseteq B$, we have $B^*\cap (B\times B)\neq \emptyset$ and hence $B^*, B^{**}\subseteq B\times B$. So $B^{**}_x\subseteq B$.
By Property (P2), we have $|B^*_x|=|B^*|/|B|$ and $|B^{**}_x|=|B^{**}|/|B|$. As $|B^*|=|B^{**}|$, we have $|B^*_x|=|B^{**}_x|=|B'|\geq |B|/K>1$.
As $|B^*_x|>1$, there exists $y\in B^*_x$ different from $x$. Then $(x,y)\in B^*$ and $(y,x)\in B^{**}$. The map $(a,b)\mapsto (b,a)$ sends $B^*$ to $B^{**}$ and does not fix $(x,y)$.
By strong antisymmetry of $\Pi$,  $B^*\neq B^{**}$. So $B^*_x\neq B^{**}_x$.

It remains to prove $B^{**}_x\subseteq W$. Let $y\in B^{*}_x$ and $z\in B^{**}_x$.
Then  $y\in B\cap W$ and $(y,x), (x,z)\in B^{**}$. 
 We want to prove $z\in W$.
As $x\in W$, we have $y\in H_{\Pi, k, B, \epsilon}+\F x$ by Theorem~\ref{thm_struct}\,(2) and (3). 
By Theorem~\ref{thm_struct}\,(1), $H_{\Pi, k, B, \epsilon}\in \B(\mathcal{S}_{\Pi,t})$.
So there exist $B_1,\dots,B_s\in \Pi^{(t)}$ and $\tau_1,\dots,\tau_s\in\M_{t,1}$ such that $H_{\Pi, k, B, \epsilon}=\bigcup_{i=1}^s \tau_i(B_i)$.
Then $(y,x)$ satisfies the relation $y-c x\in \bigcup_{i=1}^s \tau_i(B_i)$ for some $c\in\F^\times$.  As $(y,x)$ and $(x,z)$ are in the same block of $\Pi^{(2)}$, we have 
$x-c z\in \bigcup_{i=1}^s \tau_i(B_i)=H_{\Pi, k, B, \epsilon}$. So $z\in c^{-1}x+H_{\Pi, k, B, \epsilon}=W$, as desired.

\textbf{Step 3: Proving (3b).}

For   $y\in B^*_x$, the definition only guarantees  $W_{\Pi_x, k, B^*_x, \epsilon}(y)$ to be a subspace of $\sg{B^*_x}$.
However, as $B_x^*=B'$ is chosen such that $\dim \sg{B'}$ is minimized, we actually have $W_{\Pi_x, k, B^*_x, \epsilon}(y)=\sg{B^*_x}$.
The reason is that if  $W_{\Pi_x, k, B^*_x, \epsilon}(y)\subsetneq \sg{B^*_x}$, we could use $H_{\Pi_x, k, B^*_x, \epsilon}$ to find another block $B''\in\Pi_x^{(1)}$ satisfying the same conditions that are satisfied by $B'$ and additionally $\dim \sg{B''}<\dim \sg{B'}$ holds. But this   contradicts the minimality of $\dim \sg{B'}$.

To formalize this argument, we first prove the following claim.
\begin{claim}\label{claim_smaller}
Suppose $W'$ is a $(\Pi_x, k)$-constructible subspace of $W$ such that  $B\cap W'\neq\emptyset$ and the codimension of $W'$ in $\sg{B}$ is at most $k$.
Then either Case~(1) of Lemma~\ref{lem_rednormal} holds or  there exists $B''\in \Pi_x^{(1)}$ satisfying $B''\subseteq B\cap W'$ and $|B''|\geq |B|/K$.  Moreover, in the latter case, $\dim \sg{B^*_x}\leq \dim \sg{B''}\leq \dim W'$.  
\end{claim}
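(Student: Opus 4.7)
The plan is to deduce that $B\cap W'$ is large via pseudorandomness, and then either take it whole or extract a block using Lemma~\ref{lem_simple}.

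I would first observe that $B\cap W'\in \B(\Pi_x^{(1)})$: by Lemma~\ref{lem_constset}\,(1) applied to the scheme $\Pi_x$, $W'\cap S\in \B(\Pi_x^{(1)})$, and intersecting with $B\in \Pi^{(1)}\subseteq \B(\Pi_x^{(1)})$ (Lemma~\ref{lem_refine}) yields the claim.  To apply Lemma~\ref{lem_pseudorandom} I need $W'$ in $\mathcal{W}_{\Pi,k,B}$ with $W\cap W'\not\subseteq H_{\Pi,k,B,\epsilon}$; the latter holds because $B\cap W'\neq\emptyset$ while $B\cap H_{\Pi,k,B,\epsilon}=\emptyset$ by Theorem~\ref{thm_struct}\,(2) (vacuously when $\mathcal{X}_{\Pi,k',B,\epsilon'}=\emptyset$ and $H_{\Pi,k,B,\epsilon}=\emptyset$).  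This gives $\mu_{W'}(B)\geq \mu_W(B)-\epsilon\geq \mu_W(B)/2$.  Using $|W'|\geq |W|/\ell^k$ from the codimension-$k$ bound on $W'$ inside $\sg{B}\supseteq W$, together with $|B\cap W|\geq |B|/K$ from Step~1, I obtain
\[
|B\cap W'|\;\geq\; \tfrac{\mu_W(B)}{2}|W'|\;\geq\; \frac{|B\cap W|\cdot |W'|}{2|W|}\;\geq\; \frac{|B|}{2K\ell^k}\;\geq\; \frac{\ell^{-1/\epsilon'^2}|B|}{K},
\]
where the last inequality uses $\epsilon'=\epsilon/\ell^k$, so $1/\epsilon'^2=\ell^{2k}/\epsilon^2\gg k$.

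Next I split on the size of $B\cap W'$.  If $|B\cap W'|\leq |B|/K$, then $B':=B\cap W'$ itself suffices: it lies in $\B(\Pi_x^{(1)})\subseteq \B(\Pi_{x,y}^{(1)})$ for any $y\in B$ by Lemma~\ref{lem_refine}, and its cardinality lies in $[\ell^{-1/\epsilon'^2}|B|/K,\,|B|/K]$, placing us in Case~(1) of Lemma~\ref{lem_rednormal}.  Otherwise Lemma~\ref{lem_simple} applied to $B\cap W'$ with $N=|B|/K$ yields either a set in $\B(\Pi_x^{(1)})$ of size between $|B|/(2K)$ and $|B|/K$ — again Case~(1), since $1/2>\ell^{-1/\epsilon'^2}$ — or a block $B''\in \Pi_x^{(1)}$ with $B''\subseteq B\cap W'$ and $|B''|\geq |B|/K$.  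In the latter branch, $W'\subseteq W$ makes $B''$ a valid competitor in the minimization defining $B^*_x$, so $\dim \sg{B^*_x}\leq \dim \sg{B''}$; and $\sg{B''}\subseteq W'$ gives $\dim \sg{B''}\leq \dim W'$, closing the claim.

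The step I expect to be the main obstacle is formally placing $W'$ into $\mathcal{W}_{\Pi,k,B}$ as Lemma~\ref{lem_pseudorandom} requires: $W'$ is handed to us as $(\Pi_x,k)$-constructible, whereas $\mathcal{W}_{\Pi,k,B}$ asks for $(\Pi_y,k)$-constructibility with $y\in S^k$.  I would re-express $W'$ as $(\Pi,k+1)$-constructible by noting that each block of $\Pi_x^{(k)}$ is the $x$-fiber of a unique block of $\Pi^{(k+1)}$, paying one unit of depth that the slack $m\geq 4k'+2$ comfortably absorbs, or alternatively port the decomposition machinery directly to the scheme $\Pi_x$.  Beyond this bookkeeping, the argument reduces to a density estimate, a two-branch size split via Lemma~\ref{lem_simple}, and the minimality comparison above.
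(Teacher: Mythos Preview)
Your proposal is correct and matches the paper's proof step for step: density bound via Lemma~\ref{lem_pseudorandom}, then the $|B\cap W'|\leq |B|/K$ split followed by Lemma~\ref{lem_simple}, then the minimality comparison. The obstacle you flag is not a real one: $(\Pi_x,k)$-constructibility for a single $x\in S$ already gives $W'\in\mathcal{W}_{\Pi,k,B}$, since the refinement argument of Lemma~\ref{lem_refine} extends to all levels (so $\B(\mathcal{S}_{\Pi_x,k})\subseteq\B(\mathcal{S}_{\Pi_{(x,x_2,\dots,x_k)},k})$ for any padding $x_2,\dots,x_k\in S$) --- the paper simply writes ``by definition'' at this point.
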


\begin{claimproof}[Proof of Claim~\ref{claim_smaller}]
As $W'$ is a $(\Pi_x, k)$-constructible and its codimension in $\sg{B}$ is at most $k$, we have $W'\in \mathcal{W}_{\Pi, k, B}$ by definition.
As $B\cap W'\neq\emptyset$, we have $W'\not\subseteq  H_{\Pi, k, B, \epsilon}$ by Theorem~\ref{thm_struct}\,(2).
By Lemma~\ref{lem_pseudorandom},
\[
\mu_{W'}(B) =\mu_{W\cap W'}(B)\geq \mu_W(B)-\epsilon\geq \mu_W(B)/2.
\]
Therefore 
\begin{align*}
|B\cap W'|&\geq \mu_W(B)/2 \cdot |W'|\geq  \mu_W(B)|W|/(2  \ell^{k}) = |B\cap W|/(2   \ell^{k}) \\
&\geq |B|/(2K \ell^k)
\geq \ell^{-(1/\epsilon'^2)}|B|/K.
\end{align*}
As $W'$ is a $(\Pi_x, k)$-constructible, we have $B\cap W'\in \B(\Pi_{x}^{(1)})$ by Lemma \ref{lem_constset}\,(1).
If $|B\cap W'|\leq |B|/K$ then Case~(1) holds. So assume  $|B\cap W'|\geq |B|/K$.
Applying Lemma~\ref{lem_simple} to $\Pi_x$, $B\cap W'$ and $N=|B|/K$, we see that there exists $B''\in \B(\Pi_x^{(1)})$ such that $B''\subseteq B\cap W'\subseteq B\cap W$ and either of the following holds:
\begin{enumerate}[(a)]
\item $|B|/(2K)\leq |B''|\leq |B|/K$.
\item $B''\in\Pi_x^{(1)}$ and $|B''|\geq |B|/K$.
\end{enumerate}
In the former case, Case~(1) holds. So assume $B''\in\Pi_x^{(1)}$ and $|B''|\geq |B|/K$.
We then have $\dim \sg{B^*_x}=\dim \sg{B'}\leq \dim \sg{B''}\leq \dim W'$ by the minimality of $\dim \sg{B'}$ subject to the conditions $B'\in\Pi_x^{(1)}$, $B'\subseteq B\cap W$ and $|B'|\geq |B|/K$.
\end{claimproof}

Now we prove (3b). If $\mathcal{X}_{\Pi_x, k',  B^*_x, \epsilon'}=\emptyset$, then (3b) holds by Definition~\ref{defi_twocases}. So assume $\mathcal{X}_{\Pi_x, k',  B^*_x, \epsilon'}\neq \emptyset$.

We want to prove $W_{\Pi_x, k, B^*_x, \epsilon}(y)=\sg{B^*_x}$  for $y\in \sg{B^*_x}$. Let $H=H_{\Pi_x, k, B^*_x, \epsilon}\subseteq \sg{B^*_x}$.  As $H$ is a hyperplane of $W_{\Pi_x, k, B^*_x, \epsilon}(y)$ for $y\in \sg{B^*_x}$, it suffices to prove $\dim H\geq \dim \sg{B^*_x}-1$.

Let $W'= H+\F x\subseteq W$. 
We check that $W'$ satisfies the conditions in  Claim~\ref{claim_smaller}:
\begin{itemize}
\item As $x\in B\cap W'$, we have $B\cap W'\neq\emptyset$. 
\item By Theorem~\ref{thm_struct}\,(1),   $H\in \B(\mathcal{S}_{\Pi_x,t}) $. As $\{x\}\in \Pi_x^{(1)}$, we have  $W'=H+\F x\in  \B(\mathcal{S}_{\Pi_x,t+1})\subseteq  \B(\mathcal{S}_{\Pi_x,k})$. So $W'$ is $(\Pi_x, k)$-constructible.
\item Let $y\in B^*_x$ and $W''=W_{\Pi_x, k, B^*_x, \epsilon}(y)=H+\F y$.  Then $W''\in  \B(\mathcal{S}_{\Pi_{x,y},t+1})$ and hence $B^*_x\cap W''\in\Pi_{x,y}^{(1)}$ by  Lemma \ref{lem_constset}\,(1).
By Theorem~\ref{thm_struct}\,(4), (5) and (6), we have 
\[
|B^*_x\cap W''|\geq  \ell^{-1/\epsilon'^2}  |B^*_x|\geq \ell^{-1/\epsilon'^2}|B|/K.
\]
If $|B^*_x\cap W''|\leq |B|/K$ then Case~(1) holds. So assume $|B^*_x\cap W''|\geq |B|/K$. Then the codimension of $W''$ in $\sg{B}$ is at most $\log_\ell (K/\mu(B))\leq t\leq k-1$.
As $\dim W'=\dim(H+\F x)\geq \dim (H+\F y)-1=\dim W''-1$, the codimension of $W'$ in $\sg{B}$ is at most $k$.
 \end{itemize}

By Claim~\ref{claim_smaller}, either Case~(1) holds or  $\dim W'\geq \dim\sg{B^*_x}$.
Assume the latter case occurs. 
As $W'=H+\F x$, we have $\dim H\geq \dim\sg{B^*_x}-1$, as desired.

\textbf{Step 4: Proving (3c).}

Let $y\in B^{**}_x$ and $W'= W_{\Pi_x, k, B^{**}_x, \epsilon}(y)=H'+\F y$. We want to prove  $\dim  \sg{B^{*}_x}=\dim \sg{B^{**}_x}$ and $W'=\sg{B^{**}_x}$ (if neither Case~(1) nor (2) holds).

We first show that either Case~(2) holds or $\dim W' \leq \dim \sg{B^*_x}$.
By Lemma~\ref{lem_pseudorandom} and (3b), we have
\begin{equation}\label{eq_density}
\mu(B^*_x)\leq \mu_{\sg{B^*_x}}(B)=\mu_{W\cap \sg{B^*_x}}(B)\leq \mu_W(B)+\epsilon.
\end{equation}
Note $|B^{**}_x|=|B^{*}_x|$. If $\dim W'> \dim \sg{B^*_x}$, then
\[
\mu_{W'}(B^{**}_x)\leq \frac{|B^*_x|}{|W'|}\leq \frac{|B^*_x|}{2| \sg{B^*_x}|}= \mu(B^*_x)/2\stackrel{\eqref{eq_density}}{\leq} (\mu_W(B)+\epsilon)/2 
\]
and hence Case~(2) holds. So   assume $\dim W' \leq \dim \sg{B^*_x}$.

Consider the case $\mathcal{X}_{\Pi_x, k',  B^{**}_x, \epsilon'}=\emptyset$. Then $W'=\sg{B^{**}_x}$   by Definition~\ref{defi_twocases}. 
 So $\dim \sg{B^{**}_x}\leq \dim \sg{B^{*}_x}$. Exchanging $B^{*}_x$ and $B^{**}_x$ in the above proof shows $\dim \sg{B^{*}_x}\leq \dim \sg{B^{**}_x}$ (or Case~(2) holds). So $\dim \sg{B^{*}_x}=\dim \sg{B^{**}_x}$ and (3c) holds.
 
 So assume $\mathcal{X}_{\Pi_x, k',  B^{**}_x, \epsilon'}\neq \emptyset$.  Let  $H'=H_{\Pi_x, k, B^{**}_x, \epsilon}$.
In Step~3, we have shown that $H_{\Pi_x, k, B^{*}_x, \epsilon}+\F x$ satisfies the conditions in Claim~\ref{claim_smaller}. The same proof with $B^*_x$ replaced by $B^{**}_x$ also shows $W'=H'+\F x$ satisfies the conditions in Claim~\ref{claim_smaller}.  Applying Claim~\ref{claim_smaller} to $W'$, we see either Case~(1) holds or  $\dim W'\geq \dim\sg{B^*_x}$.
Assume the latter. Then  $\dim W'=\dim\sg{B^*_x}$ as we already know  $\dim W' \leq \dim \sg{B^*_x}$.

It remains to prove $W'=\sg{B_x^{**}}$ (or Case~(1) or (2) holds). Assume $W'$ is a proper subspace of $\sg{B_x^{**}}$.
Then $|B^{**}_x\cap W'|\leq |B^{**}_x|/2 $ by Theorem~\ref{thm_struct}\,(4) and (5).
Therefore 
\[
\mu_{W'}(B^{**}_x)\leq \frac{|B^{**}_x|}{2|W'|}=\frac{|B^*_x|}{2| \sg{B^*_x}|}= \mu(B^*_x)/2\stackrel{\eqref{eq_density}}{\leq} (\mu_W(B)+\epsilon)/2 
\]
and hence Case~(2) holds.

\textbf{Step 5: Proving (3d).}

Assume neither Case~(1) nor (2) holds. Also assume $B\cap \sg{B^{*}_x}\cap \sg{B^{**}_x}\neq \emptyset$. We will derive a contradiction. The intuition is that since $B\cap \sg{B^{*}_x}\cap \sg{B^{**}_x}\neq \emptyset$, we could find another block $B''\subseteq B\cap \sg{B^{*}_x}\cap \sg{B^{**}_x}$ and use it to contradict the minimality of $\dim \sg{B'}$.

Let $W'=\sg{B^{*}_x}\cap \sg{B^{**}_x}\subseteq W$. We check that $W'$ satisfies the conditions in Claim~\ref{claim_smaller}:
\begin{itemize}
\item By assumption, we have $B\cap W'=B\cap \sg{B^{*}_x}\cap \sg{B^{**}_x}\neq\emptyset$.
\item As $|B^{*}_x|=|B^{**}_x|\geq |B|/K$, both $\mu(B^{*}_x)$ and $\mu(B^{**}_x)$ are at least $\mu_{\sg{B}}(B^{*}_x)=\mu_{\sg{B}}(B^{**}_x)\geq \mu(B)/K$. By Theorem~\ref{thm_smallgen}, $\sg{B^{*}_x}$ and $\sg{B^{**}_x}$ are $(\Pi_x,t)$-constructible and hence $(\Pi_x,k)$-constructible.
By Lemma~\ref{lem_constset}\,(2), $W'=\sg{B^{*}_x}\cap \sg{B^{**}_x}$ is also $(\Pi_x,k)$-constructible.
\item As $\mu_{\sg{B}}(B^{*}_x)=\mu_{\sg{B}}(B^{**}_x)\geq \mu(B)/K$, both  $\sg{B^{*}_x}$ and $\sg{B^{**}_x}$ have codimension at most  $\log_\ell (K/\mu(B))\leq t$ in $\sg{B}$. So $W'=\sg{B^{*}_x}\cap \sg{B^{**}_x}$ has codimension at most $2t\leq k$ in $\sg{B}$.
\end{itemize}
By Claim~\ref{claim_smaller}, we have  $\dim W'\geq \dim \sg{B^*_x}$.  Then  $W'=\sg{B^{*}_x}=\sg{B^{**}_x}$ by (3c).
As $W'$ is  $(\Pi_x, k)$-constructible and its codimension in $\sg{B}$ is at most $k$, we have $W'\in \mathcal{W}_{\Pi, k, B}$ by definition.
As $B^*_x, B^{**}_x\subseteq B$, $B^*_x\cap B^{**}_x=\emptyset$, and $|B^*_x|=|B^{**}_x|$, we have
 \[
\mu_{W'}(B^*_x)\leq \mu_{W'}(B)/2=\mu_{W\cap W'}(B)/2\leq (\mu_W(B)+\epsilon)/2.
 \]
where the last inequality holds by Lemma~\ref{lem_pseudorandom}.
 By (3b), $W'=W_{\Pi_x, k, B^*_x, \epsilon}(y)$ for $y\in B^*_x$. So Case~(2) holds, contradicting our assumption.

\textbf{Step 6: Addressing the case $\mathcal{X}_{\Pi, k', B, \epsilon'}=\emptyset$.}

Let $W'=\sg{B^{*}_x}\cap \sg{B^{**}_x}$ as in Step 5.
Suppose $\mathcal{X}_{\Pi, k', B, \epsilon'}=\emptyset$. Then $H_{\Pi, k, B, \epsilon}=\emptyset$ and hence $W'\not\subseteq  H_{\Pi, k, B, \epsilon}$.
By Lemma~\ref{lem_pseudorandom}, we have 
\[
\mu_{W'}(B)=\mu_{W\cap W'}(B)\geq \mu_{W}(B)-\epsilon>0.
\]
So $B\cap \sg{B^{*}_x}\cap \sg{B^{**}_x}\neq \emptyset$. By Step 5,  either Case~(1) or (2) holds.

From now on, we assume $\mathcal{X}_{\Pi, k', B, \epsilon'}\neq \emptyset$, so that $H_{\Pi, k, B, \epsilon}$ is a hyperplane of $W$.

\textbf{Step 7: Proving (3e).}

We want to prove $x\in \sg{B^{*}_x}$. While we do not know if this holds in general, we will show that it can be achieved by replacing $B^*_x=B'$ with another block and $\dim \sg{B'}$ remains minimized.

Let $H=H_{\Pi, k, B, \epsilon}$, which is a hyperplane of $W$ by Step 6.
Let $W'=(\sg{B^*_x}\cap H)+\F x\subseteq W$. 
We check that $W'$ satisfies the conditions in Claim~\ref{claim_smaller}:
\begin{itemize}
\item As $x\in B\cap W'$, we have $B\cap W'\neq\emptyset$.
\item By Theorem~\ref{thm_struct}\,(1),   $H\in \B(\mathcal{S}_{\Pi,t})\subseteq \B(\mathcal{S}_{\Pi_x,t})$. We also know $\sg{B^*_x}\in \B(\mathcal{S}_{\Pi_x, t})$ (see Step~5).
So $\sg{B^*_x}\cap H\in \B(\mathcal{S}_{\Pi_{x},t})$ by Lemma~\ref{lem_constset}\,(2). As $\{x\}\in\Pi_x^{(1)}$, we have $W'=(\sg{B^*_x}\cap H)+\F x\in \B(\mathcal{S}_{\Pi_{x},t+1})\subseteq  \B(\mathcal{S}_{\Pi_{x},k})$, i.e., $W'$ is $(\Pi_x, k)$-constructible. 
\item We already know the codimension of $\sg{B^*_x}$ in $\sg{B}$ is at most $k$ (see Step 5). As $\dim W'=\dim(\sg{B^*_x}\cap H)+1=\dim \sg{B^*_x}$, the codimension of $W'$ in $\sg{B}$ is at most $k$.
\end{itemize}

By Claim~\ref{claim_smaller},   either (1) holds  or there exists $B''\in \Pi_x^{(1)}$ satisfying $B''\subseteq B\cap W'\subseteq B\cap W$, $|B''|\geq |B|/K$, and $\dim \sg{B^*_x}=\dim \sg{B'}\leq \dim \sg{B''}\leq \dim W'$.
Assume the latter case occurs.
We know $\dim W'=  \dim \sg{B^*_x}$. So $ \dim \sg{B''}=\dim \sg{B'}$ and $\sg{B''}=W'\ni x$. Replacing $B'$ by $B''$  preserves the conditions $B'\in\Pi_x^{(1)}$, $B'\subseteq B\cap W$ and $|B'|\geq |B|/K$, and $\dim \sg{B'}$ remains minimized subject to these conditions. So, by replacing $B'$ with $B''$, we may assume $x\in \sg{B'}=\sg{B^*_x}$, and all the results proved above still hold.

 \textbf{Step 8: Proving (3f).}
 
 We prove that $|(B\cap \sg{B^{**}_x})\setminus B^{**}_x|< \ell^{-(1/\epsilon'^2)}|B|/K$ (or either Case~(1) or (2) holds). The proof for the claim $|(B\cap \sg{B^{*}_x})\setminus B^{*}_x|< \ell^{-(1/\epsilon'^2)}|B|/K$ is the same.
 
 Let $T=(B\cap \sg{B^{**}_x})\setminus B^{**}_x$. We have $T\in \B(\Pi_x^{(1)})$ by Lemma~\ref{lem_constset}\,(1).
 If $\ell^{-(1/\epsilon'^2)}|B|/K\leq |T|\leq |B|/K$ then (1) holds. 
 
 Now assume $|T|\geq |B|/K$. Applying Lemma~\ref{lem_simple} to $\Pi_x$, $T$ and $N=|B|/K$, we see that there exists $T'\in \B(\Pi_x^{(1)})$ such that $T'\subseteq T\subseteq \sg{B^{**}_x}$ and either of the following holds:
\begin{enumerate}[(a)]
\item $|B|/(2K)\leq |T'|\leq |B|/K$.
\item $T'\in\Pi_x^{(1)}$ and $|T'|\geq |B|/K$.
\end{enumerate}
In the former case, Case~(1) holds and we are done. So assume  $T'\in\Pi_x^{(1)}$ and $|T'|\geq |B|/K$. 

Let $H=H_{\Pi_x, k, T', \epsilon}$. Define a subspace $W'$ of $W$ as follows: If  $\mathcal{X}_{\Pi_x, k', T', \epsilon'}=\emptyset$, let $W'=\sg{T'}$.
Otherwise let $W'=H+\F x$. Note that in either case, we have $\dim W'=\dim W_{\Pi_x, k, T', \epsilon}(y)$ for $y\in T'$.
Using the fact $|T'|\geq |B|/K$, it can be shown that either Case~(1) holds or $W'$ satisfies the conditions in  Claim~\ref{claim_smaller}.\footnote{We omit the proof. If $\mathcal{X}_{\Pi_x, k', T', \epsilon'}=\emptyset$, we have $W'=\sg{T'}$ and the proof is similar to that in Step~5. Otherwise, we have $W'=H+\F x$ and the proof is similar to that in Step~3.}  Assume the latter case occurs. 

By Claim~\ref{claim_smaller}, we have $\dim W'\geq \dim \sg{B^{*}_x}=\dim \sg{B^{**}_x}$. Therefore, $\dim W_{\Pi_x, k, T', \epsilon}(y)\geq \dim \sg{B^{**}_x}$ for $y\in T'$. As $T'\subseteq  \sg{B^{**}_x}$, we have $W_{\Pi_x, k, T', \epsilon}(y)=\sg{B^{**}_x}$ for $y\in T'$.

As  $T', B^{**}_x\subseteq  B\cap \sg{B^{**}_x}$ and $T'\cap B^{**}_x=\emptyset$, we have
\[
\min\{\mu_{\sg{B^{**}_x}}(T'), \mu(B^{**}_x)\}  \leq \mu_{\sg{B^{**}_x}}(B)/2 =  \mu_{W\cap \sg{B^{**}_x}}(B)/2 \leq (\mu_W(B)+\epsilon)/2
\]
where the last inequality holds by Lemma~\ref{lem_pseudorandom} and (3c). So Case~(2) holds.

Therefore, we may assume $|T|< \ell^{-(1/\epsilon'^2)}|B|/K$, since otherwise Case~(1) or (2) holds.
\end{proof}

To address Case (3) of Lemma~\ref{lem_rednormal}, we need the following lemma.

\begin{lemma}\label{lem_constantsize}
Let $\Pi$ be a  linear $m$-scheme on $S$, where $m\geq 2$.
Let $B\in \Pi^{(1)}$ and $B', B''\in \Pi^{(2)}$ such that $B', B''\subseteq B\times B$. Let $x\in B$, $\mu=|B'_x|/|\sg{B}|$ and $t=\lfloor\frac{3}{2\mu}\rfloor+1$.
Suppose $m\geq t+1$.
For $y\in B$, we have (1) $|B''_y|=|B''_x|$, (2) either $B''_y\subseteq \sg{B'_y}$ or $B''_y\cap \sg{B'_y}=\emptyset$, and (3) $B''_y\subseteq \sg{B'_y}$ iff $B''_x\subseteq \sg{B'_x}$.
\end{lemma}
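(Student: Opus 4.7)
The plan is straightforward: statement (1) is essentially a direct application of Property (P2), and for (2) and (3) I will encode the condition ``$z\in\sg{B'_y}$'' as membership of $(y,z)$ in an auxiliary set $R\in\B(\Pi^{(2)})$, and then use the fact that $B''$ is a single block of $\Pi^{(2)}$ to force the dichotomy $B''\subseteq R$ or $B''\cap R=\emptyset$ simultaneously for all $y\in B$.

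For (1), I would first observe that the projection $\pi_{2,1}\in\M_{2,1}$ together with Property (P1) forces $\pi_{2,1}(B'')=B$ (the alternative $\pi_{2,1}(B'')\cap B=\emptyset$ contradicts $B''$ being a nonempty subset of $B\times B$), so Property (P2) yields $|B''_y|=|B''|/|B|$ independent of $y\in B$. Applying the same reasoning to $B'$ gives $|B'_y|=|B'_x|$, and hence $\mu(B'_y)\geq |B'_y|/|\sg{B}|=\mu$. Theorem~\ref{thm_smallgen} then bounds $h(B'_y)\leq\max\{2,\lfloor 3/(2\mu)\rfloor\}\leq t$; this is the key ingredient, since it says $\sg{B'_y}=t(B'_y)^{\pm}$ for every $y\in B$, so membership in $\sg{B'_y}$ can be witnessed by only $t$ summands drawn from $B'_y\cup(-B'_y)\cup\{0\}$.

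With that, I would build $R$ as follows. For each $c=(c_1,\dots,c_t)\in\{-1,0,1\}^t$, let $\sigma_c\in\M_{t+1,2}$ be the map sending $(y,w_1,\dots,w_t)$ to $(y,\sum_i c_iw_i)$, and set
\[
A^c:=\{(y,w_1,\dots,w_t)\in S^{t+1}: y\in B,\ (y,w_i)\in B'\text{ for all }i\}.
\]
Repeated application of Lemma~\ref{lem_closedness}\,(1) and (4) gives $A^c\in\B(\Pi^{(t+1)})$ (this is where the hypothesis $m\geq t+1$ is used), and Lemma~\ref{lem_closedness}\,(3) yields $\sigma_c(A^c)\cap S^2\in\B(\Pi^{(2)})$. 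Taking $R$ to be the union of these sets over $c$, I obtain $R\in\B(\Pi^{(2)})$ such that $(y,z)\in R$ iff $z\in t(B'_y)^{\pm}=\sg{B'_y}$. Since $B''\in\Pi^{(2)}$ is a single block and $B''\cap R\in\B(\Pi^{(2)})$ is a union of blocks, $B''\cap R$ must equal $B''$ or $\emptyset$; these two cases correspond respectively to $B''_y\subseteq\sg{B'_y}$ for all $y\in B$, and $B''_y\cap\sg{B'_y}=\emptyset$ for all $y\in B$. This proves (2), and since the same case applies uniformly in $y$, it also proves (3).

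The step most worth checking carefully is the construction of $R$: it is precisely here that the Freiman-type bound $h(B'_y)\leq t$ is essential, as it converts the a priori unbounded subgroup condition $z\in\sg{B'_y}$ into a bounded-arity linear condition that the hypothesis $m\geq t+1$ allows the $m$-scheme to express; once $R$ is in place, the rest is routine bookkeeping with the closedness properties in Lemma~\ref{lem_closedness}.
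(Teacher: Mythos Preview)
Your proof is correct and follows essentially the same strategy as the paper: use Property~(P2) for (1), and use the bound $h(B'_y)\leq t$ from Theorem~\ref{thm_smallgen} to encode ``$z\in\sg{B'_y}$'' as a bounded-depth condition expressible in the $m$-scheme. The packaging differs slightly: the paper proves (2) by working in $\Pi_y$ (showing $\sg{B'_y}\in\B(\mathcal{S}_{\Pi_y,t})$ and intersecting with the block $B''_y\in\Pi_y^{(1)}$) and then proves (3) separately by building a set $T\in\B(\Pi^{(1)})$ of those $y\in B$ with $B''_y\cap\sg{B'_y}\neq\emptyset$; you instead build a single set $R\in\B(\Pi^{(2)})$ at level~2 and derive both (2) and (3) from the dichotomy $B''\subseteq R$ or $B''\cap R=\emptyset$. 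Your version is a bit more economical, but the underlying idea is identical.
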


\begin{proof}
The first claim holds by Property (P2) of $\Pi$.
Also note $|B'_y|=|B'_x|$ and hence $\mu(B'_y)\geq \mu$ for  $y\in B$ by Property (P2) of $\Pi$.

Consider $y\in B$.
By Theorem~\ref{thm_smallgen}  and the fact $\mu(B'_y)\geq \mu$, we have   $\sg{B'_y}\in \B(\mathcal{S}_{\Pi_y,t})$. By Lemma~\ref{lem_constset}\,(1),  we have $B''_y\cap \sg{B'_y}\in \B(\Pi_y^{(1)})$.
As $B''_y\in\Pi_y^{(1)}$, $B''_y\cap \sg{B'_y}$ equals either $B''_y$ or the empty set. This proves (2).

Let $T$ be the set of $y\in B$ satisfying $B''_y\subseteq \sg{B'_y}$, or equivalently,  $B''_y\cap \sg{B'_y}\neq \emptyset$.
Then $T=\bigcup_{\tau\in\M_{t,1}} T_\tau$, where
\[
T_\tau=\left\{y\in B: \begin{array}{l}\exists\,  z=(z_1,\dots,z_t)\in B^t  \text{ such that }\\  (y,z_1)\in B', \dots, (y, z_t)\in B', (y,\tau(z))\in B''
\end{array}
\right\}.
\]
So $T\in \B(\Pi^{(1)})$ by Lemma~\ref{lem_closedness}. As $B\in \Pi^{(1)}$,   either $T=B$ or $T=\emptyset$. This proves (3).
\end{proof}

Next, we prove that Case (3) of Lemma~\ref{lem_rednormal} actually implies Case (1):

\begin{lemma}\label{lem_ruleout}
Under the notations of Lemma~\ref{lem_rednormal}, further assume $k\geq (t+1)\log_\ell (K/\mu(B))$.
If Case~(3) of Lemma~\ref{lem_rednormal} holds, then Case~(1) also holds.
\end{lemma}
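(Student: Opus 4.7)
The plan proceeds in three phases. First I would lift the locally-stated properties (3a)--(3f), which hold at the base point $x$, uniformly to every $y\in B$, via Property~(P2). Each condition in (3d)--(3f) is the $x$-slice of a subset of $S^j$ (for some small $j$) whose membership in $\B(\Pi^{(j)})$ follows from Lemma~\ref{lem_closedness}, Lemma~\ref{lem_linrel}, and the identity $\sg{B^*_y}=t(\F B^*_y)$ from Theorem~\ref{thm_smallgen} (which makes $\sg{B^*_y}$ a $(\Pi_y,t)$-constructible set); Property~(P2) then forces the corresponding fiber cardinality to be constant across the block $B\in\Pi^{(1)}$. Consequently, for every $y\in B$: $|B^*_y|=|B^{**}_y|=N$, $y\in\sg{B^*_y}$, $B\cap\sg{B^*_y}\cap\sg{B^{**}_y}=\emptyset$, and $|(B\cap\sg{B^*_y})\setminus B^*_y|<\delta:=\ell^{-1/\epsilon'^2}|B|/K$.

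Second I would pick $y\in B^{**}_x$ (nonempty since $|B^{**}_x|=N>1$); then $(y,x)\in B^*$ gives $x\in B^*_y$, and the extended (3d) together with $y\in B^{**}_x\subseteq\sg{B^{**}_x}$ forces $\sg{B^*_y}\neq\sg{B^*_x}$, so $V_1:=\sg{B^*_x}\cap\sg{B^*_y}$ is a proper subspace of $\sg{B^*_x}$. The set $B^*_y\cap B^*_x\in\B(\Pi^{(1)}_{x,y})$ lies in $B\cap V_1$ and has a constant cardinality $M$ depending only on $(x,y)\in B^{**}$. If $M\in[\delta,|B|/K]$, Case~(1) holds immediately with this $y$ and $B':=B^*_y\cap B^*_x$. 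Otherwise Lemma~\ref{lem_simple} applied inside $\Pi_{x,y}$ to either $B^*_y\cap B^*_x$ (when $M>|B|/K$) or $B^*_y\setminus(B^*_x\cup\{x\})$ (when $M<\delta$, so $N-M-1\geq |B|/K$) either yields Case~(1) directly or extracts a single $\Pi^{(1)}_{x,y}$-block $B^{(1)}$ of size $\geq|B|/K$ strictly contained in a subspace of dimension smaller than $\dim\sg{B^*_x}$.

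Third I would iterate this dimensional-shrinkage inside the fixed scheme $\Pi_{x,y}$, representing each successive ambient subspace $V_i$ as an element of $\B(\mathcal{S}_{\Pi_{x,y},k_i})$ built by intersecting $\sg{B^*_x}$ with constructible spans $\sg{B^*_{z_i}}$ for successive $z_i\in B^{(i)}$; strict shrinkage $V_{i+1}\subsetneq V_i$ at each step follows from the extended (3d) applied at $z_i$. Since $|V_i|\geq|B^{(i)}|\geq|B|/K$ we have $\dim V_i\geq\log_\ell(|B|/K)$, so the process terminates within $\dim\sg{B^*_x}-\log_\ell(|B|/K)\leq\log_\ell(K/\mu(B))$ steps; each step consumes $t+1$ units of scheme-depth via Theorem~\ref{thm_smallgen}, fitting the hypothesis $k\geq(t+1)\log_\ell(K/\mu(B))$ exactly. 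The hardest part is keeping the iteration inside $\B(\Pi^{(1)}_{x,y})$ rather than drifting into a deeper scheme $\Pi_{x,y,z_2,\dots,z_r}$ (which Case~(1) of Lemma~\ref{lem_rednormal} would not accept); this forces the iteration to be encoded as nested constructible subspaces within the single fixed fiber $\Pi_{x,y}$, and the scheme-depth accounting $(t+1)\log_\ell(K/\mu(B))$ aligns with $k$ precisely for this reason.
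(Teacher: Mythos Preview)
Your proposal diverges from the paper's argument and has a gap exactly where you flag ``the hardest part.'' Case~(1) of Lemma~\ref{lem_rednormal} demands a witness in $\B(\Pi_{x,y}^{(1)})$ with a \emph{single} auxiliary point $y$. Your iteration builds $V_i=\sg{B^*_x}\cap\bigcap_{j\le i}\sg{B^*_{z_j}}$, but each $\sg{B^*_{z_j}}$ is only $(\Pi_{z_j},t)$-constructible, so $V_i$ lives in $\B(\mathcal{S}_{\Pi_{x,y,z_2,\dots,z_i},\cdot})$ and the block $B^{(i)}$ extracted from it sits in $\B(\Pi_{x,y,z_2,\dots,z_i}^{(1)})$. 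The hypothesis $k\ge(t+1)\log_\ell(K/\mu(B))$ bounds the \emph{codimension} and the constructibility parameter, not the fiber depth; nothing in the linear-$m$-scheme axioms lets you collapse the later $z_j$'s out of the subscript while keeping the set constructible. There is also an earlier gap: in your Phase~2 branch $M<\delta$, the extracted block $B^{(1)}\subseteq B^*_y\setminus(B^*_x\cup\{x\})$ lives in $\sg{B^*_y}$, which has the \emph{same} dimension as $\sg{B^*_x}$ by Lemma~\ref{lem_constantsize}, so no descent has begun.

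The paper avoids iteration entirely by a contradiction argument. It picks $x_1,\dots,x_t\in B^*_x$ with $x$ in their span (via (3e) and Theorem~\ref{thm_smallgen}) and forms $L=\sg{B^*_x}\cap\bigcap_i\sg{B^{**}_{x_i}}$. The many auxiliary points are harmless here because $L$ is never used to manufacture the Case~(1) witness; it is only used to certify $L\in\mathcal{W}_{\Pi,k,B}$ (this is precisely where $k\ge(t+1)\log_\ell(K/\mu(B))$ enters, as a codimension bound), so that Lemma~\ref{lem_pseudorandom} makes $|B\cap L|$ large. The actual Case~(1) witness, if one is needed, comes from the union $U\in\B(\Pi_x^{(1)})$ of small $\Pi_x^{(1)}$-blocks inside $B$. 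If instead $|U|$ is small, then $B\cap L$ is large enough to contain a point $y$ avoiding $U$ and all exceptional sets $T_i=(B\cap\sg{B^{**}_{x_i}})\setminus B^{**}_{x_i}$ (each of size $<\delta$ by (3f) and Lemma~\ref{lem_constantsize}); one then checks $x_i\in B^*_y$ for every $i$, hence $x\in\sg{B^*_y}$, and a size comparison using $y\notin U$ and (3f) forces the $\Pi_y^{(1)}$-block through $x$ to coincide with $B^*_y$. This gives $y\in B^{**}_x\subseteq\sg{B^{**}_x}$, and combined with $y\in L\subseteq\sg{B^*_x}$ it contradicts (3d).
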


\begin{proof}
Assume Case~(3) of Lemma~\ref{lem_rednormal} holds.  
By Theorem~\ref{thm_smallgen} and (3a),   $\sg{B^*_x}=t(\F B^*_x)$. As $x\in \sg{B^*_x}$ by (3e), we may choose $x_1,\dots,x_t\in B^*_x$ such that $x$ is in the linear span of $x_1,\dots,x_t$ over $\F$.
As $|B^*_x|\geq |B|/K>1$ and $\{x\}, B_x^*\in \Pi_x^{(1)}$, we have $x\not\in B^*_x$ and hence $x\not\in\{x_1,\dots,x_t\}$.

As $x_1,\dots,x_t\in B^*_x$, we have   $x\in B^{**}_{x_i}$ for $i\in [t]$.
Define  $L:= \sg{B^*_x}\cap \bigcap_{i=1}^t \sg{B^{**}_{x_i}}\subseteq W$.
 Then $x\in   L$. 
  For $i\in [t]$, define $T_i:=(B\cap \sg{B^{**}_{x_i}})\setminus B^{**}_{x_i}$.
By (3f) and Lemma~\ref{lem_constantsize}, we have $|T_i|=|(B\cap \sg{B^{**}_x})\setminus B^{**}_x|<N$  for $i\in [t]$, where $N:=\ell^{-(1/\epsilon'^2)}|B|/K$.

Let $U$ be the union of all the blocks $B'\in \Pi_x^{(1)}$ satisfying $B'\subseteq B$ and $|B'|\leq N$.
Then $U\in \B(\Pi_x^{(1)})$.
If $|U|\geq 2N$, then as $U$ is a union of blocks of cardinality at most $N$,  there exists a subset $U'\subseteq U$ in $\B(\Pi_x^{(1)})$ such that $N\leq |U'|\leq 2N\leq |B|/K$.
Then  Case~(1) of Lemma~\ref{lem_rednormal} holds.
So we may assume $|U|\leq 2N$.

Note $\sg{B^{*}_{x}}\in \B(\mathcal{S}_{\Pi_x,t})$ and $\sg{B^{**}_{x_i}}\in \B(\mathcal{S}_{\Pi_{x_i},t})$ for $i\in [t]$.
 By Lemma~\ref{lem_constset}\,(2), we have $L\in \B(\mathcal{S}_{\Pi_{x, x_1,\dots,x_t},t})$.
As $|B^{*}_{x}|\geq |B|/K$ and $|B^{**}_{x_i}|=|B^{**}_{x}|\geq |B|/K$ for $i\in [t]$,
the codimension of $L$ in $\sg{B}$ is at most $(t+1)\log_\ell (K/\mu(B))\leq k$.
By definition, we have $L\in \mathcal{W}_{\Pi, k, B}$.
 
As $x\in L$, we have $L\not\subseteq H_{\Pi, k, B,\epsilon}$. So $|\mu_{L}(B)-\mu_W(B)|\leq \epsilon$ by Lemma~\ref{lem_pseudorandom}. Therefore
\begin{align*}
|B\cap L|&\geq (\mu_W(B)-\epsilon)\cdot |L|\geq \mu_W(B)/2\cdot |L|\geq  \mu_W(B)/2\cdot \ell^{-k}\cdot |W|\\
&=|B\cap W|/(2\ell^k) \geq |B|/(2\ell^k K) \geq (t+2) N >|U|+\sum_{i=1}^t |T_i|.
\end{align*}
Therefore, there exists $y\in  B\cap L$ such that  $y\not\in U$ and $y\not\in T_i$ for $i\in [t]$. Fix such $y$.

For $i\in [t]$, we have $y\in (B\cap\sg{B^{**}_{x_i}}) \setminus T_i=B^{**}_{x_i}$, i.e., $(x_i, y)\in B^{**}$, or equivalently, $(y,x_i)\in B^*$. So $x_i\in B^*_y\subseteq \sg{B^*_y}$ for $i\in [t]$.
As $x$ is in the linear span of $x_1,\dots,x_t$, we have $x\in \sg{B^*_y}$.

Let $B'$ (resp. $B''$) be the block in $\Pi^{(2)}$ containing $(x,y)$ (resp. $(y,x)$). 
Then $|B'|=|B''|$.
Note that $B'_x$ is the block in $\Pi_x^{(1)}$ containing $y$.
As $y\not\in U$, we have $|B'_x|>N$.
Then $|B''_y|>N$ since $|B''_y|=|B''|/|B|=|B'|/|B|=|B'_x|$ by Property (P2) of $\Pi$.

Note that $B''_y$ is the block  in $\Pi_y^{(1)}$ containing $x$. As  $x\in \sg{B^*_y}$, we have $B''_y\subseteq B\cap \sg{B^*_y}$ by  Lemma~\ref{lem_constantsize}.
By (3f) and Lemma~\ref{lem_constantsize},  we have $|(B\cap \sg{B^{*}_y})\setminus B^{*}_y|=|(B\cap \sg{B^{*}_x})\setminus B^{*}_x|<N$. As $|B''_y|>N$, we must have $B''_y= B^{*}_y$.
So $x\in  B^{*}_y$, or equivalently, $y\in B^{**}_x$.

As $y\in L\subseteq \sg{B^*_x}$, we have $y\in \sg{B^*_x}\cap \sg{B^{**}_x}$. So $B\cap \sg{B^*_x}\cap \sg{B^{**}_x}\neq\emptyset$, contradicting (3d).
\end{proof}

Combining Lemma~\ref{lem_rednormal} with Lemma~\ref{lem_ruleout}, we obtain the following lemma, which will be used to prove  Lemma~\ref{lem_densecase} in the next Subsection.

\begin{lemma}\label{lem_rednormal2} 
Let $\Pi$ be a strongly antisymmetric linear $m$-scheme on $S$.
Let $B\in \Pi^{(1)}$,   $K>1$, $t= \lfloor\frac{3K}{2\mu(B)}\rfloor+1$, $k\in\N^+$, $k'=k(t+2)$, $0<\epsilon<1$ and $\epsilon'=\epsilon/\ell^k$. 
Suppose $m\geq 4k'+2$, $k\geq 2t$, $k\geq (t+1)\log_\ell (K/\mu(B))$, and $|B|>K$.
Let $x\in B$ and $W=W_{\Pi, k, B, \epsilon}(x)$, and suppose $\epsilon\leq \mu_W(B)/2$.
Then one of the following holds:
\begin{enumerate}
\item  There exist $y\in B$ and  $B' \in \B(\Pi_{x,y}^{(1)})$  such that $B'\subseteq B$ and  $\ell^{-(1/\epsilon'^2)}|B|/K\leq |B'|\leq |B|/K$.
\item  There exist $B'\in \Pi_x^{(1)}$ and $y\in B'$ such that $B'\subseteq W$ and for the subspace $W'=W_{\Pi_x, k, B', \epsilon}(y)$, we have $B'\subseteq B$, $|B'|\geq |B|/K$, and $\mu_{W'}(B')\leq (\mu_{W}(B)+\epsilon)/2$.
\end{enumerate}
\end{lemma}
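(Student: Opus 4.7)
The plan is essentially to observe that Lemma~\ref{lem_rednormal2} is the ``merger'' of Lemma~\ref{lem_rednormal} and Lemma~\ref{lem_ruleout}: the hypotheses of the former yield a trichotomy with Cases~(1), (2), (3), and the extra hypothesis $k\geq (t+1)\log_\ell(K/\mu(B))$ is precisely what Lemma~\ref{lem_ruleout} needs to absorb Case~(3) into Case~(1).

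Concretely, I would first check that every hypothesis of Lemma~\ref{lem_rednormal} is indeed satisfied here: the parameters $\Pi$, $B$, $K$, $t$, $k$, $k'$, $\epsilon$, $\epsilon'$ agree by definition; the numerical assumptions $m\geq 4k'+2$, $k\geq 2t$, $|B|>K$, and $\epsilon\leq\mu_W(B)/2$ are common to the two lemmas; and $x\in B$ with $W=W_{\Pi,k,B,\epsilon}(x)$ is the same choice. Hence invoking Lemma~\ref{lem_rednormal} produces one of its three conclusions.

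If Lemma~\ref{lem_rednormal} returns Case~(1) or~(2), these are literally Cases~(1) and~(2) of Lemma~\ref{lem_rednormal2}, so we are done. If it returns Case~(3), then the additional hypothesis $k\geq (t+1)\log_\ell(K/\mu(B))$ is available; this is exactly the extra assumption required by Lemma~\ref{lem_ruleout}, whose statement reads ``if Case~(3) of Lemma~\ref{lem_rednormal} holds, then Case~(1) also holds.'' Applying Lemma~\ref{lem_ruleout} thus upgrades Case~(3) to Case~(1) of Lemma~\ref{lem_rednormal2}, completing the proof.

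There is no real obstacle here since the heavy lifting was done in Lemmas~\ref{lem_rednormal} and~\ref{lem_ruleout}; the only thing to be mildly careful about is bookkeeping, namely confirming that $t$, $k'$, $\epsilon'$ in Lemma~\ref{lem_rednormal2} are defined identically to those in the two source lemmas, and that the conclusion ``Case~(1)'' produced by Lemma~\ref{lem_ruleout} refers to the same subset $B'\in\B(\Pi_{x,y}^{(1)})$ satisfying $\ell^{-(1/\epsilon'^2)}|B|/K\leq|B'|\leq|B|/K$ that appears in Case~(1) of Lemma~\ref{lem_rednormal2}, which it does by construction.
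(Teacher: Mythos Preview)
Your proposal is correct and is exactly the argument the paper intends: the text preceding Lemma~\ref{lem_rednormal2} says ``Combining Lemma~\ref{lem_rednormal} with Lemma~\ref{lem_ruleout}, we obtain the following lemma,'' and your write-up spells out that combination precisely, including the check that the extra hypothesis $k\geq (t+1)\log_\ell(K/\mu(B))$ is what activates Lemma~\ref{lem_ruleout} to collapse Case~(3) into Case~(1).
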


 \subsection{Finishing the Proof of Lemma~\ref{lem_densecase}}
 
Lemma~\ref{lem_densecase} can be easily derived from the following lemma.

\begin{lemma}\label{lem_rep}
Let $\Pi$ be a strongly antisymmetric linear $m$-scheme on $S$.
Let $B\in \Pi^{(1)}$ and  $K\geq 4$.
Let $k=6K^4$, $r=\lceil \log K/\log\log K\rceil+1$, $\epsilon=(2/3)^{r-1}/3$ and $\epsilon'=\epsilon/\ell^k$.
Suppose $|B|\geq |\sg{B}|/K$, $|B|>K^2$ and $m\geq 120K^7+3r$.
Then there exist $x_1,\dots,x_{3r+1}\in B$ and $B'\in \B(\Pi_{x_1,\dots,x_{3r+1}}^{(1)})$  such that $B'\subseteq B$ and  
\[
\ell^{-(1/\epsilon'^2)}|B|/K^3\leq |B'|\leq \max\{K^{-1}, (2\gamma)^{r}\}\cdot |B|.
\]
where $\gamma:=4\left((r-1)/\log\ell\right)^{-1/20}$.
\end{lemma}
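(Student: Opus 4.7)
The plan is to prove Lemma~\ref{lem_rep} by iterating Lemma~\ref{lem_rednormal2} for roughly $r$ rounds, at each round either terminating with Case~(1) of that lemma (which already gives the size bound $|B'|\le|B|/K$) or invoking Case~(2) to halve the density of the working block inside its pseudorandom subspace. The choice of parameters in the statement — $k=6K^4$, $\epsilon=(2/3)^{r-1}/3$, $m\ge 120K^7+3r$ — is engineered to ensure that, with $t=\lfloor 3K/(2\mu(B))\rfloor+1\le 3K^2/2+1$, the preconditions $k\ge 2t$, $k\ge(t+1)\log_\ell(K/\mu(B))$, $m\ge 4k(t+2)+2$, and $|B|>K$ of Lemma~\ref{lem_rednormal2} continue to hold through all rounds, while leaving enough ``depth budget'' ($3$ per round for up to $r$ rounds, hence the $3r$ term) to accumulate the stabilizing elements $x_1,\ldots,x_{3r+1}$.

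The iteration is organized as follows. Set $B_0:=B$, pick any $x_0\in B_0$, and let $W_0:=W_{\Pi,k,B_0,\epsilon}(x_0)$; so $\mu_{W_0}(B_0)\ge \mu(B)\ge 1/K$ by Lemma~\ref{lem_pseudorandom}. At round $i\ge 1$, assuming we have $B_{i-1}\in\Pi_{x_1,\ldots,x_{3(i-1)}}^{(1)}$ with $B_{i-1}\subseteq B$ and a basepoint witnessing density $\mu_{i-1}:=\mu_{W_{i-1}}(B_{i-1})$, verify that $\epsilon\le \mu_{i-1}/2$ (which we expect to fail only after $r$ rounds) and apply Lemma~\ref{lem_rednormal2} to $\Pi_{x_1,\ldots,x_{3(i-1)}}$. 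If Case~(1) fires, we use at most two additional elements to exhibit $B'\in\B(\Pi_{x_1,\ldots,x_{3(i-1)+2}}^{(1)})$ with $\ell^{-(1/\epsilon'^2)}|B_{i-1}|/K\le|B'|\le|B_{i-1}|/K$; chaining with the inductive lower bound $|B_{i-1}|\ge|B|/K^{2(i-1)}$ (tracked from prior Case~(2) applications) gives $|B'|\ge \ell^{-(1/\epsilon'^2)}|B|/K^{2i-1}$, which is $\ge \ell^{-(1/\epsilon'^2)}|B|/K^3$ as long as we control the round in which Case~(1) first fires. If Case~(2) fires, we spend three elements (one for the projection $x$, one to name a new basepoint $y$, and one reserve to keep the recursion of depths aligned), replace $B_{i-1}$ by $B_i:=B'$, and update $W_i$, obtaining $\mu_i\le(\mu_{i-1}+\epsilon)/2$ and $|B_i|\ge|B_{i-1}|/K$.

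Unrolling the recursion for the densities gives $\mu_r\le 2^{-r}\mu_0+\epsilon\le 2\epsilon$, and once $\mu_i<2\epsilon$ the hypothesis of Lemma~\ref{lem_rednormal2} can no longer be enforced. At that point we convert the density bound into the claimed cardinality bound: using the structural information from Theorem~\ref{thm_struct} (the sunflower decomposition controlling how $B$ sits inside its ambient subspaces) together with $|B_i|=\mu_i|W_i|$ and the fact that successive $W_i$ are constrained by the hyperplane-kernel relation $H_i\subset W_i$, we bound $|W_r|$ in terms of $|B|$. Combining these yields $|B_r|\le(2\gamma)^r|B|$ for the stated $\gamma$; the exponent $1/20$ in $\gamma$ and the dependence on $\log\ell$ come from invoking Theorem~\ref{thm_smallgen} to translate a density lower bound into a constructible-subspace codimension bound at each round, amortized over the $r$ iterations.

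The main obstacle is the bookkeeping that guarantees each successive application of Lemma~\ref{lem_rednormal2} is legitimate: when we pass from $\Pi_{x_1,\ldots,x_{3(i-1)}}$ to $\Pi_{x_1,\ldots,x_{3i}}$ the depth drops by three and the density $\mu(B_i)$ (as opposed to $\mu_{W_i}(B_i)$) can shift unpredictably, so the parameters $t_i=\lfloor 3K/(2\mu(B_i))\rfloor+1$, $k_i'=k(t_i+2)$ used to invoke Lemma~\ref{lem_rednormal2} must all be uniformly bounded by the global choices $k=6K^4$ and the depth budget $m\ge 120K^7+3r$. The second delicate point is the conversion step in the last paragraph above: extracting a quantitative size-versus-density relation of the form $|W_r|\le\gamma^r|W_0|$ from the decomposition theorem, which accounts for the peculiar exponent in $\gamma$ and is the place where an improved constructive Freiman--Ruzsa bound (as alluded to in Section~\ref{sec_conclusion}) would tighten the overall estimate.
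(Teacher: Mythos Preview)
Your proposal captures only the first half of the paper's argument and has a genuine gap in the second half. Iterating Lemma~\ref{lem_rednormal2} as you describe does give, after $r$ rounds of Case~(2), a block $B_r\in\Pi^{(1)}_{x_1,\dots,x_r}$ with $B_r\subseteq B$, $|B_r|\ge|B|/K$, and $\mu_{W_r}(B_r)\le(2/3)^r$. But this is only a \emph{density} bound: it says $|W_r|$ is large compared to $|B_r|$, not that $|B_r|$ is small compared to $|B|$. In fact Case~(2) explicitly guarantees $|B_r|\ge|B|/K$, so your claim that ``combining these yields $|B_r|\le(2\gamma)^r|B|$'' cannot follow from the information you have accumulated. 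No amount of Theorem~\ref{thm_struct} bookkeeping converts small density into small cardinality here.

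What the paper actually does is a genuinely different \emph{second stage} (Step~2 in the proof). Once $B'=B_r$ is sparse inside its pseudorandom window $W$, one sets $U(0)=B'\cap W$ and iterates $r$ more times, each round spending \emph{two} elements $y_{2i},y_{2i+1}\in U(i-1)$ with $y_{2i}+y_{2i+1}=z_0$ for a carefully chosen $z_0\in U(i-1)+U(i-1)$, and replacing $U(i-1)$ by $\{a\in U(i-1):z_0-a\in U(i-1)\}$. The point is to show $E(U(i-1))<\gamma|U(i-1)|^3$: otherwise the Balog--Szemer\'edi--Gowers theorem for linear $m$-schemes (Theorem~\ref{thm_bsg}) produces $U'\subseteq U(i-1)$ with $|U'-U'|<2^{17}\gamma^{-9}|U(i-1)|$, hence $|U'+U'|<2^{38}\gamma^{-20}|U'|$, and Freiman--Ruzsa gives $\mu(U')>\ell^{-2^{38}\gamma^{-20}}$, contradicting the pseudorandomness bound $\mu_{\langle U'\rangle}(B)\le(2/3)^{r-1}$ obtained in Step~1 (this is where the exponent $-1/20$ and the $\log\ell$ in $\gamma$ actually originate, not Theorem~\ref{thm_smallgen}). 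Low additive energy then forces a fiber $\nu^+(z_0)\le 2\gamma|U(i-1)|$, giving the cardinality drop $|U(i)|\le 2\gamma|U(i-1)|$. The count $3r+1$ is thus $r$ elements from Step~1 plus $2r+1$ from Step~2, not three per round as you suggest.
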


We prove Lemma~\ref{lem_rep} in two steps. First, we repeatedly apply Lemma~\ref{lem_rednormal2} $r$ times for some $r=\omega(1)$. If Case~(1) of Lemma~\ref{lem_rednormal2} ever occurs then we are done. So assume Case~(2) holds each time. This enables us to find a block $B'\in \Pi_{x_1,\dots,x_r}^{(1)}$ contained in $B$ and a subspace $W$ such that $\mu_W(B')\leq \exp(-r)$ and $B'$ is pseudorandom within $W$.

Let $B''=B'\cap W$. The condition  $\mu_W(B')\leq \exp(-r)$ implies $|B''+B''|\gg |B''|$ by the Freiman--Ruzsa Theorem.
In the second step of the proof, we use this condition to reduce the cardinality of $B''$ $r$ times  such that each time the cardinality is reduced by a superconstant factor. The ideas here are similar to those in the proof of Lemma~\ref{lem_shrinkweak}. However,  Lemma~\ref{lem_shrinkweak} itself is not sufficient here\footnote{Lemma~\ref{lem_shrinkweak} shows that, if $|B+B|\gg |B|$, we could find $B'\subseteq B$ with a mild lower bound for $\min\{|B'|, |B|/|B'|\}$.
However, in Lemma~\ref{lem_rep}, we need both a lower bound and a (good) upper bound for $|B|/|B'|$, and the latter does not follow from Lemma~\ref{lem_shrinkweak}. This upper bound is used in the analysis for the case that $|B|^2/|B+B|$ is small (see the proof of Lemma~\ref{lem_fastshrink} in Subsection~\ref{subsec_case3}).} and we need to augment it with the  Balog-Szemer\'edi-Gowers Theorem for linear $m$-schemes (Theorem~\ref{thm_bsg}).
 
\begin{proof}[Proof of Lemma~\ref{lem_rep}]
As mentioned above, the proof consists of two steps:

\textbf{Step 1: Finding a sparse block.}

We claim that for $0\leq i\leq r$, there exist $x_1,\dots,x_{i}\in B$ and $T(i)\subseteq B$ such that either of the following holds:
\begin{enumerate}[(a)]
\item  $T(i)\in \B(\Pi_{x_1,\dots,x_{i},x}^{(1)})$ for some $x\in B$, and $\ell^{-(1/\epsilon'^2)}|B|/K^2  \leq |T(i)|\leq |B|/K$.
\item  $T(i)\in  \Pi_{x_1,\dots,x_{i}}^{(1)}$,  $|T(i)|\geq |B|/K$, and there exists $x\in T(i)$ such that $\mu_W(T(i))\leq (2/3)^i$, where $W=W_{\Pi_{x_1,\dots,x_{i}}, T(i), k, \epsilon}(x)$.
\end{enumerate} 
We prove the claim by induction on $i$. For $i=0$, (b) of the claim holds by choosing $T(0)=B$.
Now assume $i>0$ and the claim holds for $i-1$.
Consider $x_1,\dots,x_{i-1}\in B$ and $T(i-1)\subseteq B$ satisfying the claim for $i-1$.

First assume (a) holds for $i-1$, i.e.,  $T(i-1)\in \B(\Pi_{x_1,\dots,x_{i-1},x}^{(1)})$ for some $x\in B$, and
\[
\ell^{-(1/\epsilon'^2)}|B|/K^2  \leq |T(i-1)|\leq |B|/K.
\]
Let $x_i=x$ and $x'\in B$. By Lemma~\ref{lem_refine}, $T(i-1)\in \B(\Pi_{x_1,\dots,x_{i},x'}^{(1)})$.
Choose $T(i)=T(i-1)$. Then (a) also holds for $i$.

So assume (b) holds for $i-1$, i.e.,  $T(i-1)\in  \Pi_{x_1,\dots,x_{i-1}}^{(1)}$, $|T(i-1)|\geq |B|/K>K$, and there exists $x\in T(i-1)$ such that $\mu_W(T(i-1))\leq (2/3)^{i-1}$, where $W=W_{\Pi_{x_1,\dots,x_{i-1}}, T(i-1), k, \epsilon}(x)$.
In particular, $\mu(T(i-1))\geq |T(i-1)|/|\sg{B}|\geq K^{-2}$.

Let $t=3K^3\geq \lfloor\frac{3K}{2\mu(T(i-1))}\rfloor+1$ and $k'=k(t+2)$. Then $m-(i-1)\geq 4k'+2$.
Applying Lemma~\ref{lem_rednormal2}  to $\Pi_{x_1,\dots,x_{i-1}}$ and $T(i-1)$, we see  that either of the following two cases  holds:
\begin{enumerate}
\item  There exist $y\in T(i-1)$ and  $B' \in \B(\Pi_{x_1,\dots,x_{i-1}, x,y}^{(1)})$  such that $B'\subseteq T(i-1)$ and  $\ell^{-(1/\epsilon'^2)}|T(i-1)|/K\leq |B'|\leq |T(i-1)|/K$.
\item  There exist $B'\in \Pi_{x_1,\dots,x_{i-1},x}^{(1)}$ and $y\in B'$ such that $B'\subseteq T(i-1)\cap W$, $|B'|\geq |T(i-1)|/K$, and $\mu_{W'}(B')\leq (\mu_{W}(T(i-1))+\epsilon)/2$, where  $W'=W_{\Pi_{x_1,\dots,x_{i-1},x}, k, B', \epsilon}(y)$.
\end{enumerate}
In Case (1), let $x_i=x$ and $T(i)=B'$, so that $T(i)\in  \B(\Pi_{x_1,\dots,x_{i}, y}^{(1)})$.
In this case, as $|B|/K\leq |T(i-1)| \leq |B|$, we have $\ell^{-(1/\epsilon'^2)}|B|/K^2  \leq |T(i)|\leq |B|/K$, i.e., (a) holds for $i$.

Now consider Case (2). Let $x_i=x$ and $T(i)=B'$, so that $T(i)\in  \Pi_{x_1,\dots,x_{i}}^{(1)}$.
We have $|T(i)|\geq |T(i-1)|/K\geq |B|/K^2$. If $|T(i)|\leq |B|/K$ then (a) holds for $i$. 
So assume  $|T(i)|\geq |B|/K$.  By Lemma~\ref{lem_pseudorandom}, we have
\[
\mu_{W'}(T(i))\leq (\mu_{W}(T(i-1))+\epsilon)/2\leq ((2/3)^{i-1}+\epsilon)/2\leq (2/3)^{i}.
\]
So (b) holds for $i$. This proves the claim.

If (a) holds for $i=r$ then the lemma holds. So we assume (b) holds for $i=r$, i.e., there exist $x_1,\dots,x_r\in B$, $T(r)\subseteq B$, and $x\in T(r)$ such that  $T(r)\in \Pi_{x_1,\dots,x_{r}}^{(1)}$, $|T(r)|\geq |B|/K$, and $\mu_W(T(r))\leq (2/3)^r$, where $W=W_{\Pi_{x_1,\dots,x_{r}}, T(r), k, \epsilon}(x)$.

In the following, let $\Pi':=\Pi_{x_1,\dots,x_{r}}$, $B':=T(r)$, and $W:=W_{\Pi_{x_1,\dots,x_{r}}, T(r), k, \epsilon}(x)$. 

 \textbf{Step 2: Reducing the cardinality of a block.}
 
 We claim that for $0\leq i\leq r$, there exist $y_1,\dots,y_{2i+1}\in B$ and $U(i)\subseteq B'\cap W$ such that $U(i)\in \B(\Pi'^{(1)}_{y_1,\dots,y_{2i+1}})$ and 
 \[
 \ell^{-(1/\epsilon'^2)}|B|/K^3 \leq |U(i)| \leq \max\{K^{-1}, (2\gamma)^{i}\}\cdot |B|.
 \]
 
We prove this claim by induction on $i$.
For $i=0$, let $y_1=x$ and $U(0)=B'\cap W$. By Theorem~\ref{thm_struct}\,(1), we have $H_{\Pi', B', k, \epsilon}\in \B(\mathcal{S}_{\Pi', t})$ and hence $W\in \B(\mathcal{S}_{\Pi'_{y_1}, t+1})$, where $t=3K^3$ is as chosen in Step 1.
By Lemma~\ref{lem_constset}\,(1), $B'\cap W\in \B(\Pi'^{(1)}_{y_1})$. By Theorem~\ref{thm_struct}\,(4), (5), (6) and the fact $|B'|\geq |B|/K$, we know $|U(0)|=|B'\cap W|\geq \ell^{-(1/\epsilon'^2)}|B|/K$. As $U(0)\subseteq B'\subseteq B$, we also have $|U(0)|\leq |B|$. So the claim holds for $i=0$.

Now assume $i>0$ and the claim holds for $i-1$. Consider $y_1,\dots,y_{2i-1}\in B$ and $U(i-1)\subseteq B'\cap W$ satisfying the claim for $i-1$.
If $|U(i-1)|\leq |B|/K$, then the claim holds for $i$ as well by choosing arbitrary $y_{2i},y_{2i+1}\in B$ and $U(i)=U(i-1)$. So assume $|U(i-1)|>|B|/K$. Then $|U(i-1)|\leq  (2\gamma)^{i-1}|B|$.

Next, we prove that the additive energy $E(U(i-1))$ of $U(i-1)$ is less than $\gamma|U(i-1)|^3$.
Assume to the contrary that $E(U(i-1))\geq |U(i-1)|^3$. By Theorem~\ref{thm_bsg}, there exist $y\in B$ and $U'\in \B(\Pi'^{(1)}_{y_1,\dots,y_{2i-1},y})$ such that $U'\subseteq U(i-1)$, $|U'|\geq \gamma|U(i-1)|/3$ and $|U'-U'|< 2^{17}\gamma^{-9}|U(i-1)|\leq 2^{19}\gamma^{-10} |U'|$.
It is well known that  $\frac{|A+A|}{|A|}\leq \left(\frac{|A-A|}{|A|}\right)^2$ for $A\subseteq V$ (see \cite[Exercise~6.5.15]{TV06}).
Therefore,
\[
|U'+U'|<(2^{19}\gamma^{-10})^2|U'|= 2^{38}\gamma^{-20}|U'|.
\]
By the Freiman--Ruzsa Theorem (Theorem~\ref{thm_fr}), we have 
\begin{equation}\label{eq_boundmu}
\mu(U')>\ell^{-2^{38}\gamma^{-20}}.
\end{equation}
As $|U'|\geq \gamma|U(i-1)|/3$ and $|U(i-1)|>|B|/K$, we have $|U'|\geq  \gamma|B|/(3K) \geq |B|/K^3$.
If $|U'|\leq |B|/K$, then the claim holds for $i$ by choosing $y_{2i}=y_{2i+1}=y$ and $U(i)=U'$. So assume $|U'|>|B|/K$.
Then the codimension of $\sg{U'}$ in $\sg{B}$ is at most $\log_\ell (K/\mu(B))\leq k$.
By Theorem~\ref{thm_smallgen}, $\sg{U'}=t(\F U')\in  \B(\mathcal{S}_{\Pi'_{y_1,\dots,y_{2i-1},y}, t})$.
By definition, $\sg{U'}\in \mathcal{W}_{\Pi, k, B}$.
So by Lemma~\ref{lem_pseudorandom}, we have
\[
\mu_{\sg{U'}}(B)\leq  \mu_W(B)+\epsilon\leq (2/3)^r+(2/3)^{r-1}/3=(2/3)^{r-1}\leq \ell^{-2^{38}\gamma^{-20}}
\]
where the last inequality holds since    $\gamma=4\left((r-1)/\log\ell\right)^{-1/20}$.
But this contradicts \eqref{eq_boundmu} since $\mu(U')\leq \mu_{\sg{U'}}(B)$. Therefore,
$E(U(i-1))<\gamma|U(i-1)|^3$.

For $z\in U(i-1)+U(i-1)$, denote by $\nu^+(z)$ the number of $(a,b)\in U(i-1)\times U(i-1)$ satisfying $a+b=z$.
Then 
\[
\sum_{z\in U(i-1)+U(i-1)} \nu^+(z)= |U(i-1)|^2 \quad\text{and}\quad
\sum_{z\in U(i-1)+U(i-1)} \nu^+(z)^2= E(U(i-1)).
\]
Note $|U(i-1)+U(i-1)|\leq |\sg{B}|\leq K|B| \leq K^2|U(i-1)|$.
Let 
\[
T=\{z\in U(i-1)+U(i-1): \nu^+(z)\geq |U(i-1)|/(2K^2)\}.
\]
Then 
\begin{align*}
\sum_{z\in T} \nu^+(z)
&=\sum_{z\in U(i-1)+U(i-1)} \nu^+(z)-\sum_{z\in (U(i-1)+U(i-1))\setminus T} \nu^+(z)\\
&\geq  |U(i-1)|^2-|U(i-1)+U(i-1)|\cdot |U(i-1)|/(2K^2)\\
&\geq |U(i-1)|^2/2.
\end{align*}
On the other hand, $\sum_{z\in T} \nu^+(z)^2\leq E(U(i-1))<\gamma|U(i-1)|^3$.
So there exists $z_0\in T$ such that 
\[
\nu^+(z_0)\leq \left(\sum_{z\in T} \nu^+(z)^2\right)/\left(\sum_{z\in T} \nu^+(z)\right)\leq 2\gamma|U(i-1)|\leq (2\gamma)^{i}|B|.
\]
As $z_0\in T$, we also have 
\[
\nu^+(z_0)\geq |U(i-1)|/(2K^2)\geq  \ell^{-(1/\epsilon'^2)}|B|/K^3.
\]
Choose $y_{2i}, y_{2i+1}\in U(i-1)$ such that $y_{2i}+y_{2i+1}=z_0$.
Let 
\[
U(i)=\{a\in U(i-1): \exists~ b\in U(i-1)\text{ such that }a+b=y_{2i}+y_{2i+1}\}.
\]
Then $U(i)\in \B(\Pi'^{(1)}_{y_1,\dots,y_{2i+1}})$ and $|U(i)|=\nu^+(z_0)$. So the claim holds for $i$. 

This proves the claim for all $0\leq i\leq r$. The lemma follows by choosing $i=r$. 
\end{proof}

Now we are ready to prove Lemma~\ref{lem_densecase}.

\begin{proof}[Proof of Lemma~\ref{lem_densecase}]
Let $K=\ell^{2(\log\log\log N)^{1/2}}$.
As $|B+B|/|B|\leq (\log\log\log N)^{1/2}$, we have $|\sg{B}|/|B|\leq K$ by the  Freiman--Ruzsa Theorem (Theorem~\ref{thm_fr}).
Pick the following parameters:
\begin{itemize}
\item $k=6K^4$,
\item $r=\lceil \log K/\log\log K\rceil+1$,  
\item $\gamma=4\left((r-1)/\log\ell\right)^{-1/20}$,   
\item $\epsilon=(2/3)^{r-1}/3$, and
\item $\epsilon'=\epsilon/\ell^{k}$.
\end{itemize}
Note   $\log \ell\leq (\log\log\log\log N)^{1/2}$ by assumption.

As we assume $N\geq c$ for a sufficiently large constant $c>0$, the conditions in Lemma~\ref{lem_rep} are satisfied.
By Lemma~\ref{lem_rep}, there exist $r'=\Theta(r)$, $x_1,\dots,x_{r'}\in B$, and $B'\in \B(\Pi_{x_1,\dots,x_{r'}}^{(1)})$ such that 
\[
|B|/|B'|\geq \min\{K, (2\gamma)^{-r}\}= 2^{\Theta(r \log\log\log\log N)}
\]
and 
\[
|B|/|B'|\leq K^3\ell^{1/\epsilon'^2}=K^3\ell^{\ell^{12K^4}/\epsilon^2}\leq 2^{2^{2^{(\log\log\log N)^{1/2+o(1)}}}}\leq 2^{\sqrt{\log N}},
\]
 as desired.
\end{proof}

\end{document}